\documentclass[a4paper,11pt]{article}
\usepackage[utf8]{inputenc}
\usepackage[T1]{fontenc}
\usepackage{amsmath, amsthm, amssymb}
\usepackage{physics} 
\usepackage{braket} 
\usepackage{faktor} 
\usepackage{tikz-cd} 
\usepackage{graphicx} 
\usepackage{enumitem}
\usepackage{comment}
\usepackage[toc,page]{appendix} 

\usepackage{xcolor}      
\usepackage{soul}        

\newcommand{\mres}{\mathbin{\vrule height 1.6ex depth 0pt width 0.13ex\vrule height 0.13ex depth 0pt width 0.8ex}}


\theoremstyle{plain}
\newtheorem{thm}{Theorem}[section]
\newtheorem{prop}[thm]{Proposition}
\newtheorem{cor}[thm]{Corollary}
\newtheorem{lem}[thm]{Lemma}
\newtheorem*{thm*}{Theorem}
\newtheorem*{prop*}{Proposition}
\newtheorem*{cor*}{Corollary}
\newtheorem*{lem*}{Lemma}

\theoremstyle{definition}
\newtheorem{defi}[thm]{Definition}
\newtheorem{exa}[thm]{Example}
\newtheorem*{defi*}{Definition}

\theoremstyle{remark}
\newtheorem{rmk}{Remark}[section]
\newtheorem*{rmk*}{Remark}

\newcommand{\R}{\mathbf{R}}
\newcommand{\Z}{\mathbf{Z}}
\newcommand{\N}{\mathbf{N}}
\newcommand{\spt}{\mathrm{spt}}
\newcommand{\HH}{\mathcal{H}}
\newcommand{\LL}{\mathcal{L}}
\newcommand{\II}{\mathcal{I}}
\newcommand{\IL}{\Lambda}
\newcommand{\GT}{\varphi}
\newcommand{\GL}{M}
\newcommand{\ka}{\kappa_0}
\newcommand{\kb}{\kappa_1}

\title{Weak Limits of Quasiminimizing Sequences}
\author{Camille Labourie}
\date{}


\begin{document}

\maketitle

\begin{abstract}
    We show that the weak limit of a quasiminimizing sequence is a quasiminimal set. This generalizes the notion of weak limit of a minimizing sequences introduced by De Lellis, De Philippis, De Rosa, Ghiraldin and Maggi. This result is also analogous to the limiting theorem of David in local Hausdorff convergence. The proof is based on the construction of suitable deformations and is not limited to the ambient space $\R^n \setminus \Gamma$, where $\Gamma$ is the boundary. We deduce a direct method to solve various Plateau problems, even minimizing the intersection of competitors with the boundary. Furthermore, we propose a structure to build Federer--Fleming projections as well as a new estimate on the choice of the projection centers.
\end{abstract}

\tableofcontents

\section{Introduction}
\subsection{Background}
Inspired by soap films, the Plateau problem is an old problem which consists in minimizing the area of a surface spanning a boundary. It admits many mathematical formulations corresponding to different classes of surfaces \emph{spanning the boundary} (also called \emph{competitors}) and different \emph{areas} to minimize. 

\begin{figure}[ht]
    \begin{center}
        \includegraphics[width=.4\linewidth]{./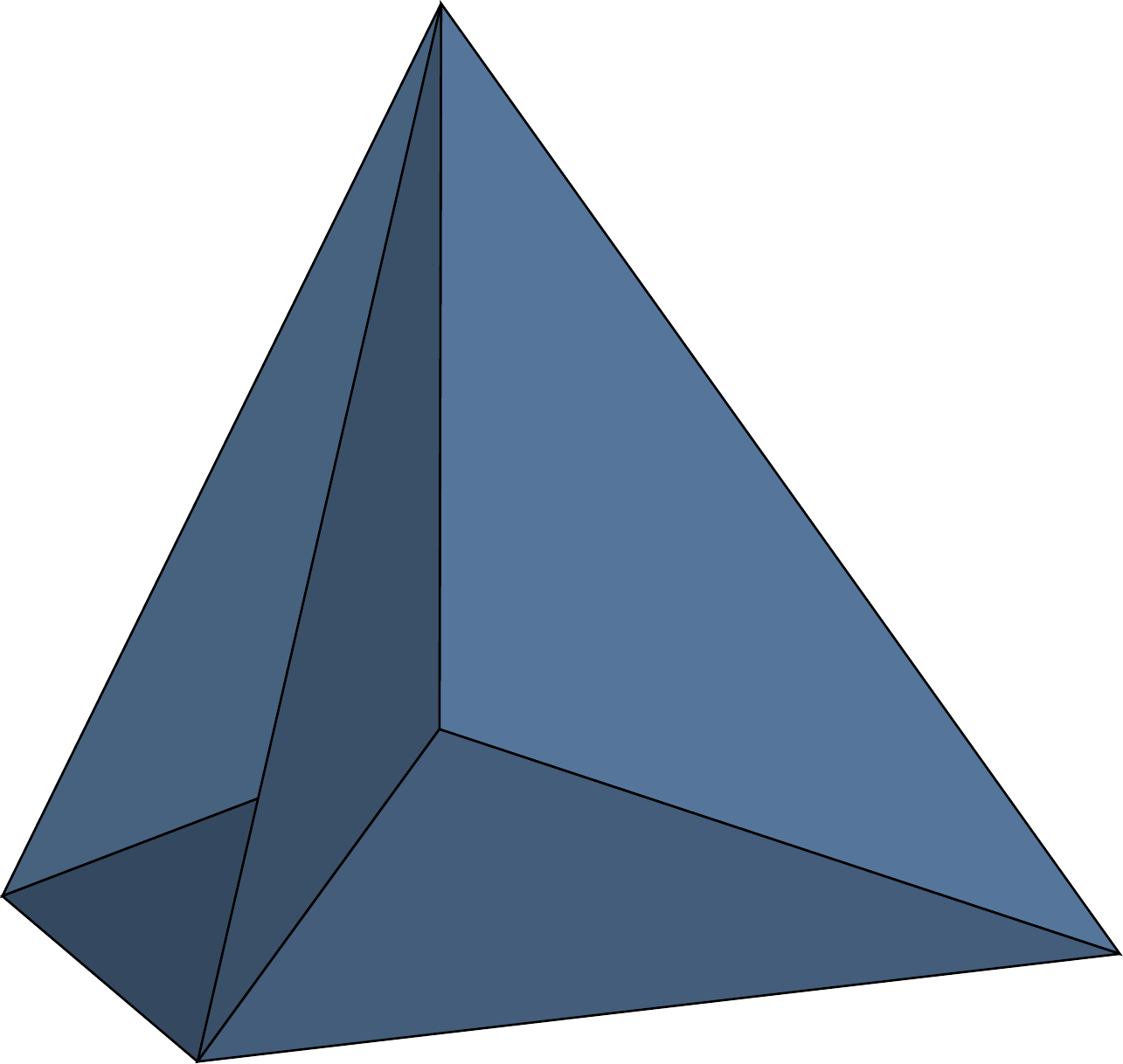}
        \qquad \qquad
        \includegraphics[width=.4\linewidth]{./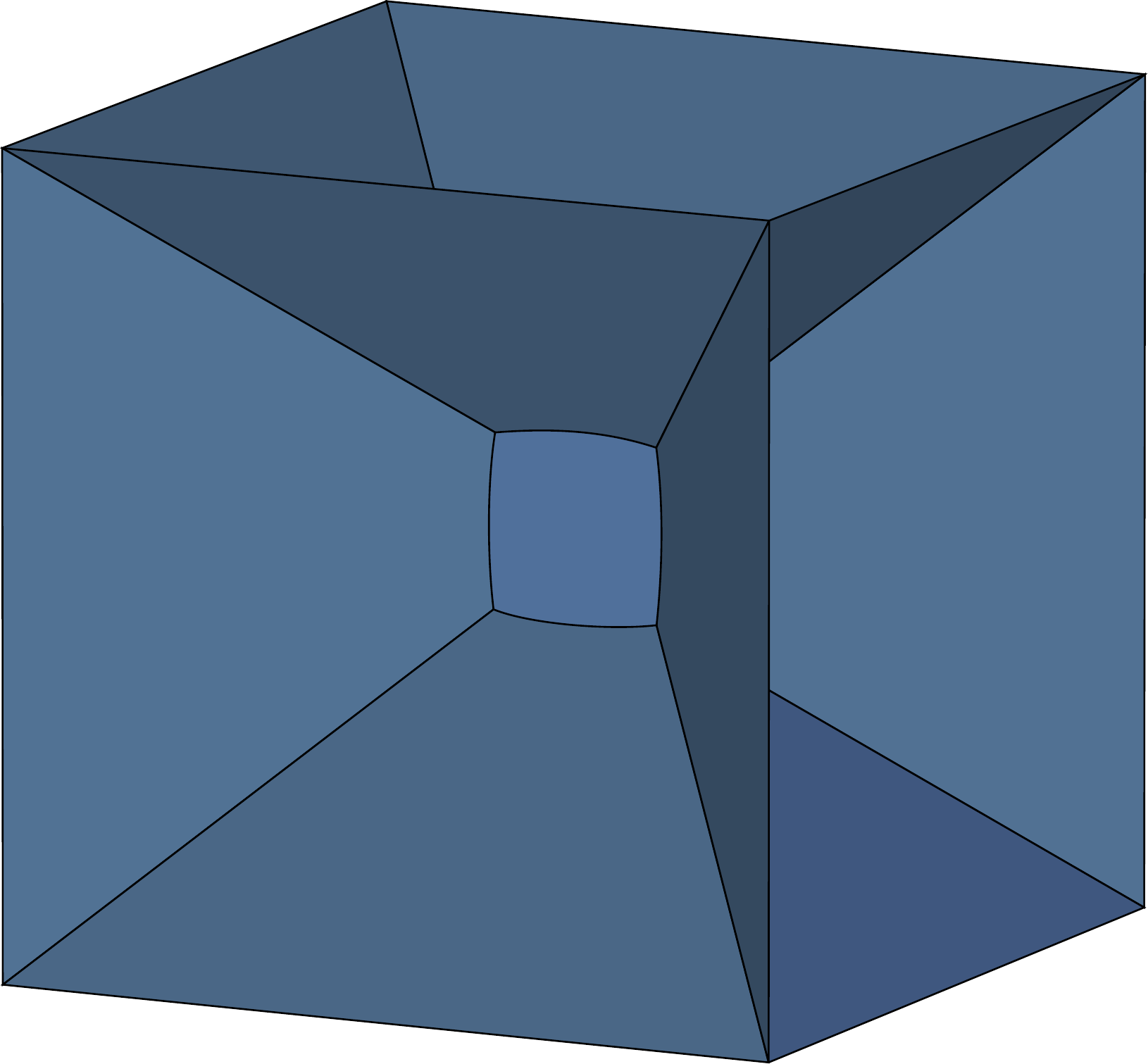}
        \caption{Soap films spanning the skeleton of a tetrahedron (left) and the skeleton of a cube (right).}
    \end{center}
\end{figure}

The usual strategy of existence is the direct method of the calculus of variation, that is, taking a limit of a minimizing sequence. It is in general difficult to have both a compactness principle on the class of competitors and a lower semicontinuity principle on the area. We present a few approaches below to motivate our main results (we also advise \cite{SteinConf}).

We are interested in the formulations that allow to describe the singularities of soap films. We work in the Euclidean space $\R^n$. We denote by $\Gamma$ a compact in $\R^n$ (the boundary) and we minimize the Hausdorff measure $\HH^d$ ($d=1,\ldots,n$) of competitors. It is possible to replace the Hausdorff measure by integrals of elliptic integrands. We present three class of competitors: the homological competitors of Reifenberg \cite{Reifenberg}, the competitors of Harrison--Pugh \cite{HaPu} and the sliding competitors of David \cite{Sliding}. These formulations do not specify the topological structure of competitors.

\begin{figure}[ht]
    \centering
    \begin{minipage}{0.46\linewidth}
        \centering
        \includegraphics[width=0.8\linewidth]{./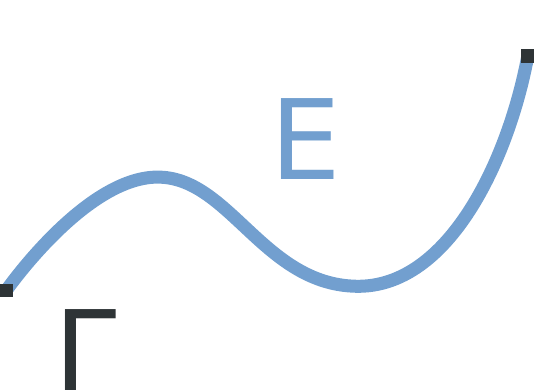}
        \caption{The competitor has a \emph{fixed boundary}.}
    \end{minipage}
    \qquad
    \begin{minipage}{0.46\linewidth}
        \centering
        \includegraphics[width=0.8\linewidth]{./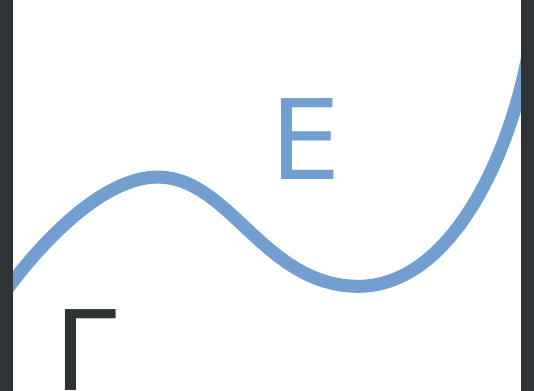}
        \caption{The competitor has a \emph{sliding boundary}. The intersection $E \cap \Gamma$ may not be negligible. One can minimize the whole set $E$ or only $E \setminus \Gamma$.}
    \end{minipage}
\end{figure}

Reifenberg minimizes the compact sets which fill the cycles of dimension $(d-1)$ of $\Gamma$. We give a slight generalization of Reifenberg original definition to allow sliding boundaries.
\begin{defi}[Reifenberg competitor]
    A competitor is a compact set $E \subset \R^n$ such that the morphism induced by inclusion,
    \[
        \begin{tikzcd}
            H_{d-1}(\Gamma) \arrow[r] & H_{d-1}(E \cup \Gamma),
        \end{tikzcd}
    \]
    is zero. One can also require that $E$ cancels only a fixed subgroup of $H_{d-1}(\Gamma)$.
\end{defi}

\begin{figure}[ht]\label{RH}
    \begin{center}
        \includegraphics[width=0.6\linewidth]{./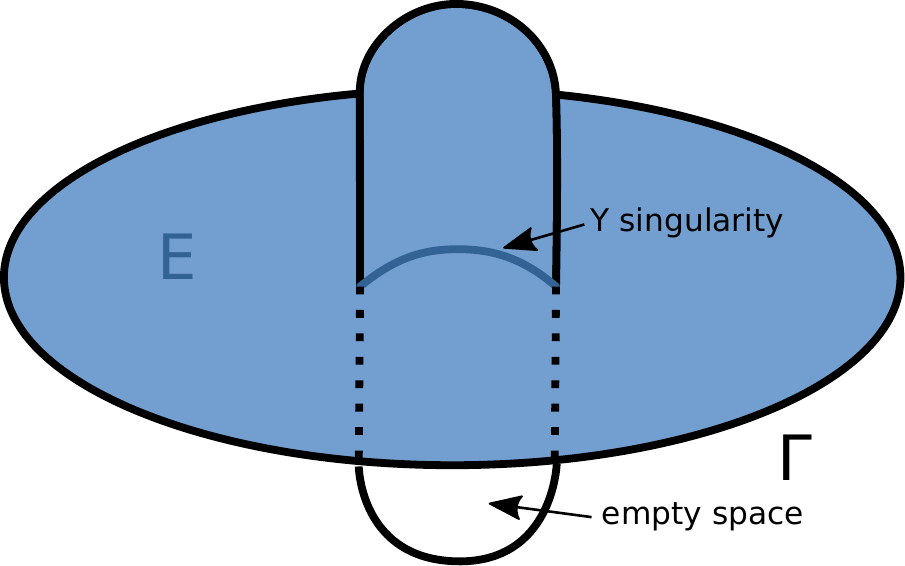}
        \caption{A minimizer of Harrison--Pugh which is not a Reifenberg minimizer.}\label{figure_reifenberg}
    \end{center}
\end{figure}

Harrison and Pugh minimize the compact sets which intersect the links of $\Gamma$.
\begin{defi}[Harrison--Pugh competitor]
    A competitor is a compact set $E \subset \R^n$ such that for any embedding
    \begin{equation*}
        \gamma \colon \mathbf{S}^{n-d} \to \R^n \setminus \Gamma
    \end{equation*}
    of the sphere $\mathbf{S}^{n-d}$ into $\R^n$, we have $\gamma \cap E \ne \emptyset$. One can also restrict the embeddings to a subclass which is closed under homotopy.
\end{defi}

David aims to minimize a fixed compact $E_0$ under the action of continuous deformations $f$ which preserve the boundary over time (\emph{sliding boundary}). This evokes a soap film sliding along a tube for example.
\begin{defi}[Sliding competitor]
    Fix $E_0$ a compact of $\R^n$ such that $\HH^d(E_0) < \infty$ (an initial surface spanning $\Gamma$). A \emph{sliding deforation} is a Lipschitz function $f\colon \R^n \to \R^n$ such that there exists a continuous homotopy $F\colon I \times \R^n \to \R^n$ satisfying the following conditions
    \begin{align*}
&F_0 = \mathrm{id},\ F_1 = f,\\
&\forall t \in I,\ F_t(\Gamma) \subset \Gamma.
    \end{align*}
    The sliding competitors of $E_0$ are the set of the form $f(E_0)$ where $f$ is a sliding deformation.
\end{defi}

The minimizers of Reifenberg, Harrison--Pugh and David satisfy an intermediate property: their area is minimal under sliding deformations. This motivates the notion of \emph{minimal sets}. They are closed and $\HH^d$ locally finite sets $E \subset \R^n$ such that for every sliding deformation $f$ of $E$ in a small ball,
\begin{equation*}
    \HH^d(E \cap W) \leq \HH^d(f(E \cap W)),
\end{equation*}
where
\begin{equation*}
    W = \set{x \in \R^n | \phi(x) \ne x}.
\end{equation*}
This property echoes the elasticity and stability of soap films under (small) deformations. We underline that the deformation has two constraints: the ball where the deformation takes place and the sliding boundary. \emph{Quasiminimal sets} are sets whose area cannot be decreased below a fixed percentage (modulo a lower-order term):
\begin{equation*}
    \HH^d(E \cap W) \leq \kappa \HH^d(\phi(E \cap W)) + h \mathrm{diam}(B)^d.
\end{equation*}
Almgren has studied such sets under the name \emph{restricted sets} but without sliding boundary and without lower-order term \cite[Definition II.1]{A2}. David and Semmes introduced quasiminimal sets as a minor variation of restricted sets in \cite[Definition 1.9]{DS}. David added the lower error term in \cite[Definition 2.10]{Holder} and the sliding boundary in \cite[Definition 2.3]{Sliding}.

The quasiminimal sets have weak regularity properties: the local Ahlfors-regularity and the rectifiability. They enjoy interesting limiting properties. Say that $(E_k)$ is a sequence of quasiminimal sets of uniform parameters. Assume that it converges in local Hausdorff distance to a limit set $E$. Then for all open set $V$,
\begin{equation}
    \HH^d(E \cap V) \leq \liminf_k \HH^d(E_k \cap V)
\end{equation}
and for all compact set $K$,
\begin{equation}
    \limsup_k \HH^d(E_k \cap K) \leq (1 + Ch) \kappa \HH^d(E \cap K).
\end{equation}
In addition, $E$ is a quasiminimal set (with the same parameters). David proved it a first time in \cite{Limits} for minimal sets without sliding boundary, a second time in \cite[Section 3]{Holder} for quasiminimal sets without sliding boundary and a last time in \cite[Theorem 10.8]{Sliding} for quasiminimal sets with a sliding boundary. Another proof is given by Fang in \cite{FangConvergence}.

We finally discuss some strategies to solve Plateau problem. Let us say that we are given a class of competitors which is closed under deformation. The first strategy is the following.
\begin{enumerate}
    \item Consider a minimizing sequence of competitors $(E_k)$, all included in a fixed compact.
    \item Extract a subsequence so that $(E_k)$ converge to a limit set $E$ in Hausdorff distance.
    \item It might be necessary to build an alternative minimizing sequence to make sure that $\HH^d(E_\infty) \leq \liminf_k \HH^d(E_k)$. Indeed, the area is not lower semicontinuous with respect to Hausdorff distance convergence. There might be thin parts whose Hausdorff measure goes to $0$ but which become more and more dense so that the limit set is too large.
    \item Check whether the limit set $E$ is a competitor.
\end{enumerate}
This is the original strategy of Reifenberg (\cite{Reifenberg},1960). Reifenberg works with the \v{C}ech homology theory because its continuity property ensures that if the set $(E_k)$ are Reifenberg competitors, then $E$ is also a Reifenberg competitor. Reifenberg builds an alternative minimizing sequence by cutting the undesirable parts and patching the holes. He uses the Exactness axiom to check that the resulting set is a competitor. However, for this axiom to holds true, the coefficient group of the \v{C}ech theory needs to be compact. This excludes the natural case of $\Z$.

This is also the strategy of Fang in \cite[Theorem 1.1]{FangReifenberg}. Fang builds an alternative minimizing sequence composed of quasiminimal sets (with uniform parameters). He relies on a complex construction of Feuvrier \cite{Feuvrier}. This allows him to solve the Reifenberg problem without restriction on the coefficient group.

In \cite{A1}, Almgren presents another strategy for solving the Reifenberg problem which relies on the theory of currents, flat chains and integral varifolds. Fang and Kolasinksi clarify and axiomatize this approach in \cite[Theorem 3.20]{FangAlmgren}.

In \cite{I1} and \cite{I2}, De Lellis, De Philippis, De Rosa, Ghiraldin and Maggi introduce a new scheme. 
\begin{enumerate}
    \item Consider a minimizing sequence of competitors $(E_k)$, all included in a fixed compact.
    \item Extract a subsequence so that the Radon measures $(\HH^d \mres E_k)$ converge to a limit measure $\mu$.
    \item Show that $\mu$ has a special structure:
        \begin{equation*}
            \mu = \HH^d \mres E
        \end{equation*}
        where $E$ is the support of $\mu$. This implies the lower semicontinuity of the area $\HH^d(E) \leq \liminf_k \HH^d(E_k)$. In addition, one can show that $E$ is minimal under deformation.
    \item Check whether the limit set $E$ is a competitor.
\end{enumerate}
The authors also replace the Hausdorff measure $\HH^d$ by integrals of elliptic integrands in \cite{I4} and \cite{I5}. A key step of the proof is to show that $E$ is $\HH^d$ rectifiable. Their proof uses the Preiss's rectifiability theorem in the first articles and an extension of Allard's rectifiability theorem (\cite{I3}) in the last articles. It has also been reproved without relying on the Preiss theorem or the Allard rectifiability theorem in \cite[Section 6, Theorem 6.7]{DRK}. This strategy allows to solve the problem of Reifenberg and the problem of Harrison--Pugh.  

Until now, these schemes have only been established in the ambient space $\R^n \setminus \Gamma$. The convergence takes place in $\R^n \setminus \Gamma$ and the intersection of competitors with $\Gamma$ is not minimized.

These ideas do not allow to solve the sliding problem directly because the class of sliding competitors is not closed under convergence in Hausdorff distance or convergence in Radon measure.

\subsection{Main Results}
\textbf{Notation} For $x \in \R^n$ and $r > 0$, $B(x,r)$ is the open ball of center $x$ and radius $r$. For $h \geq 0$, the notation $h B(x,r)$ means $B(x,hr)$.  The interval $[0,1]$ is denoted by the capital letter $I$. Given a set $E \subset \R^n$ and a function $F\colon I \times E \to \R^n$, the notation $F_t$ means $F(t,\cdot)$. Given two sets $A, B \subset \R^n$, the notation $A \subset \subset B$ means that there exists a compact set $K \subset \R^n$ such that $A \subset K \subset B$. The terms ``a closed set $A \subset B$'' or ``a closed subset $A$ of $B$'' mean that $A$ is relatively closed.

Our ambient space is an open set $X$ of $\R^n$. We fix an integer $1 \leq d \leq n$. The symbol $\HH^d$ denotes the Hausdorff measure of dimension $d$.  To each ball $B(x,r) \subset X$, we associate a scale $s \in [0,\infty]$. If $X = \R^n$, the scale is given by the radius but if $X \ne \R^n$, we would like to express that the scale goes to $\infty$ when $r \to \mathrm{d}(x,X^c)$. For example, we can choose the parameter $s \in [0,\infty]$ such that $r = r_s$ where
\begin{equation*}\label{r_s}
    r_s(x) = \min\set{\tfrac{s}{1+s}\mathrm{d}(x,X^c),s}.
\end{equation*}
The ball $B(x,r)$ is of scale $\leq s$ if and only if $r \leq r_s(x)$. When there is no ambiguity, we write $r_s$ instead of $r_s(x)$. In practice, we do not use the formula but only two properties.
\begin{enumerate}
    \item For $s \in [0,\infty]$ and $\lambda > 0$, there exists a scale $t \in [0,\infty]$ such that for all $x \in X$, $r_t(x) \leq \lambda r_s(x)$. 
    \item For all $s \in [0,\infty]$, the application $x \mapsto r_s(x)$ is $1$-Lipschitz in $X$.
\end{enumerate}
For the first one, it suffices to observe that
\begin{equation*}
    \tfrac{s}{1+s} \min \set{\mathrm{d}(x,X^c),1} \leq r_s(x) \leq s \min \set{\mathrm{d}(x,X^c),1}.
\end{equation*}
For the second one, we set $r = \abs{x - y}$ and we observe that $\mathrm{d}(y,X^c) \leq \mathrm{d}(x,X^c) + r$ so
\begin{align*}
    r_s(y)  &\leq \min\set{\tfrac{s}{1+s}\left(\mathrm{d}(x,X^c) + r\right),s}\\
            &\leq \min\set{\tfrac{s}{1+s}\mathrm{d}(x,X^c) + r,s+r}\\
            &\leq r_s(x) + r.
\end{align*}

Here are our two main results. We fix a closed subset $\Gamma$ of $X$ (the boundary) and a Borel measure $\II$ in $X$ (the area). We omit the assumptions on $\Gamma$ and $\II$ but they are specified in subsections \ref{sub_whitney} and \ref{sub_energy}. The complete statements are Theorem \ref{thm_limit} and Corollary \ref{cor_direct}. These statements involve the notion of \emph{sliding deformations} (Definition \ref{defi_sliding} and Remark \ref{rmk_global_sliding}).
\begin{thm*}[Limiting theorem]
    Fix $\kappa \geq 1$, $h > 0$ and a scale $s \in ]0,\infty]$. Assume that $h$ is small enough (depending on $n$, $\Gamma$, $\II$). Let $(E_i)$ be a sequence of closed, $\HH^d$ locally finite subsets of $X$ satisfying the following conditions:
    \begin{enumerate}[label=(\roman*)]
        \item the sequence of Radon measures $(\II \mres E_i)$ has a weak limit $\mu$ in $X$;
        \item for all open balls $B$ of scale $\leq s$ in $X$, there exists a sequence $(\varepsilon_i) \to 0$ such that for all global sliding deformations $f$ in $B$,
            \begin{equation}
                \II(E_i \cap W_f) \leq \kappa \II(f(E_i \cap W_f)) + h \II(E_i \cap hB) + \varepsilon_i
            \end{equation}
    \end{enumerate}
    Then we have
    \begin{equation}
        \II \mres E \leq \mu \leq \ka \II \mres E,
    \end{equation}
    where $E = \spt(\mu)$ and $\ka = \kappa + h$. The set $E$ is $(\kappa,\ka h,s)$-quasiminimal with respect to $\II$ in $X$ and in particular, $\HH^d$ rectifiable.
\end{thm*}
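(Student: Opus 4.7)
The strategy is the direct method: transfer each sliding deformation of the candidate limit $E := \spt(\mu)$ to deformations of the $E_i$, apply the asymptotic quasiminimality hypothesis, and pass to the limit using $\II \mres E_i \rightharpoonup \mu$. The preliminaries are uniform local Ahlfors regularity estimates. The lower bound $\II(E_i \cap B(x,r)) \gtrsim r^d$ at points of $E_i$ follows from the usual cone argument applied to the quasiminimality inequality. The upper bound $\II(E_i \cap B) \lesssim r^d$ is obtained by testing the quasiminimality of $E_i$ against a Federer--Fleming projection to the $d$-skeleton of a Whitney complex in $B$ adapted to $\Gamma$ (so that the projection is a genuine sliding deformation); the loss from the error term is absorbed by choosing $h$ small enough. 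Together with the weak convergence, these bounds imply, up to a subsequence, local Hausdorff convergence $E_i \to E$ and that $\mu$ is comparable to $\HH^d \mres E$.

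The technical heart of the argument is the \emph{transfer of deformations}. Given a sliding deformation $\varphi$ in a ball $B$ of scale $\leq s$ with $W_\varphi \subset\subset B$, I would build, for $i$ large, a global sliding deformation $\varphi_i$ in a slightly enlarged ball $B'$ such that
\begin{equation*}
    \II\bigl(\varphi_i(E_i \cap W_{\varphi_i})\bigr) \leq \II\bigl(\varphi(E \cap W_\varphi)\bigr) + o_i(1)
\end{equation*}
and $W_{\varphi_i} \subset\subset B'$. The natural construction precomposes $\varphi$ with a Lipschitz retraction of a thin tubular neighborhood of $E$ onto $E$, so that $E_i$---which is Hausdorff-close to $E$---is first squeezed onto $E$ and then pushed by $\varphi$. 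This retraction is produced by a Federer--Fleming-type projection on a Whitney complex adapted to $\Gamma$; the main point is that the projection centers must be chosen so that the homotopy fixes $\Gamma$ and so that the measure distortion tends to zero as $i \to \infty$. This is exactly the role of the new projection-center estimate announced in the abstract, and it is the main obstacle of the proof.

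Once the transfer is available, the quasiminimality of $E_i$ together with weak convergence gives
\begin{equation*}
    \mu(W_\varphi) \leq \liminf_i \II(E_i \cap W_{\varphi_i}) \leq \kappa \II\bigl(\varphi(E \cap W_\varphi)\bigr) + h\, \mu(hB').
\end{equation*}
A symmetric construction, testing the upper density of $\mu$ against a retraction of $E_i$ onto $E$ and taking the radius to zero, produces the sharp density bound $\mu \leq \ka \II \mres E$; the lower bound $\II \mres E \leq \mu$ comes from weak convergence combined with the lower semicontinuity of $\II$ along the Hausdorff-converging quasiminimal sequence. Substituting these bounds into the displayed inequality yields the $(\kappa, \ka h, s)$-quasiminimality of $E$, and $\HH^d$-rectifiability of $E$ follows automatically from the quasiminimality by the standard regularity theory for sliding quasiminimal sets.
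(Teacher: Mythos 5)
Your high-level skeleton (Ahlfors regularity of $E_i$, quasiminimality of $E$ via transferred deformations, density bounds $\II \mres E \leq \mu \leq \ka \II \mres E$) matches the paper's architecture, but three of your steps conceal genuine obstacles that the paper works hard to overcome, and as written they would fail.

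First, the retraction onto $E$. You propose to build $\varphi_i = \varphi \circ p$ where $p$ is ``a Lipschitz retraction of a thin tubular neighborhood of $E$ onto $E$.'' No such retraction exists in general: $E = \spt(\mu)$ is only $\HH^d$-rectifiable (and, at this point in the argument, you do not even know that yet), not an embedded manifold, so it is not a Lipschitz neighborhood retract. A Federer--Fleming projection does not fix this; it projects onto the $d$-skeleton of a complex, not onto $E$ itself. The paper's Technical Lemma (Lemma \ref{technical_lemma}) avoids retracting onto $E$ or $E_i$ altogether: given a global sliding deformation $f$, it builds $g \approx p \circ f$ where $p$ is a piecewise orthogonal projection onto approximate tangent planes $V_j$ of $f(E)$ chosen from a Vitali covering of $f(E)$, pasted with a retraction onto $\Gamma$, and compensates for the bad set (where $f(E)$ is far from tangent plane behavior, or where the differentiation density of $f$ is too large) by a separate step that cleans a purely small-image piece $E_2$. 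This is the correct substitute for the nonexistent Lipschitz retraction, and it is precisely where the nontrivial work lies.

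Second, the bound $\II \mres E \leq \mu$ does \emph{not} follow from ``lower semicontinuity of $\II$ along a Hausdorff-converging quasiminimal sequence.'' Lower semicontinuity of $\II$ with respect to Hausdorff convergence is false in general---this is the classic obstruction (thin dense parts) that the paper's introduction explicitly warns about. Step 5 of the paper's proof is the single hardest piece of the argument: it invokes the ellipticity axiom (iii) of admissible energies (Definition \ref{defi_energy}), and shows by contradiction that $E_i \cap B_r$ cannot be collapsed onto $V \cap \partial B_r$ by constructing, after a two-stage cleaning of $E_i$ (a Federer--Fleming projection in an annular region followed by a projection toward $V$), a sliding deformation whose quasiminimality estimate forces $\HH^d(B_1 \cap V) = 0$. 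None of this is automatic. Declaring it a consequence of lower semicontinuity assumes the conclusion.

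Third, the order of the argument is forced, not a matter of taste. You derive rectifiability of $E$ at the end from quasiminimality; the paper must prove rectifiability of $E$ \emph{before} Step 3, because the Technical Lemma requires $E$ to be $\HH^d$-rectifiable to speak of tangent planes and apply the Vitali covering. Rectifiability is proved directly (Step 2) by passing the Federer--Fleming estimate (\ref{FF_estimate2}) through the limit, and this---not the deformation transfer---is where the new projection-center estimate $\zeta^d$ plays its role. Finally, your ``symmetric construction'' for $\mu \leq \ka \II \mres E$ is left unspecified; the paper's Step 4 uses the assumption that $\II$ is induced by a continuous integrand together with the conical structure of $\Gamma$, a hypothesis your sketch does not touch.
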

This theorem says that the weak limit of a quasiminimizing sequence is a quasiminimal set. It is analogous to the limiting theorem of David for sequences of quasiminimal sets which converge in local Hausdorff distance. A limit in local Hausdorff distance is a set by assumption, whereas we do not know the structure of $\mu$ a priori. David's theorem does not allow a small term $\varepsilon_i$ going to $0$ so it cannot be applied directly to minimizing sequences.

Our theorem also generalizes the strategy De Lellis, De Philippis, De Rosa, Ghiraldin and Maggi. It deals with more general sequences thanks to the factor $\kappa$ and the error term $h \II(E_i \cap hB)$. The convergence can take place in any open set $X$ of $\R^n$ and not only $X = \R^n \setminus \Gamma$.

Our proof does not rely on the Preiss rectifiability theorem or the extension of Allard's rectifiability theorem. It is similar to the proof of David and is based upon the construction of relevant sliding deformations.

We deduce the direct method developed by De Lellis, De Philippis, De Rosa, Ghiraldin and Maggi but we are also able to minimize the intersection of competitors with the boundary.
\begin{cor*}[Direct method]
    Fix $\mathcal{C}$ a class of closed subsets of $X$ such that
    \begin{equation*}
        m = \inf \set{\II(E) | E \in \mathcal{C}} < \infty
    \end{equation*}
    and assume that for all $E \in \mathcal{C}$, for all sliding deformations $f$ of $E$ in $X$,
    \begin{equation*}
        m \leq \II(f(E)).
    \end{equation*}
    Let $(E_k)$ be a minimizing sequence for $\II$ in $\mathcal{C}$. Up to a subsequence, there exists a coral\footnote{A set $E \subset X$ is coral in $X$ if $E$ is the support of $\HH^d \mres E$ in $X$. Equivalently, $E$ is closed in $X$ and for all $x \in E$ and for all $r > 0$, $\HH^d(E \cap B(x,r)) > 0$.} minimal set $E$ with respect to $\II$ in $X$ such that
    \begin{equation*}
        \II \mres E_k \rightharpoonup \II \mres E.
    \end{equation*}
    where the arrow $\rightharpoonup$ denotes the weak convergence of Radon measures in $X$. In particular, $\II(E) \leq m$.
\end{cor*}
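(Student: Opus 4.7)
My plan is to apply the Limiting Theorem with $\kappa=1$ and $h$ arbitrarily small, and then send $h\to 0$ to upgrade quasiminimality to minimality. The first step is a standard extraction: since $\II(E_k)\to m<\infty$, the Radon measures $\II\mres E_k$ have uniformly bounded mass on compacta of $X$, so by Banach--Alaoglu I pass to a subsequence with $\II\mres E_k\rightharpoonup\mu$ for some Radon measure $\mu$ on $X$. Weak lower semicontinuity on the open set $X$ immediately gives $\mu(X)\le \liminf_k \II(E_k)=m$.

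Next I would verify hypothesis (ii) of the Limiting Theorem with $s=\infty$. Fix any open ball $B\subset X$ and any global sliding deformation $f$ in $B$. Then $f$ is in particular a sliding deformation of $E_k$ in $X$, so by hypothesis on $\mathcal{C}$, $m\le \II(f(E_k))$. Since $f$ fixes $E_k\setminus W_f$, the decomposition $f(E_k)=f(E_k\cap W_f)\cup(E_k\setminus W_f)$ gives
\begin{equation*}
m \le \II\bigl(f(E_k\cap W_f)\bigr) + \II(E_k) - \II(E_k\cap W_f),
\end{equation*}
which rearranges to
\begin{equation*}
\II(E_k\cap W_f) \le \II\bigl(f(E_k\cap W_f)\bigr) + \varepsilon_k, \qquad \varepsilon_k := \II(E_k)-m \to 0.
\end{equation*}
Since the sequence $\varepsilon_k$ is independent of $B$, of $f$, and of $h$, it verifies condition (ii) of the Limiting Theorem with $\kappa=1$ and every admissible $h>0$ simultaneously.

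I would then apply the Limiting Theorem to this same subsequence, for each small $h>0$. Each application yields $\II\mres E \le \mu \le (1+h)\II\mres E$ with $E=\spt(\mu)$, and the $(1,(1+h)h,\infty)$-quasiminimality of $E$. Because the subsequence, and hence $\mu$ and $E$, do not depend on $h$, letting $h\to 0$ forces $\mu = \II\mres E$ and shows that $E$ is $(1,\varepsilon,\infty)$-quasiminimal for every $\varepsilon>0$; that is, $E$ is minimal with respect to $\II$ in $X$.

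For corality, I use that $E = \spt(\mu) = \spt(\II\mres E)$ by the previous paragraph; combined with the rectifiability of $E$ provided by the Limiting Theorem and the assumed local comparability of $\II$ and $\HH^d$ (see \S\ref{sub_energy}), this gives $E=\spt(\HH^d \mres E)$. The bound $\II(E)=\mu(X)\le m$ is just the lsc from the first step. The one subtlety, which Step~2 is tailored to handle, is that the Limiting Theorem is stated for a single $h$, whereas the conclusion requires a single subsequence working for all $h$ small enough at once; this is precisely what the $h$-uniformity of $\varepsilon_k$ secures.
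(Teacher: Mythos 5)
Your argument follows the paper's proof almost exactly: the same extraction, the same identity $\varepsilon_k=\II(E_k)-m$, and the same rearrangement $\II(E_k\cap W_f)\le \II(f(E_k\cap W_f))+\varepsilon_k$ obtained from $m\le\II(f(E_k))$ and the decomposition of $f(E_k)$. There is, however, one genuine gap in how you then invoke the Limiting Theorem. Theorem \ref{thm_limit} is stated for an admissible energy \emph{induced by a continuous integrand}, whereas Corollary \ref{cor_direct} assumes only a general admissible energy; the continuity of the integrand is used in Step 4 of the theorem's proof (the bound $\mu\le(\kappa+h)\II\mres E$), so you cannot apply the theorem as stated under the corollary's hypotheses. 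The paper circumvents this through Remark \ref{rmk_limit}: if one additionally has $\II(E_i\cap B)\le\kb\,\II(f(E_i\cap B))+\varepsilon_i$ for some $\kb\ge1$, the conclusion holds for a general admissible energy and gives $\mu\le\kb\,\II\mres E$. Your own computation supplies exactly this with $\kb=1$ — apply the same rearrangement with $W=B$ (or any open $W$ on which $f=\mathrm{id}$ outside) instead of $W=W_f$, using that $f=\mathrm{id}$ on $B\setminus W_f$ — so the fix is immediate, but it must be stated. Note also that this route yields $\mu=\II\mres E$ in one application (since $\kb=1$), so your $h\to0$ limit is then needed only to upgrade $(1,\kb h,\infty)$-quasiminimality to minimality, a step you correctly justify via the $h$-uniformity of $\varepsilon_k$ and which the paper leaves implicit. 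The remaining points (corality from $E=\spt(\mu)=\spt(\II\mres E)$ together with $\IL^{-1}\HH^d\le\II\le\IL\HH^d$, and $\II(E)=\mu(X)\le m$ by lower semicontinuity on the open set $X$) are fine.
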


\subsection{Main Definitions}
\subsubsection{Sliding Deformations and Quasiminimal Sets}
We fix a closed subset $\Gamma$ of $X$ (the boundary).
\begin{defi}[Sliding deformation along a boundary]\label{defi_sliding}
    Let $E$ be a closed, $\HH^d$ locally finite subset of $X$. A \emph{sliding deformation} of $E$ in an open set $U \subset X$ is a Lipschitz map $f\colon E \to X$ such that there exists a continuous homotopy $F\colon I \times E \to X$ satisfying the following conditions:
    \begin{subequations}
        \begin{align}
            &   F_0 = \mathrm{id}\\
            &   F_1 = f\\
            &   \forall t \in I,\ F_t(E \cap \Gamma) \subset \Gamma\\
            &   \forall t \in I,\ F_t(E \cap U) \subset U\\
            &   \forall t \in I,\ F_t = \mathrm{id} \ \text{in} \ E \setminus K,
        \end{align}
    \end{subequations}
    where $K$ is some compact subset of $E \cap U$. Alternatively, the last axiom can be stated as
    \begin{equation}
        \Set{x \in E | \exists \, t \in I, \ F_t(x) \ne x} \subset \subset E \cap U.
    \end{equation}
\end{defi}

Quasiminimal sets are sets for which sliding deformations cannot decrease the area below a fixed percentage $\kappa^{-1}$ (and modulo an error of small size). The topological constraint preventing the collapse might come from $U$ as $F_t(E \cap U) \subset U$ and $F_t = \mathrm{id}$ in $X \setminus U$ or from the boundary as $F_t(E \cap \Gamma) \subset \Gamma$.
\begin{defi}[Quasiminimal sets]\label{defi_quasi}
    Let $E$ be a closed, $\HH^d$ locally finite subset of $X$. Let $\mathcal{P}=(\kappa,h,s)$ be a triple of parameters composed of $\kappa \geq 1$, $h \geq 0$ and a scale $s \in ]0,\infty]$. We say that $E$ is \emph{$\mathcal{P}$-quasiminimal} in $X$ if for all open balls $B \subset X$ of scale $\leq s$ and for all sliding deformations $f$ of $E$ in $B$, we have
    \begin{equation}\label{quasi_ineq}
        \HH^d(W_f) \leq \kappa \HH^d(f(W_f)) + h \HH^d(E \cap hB),
    \end{equation}
    where $W_f = \set{x \in E | f(x) \ne x}$. We say that $E$ is almost-minimal in the case $\kappa = 1$. We say that $E$ is a minimal set in the case $\mathcal{P}=(1,0,\infty)$. 
\end{defi}
We can also replace the Hausdorff measure by an admissible energy $\II$ (Definition \ref{defi_energy}). The inequality (\ref{quasi_ineq}) is then replaced by
\begin{equation}
    \II(W_f) \leq \kappa \II(f(W_f)) + h \II(E \cap hB)
\end{equation}
and we say that $E$ is $(\kappa,h,s)$-quasiminimal with respect to $\II$ in $X$. Our definition of admissible energies says that there exist $\IL \geq 1$ such that $\IL^{-1} \HH^d \leq \II \leq \IL \HH^d$ so $E$ would be also $(\IL^2\kappa, \IL^2 h,s)$-quasiminimal in the usual sense.

The factor $\kappa$ includes Lipschitz graphs among quasiminimal sets. The error term $h \HH^d(E \cap hB)$ is a lower-order term which broadens the class of functionals that are to be minimized. One could also consider an error term of the form $h \mathrm{diam}(B)^d$. In practice, $h$ is assumed small enough depending on $n$, $\Gamma$ and $\II$. The quasiminimal sets for $\kappa > 1$ have weak regularity properties: the local Ahlfors-regularity (Proposition \ref{prop_density}) and the rectifiability (Proposition \ref{prop_rect}). The situation is different for almost-minimal sets because they look more and more like minimal sets at small scales. Their density is nearly monotone and their blow-ups are minimal cones.

By deforming a minimal set with a one-parameter family of diffeomorphisms, we get that the \emph{first variation of the area} is null. This means that the \emph{mean curvature} is null in the sense of varifolds. However, the previous definition allows to deform a set with a non-injective function and this imposes more constraints on the singularities. Let us take for example a cone of dimension $d=1$ centered at $0$. The first variation is null if and only if the center of mass of the cone is $0$. But the cone is a minimal set if and only if it is a line or three half-lines that make $120^\circ$ angles at the origin.

\begin{rmk}[Replacing $W_f$ by $B$]\label{rmk_B}
    Let us consider an open set $W$ such that $W_f \subset W$. Then, (\ref{quasi_ineq}) implies
    \begin{equation}
        \HH^d(E \cap W) \leq (\kappa+1) \HH^d(f(E \cap W)) + h \HH^d(E \cap hB).
    \end{equation}
    Taking $W = B$ and $h \leq \frac{1}{2}$, we obtain
    \begin{equation}\label{weak_ineq}
        \HH^d(E \cap B) \leq 2(\kappa+1) \HH^d(f(E \cap B)).
    \end{equation}
    This inequality is handier to use if one does not mind a bad scalar factor. For example, (\ref{weak_ineq}) is fine to prove the local Ahlfors-regularity and the rectifiability. But it would be too rough for certain properties of almost-minimal sets. Almost-minimal sets could be alternatively defined by
    \begin{equation}\label{weak_almost}
        \HH^d(E \cap B) \leq \HH^d(f(E \cap B)) + h \HH^d(E \cap hB).
    \end{equation}
    It is clear that (\ref{weak_almost}) implies
    \begin{equation}
        \HH^d(E \cap W_f) \leq \HH^d(f(E \cap W_f)) + h \HH^d(E \cap hB).
    \end{equation}
    and according to \cite[Proposition 4.10]{Holder} or \cite[Proposition 20.9]{Sliding}, these inequalities define in fact the same almost-minimal sets.
\end{rmk}
\begin{rmk}[Global sliding deformations]\label{rmk_global_sliding}
    In Definition \ref{defi_quasi}, we test the quasiminimality against sliding deformations defined \emph{only} on $E$. But we could also work with deformations defined everywhere in the ambient space $X$. For an open set $U \subset X$, a \emph{global sliding deformation in $U$} is a sliding deformation where $E$ has been replaced by $X$. In this case,
    \begin{equation}
        W_f = \set{x \in X | f(x) \ne x}.
    \end{equation}
    We think that sliding deformations defined only on $E$ are more natural than global sliding deformations. On the other hand, global sliding deformations are handier for our limiting theorem (Theorem \ref{thm_limit}). In Appendix \ref{appendix_toolbox},  we prove that if $\Gamma$ is a Lipschitz neighborhood retract (Definition \ref{retract_definition}), these notions induce the same quasiminimal sets.
\end{rmk}
\begin{rmk}\label{rmk_core}
    The \emph{core} of $E$ of $X$ is defined as the support of $\HH^d \mres E$ in $X$ and is denoted by $E^*$. It can be characterized as
    \begin{equation}
        E^* = \set{x \in X | \forall r > 0,\ \HH^d(E \cap B(x,r)) > 0}.
    \end{equation}
    By definition of the support, $E^*$ is closed and $\HH^d(E \setminus E^*) = 0$. As $E$ is also closed, we have furthermore $E^* \subset E$. Assuming that $\Gamma$ is a Lipschitz neighborhood retract, the set $E$ is $\mathcal{P}$-quasiminimal if and only if $E^*$ is $\mathcal{P}$-quasiminimal. It suffices to check the quasiminimality with respect to global sliding deformations; in this case $\HH^d(E \cap W_f) = \HH^d(E^* \cap W_f)$ and $\HH^d(f(E \cap W_f)) = \HH^d(f(E^* \cap W_f))$.
\end{rmk}

\subsubsection{The Boundary}\label{sub_whitney}
The set $\Gamma$ which plays the role of the boundary is first of all a closed subset of $X$. We present three additional properties. We cannot build much sliding deformations without assuming that the boundary is at least a Lipschitz neighborhood retract in $X$.
\begin{defi}\label{retract_definition}
    A \emph{Lipschitz neighborhood retract} of $X$ is a closed subset $\Gamma \subset X$ for which there exists an open set $O \subset X$ containing $\Gamma$ and a Lipschitz map $r\colon O \to \Gamma$ such that $r = \mathrm{id}$ on $\Gamma$.
\end{defi}
The construction of relevant deformations (Lemma \ref{technical_lemma}) will require $\Gamma$ to be locally diffeomorphic to a cone.
\begin{defi}\label{cone_definition}
    We say that a closed subset $\Gamma \subset X$ is locally diffeomorphic to a cone if for all $x \in \Gamma$, there exists $R > 0$ such that $\overline{B}(x,R) \subset X$, an open set $O \subset \R^n$ containing $x$, a $C^1$ diffeomorphism $\GT\colon B(x,R) \to O$ and a closed cone $S \subset \R^n$ centered at $x$ such that $\GT(x)=x$ and $\GT(\Gamma \cap B(x,R)) = S \cap O$.
\end{defi}
The local Ahlfors-regularity and the rectifiability of quasiminimal sets are proved with a special deformation called the Federer--Fleming projection. For this deformation to preserve $\Gamma$, we require that $\Gamma$ looks like a piece of grid in balls of uniform scales. For example, $\Gamma$ can be an union of faces of varying dimensions of a Whitney decomposition of $X$ or the image of such an union by a bilipschitz diffeomorphism. 

For $k \in \N$, we denote by $\mathcal{E}_n(k)$ the grid of dyadic cells of sidelength $2^{-k}$ in $\R^n$. This is the set of all faces of the form
\begin{equation}
    \prod_{i=1}^n [p_i,p_i + 2^{-k}\alpha_i],
\end{equation}
where $p \in 2^{-k} \Z^n$ and $\alpha \in \set{0,1}^n$. Given a set $S$ of cells, we define the support of $S$ as $\abs{S} = \bigcup \set{A | A \in S}$.
\begin{defi}[Whitney subset]\label{defi_whitney}
    Let $X$ be an open set of $\R^n$. A Whitney subset of $X$ is a closed subset $\Gamma \subset X$ such that
    \begin{enumerate}[label=(\roman*)]
        \item $\Gamma$ is a Lipschitz neighborhood retract in $X$;
        \item $\Gamma$ is locally diffeomorphic to a cone;
        \item there exists a constant $\GL \geq 1$ and a scale $t > 0$ such that for all $x \in \Gamma$, there exists an open set $O \subset \R^n$, a $\GL$-bilipschitz map $\GT\colon B(x,r_t) \to O$, an integer $k \in \N$ and a subset $S \subset \mathcal{E}_n(k)$ such that $2^{-k} \geq \GL^{-1} r_t$ and $\GT(\Gamma \cap B(x,r_t)) = O \cap \abs{S}$.
    \end{enumerate}
\end{defi}
The definition includes smooth compact submanifolds embedded in $X$. It is convenient to refine the third axiom in the following way.
\begin{enumerate}[label=(\roman**)]
    \setcounter{enumi}{2}
\item there exists a constant $\GL \geq 1$ and a scale $t > 0$ such that for all $x \in X$, for all $0 \leq r \leq r_t$, there exists an open set $O \subset \R^n$, a $\GL$-bilipschitz map $\GT\colon B(x,r) \to O$, an integer $k \in \N$ and a subset $S \subset \mathcal{E}_n(k)$ such that $\GT(\Gamma \cap B(x,r)) = O \cap \abs{S}$ and
    \begin{equation}\label{boundary}
        \GT(B(x,\GL^{-1} r)) \subset B(0,2^{-k-1}) \subset B(0,2^{-k} \sqrt{n}) \subset O.
    \end{equation}
\end{enumerate}
The point of the two balls $B(0,2^{-k-1})$ and $B(0,2^{-k} \sqrt{n})$ is to frame the cubes $[-2^{-k+1},2^{-k+1}]^n$ and $[-2^{-k}, 2^{-k}]^n$. We observe that $\GL^{-1} r \leq \GL 2^{-k-1}$ and $2^{-k}\sqrt{n} \leq \GL r$ so $r \simeq 2^{-k}$.
\begin{proof}
    Let $\GL \geq 1$ and $t > 0$ be the constants of (iii). We want first of all to extend this axiom to all points $x \in X$. Let $u > 0$ be a scale such that $r_u \leq \tfrac{1}{2}r_t$. Let $x \in X$. If $B(x,r_u) \cap \Gamma = \emptyset$, then (iii) holds trivially for the ball $B(x,r_u)$. Otherwise, there exists $x^* \in \Gamma$ such that $\abs{x - x^*} < r_u$ and then there exists an open set $O \subset \R^n$, a $\GL$-bilipschitz map $\GT\colon B(x^*,r_t) \to O$, an integer $k \in \N$ and a subset $S \subset \mathcal{E}_n(k)$ such that $2^{-k} \geq \GL^{-1} r_t$ and $\GT(\Gamma \cap B(x^*,r_t)) = O \cap \abs{S}$. By restriction, (iii) holds true for $(x,r_u)$ in place of $(x^*,r_t)$ (the image set $O$ is restricted accordingly).

    Let $y= \GT(x)$ and $0 \leq r < \GL^{-1} r_u$. We are going to justify that $B(y,\GL^{-1} r) \subset T(B(x,r))$. As $\GT^{-1}$ is $\GL$-Lipschitz in $O$, we have
    \begin{equation}
        \GT^{-1}(O \cap B(y,r)) \subset B(x,\GL r) \subset \overline{B}(x,\GL r) \subset B(x,r_u).
    \end{equation}
    and thus
    \begin{equation}
        O \cap B(y,r) \subset \GT(\overline{B}(x,\GL r)) \subset O.
    \end{equation}
    The set
    \begin{equation}
        O \cap B(y,r) = \GT(\overline{B}(x,\GL r)) \cap B(y,r).
    \end{equation}
    is open and closed in $B(y,r)$. We conclude $O \cap B(y,r) = B(y,r)$ by connectedness. This proves our claim.

    We are going to use the maximum norm $\abs{\, \cdot \,}_\infty$ and the corresponding open (cubic) balls $U$. For $l \in \mathbf{N}$, there exists $p \in 2^{-l-1} \Z^n$ such that $\abs{y - p}_{\infty} \leq 2^{-l-2}$. According to the triangular inequality
    \begin{equation}
        U(y,2^{-l-2}) \subset U(p,2^{-l-1}) \subset \overline{U}(p,2^{-l}) \subset U(y,2^{-l+1}).
    \end{equation}
    We replace the extremities by Euclidean balls
    \begin{equation}\label{range}
        B(y,2^{-l-2}) \subset U(p,2^{-l-1}) \subset \overline{U}(p,2^{-l}) \subset B(y,2^{-l+1} \sqrt{n}).
    \end{equation}
    We want to choose the smallest $l$ such that $B(y,2^{-l+1} \sqrt{n}) \subset \GT(B(x,r))$. We take $l \in \Z$ such that
    \begin{equation}
        (4 \sqrt{n} \GL)^{-1} r \leq 2^{-l} \leq (2 \sqrt{n} \GL)^{-1} r.
    \end{equation}
    Note that $l \geq k$ because $2^{-k} \geq \GL^{-1} r > 2^{-l}$. We have
    \begin{equation}
        B(y,2^{-l+1}\sqrt{n})   \subset B(y,\GL^{-1}r)                  \subset \GT(B(x,r))
    \end{equation}
    and for $\GL_* = 16 \sqrt{n} \GL^2$,
    \begin{equation}
        \GT(B(x,\GL_*^{-1} r)) \subset B(y, (16 \sqrt{n} \GL)^{-1} r) \subset B(y,2^{-l-2})
    \end{equation}
    We postcompose $\GT$ with a translation to assume $p = 0$. This proves (\ref{boundary}). As $l \geq k$, the grid $\mathcal{E}_n(l)$ is a refinement of $\mathcal{E}_n(k)$ and there exists $S' \subset \mathcal{E}_n(l)$ such that $\abs{S} = \abs{S'}$.
\end{proof}

\subsubsection{The Area}\label{sub_energy}
The next definition comes from \cite[Definition 25.3, Remark 25.87]{Sliding}. It is a slight generalization of \cite[Definition 1.6(2)]{A1} and \cite[Definition IV.1(7)]{A2}. After the statement, we will give a few explanations and compare it to other definitions.
\begin{defi}[Admissible energy]\label{defi_energy}
    An \emph{admissible energy} in $X$ is a Borel regular measure $\II$ in $X$ which satisfies the following axioms:
    \begin{enumerate}[label=(\roman*)]
        \item There exists $\IL \geq 1$ such that $\IL^{-1} \HH^d \leq \II \leq \IL \HH^d$.
        \item Let $x \in X$, let a $d$-plane $V$ passing through $x$, let a $C^1$ map $f\colon V \to \mathbf{R}^n$ be such that $f(x) = x$ and $Df(x)$ is the inclusion map $\overrightarrow{V} \hookrightarrow \R^n$. Then
            \begin{equation}
                \lim_{r \to 0} \frac{\II(f(V \cap B(x,r)))}{\II(V \cap B(x,r))} = 1.
            \end{equation}
        \item For each $x \in X$, there exists $R > 0$ and $\varepsilon \colon \mathopen{]}0,R\mathclose{]} \to \R^+$ such that $\overline{B}(x,R) \subset X$, $\lim_{r \to 0} \varepsilon(r) = 0$ and
            \begin{equation}\label{ellipticity_david}
                \II(V \cap B(x,r)) \leq \II(S \cap B(x,r)) + \varepsilon(r)r^d
            \end{equation}
            whenever $0 < r \leq R$, $V$ is a $d$-plane passing through $x$ and $S \subset \overline{B}(x,r)$ is a compact $\HH^d$ finite set which cannot be mapped into $V \cap \partial B(x,r)$ by a Lipschitz mapping $\psi\colon \overline{B}(x,r) \to \overline{B}(x,r)$ such that $\psi = \mathrm{id}$ on $V \cap \partial B(x,r)$.
    \end{enumerate}
\end{defi}
We use the second axiom to say that if $E \subset X$ is $\HH^d$ measurable, $\HH^d$ locally finite and $\HH^d$ rectifiable, then for $\HH^d$-a.e. $x \in E$,
\begin{equation}
    \lim_{r \to 0} \frac{\II(E \cap B(x,r))}{\II(V \cap B(x,r))} = 1.
\end{equation}
A proof is given at the end of this subsection. The third axiom is important to establish the lower semicontinuity. Let us say that $(E_k)$ is a minimizing sequence of competitors such that $\II \mres E_k \to \mu$ where $\mu$ is a $d$-rectifiable Radon measure. The third axiom is the main argument to show that
\begin{equation}
    \II \mres E_\infty \leq \mu,
\end{equation}
where $E_\infty = \spt(\mu)$. This yields in particular $\II(E_\infty) \leq \lim_k \II(E_k)$. The reader can find an example below Definition 25.3 in \cite{Sliding} where $\II(E_\infty)$ is too large.

The second axiom is perhaps too weak for a certain step in the proof of the limiting theorem (Theorem \ref{thm_limit}). We suggest a stronger variant below. That being said, we do not need this variant for the direct method (Corollary \ref{cor_direct}) because minimizing sequences have additional properties. See also Remark \ref{rmk_limit}
\begin{defi}\label{defi_energy2}
    We say that an admissible energy $\II$ in $X$ is \emph{induced by a continuous integrand} if there exists a continuous function $i\colon X \times G(d,n) \to \mathopen{]}0,\infty\mathclose{[}$ such that for all $\HH^d$ measurable, $\HH^d$ finite, $\HH^d$ rectifiable set $E \subset X$,
    \begin{equation}
        \II(E) = \int_E \! i(x,T_xE) \, \mathrm{d}\HH^d(x)
    \end{equation}
    where $T_xE$ is the linear tangent plane of $E$ at $x$. 
\end{defi}

Here is an example. We consider two Borel functions,
\begin{equation}
    i\colon X \times G(d,n) \to \mathopen{]}0,\infty\mathclose{[} \ \text{and} \ j\colon X \to \mathopen{]}0,\infty\mathclose{[}
\end{equation}
called the \emph{integrands} and we define the corresponding energy by the formula
\begin{equation}
    \II(S) = \int_{S_r} i(y,T_yS) \, \mathrm{d}\HH^d(y) + \int_{S_u} j(y) \, \mathrm{d}\HH^d(y)
\end{equation}
where $S \subset X$ is a Borel $\HH^d$ finite set and $S_r$, $S_u$ are its $d$-rectifiable and purely $d$-unrectifiable parts. We assume $\IL^{-1} \leq i,j \leq \IL$ so that $\IL^{-1} \HH^d \leq \II \leq \IL \HH^d$ on such sets $S$. We define of course $\II(S) = \infty$ on Borel sets $S \subset X$ of infinite $\HH^d$ measure and we extend $\II$ on all subsets of $X$ by Borel regularity. Next, we assume that $i$ is continuous on $X \times G(d,n)$ and we show that this implies the second axiom. Let $x$, $V$ and $f$ be as in (ii). To shorten a bit the notations, we assume $x = 0$ and we denote $i(0,V)$ by $i_0$. For $r > 0$, we denote $B(0,r)$ by $B_r$ and $f(V \cap B(0,r))$ by $S_r$. Fix $\varepsilon > 0$. According to the inverse function theorem, there exists $R > 0$ such that $f$ induces a $(1+\varepsilon)$-bilipschitz diffeomorphism from $V \cap B_R$ to $S = S_R$. Note that the function $y \mapsto (y,T_y S)$ is continuous on $S$ and that at $y = 0$, $(y,T_y S) = (0,V)$. As $i$ is continuous, there exists $R' > 0$ such that for all $y \in S \cap B_{R'}$
\begin{equation}
    (1 + \varepsilon)^{-1} i_0 \leq i(y,T_y S) \leq (1 + \varepsilon) i_0.
\end{equation}
If $r > 0$ is small enough so that $B_r \subset B_R \cap f^{-1}(B_{R'})$, we have $S_r \subset S \cap B_{R'}$ and thus
\begin{equation}
    (1 + \varepsilon)^{-1} i_0 \leq \frac{\II(S_r)}{\HH^d(S_r)} \leq (1 + \varepsilon) i_0.
\end{equation}
The function $f$ is $(1+\varepsilon)$-bilipschitz on $V \cap B_r$ so this simplifies to 
\begin{equation}
    (1 + \varepsilon)^{d-1} i_0 \leq \frac{\II(S_r)}{\HH^d(V \cap B_r)} \leq (1 + \varepsilon)^{d+1} i_0.
\end{equation}
We deduce that
\begin{equation}
    \lim_{r \to 0} \frac{\II(S_r)}{\HH^d(V \cap B_r)} = i_0
\end{equation}
and by the same reasoning,
\begin{equation}
    \lim_{r \to 0} \frac{\II(V \cap B_r)}{\HH^d(V \cap B_r)} = i_0.
\end{equation}
The second axiom follows. It is more difficult to construct integrands that yield the third axiom. A first example is when $\II = \HH^d$ or when $i$ depends on $x$ alone with $i = j$ (we will detail this later). Let us try to relate our integrands to other definitions. The energy introduced by Almgren in \cite[Definition 1.6(2)]{A1} is like above with $X = \R^n$ and $i$ of class $C^3$ but (iii) is replaced by a stronger condition called \emph{ellipticity bound}. We present this condition in more detail. For $x \in \R^n$, let
\begin{equation}
    \II^x(S) = \int_{S_r} i(x,T_yS) \, \mathrm{d}\HH^d(y) + j(x) \HH^d(S_u)
\end{equation}
be defined on Borel $\HH^d$ finite sets $S \subset \R^n$. Almgren requires that there exists a continuous function $c\colon \R^n \to ]0,\infty[$ such that for all $x \in \R^n$,
\begin{multline}\label{ellipticity_almgren}
    \II^x(S \cap B(x,r)) - \II^x(V \cap B(x,r)) \\\geq c(x) \left[\HH^d(S \cap B(x,r)) - \HH^d(V \cap B(x,r))\right]
\end{multline}
whenever $r > 0$, $V$ is a $d$-plane passing through $x$ and $S \subset \overline{B}(x,r)$ is a compact, $\HH^d$ rectifiable, $\HH^d$ finite set which spans $V \cap \partial B(x,r)$. In \cite[Definition IV.1(7)]{A2}, Almgren restricts this definition to the functions $c$ which are positive constants. The energy introduced by De Lellis, De Philippis, De Rosa, Ghiraldin and Maggi in \cite[Definitions 1.3, 1.5 and Remark 1.8]{I5} is also like above with $X = \R^n$ and $i$ of class $C^1$ but (iii) is replaced by a new axiom called \emph{atomic condition}. This axiom characterizes the integrands for which Allard's rectifiability theorem holds (\cite{I3}). In codimension one, it is equivalent to a convexity condition on the integrand. In the general case, \cite{DRK} shows that the atomic condition implies an ellipticity condition of the form
\begin{multline}\label{ellipticity_weak}
    \left[\HH^d(S \cap B(x,r)) - \HH^d(V \cap B(x,r))\right] > 0 \\\implies \left[\II^x(S \cap B(x,r)) - \II^x(V \cap B(x,r))\right] > 0
\end{multline}
whenever $r > 0$, $V$ is a $d$-plane passing through $x$ and $S \subset \overline{B}(x,r)$ is a compact, $\HH^d$ finite set which spans $V \cap \partial B(x,r)$.

In this paragraph, we compare (iii) to Almgren’s ellipticity bound. We allow $d$-dimensional sets $S$ which have a purely unrectifiable part because we allow competitors which have a purely unrectifiable part in the direct method (Corollary \ref{cor_direct}). Almgren only tests the ellipticity bound against rectifiable sets but \cite[Corollary 5.13]{DRK} justifies that this is not a loss of generality. Next, we show that an inequality such as (\ref{ellipticity_almgren}) implies (\ref{ellipticity_david}). We work at the point $x = 0$ and we fix a constant $c > 0$. Let $r > 0$ (to be chosen small enough), let $V$ be a $d$-plane passing through $0$, let $S \subset \overline{B}(0,r)$ be a compact $\HH^d$ finite set such that $V \cap \overline{B}(0,r) \subset p_V(S)$ and let us assume that
\begin{multline}\label{S_assumptions}
    \II^0(S \cap B(0,r)) - \II^0(V \cap B(0,r)) \\\geq c \left[\HH^d(S \cap B(0,r)) - \HH^d(V \cap B(0,r))\right].
\end{multline}
Note that $\HH^d(V \cap B(0,r)) \leq \HH^d(S \cap B(0,r))$ because $V \cap \overline{B}(0,r) \subset p_V(S)$ and $p_V$ is $1$-Lipschitz. We define
\begin{multline}
    \varepsilon(r) = \sup\Set{\abs{i(y,W) - i (0,W)} | y \in B(0,r),\ W \in G(d,n)}\\+ \sup\Set{\abs{j(y) - j(0)} | y \in B(0,r)}.
\end{multline}
The function $i$ is uniformly continuous on the compact set $\overline{B}(0,1) \times G(d,n)$ and $j$ as well on $\overline{B}(0,1)$ so $\lim_{r \to 0} \varepsilon(r) = 0$. We assume $r$ small enough so that $\varepsilon(r) \leq c$. According to the definition of $\varepsilon(r)$,
\begin{equation}
    \abs{\II(S \cap B(0,r)) - \II^0(S \cap B(0,r))} \leq \varepsilon(r) \HH^d(S \cap B(0,r))
\end{equation}
and similarly
\begin{equation}
    \abs{\II(V \cap B(0,r)) - \II^0(V \cap B(0,r))} \leq \varepsilon(r) \HH^d(V \cap B(0,r)).
\end{equation}
Plugging this into (\ref{S_assumptions}), we get
\begin{align}
    \begin{split}
        &\II(S \cap B(0,r)) - \II(V \cap B(0,r))\\
        &\geq c \left[\HH^d(S \cap B(0,r)) - \HH^d(V \cap B(0,r))\right]\\
        &\qquad - \varepsilon(r) \HH^d(S \cap B(0,r)) - \varepsilon(r) \HH^d(V \cap B(0,r))
    \end{split}\\
    \begin{split}
        &\geq (c - \varepsilon(r))\left[\HH^d(S \cap B(0,r)) - \HH^d(V \cap B(0,r))\right]\\
        &\qquad - 2 \varepsilon(r) \HH^d(V \cap B(0,r))
    \end{split}\\
        &\geq - 2 \varepsilon(r) \HH^d(V \cap B(0,r))\\
        &\geq - 2 \varepsilon(r) \omega_d r^d
\end{align}
where $\omega_d$ is the $\HH^d$ measure of the $d$-dimensional unit disk. This is (\ref{ellipticity_david}) as promised. We should not forget to mention that if $i$ depends on $x$ alone with $i = j$, then we have $\II^0 = i(0) \HH^d$ so (\ref{S_assumptions}) is clearly verified.

\begin{lem}
    Let $\II$ be an admissible energy in $X$. Let $E \subset X$ be $\HH^d$ measurable, $\HH^d$ locally finite and $\HH^d$ rectifiable. Then for $H^d$-ae. $x \in E$, 
    \begin{equation}
        \lim_{r \to 0} \frac{\II(E \cap B(x,r))}{\II(V \cap B(x,r))} = 1
    \end{equation}
\end{lem}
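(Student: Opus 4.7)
The plan is to reduce to axiom (ii) of admissibility by approximating $E$ near $\HH^d$-almost every point by a $C^1$ submanifold. Since $E$ is $\HH^d$ locally finite and $d$-rectifiable, a classical theorem of geometric measure theory provides countably many $d$-dimensional $C^1$ submanifolds $(M_i)_{i \in \N}$ of $\R^n$ with $\HH^d(E \setminus \bigcup_i M_i) = 0$. Standard density arguments then yield, at $\HH^d$-a.e. $x \in E$: $x \in M_i$ for some $i$, the approximate tangent plane $V := T_x E$ exists and equals $T_x M_i$, and $\HH^d((E \triangle M_i) \cap B(x,r)) = o(r^d)$ as $r \to 0$. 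Fix such a point $x$ and set $M := M_i$. Near $x$, the submanifold $M$ can be parameterized as $f(V \cap U)$ for a $C^1$ map $f\colon V \to \R^n$ with $f(x) = x$ and $Df(x)$ equal to the inclusion $\overrightarrow{V} \hookrightarrow \R^n$.

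Such an $f$ falls directly under the scope of axiom (ii), which will deliver
\begin{equation*}
    \lim_{r \to 0} \frac{\II(f(V \cap B(x,r)))}{\II(V \cap B(x,r))} = 1.
\end{equation*}
The remaining task is to show that $\II(E \cap B(x,r)) = \II(f(V \cap B(x,r))) + o(r^d)$; the conclusion then follows upon dividing by $\II(V \cap B(x,r))$ and using the lower bound $\II(V \cap B(x,r)) \geq \IL^{-1} \omega_d r^d$ from axiom (i) to absorb the error. This comparison splits into two steps. First, axiom (i) gives $\abs{\II(E \cap B(x,r)) - \II(M \cap B(x,r))} \leq \IL \HH^d((E \triangle M) \cap B(x,r)) = o(r^d)$. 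Second, choose $\varepsilon_r \to 0$ so that $f$ is $(1+\varepsilon_r)$-bilipschitz on a fixed neighborhood of $x$ in $V$. Setting $W_r := f^{-1}(B(x,r)) \cap V$, one has $V \cap B(x, r(1-\varepsilon_r)) \subset W_r \subset V \cap B(x, r(1+\varepsilon_r))$ and $M \cap B(x,r) = f(W_r)$, so injectivity and the Lipschitz bound give $\HH^d(f(W_r) \triangle f(V \cap B(x,r))) = O(\varepsilon_r r^d)$, and axiom (i) again turns this into $\abs{\II(M \cap B(x,r)) - \II(f(V \cap B(x,r)))} = o(r^d)$.

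The only real subtlety is making sure the $o(r^d)$ error terms are truly negligible compared to the principal term $\II(V \cap B(x,r))$. This is precisely the role of the two-sided comparison $\IL^{-1} \HH^d \leq \II \leq \IL \HH^d$ in axiom (i): the upper bound converts rectifiability discrepancies into $o(r^d)$-bounds on $\II$, while the lower bound guarantees $\II(V \cap B(x,r)) \gtrsim r^d$ so that $o(r^d) = o(\II(V \cap B(x,r)))$. Beyond this, everything else is routine use of standard density theorems and the $C^1$-regularity of $f$ at the base point.
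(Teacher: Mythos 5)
Your proof is correct and follows essentially the same strategy as the paper's: reduce to the $C^1$-submanifold case via the rectifiability decomposition $\HH^d(E \setminus \bigcup_i M_i) = 0$ and the density theorem for $E \Delta M_i$, then apply axiom (ii) to a $C^1$ parametrization $f$ of $M_i$ near $x$, using axiom (i) to convert the $o(r^d)$ Hausdorff-measure discrepancies into negligible errors relative to $\II(V \cap B(x,r)) \gtrsim r^d$. The only cosmetic difference is in the submanifold comparison step: the paper sandwiches $M \cap B(x,r)$ between $f(V \cap B(x,\lambda^{-1} r))$ and $f(V \cap B(x,\lambda r))$ for a fixed $\lambda > 1$ and sends $\lambda \to 1$ at the end, whereas you use a radius-dependent bilipschitz constant $1 + \varepsilon_r \to 1$ and a symmetric-difference bound; both deliver the same estimate.
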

\begin{proof}
    Let $M$ be an embedded submanifold of $\R^n$. We show that for all $x \in M \cap X$,
    \begin{equation}\label{density_manifold}
        \lim_{r \to 0} \frac{\II(M \cap B(x,r))}{\II(V \cap B(x,r))} = 1.
    \end{equation}
    Fix $x \in M \cap X$ and let $V$ be the affine tangent plane of $M$ at $x$. There exists a radius $R > 0$, an open set $O$ containing $x$, a $C^1$ diffeomorphism $f \colon B(x,R) \to O$ such that $f(x) = x$, $Df(x) = \mathrm{id}$ and $f(V \cap B(x,R)) = M \cap O$. Let us fix $\lambda > 1$. We observe that for small $r > 0$, we have
    \begin{equation}
        f(V \cap B(x,\lambda^{-1}r) \subset M \cap B(x,r) \subset f(V \cap B(x,\lambda r)).
    \end{equation}
    We also observe that there exists a constant $C \geq 1$ (depending on $n$, $\IL$) such that for small $r > 0$,
    \begin{align}
        \abs{\frac{\II(V \cap B(x,\lambda r))}{\II(V \cap B(x,r))} - 1}   &\leq \frac{\II(V \cap B(x,\lambda r) \setminus B(x,r))}{\II(V \cap B(x,r))}\\
                                                                          &\leq C \frac{\HH^d(V \cap B(x,\lambda r) \setminus B(x,r))}{\HH^d(V \cap B(x,r))}\\
                                                                        &\leq C (\lambda^d - 1).
    \end{align}
    We combine these observations to deduce that for small $r > 0$,
    \begin{equation}
        \frac{\II(M \cap B(x,r))}{\II(V \cap B(x,r))} \leq C(\lambda) \frac{\II(f(V \cap B(x,\lambda r))}{\II(V \cap B(x,\lambda r))}
    \end{equation}
    where $C(\lambda) \to 1$ when $\lambda \to 1$. It follows that
    \begin{equation}
        \limsup_{r \to 0} \frac{\II(M \cap B(x,r)}{\II(V \cap B(x,r)} \leq C(\lambda)
    \end{equation}
    and since $\lambda > 1$ is arbitrary, we get
    \begin{equation}
        \limsup_{r \to 0} \frac{\II(M \cap B(x,r))}{\II(V \cap B(x,r))} \leq 1.
    \end{equation}
    We can work similarly with $\liminf_{r \to 0}$.

    Now, we deal with $E$. It suffices to show that for $\HH^d$-a.e. $x \in E$, there exists an embedded submanifold $M \subset \R^n$ such that
    \begin{equation}
        \lim_{r \to 0} r^{-d} \HH^d(B(x,r) \cap (E \Delta M)) = 0.
    \end{equation}
    According to \cite[Theorem 15.21]{Mattila} or \cite[Theorem 3.2.29]{Federer}, there exists a sequence of embedded submanifolds $(M_i) \subset \R^n$ such that
    \begin{equation}
        \HH^d(E \setminus \bigcup_i M_i) = 0.
    \end{equation}
    We also recall \cite[Theorem 6.2(2)]{Mattila}. If $A \subset \R^n$ is $\HH^d$ measurable and $\HH^d$ locally finite, then for $\HH^d$-a.e. $x \in \R^n \setminus A$, $\lim_{r \to 0} r^{-d} \HH^d(B(x,r) \cap A) = 0$.
    Let us fix an index $i$. We have for $\HH^d$-a.e. $x \in E$,
    \begin{equation}
        \lim_{r \to 0} r^{-d} \HH^d(B(x,r) \cap M_i \setminus E) = 0
    \end{equation}
    and for $\HH^d$-a.e. $x \in M_i$,
    \begin{equation}
        \lim_{r \to 0} r^{-d} \HH^d(B(x,r) \cap E \setminus M_i) = 0
    \end{equation}
    whence for $\HH^d$-a.e. $x \in E \cap M_i$,
    \begin{equation}
        \lim_{r \to 0} r^{-d} \HH^d(B(x,r) \cap (E \Delta M_i)) = 0.
    \end{equation}
    Our claim follows.
\end{proof}

\section{Federer--Fleming Projection}\label{rigid_boundaries}
\subsection{Complexes}
The Federer--Fleming projection for sets was introduced in \cite{DS} by David and Semmes following the ideas of the Federer--Fleming projection for currents. This technique sends a $d$-dimensional set $E$ in the $d$-dimensional skeleton of a grid of cubes while controlling the measure of the image. In each cube, we choose a center of projection which is not in the closure of $E$. We perform a radial projection to send $E$ in the $(n-1$)-dimensional faces. Then we repeat the process in each $(n-1)$-dimensional face to project $E$ in the $(n-2)$-dimensional faces, etc. One usually stops when $E$ is sent in the $d$-skeleton because it is no longer possible to ensure that there exists a center of projection away from $E$. A simple idea to estimate the measure of the image is to estimate the Lipschitz constants of the radial projections. However, we may not have a good control over this Lipschitz constant. David and Semmes proved that in average among centers of projection, the measure of the image is multiplied by a constant that depends only on $n$. 

The goal of this subsection is to axiomatize the structure which supports Federer--Fleming projections. We consider a set $K$ made of faces of varying dimensions in which we intend to perform radial projections. In order to glue and compose radial projections, the inclusion relation $\subset$ between the faces of $K$ should be compatible with the topology in some sense. We adopt a formalism close the one of $CW$-complex , except that the boundary of a cell may not be covered by other subcells. In Fig. \ref{grille} for example, we choose not to make radial projection on the external edges so they are excluded from $K$.
\begin{figure}[ht]
    \begin{center}
        \setlength{\fboxsep}{0pt}
        \fbox{\includegraphics[width=.8\linewidth]{./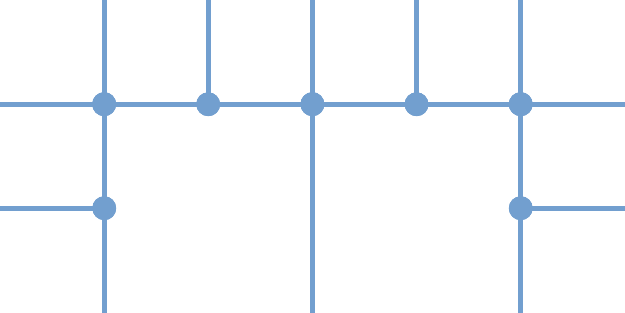}}
        \caption{A set of cells composed of $2$-cells (white squares), $1$-cells (blue edges) and $0$-cells (blue vertices). The associated Federer--Fleming projection consists in making a radial projection in each white square and then in each blue edge (we take the convention that a radial projection in a $0$-cell is the identity map).}
        \label{grille}
    \end{center}
\end{figure}

We define a $\emph{cell}$ as a face of cubes of $\R^n$. We could take a more general definition but it is easier to build complexes that way. The \emph{dimension} of a cell $A$ is the dimension of its affine span. Its \emph{interior} $\mathrm{int}(A)$ and \emph{boundary} $\partial A$ are the topological interior and topological boundary relative to its affine span. For example, if $A$ is $0$-dimensional, then $\mathrm{int}(A) = A$ and $\partial A = \emptyset$. If $A$ is a segment $[x,y]$ with $x \ne y$, then $\mathrm{int}(A) = ]x,y[$ and $\partial A = \set{x,y}$. Given a set of cells $K$, we define the \emph{support} of $K$ by
\begin{equation}
    \abs{K} = \bigcup \set{A | A \in K}.
\end{equation}
For an integer $i = 0, \ldots, n$, the subset of $K$ composed of the $i$-dimensional cells is denoted by $K^i$.

We are going to motive the axioms. Say that $K$ is a set of cells in $\R^n$ and we want to glue radial projections in its $n$-cells, then in its $(n-1)$-cells, etc. It is natural to ask that the cells have disjoint interiors because we want the gluing of our radial projections to be well defined. It is more problematic to ensure that the gluing is continuous. At each step, we have to glue a family of continuous maps (radial projections)
\begin{equation}
    \phi_A\colon A \to A
\end{equation}
indexed by $\set{A \in K^d}$ (for some $d = 0, \ldots,n$), with the identity map on
\begin{equation}
    \abs{K} \setminus \bigcup \set{\mathrm{int}(A) | A \in K,\ \mathrm{dim} \, A \geq d}.
\end{equation}
We use of a classical result which says that a pasting of continuous maps on two closed domains is a continuous map. This also works for a locally finite family of closed domains\footnote{In a topological space $X$, a family of sets $(A_i)$ is locally finite provided that for every $x \in X$, there exists a neighborhood $U$ of $x$ such that $\set{i | A_i \cap U \ne \emptyset}$ is finite. As a consequence, if the sets $A_i$ are closed, their union $\bigcup A_i$ is closed in $X$.}. We require the cells to constitute a locally finite family in $\abs{K}$ so that we can glue continuously the maps $(\phi_A)$. Next, we want the set
\begin{equation}
    \abs{K} \setminus \bigcup \set{\mathrm{int}(A) | A \in K,\ \mathrm{dim} \, A \geq d}
\end{equation}
to be closed. The corresponding axiom is more precise: we require that for every $A \in K$, the set $\bigcup \set{\mathrm{int}(B) | B \in K,\ A \subset B}$ is a neighborhood of $\mathrm{int}(A)$.

\begin{defi}[Complex]\label{def_complex}
    A \emph{complex} $K$ is a set of cells such that
    \begin{enumerate}[label=(\roman*)]
        \item the cells interior $\set{\mathrm{int}(A) | A \in K}$ are mutually disjoint;
        \item every $x \in \abs{K}$ admits a relative neighborhood in $\abs{K}$ which meets a finite number of cells $A \in K$;
        \item for every cell $A \in K$, the set
            \begin{equation}
                V_A = \bigcup \set{\mathrm{int}(B) | B \in K \ \text{contains} \ A}
            \end{equation}
            is a relative neighborhood of $\mathrm{int}(A)$ in $\abs{K}$.
    \end{enumerate}
\end{defi}
These are expected properties of CW-complexes but for us the boundary of a cell may not be covered by other cells. These axioms are enough for building a continuous Federer--Fleming projection but for a Lipschitz projection, we also need to quantify how wide is $V_A$ around $\mathrm{int}(A)$. In the next section, we will work a more specific class of complexes that we call $n$-complexes.

We present a few basic properties of such a complex $K$. By the local finiteness axiom, $K$ is at most countable. For each $A \in K$, the set $V_A$ is relatively open in $\abs{K}$. Indeed for each cell $B \in K$ containing $A$, $V_B \subset V_A$ so $V_A$ is also a neighborhood of $\mathrm{int}(B)$. Finally, we are going to see that a non-empty intersection of the form $\mathrm{int}(A) \cap B$ is meaningful.
\begin{lem}\label{complex_lemma}
    Let $K$ be a complex.
    \begin{itemize}
        \item For $A, B \in K$, $\mathrm{int}(A) \cap B \ne \emptyset$ implies $A \subset B$.
        \item For $A, B \in K$ such that $\mathrm{dim} \, A > \mathrm{dim} \, B$, we have $\mathrm{int}(A) \cap B = \emptyset$.
        \item For $A, B \in K$ of the same dimension, $\mathrm{int}(A) \cap B \ne \emptyset$ implies $A = B$.
    \end{itemize}
\end{lem}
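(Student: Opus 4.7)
The plan is to establish the three bullets in order; the first carries all the topological content, and the other two follow by simple dimension-chasing from the first combined with axiom (i).

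For the first bullet, I would fix a point $x \in \mathrm{int}(A) \cap B$ and try to locate, arbitrarily close to $x$, a point of $\mathrm{int}(C)$ for some cell $C \in K$ containing $A$; then the disjointness of interiors (axiom (i)) should let me identify $C$ with $B$. The natural way to produce such a point is axiom (iii): since $V_A$ is a relative neighborhood of $\mathrm{int}(A)$ in $\abs{K}$, there is a Euclidean open ball $U$ around $x$ with $U \cap \abs{K} \subset V_A$. The main (and essentially only) obstacle is then a geometric one: I need a point $y \in U \cap \mathrm{int}(B)$. If $\dim B \ge 1$ this is automatic, because $x \in B = \overline{\mathrm{int}(B)}$ in the affine span of $B$, so $U \cap \mathrm{int}(B) \ne \emptyset$. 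Any such $y$ lies in $\abs{K}$, hence in $V_A$, hence in some $\mathrm{int}(C)$ with $A \subset C$; then $y \in \mathrm{int}(B) \cap \mathrm{int}(C)$, and axiom (i) gives $B = C$, whence $A \subset B$. The edge case $\dim B = 0$ does not require axiom (iii) at all: then $B = \{x\} = \mathrm{int}(B)$, so $x \in \mathrm{int}(A) \cap \mathrm{int}(B)$ and axiom (i) immediately forces $A = B \subset B$.

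The second bullet is an immediate consequence: if $\dim A > \dim B$ and $\mathrm{int}(A) \cap B$ were nonempty, the first bullet would give $A \subset B$, contradicting the dimension inequality. For the third bullet, the first bullet yields $A \subset B$, and since $\dim A = \dim B$ their affine spans coincide. Then $\mathrm{int}(A)$ is a nonempty subset of $B$ which is open in this common affine span, so it must be contained in $\mathrm{int}(B)$ (which is the largest subset of $B$ with that property). The resulting nonempty intersection $\mathrm{int}(A) \cap \mathrm{int}(B)$ then forces $A = B$ by axiom (i). The entire lemma thus reduces to the single topological extraction step in the first bullet; everything else is bookkeeping with dimensions and the disjointness axiom.
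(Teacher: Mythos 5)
Your proof is correct and follows essentially the same approach as the paper: use axiom (iii) to pass from a point of $\mathrm{int}(A) \cap B$ to a point of $\mathrm{int}(B)$ lying in $V_A$, then invoke disjointness of interiors, with the other two bullets as dimension-counting corollaries. The only (cosmetic) difference is that the paper first observes $V_A$ is relatively open in $\abs{K}$ and argues globally, while you work with a local Euclidean ball around a chosen point; your separate treatment of $\dim B = 0$ is also unnecessary, since $B = \overline{\mathrm{int}(B)}$ holds for $0$-cells too.
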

\begin{proof}
    Let us prove the first point. The set $V_A$ is a relative open set of $\abs{K}$ which meets $B = \overline{\mathrm{int}(B)}$ so it also meets $\mathrm{int}(B)$. As the cells of $K$ have disjoint interiors, we deduce that $B$ contains $A$. The second point is now obvious. Next, we assume that $A$ and $B$ have the same dimension and we prove that $A = B$. According to the inclusion $A \subset B$ and a dimension argument, the affines spaces $\mathrm{aff}(A)$ and $\mathrm{aff}(B)$ are equals. As $\mathrm{int}(A)$ is relatively open in $\mathrm{aff}(B)$, we deduce $\mathrm{int}(A) \subset \mathrm{int}(B)$. Then $A = B$ because the cells have disjoint interiors.
\end{proof}


We introduce a natural notion subspace.
\begin{defi}[Subcomplex]\label{subcomplex_definition}
    Let $K$ be a complex. A \emph{subcomplex} of $K$ is a subset $L \subset K$ such that for all $A \in L$ and $B \in K$, the inclusion $A \subset B$ implies $B \in L$. The \emph{rigid open set} induced by $L$ is the set
    \begin{equation}
        U(L) = \bigcup \Set{\mathrm{int}(A) | A \in L}.
    \end{equation}
\end{defi}
If $L$ is a subcomplex of $K$, then $L$ is a also a complex. Moreover, $U(L)$ is relatively is relatively open in $\abs{K}$ by the second axiom of Definition \ref{def_complex}. We introduce a natural relationship between complexes.
\begin{defi}[Subordinate complex]
    We say that a complex $L$ is \emph{subordinate} to a complex $K$ when for every $A \in L$, there exists $B \in K$ (necessary unique) such that
    \begin{equation}
        \mathrm{int}(A) \subset \mathrm{int}(B).
    \end{equation}
    This relation is denoted by $L \preceq K$.
\end{defi}
See Fig. \ref{figure_subordinate}. The notion of subordinate complex differs from the notion of subcomplex because $L$ may not be subset of $K$.
\begin{figure}[ht!]
    \begin{center}
        \setlength{\fboxsep}{0pt}
        \fbox{\includegraphics[width=.8\linewidth]{./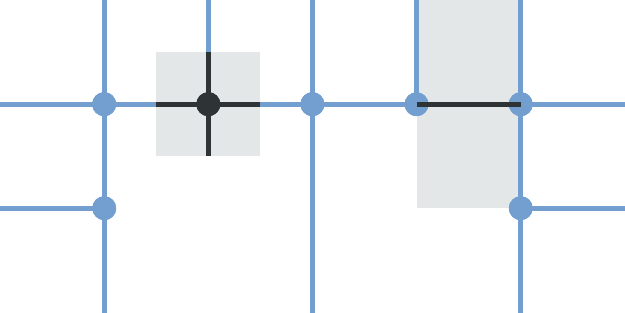}}
        \caption{A complex (white squares, blue edges, blue vertices) and two subordinate complexes (gray squares, black edges, black vertices).}\label{figure_subordinate}
    \end{center}
\end{figure}

\subsection{Pasting Complexes} 
We are going to present a way of building complexes by combining pieces of grids. The result of such a procedure is called a $n$-complex. As an example, we will see that the Whitney decomposition of an open set is a $n$-complex. 
\begin{defi}[Canonical $n$-complex]
    The \emph{canonical $n$-complex} of $\R^n$ is
    \begin{equation}
        E_n = \Set{\prod_{i = 1}^n [0,\alpha_i] | \alpha \in \set{-1,0,1}^n}.
    \end{equation}
    See Fig. \ref{figure_En}. We call \emph{$n$-charts} the image of $E_n$ by a similarity of $\R^n$ (translations, isometries, homothetys and their compositions). 
\end{defi}
\begin{figure}[ht!]
    \begin{center}
        \setlength{\fboxsep}{0pt}
        \fbox{\includegraphics[width=.35\linewidth]{./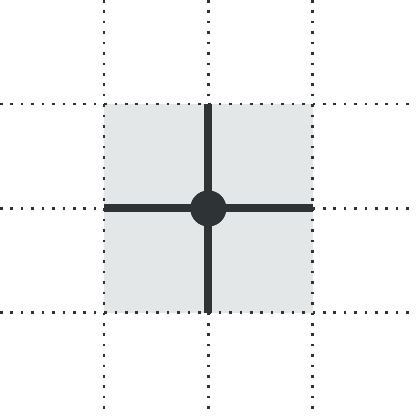}}
        \caption{The canonical grid of the plane is represented in dotted lines. The complex $E_2$ is made of the four gray squares, the four black edges and the black vertice.}\label{figure_En}
    \end{center}
\end{figure}
We justify that $E_n$ is a complex. We are going to see that for all $A \in E_n$, the set $V_A$ is open in $\R^n$ (and not only relatively open in $\abs{E_n}$). Moreover, if $A$ is not $0$-dimensional, there exists a constant $\kappa \geq 1$ (depending only on $n$) such that the set
\begin{equation}
    V_A(\kappa) = \Set{x \in \R^n | \mathrm{d}(x, A) < \kappa^{-1} \mathrm{d}(x, \partial A)}
\end{equation}
is included in $V_A$. The set $V_A(\kappa)$ is an open neighborhood of $\mathrm{int}(A)$ and the parameter $\kappa$ quantifies how wide it is.
\begin{proof}
    Consider $\alpha \in \set{-1,0,1}^n$ and the corresponding cell $A = \prod [0,\alpha_i]$. The interior of $A$ is
    \begin{equation}
        \mathrm{int}(A) = \prod_{i=1}^n (0,\alpha_i),
    \end{equation}
    where $(0,\alpha_i) = ]0,\alpha_i[$ if $\alpha_i \ne 0$ and $(0,\alpha_i) = \set{0}$ otherwise. Observe that different $\alpha, \beta$ induce cells $\prod [0,\alpha]$, $\prod [0,\beta]$ which have disjoint interiors. Thus, the cells of $E_n$ have disjoint interiors. Next, we are going to see that for all $A \in E_n$, the set $V_A$ is open in $\R^n$. Without loss of generality, we can work with $A = [0,1]^d \times \set{0}^{n-d}$ for some $d = 0,\ldots,n$ and then we see that
    \begin{equation}\label{V_int}
        V_A = \mathopen{]}0,1\mathclose{[}^d \times \mathopen{]}-1,1\mathclose{[}^{n-d}.
    \end{equation}
    Now, we assume $d \geq 1$ and we show that there exists $\kappa \geq 1$ (depending only on $n$) such that
    \begin{equation}
        \Set{x \in \R^n | \mathrm{d}(x, A) < \kappa^{-1} \mathrm{d}(x, \partial A)} \subset V_A.
    \end{equation}
    We proceed by contraposition. We fix $x \in \R^n \setminus V_A$. We have either $x_i \notin ]0,1[$ for some $i=1,\ldots,d$ or $x_i \notin ]-1,1[$ for some $i = d+1,\ldots,n$. In the first case, the distance $\mathrm{d}(x,A)$ is attained on $\partial A$ so $\mathrm{d}(x,A) = \mathrm{d}(x,\partial A)$. In the second case we have $\mathrm{d}(x,A) \geq 1$. The triangular inequality yields
    \begin{align}
        \mathrm{d}(x,\partial A)    &   \leq \mathrm{d}(x,A) + \mathrm{diam}(A)\\
                                    &   \leq \mathrm{d}(x,A) + \sqrt{d},
    \end{align}
    so
    \begin{equation}
        \mathrm{d}(x,\partial A) \leq (1 + \sqrt{d}) \mathrm{d}(x,A).
    \end{equation}
    In both cases, one has $\mathrm{d}(x,\partial A) \leq \kappa \mathrm{d}(x,A)$, where $\kappa$ depends only on $n$. Finally, it is clear that $E_n$ is locally finite because it has $3^n$ cells.
\end{proof}
We detail how to combine a family of $n$-charts $(K_\alpha)$. This principle is close to the notion of \emph{direct limit} in algebra (see the remark below Definition \ref{defi_system}). We paste the family by "identifying" the cells $A$,$B$ for which there exists another cell $C$ such that $\mathrm{int}(A), \mathrm{int}(B) \subset \mathrm{int}(C)$. We need a few assumptions to ensure that each equivalent class has a maximal cell and that the collection of maximal cells is locally finite. The result of such a procedure is called a $n$-complex. 
\begin{defi}\label{defi_system}
    A \emph{system of $n$-charts} is a family of $n$-charts $(K_\alpha)$ such that
    \begin{enumerate}[label=(\roman*)]
        \item for all cells $A, B \in \bigcup K_\alpha$
            \begin{equation}
                \mathrm{int}(A) \cap \mathrm{int}(B) \ne \emptyset \implies \mathrm{int}(A) \subset \mathrm{int}(B) \ \text{or}\ \mathrm{int}(B) \subset \mathrm{int}(A);
            \end{equation}
        \item for every $x \in \bigcup \abs{K_\alpha}$, there exists a relative neighborhood $V$ of $x$ in $\bigcup \abs{K_\alpha}$ and a finite set $S \subset \bigcup K_\alpha$ such that whenever a cell $A \in \bigcup K_\alpha$ meets $V$, there exists $B \in S$ such that $\mathrm{int}(A) \subset \mathrm{int}(B)$.
    \end{enumerate}
    We define a \emph{maximal cell of the system} as a cell $A \in \bigcup K_\alpha$ such that for all $B \in \bigcup K_\alpha$,
    \begin{equation}
        \mathrm{int}(A) \cap \mathrm{int}(B) \ne \emptyset \implies \mathrm{int}(B) \subset \mathrm{int}(A).
    \end{equation}
    The collection of all maximal cells is called the \emph{limit of the system $(K_\alpha)$}. It is a complex $K$ such that $\abs{K} = \bigcup \abs{K_\alpha}$. Moreover, it has the following intrinsic properties.
    \begin{enumerate}[label=(\roman*)]
        \item For all $A \in K$, the set $V_A$ is open in $\R^n$.
        \item There exists a constant $\kappa \geq 1$ (depending only on $n$) such that for all $A \in K$ which is not $0$-dimensional, the set
            \begin{equation}
                V_A(\kappa) = \set{x \in \R^n | \mathrm{d}(x,\partial A) < \kappa^{-1} \mathrm{d}(x,A)}
            \end{equation}
            is included in $V_A$.
        \item Every cell $A \in K$ is included in at most $3^n$ cells $B \in K$.
    \end{enumerate}
\end{defi}
We prove our claims below the remark. 
\begin{rmk}\label{rmk_system}
    The goal of this remark is to precise the analogy between a system of $n$-charts and the notion of \emph{direct system} in algebra. Let the index set of $(K_\alpha)$ be denoted by $M$. Equip $M$ with the following quasi-order: $\alpha \leq \beta$ if $K_\alpha \preceq K_\beta$. When $\alpha \leq \beta$, there exists a natural function
    \begin{equation}
        \pi_\alpha^\beta\colon K_\alpha \to K_\beta
    \end{equation}
    which associates to each $A \in K_\alpha$, the unique $B \in K_\beta$ such that $\mathrm{int}(A) \subset \mathrm{int}(B)$. It is clear that
    \begin{equation}
        \pi_\alpha^\alpha = \mathrm{identity}
    \end{equation}
    and for $\alpha \leq \beta \leq \gamma$ in $M$,
    \begin{equation}
        \pi_\beta^\gamma \pi_\alpha^\beta = \pi_\alpha^\gamma.
    \end{equation}
    Given $\alpha$ and $\beta$, the first axiom of our definition allows to paste $K_\alpha$ and $K_\beta$. The result is a $n$-complex $K$ such that $K_\alpha, K_\beta \preceq K$ and which can be added to the system. In this way, $M$ becomes a directed set. Then we say that two cells $A \in K_\alpha$ and $B \in K_\beta$ are equivalent if there exists some $\gamma \geq \alpha, \beta$ such that $\pi_\alpha^\gamma(A) = \pi_\beta^\gamma(B)$. In algebra, the \emph{direct limit} of such system would be the set of all equivalent classes. Our definition take a few shortcuts because the abstract equivalent classes are replaced by a collection of maximal cells. We could not avoid a long axiom to say that these maximal cells are locally finite.
\end{rmk}
\begin{proof}
    It is straightforward from the definition that the cells of $K$ have disjoint interiors. Moreover, the second axiom implies that $K$ is locally finite in $\bigcup \abs{K_\alpha}$. We are going to show that for every $A \in \bigcup K_\alpha$, there exists $B \in K$ such that
    \begin{equation}
        \mathrm{int}(A) \subset \mathrm{int}(B).
    \end{equation}
    Such an inclusion implies the inclusion of the closures $A \subset B$ and it will follow $\abs{K} = \bigcup \abs{K_\alpha}$. We introduce
    \begin{equation}
        S = \set{B \in \bigcup K_\alpha | \mathrm{int}(A) \subset \mathrm{int}(B)}
    \end{equation}
    and we search a biggest element in the set
    \begin{equation}\label{S_img}
        \set{\mathrm{int}(B) | B \in S}.
    \end{equation}
    The first axiom implies that the set (\ref{S_img}) is totally ordered.  We apply the second axiom at a fixed point $x_0 \in \mathrm{int}(A)$ and we obtain a finite subset $T \subset \bigcup K_\alpha$ such that for all $B \in S$, there exists $C \in T$ such that $\mathrm{int}(B) \subset \mathrm{int}(C)$. In particular, $C \in S$. Therefore, a biggest element of (\ref{S_img}) can be searched for in a finite subset:
    \begin{equation}
        \set{\mathrm{int}(C) | C \in S \cap T}
    \end{equation}
    We conclude that there exists $B \in S$ such that $\mathrm{int}(B)$ is the biggest element of (\ref{S_img}). Then we justify that $B$ is a maximal cell. For all $C \in \bigcup K_\alpha$, we have either $\mathrm{int}(C) \subset \mathrm{int}(B)$ or $\mathrm{int}(B) \subset \mathrm{int}(C)$. In the second case, $C \in S$ so we have in fact $\mathrm{int}(B) = \mathrm{int}(C)$. In all cases, $\mathrm{int}(C) \subset \mathrm{int}(B)$.

    Next, we fix $A \in K$ and we show that the set
    \begin{equation}
        V_A = \bigcup \set{\mathrm{int}(B) | B \in K \ \text{contains} \ A}
    \end{equation}
    is open in $\R^n$. There exists an index $\alpha$ such that $A \in K_\alpha$ and since $K_\alpha$ is similar to $E_n$, the set
    \begin{equation}\label{V_alpha}
        W_A = \set{\mathrm{int}(C) | C \in K_\alpha \ \text{contains} \ A}
    \end{equation}
    is open in $\R^n$. For every $C \in K_\alpha$ containing $A$, there exists $B \in K$ such that $\mathrm{int}(C) \subset \mathrm{int}(B)$ and in particular $B$ contains $A$. We deduce that $W_A \subset V_A$. This shows that $V_A$ contains an open neighborhood of $\mathrm{int}(A)$ in $\R^n$. Furthermore, for all $B \in K$ containing $A$, we have $V_B \subset V_A$ so $V_A$ also contains an open neighborhood of $\mathrm{int}(B)$ in $\R^n$. This proves that $V_A$ is open in $\R^n$. If $A$ is not $0$-dimensional, the constant $\kappa$ that we had found for $E_n$ fulfills
    \begin{equation}
        V_A(\kappa) \subset W_A \subset V_A.
    \end{equation}
    Now, we want to estimate the number of cells $B \in K$ that contains $A$. Let $B \in K$ containing $A$. The set $W_A$ contains $\mathrm{int}(A)$ so it meets $B$. As $W_A$ is open in $\R^n$ and $B = \overline{\mathrm{int}(B)}$, $W_A$ also meets $\mathrm{int}(B)$. We deduce that there exists $C \in K_\alpha$ containing $A$ such that $\mathrm{int}(C) \cap \mathrm{int}(B) \ne \emptyset$. In fact, there exists $B' \in K$ such that $\mathrm{int}(C) \subset \mathrm{int}(B')$ and since the cells of $K$ have disjoint interior, we necessarily have $B' = B$. There are at most $3^n$ cells $C \in K_\alpha$ and since there can be only one $B \in K$ such that $\mathrm{int}(C) \subset \mathrm{int}(B)$; we deduce that there are at most $3^n$ cells $B \in K$ containing $A$.
\end{proof}

\begin{exa}\label{grid_example}
    Remember
    \begin{equation}
        E_n = \Set{\prod_{i = 1}^n [0,\alpha_i] | \alpha \in \set{-1,0,1}^n}.
    \end{equation}
    The family of translations of $E_n$ by $\Z^n$,
    \begin{equation}
        \Set{p + E_n | p \in \Z^n},
    \end{equation}
    is a system of $n$-charts. Its limit is the canonical grid of $\R^n$. We could have defined this grid directly as the set of all cells of the form
    \begin{equation}
        p + \prod_{i=1}^n [0,\alpha_i],
    \end{equation}
    where $p \in \Z^n$ and $\alpha \in \set{0,1}^n$.
\end{exa}

\begin{exa}\label{whitney_example}
    Let $X$ an open set of $\R^n$. We build a $n$-complex filling $X$ and which is analogous to a Whitney decomposition. For $k \in \N$ and $p \in 2^{-k} \Z^n$, we introduce the dyadic chart of center $p$ and sidelength $2^{-k}$:
    \begin{equation}
        E_n(p,k) = p + 2^{-k} E_n,
    \end{equation}
    i.e.
    \begin{equation}
        E_n(p,k) = \Set{\prod_{i=1}^n [p_i,p_i + 2^{-k} \alpha_i] | \alpha \in \set{-1,0,1}^n}.
    \end{equation}
    Then we consider the family
    \begin{equation}\label{dyadic_system}
        \Set{E_n(p,k) | k \in \N,\ p \in 2^{-k} \Z^n,\ \abs{E_n(p,k)} \subset X}.
    \end{equation}
    All the cells induced by this family are dyadic cells. We rely on the following property: for all dyadic cell $A$ of sidelength $2^{-k}$, for all dyadic cell $B$ of sidelength $2^{-l}$ with $l \leq k$, the intersection $\mathrm{int}(A) \cap \mathrm{int}(B) \ne \emptyset$ implies $\mathrm{int}(A) \subset \mathrm{int}(B)$. It is then clear that the first axiom of Definition \ref{defi_system} is satisfied. Let us focus on the second axiom. Instead of working with the Euclidean norm, we work with the maximum norm $\abs{\, \cdot \,}_\infty$, the corresponding distance $d_\infty$ and the corresponding open (cubic) balls $U$. We fix $x \in X$ and we define
    \begin{equation}
        r = \min\set{1, \mathrm{d}_\infty(x, X^c)}.
    \end{equation}
    Let $k \in \N$ be such that $2^{-k+1} \leq r \leq 2^{-k+2}$. There exists $p \in 2^{-k} \Z^n$ such that $\abs{x - p}_\infty < 2^{-k-1}$. Then, the triangular inequality gives
    \begin{equation}
        U(x, 2^{-k-1}) \subset U(p, 2^{-k}) \subset \overline{U}(p, 2^{-k}) \subset U(x,2^{-k+1}).
    \end{equation}
    so
    \begin{equation}
        U(x, 2^{-3}r) \subset U(E_n(p,k)) \subset \abs{E_n(p,k)} \subset U(x,r).
    \end{equation}
    According to the definition of $r$, $E_n(p,k)$ belongs to the family (\ref{dyadic_system}). This proves that the family fills $X$. Let $A$ be a cell of the system which meets $U(x, 2^{-3}r)$. The interior $\mathrm{int}(A)$ also meets $U(x,2^{-3}r)$ because $U(x,2^{-3}r)$ is an open set and $A = \overline{\mathrm{int}(A)}$. Then $\mathrm{int}(A)$ meets the interior of a cell $B \in E_n(p,k)$. Either the sidelength of $A$ is $\geq 2^{-k}$, either it is $\leq 2^{-k}$ and $\mathrm{int}(A) \subset \mathrm{int}(B)$. In both cases, there exists a dyadic cell $C$ of sidelength $\geq 2^{-k}$ such that $\mathrm{int}(A) \subset \mathrm{int}(C)$. However, there exists only a finite number of dyadic cells of sidelength $\geq 2^{-k}$ which meet $U(x,2^{-3}r)$. We deduce the second axiom of Definition \ref{defi_system}.
\end{exa}

\subsection{Estimates}\label{subsection_FF}
We recall the Federer--Fleming projection with an example. Let us say $K$ is the set of dyadic cells of sidelength $2^{-k}$ which are included in $Q = [-1,1]^n$ but not in $\partial Q$. Let us say that $E$ is a $1$-dimensional subset of $Q$. We perform a radial projection in each cell $A \in K^n$, then in each cell $A \in K^{n-1}$, ... until the cells $A \in K^2$. This operation sends $E$ to
\begin{equation}
    \abs{K} \setminus \bigcup \set{\mathrm{int}(A) | \mathrm{dim} \, A > 1} = \partial Q \cup \abs{K^1}.
\end{equation}
Note that $E$ is not entirely sent in the $1$-skeleton because we do not make radial projections on $\partial Q$. We denote by $\phi$ the function obtained by composing these radial projections. Throughout the construction, we can choose the centers of projections so as to control the measure of $\phi(E)$. Such procedure was already developed in \cite{DS}. In this subsection, we state this projection in the language of complexes and we present a new estimate.

We recall some notions about the Grassmannian spaces. Let $E$ be an euclidean space of dimension $n$ and $0 \leq d \leq n$ an integer. The Grassmannian $G(d,E)$ is the set of all $d$-linear planes of $X$. A linear plane $V$ can be represented by its orthogonal projection $p_V$. Thus, the operator norm induces a distance on $G(d,E)$. The Haar measure $\gamma_{d,E}$ is the unique Radon measure on $G(d,E)$ whose total mass is $1$ and which is invariant under the action of the orthogonal group $O(E)$ (see \cite[Chapter 3]{Mattila} for existence and unicity). In the case $E = \R^n$, the Grassmannian is denoted by $G(d,n)$ and the Haar measure by $\gamma_{n,d}$.

It is usually not defined that way but $\gamma_{d,n}$ coincides with the Hausdorff measure of dimension $m=d(n-d)$, up to a multiplicative constant. Indeed, $G(d,n)$ is non-empty compact manifold of dimension $m$ so $\sigma = \HH^m(G(d,n))$ is finite and positive. Then $\sigma^{-1} \HH^m$ is a Radon measure on $G(d,n)$ whose total mass is $1$ and which is invariant under the isometries of $G(d,n)$. In particular, it is invariant under the action of $O(n)$. One can justify similarly that for all $A \subset G(1,n+1)$
\begin{equation}
    \gamma_{1,n+1}(A) = \sigma_n^{-1} \HH^n(\set{x \in \mathbf{S}^n | \exists L \in A,\ x \in L})
\end{equation}
where $\sigma_n = \HH^n(\mathbf{S}^n)$. If there is no ambiguity, we write $\mathrm{d}V$ in the integrals instead of $\mathrm{d}\gamma_{d,n}(V)$.

\begin{lem*}[Federer--Fleming projection]
    Let $K$ be $n$-complex. Let $0 \leq d < n$ be an integer and let $E$ be Borel subset of $\abs{K}$ such that $\HH^{d+1}(\abs{K} \cap \overline{E}) = 0$. Then there exists a locally Lipschitz function $\phi\colon \abs{K} \to \abs{K}$ satisfying the following properties:
    \begin{enumerate}[label=(\roman*)]
        \item for all $A \in K$, $\phi(A) \subset A$;
        \item $\phi = \mathrm{id}$ in $\abs{K} \setminus \bigcup \set{\mathrm{int}(A) | \mathrm{dim} \, A > d}$;
        \item there exists a relative open set $O \subset \abs{K}$ such that $E \subset O$ and
            \begin{equation}
                \phi(O) \subset \abs{K} \setminus \bigcup \set{\mathrm{int}(A) | \mathrm{dim} \, A > d};
            \end{equation}
        \item for all $A \in K$,
            \begin{equation}\label{FF_estimate1}
                \HH^d(\phi(A \cap E)) \leq C \HH^d(A \cap E);
            \end{equation}
        \item for all $A \in K^d$,
            \begin{equation}\label{FF_estimate2}
                \HH^d(A \cap \phi(E)) \leq C \int_{G(d,n)} \HH^d(p_V(V_A \cap E)) \, \mathrm{d}V,
            \end{equation}
    \end{enumerate}
    where $C \geq 1$ is a constant that depends only on $n$.
\end{lem*}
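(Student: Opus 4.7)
The plan is to construct $\phi$ by downward induction on cell dimension, performing one radial projection per cell of dimension greater than $d$ and composing. Set $\phi = \phi_{d+1} \circ \phi_{d+2} \circ \cdots \circ \phi_n$, where each $\phi_k\colon \abs{K} \to \abs{K}$ is the identity outside $\bigcup\{\mathrm{int}(A) : A \in K^k\}$ and, on each $A \in K^k$, is the radial projection from a chosen center $a_A \in \mathrm{int}(A)$ onto $\partial A$. Property (i) is then automatic (every $\phi_k$ preserves every cell of $K$), (ii) is built into the construction, and (iii) follows by taking $O$ to be a small relative neighborhood of $E$ whose intersection with each top-dimensional cell is radially projected into its boundary.

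The centers $a_A$ are chosen in a shrunken concentric subcell $\{x \in \mathrm{int}(A) : \mathrm{d}(x,\partial A) \geq \delta\,\mathrm{diam}(A)\}$ for some universal $\delta(n) > 0$, so that each radial projection is Lipschitz on $A$ with constant depending only on $n$. Continuity of the composition at shared faces comes from the complex axioms of Definition \ref{def_complex}: the cells are locally finite and the neighborhood $V_A(\kappa) \subset V_A$ provided by Definition \ref{defi_system} forces the identity-on-boundary agreement across adjacent cells. Inductively, the iterated image of $E$ still satisfies $\HH^{d+1}(\overline{E^{(k)}}) = 0$ (preserved by Lipschitz maps), so $\HH^k$-almost every candidate $a_A$ avoids this image and makes $\pi_{a_A}$ well-defined on it.

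The standard estimate (iv) is obtained by the classical David--Semmes averaging: for a $k$-cell $B$ and center $a \in \mathrm{int}(B)$, the Jacobian of the radial projection on a $d$-dimensional set is bounded by $C(\mathrm{diam}(B)/|x-a|)^{k-d-1}$ times a factor independent of $a$, which, after integrating over $a$ in the shrunken subcell, yields
\begin{equation*}
\fint \HH^d(\pi_a(B \cap E^{(k)}))\, da \leq C\,\HH^d(B \cap E^{(k)}).
\end{equation*}
Chebyshev selects a center simultaneously giving (iv) and avoiding $\overline{E^{(k)}}$. Iterating over the at most $3^n$ ancestors of $A$ (third intrinsic property of Definition \ref{defi_system}) yields (iv) with a constant depending only on $n$.

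The main obstacle is the new estimate (v). The preliminary observation is that any ancestor $B \supset A$ with $A \in K^d$ satisfies $\mathrm{int}(B) \subset V_A$ by the very definition of $V_A$, so $\phi^{-1}(A) \cap E$ is contained in $V_A \cap E$. To bring the Grassmannian in, I would rewrite each radial projection's Jacobian through an integral-geometric identity of Crofton type: the factor $|x-a|^{-1}$ arising in the Jacobian can be represented as an average of indicator functions of hyperplanes through $a$, hence ultimately as an average over $V \in G(d,n)$ of quantities involving $p_V$. Combining this with the area formula $\int_{V_A \cap E} |\wedge^d p_V|\,d\HH^d = \HH^d(p_V(V_A \cap E))$, averaging centers in each ancestor cell, and using the invariance of $\gamma_{d,n}$ under $O(n)$, one collapses the compound estimate into the single Grassmannian integral on the right of (\ref{FF_estimate2}). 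The delicate step is the combinatorial bookkeeping through the chain of at most $n-d$ consecutive radial projections so that only one Grassmannian integration remains at the end; this is where the constant $C = C(n)$ is produced and where most of the work of the new estimate lies.
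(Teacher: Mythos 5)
Your overall architecture (downward induction on dimension, one radial projection per cell, Chebyshev selection of centers, gluing across faces via the $V_A(\kappa)$ neighborhoods, the David--Semmes averaging for (iv)) matches the paper's construction, and parts (i)--(iv) are essentially fine modulo routine details. The problem is part (v), which is the actual novelty of the lemma: you correctly sense that an integral-geometric (Crofton-type) identity is needed, but you then defer ``the combinatorial bookkeeping through the chain of at most $n-d$ consecutive radial projections'' as the place ``where most of the work of the new estimate lies'' --- and that bookkeeping is precisely the missing idea, not a technicality. A naive attempt to compose per-step Jacobian estimates fails, because after the first radial projection the image set is no longer $E$ and you have no way to re-express its Grassmannian content in terms of $E$ alone; the single Grassmannian integral does not ``collapse'' by invariance of $\gamma_{d,n}$.

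What makes the composition work in the paper is a change of the quantity carried through the induction. One introduces the gauge $\zeta^d(F) = \int_{G(d,n)} \HH^d(p_V(F))\,\mathrm{d}V$ and its restriction $\zeta^d \mres A$ to the affine span of a cell, and proves the averaged estimate (Lemma \ref{lem_DS2}): for a cube $Q$ and a radial projection $\psi_x$ centered at $x \in \tfrac{1}{2}Q$,
\begin{equation*}
    \mathrm{diam}(Q)^{-n} \int^{*}_{\frac{1}{2}Q} \zeta^d \mres \partial Q\,(\psi_x(F)) \, \mathrm{d}x \leq C\, \zeta^d(F),
\end{equation*}
proved by bounding $\zeta^d \mres A(\psi_x(F))$ by $C R^d \gamma_{n-d,n}(\set{W \mid (x+W)\cap F \neq \emptyset})$ and applying Fubini. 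The induction hypothesis is then stated for $\zeta^d$, not $\HH^d$: at stage $m$ one requires $\zeta^d \mres A(\mathrm{int}(A) \cap \phi_m(E)) \leq C\, \zeta^d(V_A \cap E)$ for every cell $A$, and the center of each $m$-cell is chosen by Chebyshev to satisfy simultaneously the $\HH^d$ bounds of (iv) \emph{and} the $\zeta^d \mres B$ bounds for each face $B \subset \partial A$ (finitely many conditions, at most $3^n + 1$, so Markov applies). The induction closes because $\zeta^d$ only decreases under passing to subsets and $V_A \subset V_B$ when $B \subset A$, and the statement (v) falls out only at the very end, when $A$ is $d$-dimensional and $\zeta^d \mres A$ coincides with $\HH^d$ on subsets of $A$. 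Without this invariant your chain of estimates cannot terminate in a single Grassmannian integral over the original set $E$, so as written the proof of (v) is incomplete.
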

The construction itself is deferred in Appendix \ref{appendix_FF} (Lemma \ref{lem_FF}) because this is not an original procedure and the reader can skip it. The goal of this subsection is to explain the estimate (\ref{FF_estimate1}) and the new estimate (\ref{FF_estimate2}).

David and Semmes have shown in \cite[Lemma 3.22]{DS} that, in average among centers of radial projections, the measure of the image is not much larger than the original set. We state their lemma without proof below.
\begin{lem}\label{lem_DS1}
    Let $Q$ be a cube of $\R^n$. Let $E$ be a Borel subset of $Q$ such that $\LL^n(\overline{E}) = 0$. Then
    \begin{equation}\label{DS1}
        \mathrm{diam}(Q)^{-n} \int^*_{\frac{1}{2} Q} \HH^d(\psi_x(E)) \, \mathrm{d}x \leq C\HH^d(E),
    \end{equation}
    where $\psi_x$ is the radial projection onto $\partial Q$ centered at $x$ and $C$ is a constant that depends only on $n$.
\end{lem}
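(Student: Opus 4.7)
The plan is to bound the pointwise Lipschitz constant of $\psi_x$ and then integrate in $x$. Since $\LL^n(\overline{E}) = 0$, for a.e.\ $x \in \frac{1}{2}Q$ we have $\mathrm{d}(x, E) > 0$, so $\psi_x$ is smooth near $E$. At such a point $y$, the differential $\mathrm{d}\psi_x(y)$ annihilates the radial direction $y-x$ and stretches orthogonal directions by the ratio $\rho(y)/|y-x|$, where $\rho(y) = |\psi_x(y) - x| \leq \mathrm{diam}(Q)$; a bounded cosine factor from the incidence angle between the ray and $\partial Q$ is harmless. Hence
\[
\|\mathrm{d}\psi_x(y)\| \leq \frac{C\,\mathrm{diam}(Q)}{|y-x|}.
\]

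Next, I would convert this pointwise bound into an estimate of $\HH^d(\psi_x(E))$ by decomposing $E$ into the dyadic annuli $E_k = \{y \in E : 2^{-k-1}\mathrm{diam}(Q) \leq |y-x| < 2^{-k}\mathrm{diam}(Q)\}$, on each of which $\psi_x$ is genuinely $C 2^{k+1}$-Lipschitz. Summing the standard bounds $\HH^d(\psi_x(E_k)) \leq (C 2^{k+1})^d \HH^d(E_k)$ and comparing $2^k$ with $\mathrm{diam}(Q)/|y-x|$ on $E_k$ yields the Borel-measurable upper bound
\[
\HH^d(\psi_x(E)) \leq C \int_E \left(\frac{\mathrm{diam}(Q)}{|y-x|}\right)^{\! d} \mathrm{d}\HH^d(y),
\]
which also sidesteps the measurability concern that motivates the use of $\int^*$.

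Finally, applying Tonelli to the jointly Borel integrand on the right-hand side gives
\[
\int^*_{\frac{1}{2} Q} \HH^d(\psi_x(E)) \, \mathrm{d}x \leq C\,\mathrm{diam}(Q)^d \int_E \int_{\frac{1}{2} Q} |y-x|^{-d} \, \mathrm{d}x \, \mathrm{d}\HH^d(y).
\]
For each fixed $y \in Q$, the inner integral is dominated by $\int_{B(y,\, C\mathrm{diam}(Q))} |y-x|^{-d}\, \mathrm{d}x \leq C'\, \mathrm{diam}(Q)^{n-d}$, where convergence relies on $d < n$ (which is implicit in the lemma, since $\psi_x(E) \subset \partial Q$ forces $d \leq n-1$). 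Combining the two displays and dividing by $\mathrm{diam}(Q)^n$ produces the stated inequality. The main obstacle is the middle step: the pointwise Lipschitz constant of $\psi_x$ blows up near $x$, and this singularity has to be quarantined into the single integrable factor $|y-x|^{-d}$; the annular decomposition accomplishes this precisely because each annulus avoids the center by a definite amount, converting a pointwise estimate into a genuine Lipschitz estimate on each piece.
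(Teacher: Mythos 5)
Your proof is correct and follows essentially the same route as the David--Semmes argument that the paper cites for this lemma (and sketches in the surrounding discussion): the pointwise bound $\|\mathrm{d}\psi_x(y)\| \leq C\,\mathrm{diam}(Q)/|y-x|$, the resulting majorant $\HH^d(\psi_x(E)) \leq C\int_E (\mathrm{diam}(Q)/|y-x|)^d \, \mathrm{d}\HH^d(y)$, and Fubini--Tonelli together with $\int_{\frac12 Q}|y-x|^{-d}\,\mathrm{d}x \leq C\,\mathrm{diam}(Q)^{n-d}$. The only point worth a word is that on each annulus the pointwise differential bound upgrades to a genuine Lipschitz bound (the annulus is not convex, but radial projection off $B(x,r)$ is $C\,\mathrm{diam}(Q)/r$-Lipschitz by a direct computation), which is standard and does not affect the argument.
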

Combining (\ref{DS1}) with the Markov inequality yields that for $\lambda > 0$,
\begin{equation}\label{DS1_Markov}
    \LL^n(\set{x \in \tfrac{1}{2}Q \setminus \overline{E} | \HH^d(\psi_x(E)) > \lambda \HH^d(E)}) \leq C \lambda^{-1} \mathrm{diam}(Q)^n
\end{equation}
We can choose $\lambda$ big enough (depending on $n$) so that the set at the left hand side has a low density in $Q$. In particular, we can find $x$ for which $\HH^d(\psi_x(E)) \leq \lambda \HH^d(E)$. We are going to develop a similar lemma where $\HH^d$ is replaced by a certain gauge $\zeta^d$.

We explain first why (\ref{DS1}) is not optimal for $d=n-1$. Let $B$ be a ball centered in $0$ of radius $R > 0$. Let $E$ be a $(n-1)$ dimensional closed subset of $B$. For $x \in \tfrac{1}{2} B \setminus E$, the image $\psi_x(E)$ is included in $\partial B$ so we have trivially
\begin{equation}\label{DS0}
    R^{-n} \int_{\frac{1}{2} B} \HH^{n-1}(\psi_x(E)) \, \mathrm{d}x \leq C R^{n-1}
\end{equation}
If $E$ has a high density in $B$, then $\HH^{n-1}(E) \gg R^{n-1}$ so (\ref{DS1}) is much worse than the trivial estimate (\ref{DS0}). In fact, the proof of Lemma \ref{lem_DS1} does not take into account that $\psi_x(E)$ is non-injective. David and Semmes bound $\HH^{n-1}(\psi_x(E))$ by the total variation,
\begin{equation}
    \HH^{n-1}(\psi_x(E)) \leq \int_{\psi_x(E)} \! \HH^0(E \cap \psi_x^{-1}(y)) \, \mathrm{d}\HH^{n-1}(y),
\end{equation}
and then they apply the Fubini theorem. In our approach, we bound
\begin{equation}
    \HH^{n-1}(\psi_x(E)) \leq C R^{n-1} \gamma_{1,n}(\set{L | (x + L) \cap E \ne \emptyset}
\end{equation}
and then we apply the Fubini theorem. This yields (the details are in Lemma \ref{lem_DS2}),
\begin{equation}\label{DS20}
    R^{-n} \int_{\frac{1}{2} B} \HH^{n-1}(\psi_x(E)) \, \mathrm{d}x \leq C \int_{G(n-1,n)} \HH^{n-1}(p_V(E)) \, \mathrm{d}V.
\end{equation}
As orthogonal projections are $1$-Lipschitz, one can see that (\ref{DS20}) is better than both (\ref{DS1}) and (\ref{DS0}). Unfortunately, it is not true for $0 \leq d < n-1$ that
\begin{equation}
    R^{-n}\int_{\frac{1}{2} B} \! \HH^d(\psi_x(E)) \, \mathrm{d}x \leq C \int_{G(d,n)} \HH^d(p_V(E)) \, \mathrm{d}V.
\end{equation}
We have to average a different quantity.

We define the gauge $\zeta^d$ on Borel subsets of $\R^n$ by
\begin{equation}
    \zeta^d(E) = \int_{G(d,n)} \HH^d(p_V(E)) \, \mathrm{d}V.
\end{equation}
This is a basic way to estimate the size of $E$ but it is not a Borel measure. The function $\zeta^d$ is known to induce the integral-geometric measure via Carathéodory's construction (\cite[2.10.5]{Federer}). The function $\zeta^d$ also cancels the purely unrectifiable part of $E$ thanks to the Besicovitch--Federer projection theorem (\cite[Theorem 18.1]{Mattila}).

For a cell $A$, we define the restriction of this gauge to $A$
\begin{equation}
    \zeta^d \mres A(E) = \int_{G(\mathrm{aff}(A),d)} \HH^d(p_V(A \cap E)) \, \mathrm{d}V,
\end{equation}
where $\mathrm{aff}(A)$ is the affine span of $A$ and $G(\mathrm{aff}(A),d)$ is the set of all $d$-linear planes of $\mathrm{aff}(A)$ centered at an arbitrary point. The author does not know whether there exists a constant $C > 0$ (depending on $n$, $d$) such that
\begin{equation}
    \zeta^d \mres A(E) = C \zeta^d(A \cap E).
\end{equation}
but at least the next lemma implies that there exists a constant $C \geq 1$ (depending on $n$) such that
\begin{equation}\label{zeta_mres}
    \zeta^d \mres A(E) \leq C \zeta^d(A \cap E).
\end{equation}
We justify inequality (\ref{zeta_mres}) in Remark \ref{rmk_zeta} below the proof of the next lemma. When $A$ is $d$-dimensional, it is clear that $\zeta^d \mres A = \HH^d$. At the end of the Federer--Fleming procedure we thus have a simplification: for $A \in K^d$, $\zeta^d \mres A(\phi(E)) = \HH^d(A \cap \phi(E))$. For a cube $Q$, we find convenient to define
\begin{equation}
    \zeta^d \mres \partial Q = \sum_A \zeta^d \mres A
\end{equation}
where the sum is indexed by the $(n-1)$-faces $A$ of $Q$.

\begin{lem}\label{lem_DS2}
    Let $Q$ be a cube of $\R^n$. Let $E$ be a Borel subset of $Q$ such that $\LL^n(\overline{E}) = 0$. Then
    \begin{equation}\label{DS2}
        \mathrm{diam}(Q)^{-n} \int^{*}_{\frac{1}{2} Q} \zeta^d \mres \partial Q(\psi_x(E)) \, \mathrm{d}x \leq C \zeta^d(E),
    \end{equation}
    where $\psi_x$ is the radial projection onto $\partial Q$ centered at $x$ and $C$ is a constant that depends only on $n$.
\end{lem}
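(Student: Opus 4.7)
The strategy is to reduce each face–projection term to the codimension-one version of the estimate (the argument sketched just before the statement), applied inside a suitable $(d+1)$-dimensional subspace.

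Fix a face $A$ of $\partial Q$ and a $d$-plane $W \subset \mathrm{aff}(A)$. Let $U = U(A,W) \subset \R^n$ denote the $(d+1)$-dimensional linear subspace spanned by the direction of $W$ together with the outward normal $e_A^\perp$ to $A$, and let $q_U\colon \R^n \to U$ be orthogonal projection. Choose coordinates so that $A = \{y_n = R\}$ and $W = \R^d \times \{0\}^{n-1-d} \times \{R\}$. A direct computation using $\psi_x(y) = x + \frac{R-x_n}{y_n-x_n}(y-x)$ gives
\begin{equation*}
p_W(\psi_x(y)) = \psi^U_{q_U(x)}(q_U(y)) \qquad \text{whenever } \psi_x(y) \in A,
\end{equation*}
where $\psi^U_{\tilde x}$ denotes the radial projection \emph{inside} $U$ from $\tilde x$ onto the ``top face'' $W$ of the cube $U \cap Q$. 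Consequently $p_W(A \cap \psi_x(E)) \subset \psi^U_{q_U(x)}(q_U(E)) \cap W$.

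Now invoke the codimension-one version of the estimate inside $U \cong \R^{d+1}$: parametrizing the image by directions $u \in \mathbf{S}^d \subset U$ pointing toward $W$, the area element on $W$ at distance $\lesssim R$ from $\tilde x = q_U(x)$ corresponds to a solid-angle element with Jacobian $\lesssim R^d$, so
\begin{equation*}
\HH^d(\psi^U_{\tilde x}(q_U(E)) \cap W) \leq C R^d \sigma_d^{-1} \HH^d\bigl(\{u \in \mathbf{S}^d : (\tilde x + \R_+ u) \cap q_U(E) \neq \emptyset\}\bigr).
\end{equation*}
Integrating over $x \in \frac{1}{2}Q$ (the integrand depends only on $\tilde x$, so the $(n-1-d)$ perpendicular coordinates contribute a factor $R^{n-1-d}$), swapping the $(\tilde x, u)$-order via Fubini, and bounding the inner volume of $\{\tilde x : (\tilde x + \R_+ u) \cap q_U(E) \neq \emptyset\}$ by $C R \cdot \HH^d(p_{u^\perp \cap U}(E))$, one arrives at
\begin{equation*}
\int_{\frac{1}{2}Q} \HH^d(p_W(A \cap \psi_x(E))) \, dx \leq C R^n \int_{G(d, U(A, W))} \HH^d(p_V(E)) \, dV.
\end{equation*}

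The remaining step is to sum over the $2n$ faces $A$ and integrate over $W \in G(\mathrm{aff}(A), d)$ and recognize the total as bounded by $C R^n \zeta^d(E)$. A change of variables $(A, W, V) \mapsto V \in G(d,n)$, valid whenever $V$ is transverse to $\R e_A^\perp$ (with $W$'s direction determined by the projection of $V$ onto $\mathrm{aff}(A)_{\text{dir}}$), identifies the inner double integral with a weighted integral $\int_{G(d,n)} \HH^d(p_V(E)) J_A(V) \, dV$, where the Blaschke–Petkantschin-type Jacobian $J_A(V) = 1/|\det (p_{\mathrm{aff}(A)_{\text{dir}}}|_V)|$ is singular for $V$ containing a face normal. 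The main technical obstacle is controlling this Jacobian: although $J_A$ is unbounded pointwise, the Grassmannian volume element vanishes fast enough near the singular locus to make $J_A$ Haar-integrable, and after summing the $2n$ face contributions the weighted integral is dominated by $C \zeta^d(E)$. This should follow from an explicit principal-angle computation, or from a standard Blaschke–Petkantschin formula on the Grassmannian; verifying it cleanly is the crux of the argument.
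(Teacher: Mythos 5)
Your proposal shares the paper's guiding insight---the codimension-one, solid-angle estimate is the clean case---but follows a genuinely different route, and the route you chose has a real gap. You slice $\R^n$ into $(d+1)$-dimensional subspaces $U(A,W)$, apply the codimension-one estimate inside each slice, and then try to recover $\zeta^d(E)$ by a Blaschke--Petkantschin-type change of variables from pairs $(W,V)$ with $W\in G(\mathrm{aff}(A),d)$, $V\in G(d,U(A,W))$ to $V\in G(d,n)$. As you acknowledge yourself, the Jacobian $J_A(V)$ is singular on $\{V : e_A^\perp \in V\}$, and you leave its Haar-integrability unverified; that is the gap, and it is not a routine check. The singular locus is isomorphic to $G(d-1,n-1)$, of codimension $n-d$ in $G(d,n)$, and $J_A$ blows up like the inverse of the distance to it (one principal angle between $V$ and $\mathrm{aff}(A)$ degenerates). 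Heuristically $\int_{G(d,n)} J_A\,d\gamma_{d,n}$ converges when $d \leq n-2$ (for $d=n-1$ the $W$-integral is a single point and there is no Jacobian), but turning the heuristic into a proof requires the explicit Blaschke--Petkantschin kernel, which is more machinery than the lemma should need.

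The paper avoids the singular Jacobian entirely by never disintegrating $\zeta^d$ down to $(d+1)$-dimensional slices. Instead it introduces the intermediate quantity $\gamma_{n-d,n}(\{W : (x+W)\cap E \neq \emptyset\})$ and proves two one-sided inequalities. In its Step~1, $\zeta^d\mres A(\psi_x(E)) \leq CR^d\,\gamma_{n-d,n}(\{W : (x+W)\cap E\neq\emptyset\})$: this applies the Disintegration Formula (Proposition~\ref{grassSum}), but then \emph{restricts} to directions $L$ in a fixed ball $B(L_0,\alpha)$ around the face normal $L_0$, where the correspondence $G(m,L_0^\perp)\to G(m,L^\perp)$ is uniformly bilipschitz via~(\ref{grassGraph}) and Lemma~\ref{lem_grass_iso}; throwing away the remaining directions only makes the lower bound smaller, so no Jacobian needs to be estimated near a degeneracy. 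Step~2 is a direct Fubini, $R^{-n}\int_Q \gamma_{n-d,n}(\{W : (x+W)\cap E\neq\emptyset\})\,dx \leq CR^{-d}\zeta^d(E)$. In short: your scheme needs the full, singular disintegration identity, whereas the paper only needs a lower bound supported on a compactly contained cone of good directions, where everything is uniformly controlled. If you want to keep your route, the thing to do is to prove $\int_{G(d,n)} J_A\,d\gamma_{d,n}\leq C(n)$ directly for $d\leq n-2$; but the paper's workaround is considerably lighter.
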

\begin{proof}
    The letter $C$ plays the role of a constant $\geq 1$ that depends on $n$. Its value can increase from one line to another (but a finite number of times). We denote by $R$ the diameter of $Q$. The principle is the following: we show that for all $(n-1)$-face $A$ of $Q$ and for all $x \in \frac{1}{2} Q \setminus E$,
    \begin{equation}\label{jauge-1}
        \zeta^d \mres A(\psi_x(E)) \leq C R^d \gamma_{n-d,n}(\set{W | (x + W) \cap E \ne \emptyset})
    \end{equation}
    and then we apply the Fubini theorem to get
    \begin{equation}\label{jauge-2}
        R^{-n} \int_Q \gamma_{n-d,n}(\set{W | (x + W)  \cap E \ne \emptyset}) \, \mathrm{d}x \leq C R^{-d} \zeta^d(E).
    \end{equation}

    \emph{Step 1.} Let $x \in \tfrac{1}{2} Q \setminus E$. We make a translation to assume that $x = 0$ (but the cube may not be centered at the origin). We denote by $\psi$ the radial projection onto $\partial Q$ centered at $0$. We fix a $(n-1)$ face $A$ of $Q$. We aim to show
    \begin{equation}
        \zeta^d \mres A(\psi(E)) \leq C R^d \gamma_{n-d,n}(\set{W | W \cap E \ne \emptyset}).
    \end{equation}
    Let $H$ be the affine span of $A$. It suffices to show that for all Borel set $F \subset H \cap B(0,R)$,
    \begin{equation}\label{zeta_step1}
        \int_{G(d,H)} \HH^d(p_V(F)) \, \mathrm{d}V \leq C R^d \gamma_{n-d,n}(\set{W | W \cap F \ne \emptyset})
    \end{equation}
    and to apply this inequality with $F = A \cap \phi(E)$. We apply the Disintegration Formula (Proposition \ref{grassSum}) for $n-d = 1 + m$ where $m=n-d-1$,
    \begin{multline}
        \gamma_{n-d,n}(\set{W | W \cap F \ne \emptyset})\\ = \int_{G(1,n)} \gamma_{m,L^\perp}(\set{W | (L + W) \cap F \ne \emptyset}) \, \mathrm{d}L.
    \end{multline}
    In particular,
    \begin{multline}
        \gamma_{n-d,n}(\set{W | W \cap F \ne \emptyset}) \\ \geq \int_{B(L_0,\alpha)} \gamma_{m,L^\perp}(\set{W | (L + W) \cap F \ne \emptyset}) \, \mathrm{d}L
    \end{multline}
    where $L_0$ be the vector line orthogonal to $H$, $\alpha \in \mathopen{]}0,1\mathclose{[}$ is a constant that we will specify later and $B(L_0,\alpha)$ is the ball of center $L_0$ and radius $\alpha$ in $G(1,n)$. For $L \in G(1,n)$ such that $\mathrm{d}(L,L_0) \leq \alpha$, we show that
    \begin{multline}\label{grass_goal1}
        \gamma_{m,L^\perp}(\set{W | (L + W) \cap F \ne \emptyset}) \\\geq C(\alpha)^{-1} \gamma_{m,L_0^\perp}(\set{W | (L + W) \cap F \ne \emptyset}),
    \end{multline}
    where $C(\alpha)$ is a constant $\geq 1$ that depends on $n$ and $\alpha$. We define $u\colon L_0^\perp \to L^\perp$ to be the orthogonal projection onto $L^\perp$. Since $\mathrm{d}(L,L_0) \leq \alpha < 1$, the inequality (\ref{grassGraph}) in Appendix \ref{appendix_grass} says that $u$ is an isomorphism from $L_0^\perp$ to $L^\perp$ and $\norm*{u} \norm*{u^{-1}} \leq C(\alpha)$. According to Lemma \ref{lem_grass_iso}, $u$ induces a $C(\alpha)$-bilipschitz and one-to-one correspondence from $G(L_0^\perp,d)$ onto $G(L^\perp,d)$. We observe finally that for all set $W \subset L_0^\perp$, we have $L + u(W) = L + W$ because for $x \in L_0^\perp$, $x - u(x) \in L$. We deduce (\ref{grass_goal1}) by the action of Lipschitz functions on Hausdorff measures. We now have
    \begin{multline}
        \gamma_{n-d,n}(\set{W | W \cap F \ne \emptyset}) \\\geq C(\alpha)^{-1} \int_{G(L_0,\alpha)} \gamma_{m,L_0^\perp}(\set{W | (L + W) \cap F \ne \emptyset}) \, \mathrm{d}L
    \end{multline}
    and the right-hand side allows an application of Fubini,
    \begin{multline}
        \gamma_{n-d,n}(\set{W | W \cap F \ne \emptyset}) \\\geq C(\alpha)^{-1} \int_{G(m,L_0^\perp)} \gamma_{1,n}(\set{L \in B(L_0,\alpha) | (L + W) \cap F \ne \emptyset}) \, \mathrm{d}W.
    \end{multline}
    It is convenient at this point to make the change of variable $W = V^\perp$ (the orthogonal complement is taken in $L_0^\perp$) where $V \in G(d,L_0^\perp)$,
    \begin{multline}
        \gamma_{n-d,n}(\set{W | W \cap F \ne \emptyset}) \\\geq C(\alpha)^{-1} \int_{G(d,L_0^\perp)} \gamma_{1,n}(\set{L \in B(L_0,\alpha) | (L + V^\perp) \cap F \ne \emptyset}) \, \mathrm{d}V.
    \end{multline}
    Notice that for $V \in G(d,L_0^\perp)$ and $L \in B(L_0,1)$, we have $(L + V^\perp) \cap F \ne \emptyset$ if and only if $(x_L + V^\perp) \cap F \ne \emptyset$ where $x_L$ is the intersection of $L$ with $H$. We apply Lemma \ref{grassParam} and obtain that for some $\alpha \in ]0,1[$ (depending on $n$) and for all $V \in G(d,L_0^\perp)$,
    \begin{multline}
        \HH^{n-1}(\set{x \in H \cap B(0,2R) | (x + V^\perp) \cap F \ne \emptyset}) \\\leq C R^{n-1} \gamma_{1,n}(\set{L \in B(L_0,\alpha) | (L + V^\perp) \cap F \ne \emptyset}).
    \end{multline}
    In addition, we decompose the Lebesgue measure as a product to see that
    \begin{equation}
        \HH^{n-1}(\set{x \in H \cap B(0,2R) | (x + V^\perp) \cap F \ne \emptyset}) \geq C^{-1} \HH^d(p_V(F)).
    \end{equation}
    We have proved (\ref{zeta_step1}).

    \emph{Step 2.} We apply the Fubini theorem,
    \begin{multline}
        \int_Q \gamma_{n,n-d}(\set{W | (x + W) \cap E \ne \emptyset}) \, \mathrm{d}x \\= \int_{G(n-d,n)} \LL^n(\set{x \in Q | (x + W) \cap E \ne \emptyset}) \, \mathrm{d}W
    \end{multline}
    and we make the change of variable $W = V^\perp$ where $V \in G(d,n)$,
    \begin{multline}
        \int_Q \gamma_{n,n-d}(\set{W | (x + W) \cap E \ne \emptyset}) \, \mathrm{d}x \\= \int_{G(d,n)} \LL^n(\set{x \in Q | (x + V^\perp) \cap E \ne \emptyset}) \, \mathrm{d}V.
    \end{multline}
    Then we decompose the Lebesgue measure as a product to see that
    \begin{equation}
        \LL^n(\set{x \in Q | (x + V^\perp) \cap E \ne \emptyset}) \leq C R^{n-d} \HH^d(p_V(E)).
    \end{equation}
\end{proof}
\begin{rmk}\label{rmk_zeta}
    We justify that there exists a constant $C \geq 1$ (depending on $n$) such that for all cell $A \subset \R^n$, for all Borel set $E \subset A$,
    \begin{equation}\label{zeta_goal}
        \zeta^d \mres A(E) \leq C \zeta^d(E).
    \end{equation}
    First of all, we assume that $A$ is $(n-1)$-dimensional. We can consider that $A$ is a face of some cube $Q$ and then by the previous lemma,
    \begin{equation}\label{re-DS2}
        \mathrm{diam}(Q)^{-n} \int_{\frac{1}{2} Q}^* \zeta^d \mres A (\psi_x(E)) \, \mathrm{d}x \leq C \zeta^d(E).
    \end{equation}
    However, for all $x \in \frac{1}{2} Q$, we have $\psi_x = \mathrm{id}$ on $A$ so (\ref{re-DS2}) simplifies to (\ref{zeta_goal}). We deduce the general case by induction.
\end{rmk}

\section{Properties of Quasiminimal Sets}\label{mini_properties}
\subsection{Local Ahlfors-Regularity and Rectifiability}
We recall we introduce a few notions. For $k \in \N$, we denote by $\mathcal{E}_n(k)$ the grid of dyadic cells of sidelength $2^{-k}$ in $\R^n$. This is the set of all faces of the form
\begin{equation}
    p + 2^{-k} \prod_{i=1}^n [0,\alpha_i],
\end{equation}
where $p \in 2^{-k} \Z^n$ and $\alpha \in \set{0,1}^n$. The canonical $n$-complex of $\R^n$ is
\begin{equation}
    E_n = \Set{\prod_{i = 1}^n [0,\alpha_i] | \alpha \in \set{-1,0,1}^n}.
\end{equation}
For $k \in \N$ and $p \in 2^{-k} \Z^n$, the dyadic chart of center $p$ and sidelength $2^{-k}$ is
\begin{equation}
    E_n(p,k) = p + 2^{-k} E_n.
\end{equation}

We fix a Whitney subset $\Gamma$ of $X$ and an admissible energy $\II$. We will only need a limited number of their properties in this subsection.
\begin{enumerate}
    \item There exists a scale $t > 0$ and a constant $\GL \geq 1$ such that for all $x \in \Gamma$, for all $0 \leq r \leq r_t$, there exists an open set $O \subset \R^n$, a $\GL$-bilipschitz map $\GT\colon B(x,r) \to O$, an integer $k \in \N$ and a subset $S \subset \mathcal{E}_n(k)$ such that $\GT(\Gamma \cap B(x,r)) = O \cap \abs{S}$ and
        \begin{equation}
            \GT(B(x,\GL^{-1} r)) \subset B(0,2^{-k-1}) \subset B(0,2^{-k} \sqrt{n}) \subset O.
        \end{equation}
    \item There exists $\IL \geq 1$ such that 
        \begin{equation}
            \IL^{-1} \HH^d \leq \II \leq \IL \HH^d
        \end{equation}
\end{enumerate}
We will use the letters $t$, $\GL$, $\IL$ without reminders in the statements and the proofs of this section.

We recall that the core $E^*$ of a closet set $E \subset X$ is the support of $\HH^d \mres E$ in $X$. It can be characterized as
\begin{equation}
    E^* = \set{x \in X | \forall r > 0,\ \HH^d(E \cap B(x,r)) > 0}.
\end{equation}

The following proposition adapts \cite[Proposition 4.74]{Sliding} to our definition of quasiminimal sets and boundaries.
\begin{prop}[Local Ahlfors-regularity]\label{prop_density}
    Fix $\kappa \geq 1$, $h > 0$ and a scale $s \in ]0,\infty]$. Assume that $h$ is small enough (depending on $n$, $\GL$, $\IL$). Let $E$ be a $(\kappa,h,s)$-quasiminimal set with respect to $\II$ in $X$. There exists $C > 1$ (depending on $n$, $\kappa$, $\GL$, $\IL$) and a scale $u > 0$ (depending on $n$, $s$, $t$) such that for all $x \in E^*$, for all $0 < r \leq r_{u}(x)$,
    \begin{equation}\label{ahlfors-regularity}
        C^{-1} r^d \leq \HH^d(E \cap B(x,r)) \leq C r^d.
    \end{equation}
\end{prop}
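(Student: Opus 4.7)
The plan is to follow the classical two-step proof of local Ahlfors-regularity for quasiminimal sets, as in \cite[Proposition 4.74]{Sliding}, adapting the constructions to our Whitney-type boundary and admissible energy. The upper and lower bounds are proved separately by building ad hoc global sliding deformations and applying the quasiminimality inequality to convert geometric bounds on image measures into bounds on $\HH^d(E \cap B(x,r))$.

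For the upper bound, the strategy is, for $x \in E^*$ and $r \leq r_u(x)$, to construct a global sliding deformation $f$ inside a ball $B = B(x, \lambda r)$ (with $\lambda$ depending on $n$ and $\GL$) such that $W_f \supset E \cap B(x,r)$ and $\HH^d(f(W_f)) \leq C r^d$. When $B \cap \Gamma = \emptyset$, this is a standard cone construction: pick a center of projection $x_0 \in B(x, r/2) \setminus \overline{E}$ (which exists because $\HH^d(E)$ is locally finite), project radially onto $\partial B(x, r)$, and interpolate to the identity on a collar. When $B$ meets $\Gamma$, use the Whitney chart $\GT$ of Definition \ref{defi_whitney} to flatten $\Gamma$ to a union of dyadic faces, perform the analogous collapse inside the chart using a Federer--Fleming projection (Section \ref{subsection_FF}) onto the $d$-skeleton of the relevant dyadic cube, and pull back by $\GT^{-1}$. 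Applying the quasiminimality inequality and the comparison $\IL^{-1}\HH^d \leq \II \leq \IL \HH^d$ yields
\begin{equation*}
    \HH^d(E \cap B(x,r)) \leq C r^d + C h \HH^d(E \cap hB).
\end{equation*}
A standard absorption argument, applied to the quantity $\rho \mapsto \rho^{-d} \HH^d(E \cap B(x,\rho))$ on scales $\rho \leq r_u$ and relying on smallness of $h$, upgrades this to the clean bound $\HH^d(E \cap B(x,r)) \leq C r^d$.

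For the lower bound, we argue by contraposition: if $\HH^d(E \cap B(x,r))$ were much smaller than $\varepsilon r^d$ for sufficiently small $\varepsilon$, we derive a contradiction with $x \in E^*$. The key step is a decay lemma of the following type: there exist $\theta \in (0,1)$ and $\varepsilon_0 > 0$, depending on $n$, $\kappa$, $\GL$, $\IL$, such that $\HH^d(E \cap B(x,r)) \leq \varepsilon r^d$ with $\varepsilon \leq \varepsilon_0$ implies $\HH^d(E \cap B(x, \theta r)) \leq \tfrac{1}{2}\varepsilon(\theta r)^d$. To prove this decay, use a coarea-type averaging in the radial variable to select a good radius $\rho \in (\theta r, r)$ where $\HH^{d-1}(E \cap \partial B(x,\rho))$ is controlled by the mean value $\HH^d(E \cap B(x,r))/r$, and then build a sliding deformation that compresses $E \cap B(x,\rho)$ onto $\partial B(x,\rho)$; its image has $\HH^d$-measure $\lesssim \rho \cdot \HH^{d-1}(E \cap \partial B(x,\rho)) \lesssim \varepsilon r^d$. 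Quasiminimality, applied to this deformation, yields the claimed multiplicative decay. Iterating the decay lemma forces $\rho^{-d} \HH^d(E \cap B(x,\rho)) \to 0$, which combined with the positivity of $\HH^d(E \cap B(x,\rho))$ at every scale (definition of $E^*$) and the upper bound already established gives the contradiction.

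The main obstacle is making every intermediate deformation a genuine sliding deformation, i.e.\ compatible with $\Gamma$. For balls meeting $\Gamma$, both the cone projection in the upper bound and the radial compression in the decay lemma must take place inside the Whitney chart of Definition \ref{defi_whitney} (item (iii${}^*$)), so that $\Gamma$ appears as a dyadic set and the deformations can be arranged to send this dyadic set into itself; this is precisely why the axiom ``locally diffeomorphic to a cone'' and the dyadic chart axiom are needed, and why the scale $u$ must be chosen small enough, depending on $s$ (so that the quasiminimality inequality applies) and on $t$ (so that the chart exists on the relevant scale). A secondary but routine difficulty is the absorption of the error term $h \II(E \cap hB)$, which obliges us to phrase the upper bound via a supremum over scales rather than at a single fixed radius, and forces the smallness assumption on $h$ in terms of $n$, $\GL$, $\IL$.
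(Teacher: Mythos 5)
Your overall framework (reduce to a dyadic chart via Definition \ref{defi_whitney}, build sliding deformations, invoke quasiminimality) matches the paper's, but both core mechanisms have genuine gaps. For the lower bound, the decay lemma you propose does not close. First, the slice-and-cone argument only yields $\HH^d(E\cap B_{\theta r}) \leq \kappa C\, d^{-1}(1-\theta)^{-1}\theta^{-d}\,\varepsilon(\theta r)^d$, and the prefactor $\kappa C\, d^{-1}(1-\theta)^{-1}\theta^{-d}$ is always $\geq 1$ (indeed $(1-\theta)\theta^d<1$ and $\kappa, C\geq 1$), so there is no gain of $\tfrac12$ in the density ratio; to gain one needs the image of the inner part to be $\HH^d$-\emph{null}, not merely comparable to $\varepsilon r^d$, which is exactly what the Federer--Fleming projection provides once the image is too small to cover any $d$-face and can be pushed into the $(d-1)$-skeleton. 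Second, turning ``replace $E\cap B(x,\rho)$ by the cone over $E\cap\partial B(x,\rho)$'' into an actual Lipschitz sliding deformation is a substantial construction that you have not supplied. Third, and most seriously, even granting the decay lemma, your conclusion is not a contradiction: $\rho^{-d}\HH^d(E\cap B(x,\rho))\to 0$ is perfectly compatible with $\HH^d(E\cap B(x,\rho))>0$ for every $\rho$, hence with $x\in E^*$. The paper avoids this by making the \emph{measure} $\HH^d(E\cap U_k)$ decay geometrically while the regions $U_k$ shrink to a \emph{fixed} cube $U_\infty=[-\tfrac12,\tfrac12]^n$ with $\HH^d(E\cap U_\infty)>0$; the contradiction comes from that fixed positive lower bound, not from a vanishing density ratio at a point.

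For the upper bound, neither of your constructions gives $\HH^d(f(W_f))\leq Cr^d$. Radial projection onto $\partial B(x,r)$ bounds the image by $Cr^d$ only when $d=n-1$; for $d<n-1$ the image is a $d$-dimensional subset of the sphere with no a priori measure bound (the David--Semmes estimate only controls it by $C\HH^d(E\cap B)$, which is circular here). A single Federer--Fleming projection onto the $d$-skeleton of a mesh-$r$ grid does produce a skeleton of measure $Cr^d$, but the part of $E$ lying in the cells adjacent to the boundary of the grid region is only sent to a set of measure $C\HH^d(E\cap(\text{outer layer}))$, and this annular term is \emph{not} absorbable via the smallness of $h$: it is of the same order as the quantity being estimated. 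The paper resolves this with an increasing sequence of nested complexes $K_k$ of mesh $2^{-q(k)}\sim\mu^k$ and a weighted telescoping sum $\sum_k\lambda^k(\cdots)$, where the choice $\mu^{n-d}=\lambda$ makes the skeleton contributions summable and the annular contributions telescope against $\LL^n(U_{k+1}\setminus U_k)$. Your ``standard absorption argument'' addresses only the term $h\,\HH^d(E\cap hB)$, which is the easy part.
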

\begin{proof}
    The letter $C$ plays the role of a constant $\geq 1$ that depends on $n$, $\kappa$, $\GL$, $\IL$. Its value can increase from one line to another (but a finite number of times). We assume, without loss of generality, that $E = E^*$.

    We are going to reduce the statement of the proposition to a simpler situation. These simplifications preserve the quasiminimal condition except for the parameters $\kappa$, $h$ which are worsened by a multiplicative factor $\GL^2 \IL^2$. 

    We fix $x \in E$ and $0 < r \leq \min \set{r_s(x),r_t(x)}$. For all sliding deformation $f$ of $E$ in $B(x,r)$,
    \begin{equation}
        \II(E \cap W_f) \leq \kappa \II(f(E \cap W_f)) + h \II(E \cap B(x,hr)).
    \end{equation}
    As $\IL^{-1} \HH^d \leq \II \leq \IL \HH^d$, this implies
    \begin{equation}
        \HH^d(E \cap W_f) \leq \kappa \IL^2 \HH^d(f(E \cap W_f)) + h \IL^2 \HH^d(E \cap B(x,hr)).
    \end{equation}
    There exists an open set $O \subset \R^n$, a $\GL$-bilipschitz map $\GT\colon B(x,r) \to O$, an integer $k \in \N$ and a subset $S \subset \mathcal{E}_n(k)$ such that $\GT(\Gamma \cap B(x,r)) = O \cap \abs{S}$ and
    \begin{equation}
        \GT(B(x,\GL^{-1} r)) \subset B(0,2^{-k-1}) \subset B(0,2^{-k} \sqrt{n}) \subset O.
    \end{equation}
    We are going to see that the image $E'= \GT(E \cap B(x,r))$ satisfies a quasiminimality condition along $\Gamma' = \GT(\Gamma \cap B(x,r))$ in $B = B(0,2^{-k}\sqrt{n})$. Let $g$ be a sliding deformation of $E'$ along $\Gamma'$ in $B$. The function $f = T^{-1} \circ g \circ T$ is a sliding deformation of $E$ along $\Gamma$ in $B(x,r)$ so
    \begin{equation}
        \HH^d(E \cap W_f) \leq \kappa \IL^2 \HH^d(f(E \cap W_f)) + h \IL^2\HH^d(E \cap B(x,hr)).
    \end{equation}
    We assume $h \leq \GL^{-1}$ so that $B(x,hr) \subset B(x,\GL^{-1}r)$. As $\GT$, $\GT^{-1}$ are $\GL$-Lipschitz and $\GT(B(x,\GL^{-1} r)) \subset B(0,2^{-k-1})$, it follows that
    \begin{multline}\label{F_quasi}
        \HH^d(E' \cap W_g) \leq \kappa \GL^2 \IL^2 \HH^d(g(E' \cap W_g))\\+ h \GL^2 \IL^2\HH^d(E' \cap B(0,2^{-k-1})).
    \end{multline}
    We can also scale the set $E'$ so as to assume $k = 0$ (this does not alter $\kappa$, $h$).

    We conclude that it suffices to solve the following problem. Let the ball $B(0,\sqrt{n})$ be denoted by $B$. We assume that $E$ is a $\HH^d$ finite closed subset of $B$ and
    \begin{enumerate}
        \item $\HH^d(E \cap B(0,\tfrac{1}{2})) > 0$;
        \item there exists $S \subset \mathcal{E}_n(1)$ such that $\Gamma \cap B = \abs{S} \cap B$;
        \item for all sliding deformations $f$ of $E$ along $\Gamma$ in $B$,
            \begin{equation}\label{E_quasi}
                \HH^d(E \cap W_f) \leq \kappa \HH^d(f(E \cap W_f)) + h \HH^d(E \cap B(0,\tfrac{1}{2})).
            \end{equation}
    \end{enumerate}
    Then we show that
    \begin{align}
        \HH^d(E \cap \mathopen{]}-\tfrac{1}{2},\tfrac{1}{2}\mathclose{[}^n)    &\leq Cr^d\\
        \HH^d(E \cap \mathopen{]}-1,1\mathclose{[}^n))                         &\geq Cr^d.
    \end{align}

    \emph{Step 1. We prove that} 
    \begin{equation}\label{alfors_step1}
        \HH^d(E \cap \mathopen{]}-\tfrac{1}{2},\tfrac{1}{2}\mathclose{[}^n) \leq C.
    \end{equation}
    The sliding deformations will be Federer--Fleming projections of $E$ in finite $n$-complexes $K \preceq E_n$. The condition $K \preceq E_n$ ensures that a Federer--Fleming projection of $U(K) \cap E$ in $K$ induces a global sliding deformation in $B = B(0,\sqrt{n})$. Let us justify this claim. Let $\phi$ be a Federer--Fleming projection of $U(K) \cap E$ in $K$ (as in Lemma \ref{lem_FF}). We know that $\phi$ is locally Lipschitz in $\abs{K}$ but since $K$ is finite, $\phi$ is in fact Lipschitz in $\abs{K}$. We justify that $\phi$ can be extended as a Lipschitz function $\phi\colon \R^n \to \R^n$ by $\phi = \mathrm{id}$ in $\R^n \setminus \abs{K}$. By definition of $n$-complexes, $U(K)$ is an open set of $\R^n$ included in $\abs{K}$ so $\partial \abs{K} \subset \abs{K} \setminus U(K)$. For $x \in \R^n \setminus \abs{K}$ and $y \in \abs{K}$, the segment $[x,y]$ meets $\partial \abs{K}$ at a point $z$ and $\phi(z) = z$ because $z \in \abs{K} \setminus U(K)$. As a consequence, we have
    \begin{align}
        \abs{\phi(y) - \phi(x)} &   \leq \abs{\phi(y) - \phi(z)} + \abs{\phi(z) - \phi(x)}\\
                                &   \leq L\abs{y - z} +  \abs{x - z}\\
                                &   \leq (L + 1) \abs{x - y},
    \end{align}
    where $L$ is the Lipschitz constant of $\phi$ in $\abs{K}$. Finally, we justify that the homotopy
    \begin{equation}
        \phi_t = (1-t) \mathrm{id} + t\phi
    \end{equation}
    preserves $\Gamma$. As $\phi = \mathrm{id}$ outside $]-1,1[^n$ and there exists $S \subset \mathcal{E}_n(1)$ such that $\Gamma \cap [-1,1]^n = \abs{S} \cap [-1,1]^n$, it suffices to check that $\phi$ preserves the cells of $E_n$. This is where the condition $K \preceq E_n$ comes into play. Let $A \in E_n$ and let $x \in A$. We are going to show that $\phi(x) \in A$. Either $x \notin U(K)$ and $\phi(x) = x$ or $x \in U(K)$ and there exists $B \in K$ such that $x \in \mathrm{int}(B)$. In the last case, we have $B \subset A$ because $K \preceq E_n$. Then by the properties of Federer--Fleming projections, $\phi(x) \in B \subset A$. Now, we proceed to the construction.

    We fix a parameter $0 < \mu < 1$ which is close enough to $1$ (this will be precised later). We aim to apply the Federer--Fleming projection in a sequence of complexes $(K_k)_{k \in \N}$ whose supports are of the form
    \begin{equation}
        \abs{K_k} = (1 - \mu^k) [-1,1]^n.
    \end{equation}
    We also want the complexes $K_k$ to be composed of dyadic cells so that $K_k \preceq E_n$. The supports of $(K_k)_{k \in \N}$ will be in fact $(\tfrac{1}{2} + \sum_{i < k} 2^{-q(i)}) [-1,1]^n$ where $(q(k))_{k \in \N}$ is a sequence of non-negative integers such that, in some sense,
    \begin{equation}
        \frac{1}{2} - \sum_{i = 0}^{k-1} 2^{-q(i)} \sim \mu^k.
    \end{equation}
    We define $(q(k))_k$ by induction. Assuming that $q(0),\ldots,q(k-1)$ have been built and $\tfrac{1}{2} - \sum_{i < k} 2^{-q(i)} > 0$, we define $q(k)$ as the smallest non-negative integer such that
    \begin{equation}\label{q1.1}
        \frac{\frac{1}{2} - \sum_{i \leq k} 2^{-q(i)}}{\frac{1}{2} - \sum_{i < k} 2^{-q(i)}} \geq \mu.
    \end{equation}
    We present the main properties of this sequence, some of which will only be useful in step 2. It is straightforward that $q(k) \geq 2$ otherwise the numerator of (\ref{q1.1}) is non-positive. We rewrite (\ref{q1.1}) as $q(k)$ being the smallest non-negative integer such that
    \begin{equation}\label{q1.2}
        \frac{2^{-q(k)}}{\frac{1}{2} - \sum_{i < k} 2^{-q(i)}} \leq 1 - \mu.
    \end{equation}
    We can deduce that $(q(k))$ is strictly non-decreasing. Indeed, by definition of $q(k+1)$ we have,
    \begin{equation}
        \frac{2^{-q(k+1)}}{\frac{1}{2} - \sum_{i < k} 2^{-q(i)}} < \frac{2^{-q(k+1)}}{\frac{1}{2} - \sum_{i < k+1} 2^{-q(i)}} \leq 1 - \mu.
    \end{equation}
    so $q(k+1) < q(k)$. More generally, the minimality of $q(k)$ with respect to (\ref{q1.2}) is equivalent to
    \begin{equation}\label{q1.5}
        \frac{1}{2}(1 - \mu) < \frac{2^{-q(k)}}{\frac{1}{2} - \sum_{i < k} 2^{-q(i)}} \leq (1 - \mu).
    \end{equation}
    An induction on (\ref{q1.1}) shows that for all $k \geq 0$,
    \begin{equation}\label{q1.3}
        \frac{1}{2} - \sum_{i < k} 2^{-q(i)} \geq \frac{1}{2} \mu^k.
    \end{equation}
    We combine (\ref{q1.5}) and (\ref{q1.3}) to obtain
    \begin{equation}\label{q1.4}
        2^{-q(k)} \geq \frac{1}{4}(1 - \mu) \mu^k,
    \end{equation}
    Finally, we justify that $\sum_i 2^{-q(i)} = \frac{1}{2}$. It is clear that $\sum_i 2^{-q(i)} \leq \frac{1}{2}$ so $\lim\limits_{i \to \infty} 2^{-q(i)} = 0$ and then (\ref{q1.5}) implies that $\frac{1}{2} - \sum_i 2^{-q(i)} = 0$.

    Now, we are ready to build the complexes $(K_k)_k$. For $k \geq 0$, we define $S_k$ as the set of dyadic cells of sidelength $2^{-q(k)}$ subdivising the cube $(\frac{1}{2} + \sum_{i < k} 2^{-q(i)}) [-1,1]^n$ but not lying on its boundary. Then we define $K_k$ as the collection of maximal cells of $\bigcup_{i=0}^k S_i$, as in Definition \ref{defi_system}. We abuse a bit the definition because $S_i$ is not an $n$-chart.
    \begin{figure}[ht]
        \begin{center}
            \begin{minipage}[c]{60px}
                \setlength{\fboxsep}{0pt}
                \fbox{\includegraphics[width=59px]{./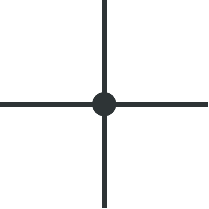}}
            \end{minipage}
            \quad
            \begin{minipage}[c]{90px}
                \setlength{\fboxsep}{0pt}
                \fbox{\includegraphics[width=89px]{./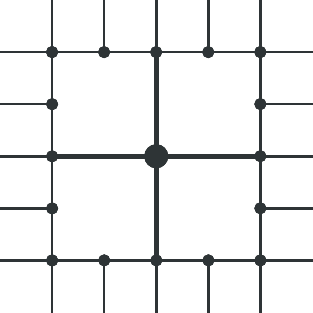}}
            \end{minipage}
            \quad
            \begin{minipage}[c]{105px}
                \setlength{\fboxsep}{0pt}
                \fbox{\includegraphics[width=104px]{./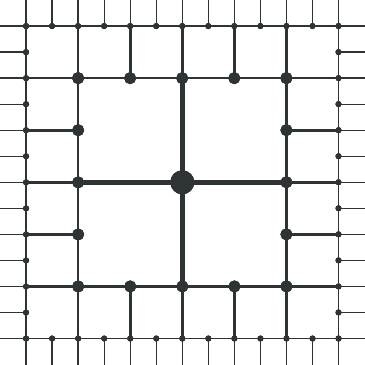}}
            \end{minipage}
            \caption{From left to right, examples of $K_0$, $K_1$, $K_2$. The external edges are not part of the complexes.}
        \end{center}
    \end{figure}

    We need a small number of observations. The set $K_k$ is a finite $n$-complex subordinated to $E_n$ and
    \begin{subequations}\label{K1}
        \begin{align}
            \abs{K_k}   &   = (\tfrac{1}{2} + \sum_{i < k} 2^{-q(i)}) [-1,1]^n,\\
            U(K_k)      &   = (\tfrac{1}{2} + \sum_{i < k} 2^{-q(i)}) \mathopen{]}-1,1\mathclose{[}^n.
        \end{align}
    \end{subequations}
    We abbreviate $U(K_k)$ as $U_k$; in particular $U_0 = \mathopen{]}-\tfrac{1}{2},\tfrac{1}{2}\mathclose{[}^n$. We also define
    \begin{equation}
        U_\infty = \bigcup_k U_k = \mathopen{]-1,1[}^n.
    \end{equation}
    The cells of $K_k$ have bounded overlap: each point of $\abs{K_k}$ belongs to at most $3^n$ cells $A \in K_k$. We show that $K_k$ is a subcomplex of $K_{k+1}$. We check first that $K_k \subset K_{k+1}$. This means that if $A$ is a maximal cell of $\bigcup_{i=0}^k S_i$, then it is also a maximal cell of $\bigcup_{i=0}^{k+1} S_i$. Indeed, we have for $B \in S_{k+1}$,
    \begin{equation}
        \mathrm{int}(A) \cap \mathrm{int}(B) \ne \emptyset \implies \mathrm{int}(B) \subset \mathrm{int}(A)
    \end{equation}
    because $A$ and $B$ are dyadic cells and the sidelength of $B$ is strictly less than the sidelength of $A$. Then, we check $K_k$ satisfies the definition of subcomplexes. This means that if $A \in K_k$ and $B$ is a maximal cell of $\bigcup_{i=0}^{k+1} S_i$ such that $A \subset B$, then $B$ is a maximal cell of $\bigcup_{i=0}^k S_i$. Indeed, $B$ cannot belong to $S_{k+1}$ because of the strictly decreasing sidelength. Finally, we show that
    \begin{align}
        \abs{K_{k+1}} \setminus U_k &= \bigcup \set{A \in K_{k+1} \setminus K_k}\\
                                    &= \bigcup \set{A \in K_{k+1} | A \cap U_k = \emptyset}.
    \end{align}
    The two right-hand sides are equivalent because according the properties of subcomplexes, we have $A \cap U_k \ne \emptyset$ if and only if $A \in K_k$. We justify that every $x \in \abs{K_{k+1}} \setminus U_k$ belongs to the right-hand side. We distinguish two cases. If $x \notin \abs{K_k}$, this is obvious. If $x \in \abs{K_k}$, then $x \in U_{k+1}$ so there exists $A \in K_{k+1}$ such that $x \in \mathrm{int}(A)$ and since $x \notin U_k$, we have $A \in K_{k+1} \setminus K_k$.

    Let $\phi$ be a Federer--Fleming projection of $E \cap U_{k+1}$ in $K_{k+1}$. We apply the quasiminimality condition (\ref{E_quasi}) with respect to the deformation $\phi$ in $B= B(0,\sqrt{n})$,
    \begin{equation}
        \HH^d(E \cap W_\phi) \leq \kappa \HH^d(f(E \cap W_\phi)) + h \HH^d(E \cap B(0,\tfrac{1}{2}))
    \end{equation}
    As $W_\phi \subset U_{k+1}$, we can use the following form which is more convenient
    \begin{equation}
        \HH^d(E \cap U_{k+1}) \leq (1+\kappa) \HH^d(f(E \cap U_{k+1})) + h \HH^d(E \cap B(0,\tfrac{1}{2}))
    \end{equation}
    We also assume $h \leq \tfrac{1}{2}$ so this simplifies to
    \begin{equation}
        \HH^d(E \cap U_{k+1})) \leq C \HH^d(\phi(E \cap U_{k+1})).
    \end{equation}
    We decompose $E \cap U_{k+1}$ in two parts: $E \cap U_k$ and $E \cap U_{k+1} \setminus U_k$. First, we claim $\HH^d(\phi(E \cap U_k)) \leq \HH^d(\abs*{K_{k+1}^d})$. Indeed, by the properties of the Federer--Fleming projections,
    \begin{equation}
        \phi(E \cap U_k) \subset \abs{K_{k+1}} \setminus \bigcup \set{\mathrm{int}(A) | A \in K_{k+1},\ \mathrm{dim} \, A > d}.
    \end{equation}
    As $K_k \subset K_{k+1}$, the function $\phi$ preserves the cells of $K_k$ and thus $\phi(U_k) \subset \abs{K_k}$. We deduce that
    \begin{align}
        \phi(E \cap U_k)    &   \subset \abs{K_k} \setminus \bigcup \set{\mathrm{int}(A) | A \in K_{k+1},\ \mathrm{dim} \, A > d}\\
                            &   \subset U_{k+1} \setminus \bigcup \set{\mathrm{int}(A) | A \in K_{k+1},\ \mathrm{dim} \, A > d}\\
                            &   \subset \bigcup \set{\mathrm{int}(A) | A \in K_{k+1},\ \mathrm{dim} \, A \leq d}.
    \end{align}
    Next, we claim that
    \begin{equation}
        \HH^d(\phi(E \cap U_{k+1} \setminus U_k)) \leq C \HH^d(E \cap U_{k+1} \setminus U_k).
    \end{equation}
    This is deduced from the fact that
    \begin{equation}
        U_{k+1} \setminus U_k = \bigcup \set{U_{k+1} \cap A | A \in K_{k+1},\ A \cap U_k = \emptyset},
    \end{equation}
    that for all $A \in K_{k+1}$, $\HH^d(\phi(E \cap U_{k+1} \cap A)) \leq C\HH^d(E \cap U_{k+1} \cap A)$ and that the cells of $K_{k+1}$ have bounded overlap. In conclusion,
    \begin{equation}\label{FF1}
        \HH^d(E \cap U_{k+1}) \leq C\HH^d(K_{k+1}^d) + C\HH^d(E \cap U_{k+1} \setminus U_k).
    \end{equation}
    We rewrite this inequality as
    \begin{equation}
        \HH^d(E \cap U_k) - \lambda \HH^d(E \cap U_{k+1}) \leq \HH^d(\abs*{K_{k+1}^d}),
    \end{equation}
    where $\lambda = C^{-1}(C - 1)$. We multiply both sides of the inequation by $\lambda^k$:
    \begin{equation}
        \lambda^k \HH^d(E \cap U_k) - \lambda^{k+1} \HH^d(E \cap U_{k+1}) \leq \lambda^k \HH^d(K_{k+1}^d).
    \end{equation}
    We sum this inequality over $k \geq 0$. Since $0 < \lambda < 1$ and $\HH^d(E \cap U_\infty) < \infty$, we have $\lim_{k \to \infty} \lambda^k \HH^d(E \cap U_k) = 0$ so
    \begin{equation}
        \HH^d(E \cap U_0) \leq \sum_k \lambda^k \HH^d(\abs*{K_{k+1}^d}).
    \end{equation}
    It is left to bound $\sum_k \lambda^k \HH^d(\abs*{K_{k+1}^d}) \leq C$. We choose $\mu$ so that $\mu^{n-d} = \lambda$. As the the cells of $K_{k+1}$ have a sidelength $\sim \mu^k$, we will see that this choice implies
    \begin{equation}\label{observation0}
        \lambda^k \left(\HH^d(\abs*{K_{k+1}^d}) - \HH^d(\abs*{K_k^d})\right) \leq C \LL^n(U_{k+1} \setminus U_k)
    \end{equation}
    Summing (\ref{observation0}) over $k \geq 0$ will give then
    \begin{align}
        (1 - \lambda) \sum_k \lambda^k \HH^d(\abs*{K_{k+1}^d}) - \HH^d(\abs*{K_0^d})    &\leq C\sum_k \LL^n(U_{k+1} \setminus U_k)\\
                                                                                        &\leq C.
    \end{align}
    Now, we justify (\ref{observation0}). It is straightforward that $\abs*{K_k^d} \subset \abs*{K_{k+1}^d}$ and that for $x \in \abs*{K_{k+1}^d} \setminus \abs*{K_k^d}$, there exists $A \in K_{k+1}^d \setminus K_k$ such that $x \in A$. We have thus
    \begin{equation}
        \HH^d(\abs*{K_{k+1}^d}) - \HH^d(\abs*{K_k^d}) \leq C \sum_{A \in K_{k+1}^d \setminus K_k} \mathrm{diam}(A)^d.
    \end{equation}
    By the choice $\mu^{n-d} = \lambda$, the fact that $\mu^k \leq C 2^{-q(k+1)}$ (see (\ref{q1.4})) and the fact that the cells of $K_{k+1}$ have sidelength $2^{-q(k+1)}$, we have then
    \begin{equation}
        \lambda^k \left(\HH^d(\abs*{K_{k+1}^d}) - \HH^d(\abs*{K_k^d})\right)  \leq C\sum_{A \in K_{k+1}^d \setminus K_k} \mathrm{diam}(A)^n
    \end{equation}
    The cardinal of $K_{k+1}^d \setminus K_k$ is comparable to the cardinal of $K_{k+1}^n \setminus K_k$ (modulo a multiplicative constant that depends on $n$) so finally
    \begin{equation}
        \lambda^k \left(\HH^d(\abs*{K_{k+1}^d}) - \HH^d(\abs*{K_k^d})\right) \leq C\LL^n(U_{k+1} \setminus U_k).
    \end{equation}

    \emph{Step 2. We prove that}
    \begin{equation}
        \HH^d(E \cap [-1,1]^n) \geq C^{-1}.
    \end{equation}
    We will proceed by contradiction. We fix $0 < \mu < 1$ close enough to $1$ (this will be precised later). We aim to apply the Federer--Fleming projection in a sequence of complexes $(K_k)_{k \in \N}$ whose supports is of the form
    \begin{equation}
        \abs{K_k} = (\tfrac{1}{2} + \mu^k) [-1,1]^n.
    \end{equation}
    We also want the complexes $K_k$ to be composed of dyadic cells so that $K_k \preceq E_n$. The supports of $(K_k)_{k \in \N}$ will be in fact $(1 - \sum_{i < k} 2^{-q(i)}) [-1,1]^n$ where $(q(k))_{k \in \N}$ is as a sequence of non-negative integer such that, in some sense,
    \begin{equation}
        \frac{1}{2} - \sum_{i=0}^{k-1} 2^{-q(i)} \sim \mu^k.
    \end{equation}
    We define $(q(k))_{k \in \N}$ as in step 1.

    For each $k$, let $K_k$ be the set of dyadic cells of sidelength $2^{-q(k)}$ subdivising the cube $(1 - \sum_{i < k} 2^{-q(i)}) [-1,1]^n$ but not lying on its boundary. The set $K_k$ is a finite $n$-complex subordinated to $E_n$ and
    \begin{subequations}\label{K2}
        \begin{align}
            \abs{K_k}   &   =(1 - \sum_{i < k} 2^{-q(i)}) [-1,1]^n,\\
            U(K_k)      &   =(1 - \sum_{i < k} 2^{-q(i)}) \mathopen{]}-1,1\mathclose{[}^n.
        \end{align}
    \end{subequations}
    We abbreviate $U(K_k)$ as $U_k$; in particular $U_0 = \mathopen{]}-1,1\mathclose{[}^n$. We also define
    \begin{equation}
        U_\infty = \bigcap_k U_k = [-\tfrac{1}{2},\tfrac{1}{2}]^n.
    \end{equation}
    The cells of $K_k$ have bounded overlap: each point of $\abs{K_k}$ belongs to at most $3^n$ cells $A \in K_k$. We observe that $K_{k+1}$ is subordinate to $K_k$. Indeed, $K_k$ and $K_{k+1}$ are composed of dyadic cells, $U_{k+1} \subset U_k$ and the cells of $K_{k+1}$ have a sidelength which is strictly less than those of $K_k$. We also observe that there exists a subcomplex $L$ of $K_k$ such that $\abs{L} = \abs{K_{k+1}}$ and $U(L) = U_{k+1}$. We deduce as in step 1 that
    \begin{equation}
        \abs{K_k} \setminus U_{k+1} = \bigcup \set{A \in K_k | A \cap U_{k+1} = \emptyset}.
    \end{equation}

    Let $\phi$ be a Federer--Fleming projection of $E \cap U_k$ in $K_k$. By the properties of the Federer--Fleming projection, there exists $C_0 \geq 1$ (depending on $n$) such that
    \begin{equation}
        \HH^d(\phi(E \cap U_k)) \leq C_0 \HH^d(E \cap U_k).
    \end{equation}
    As a cell $A \in K_k$ has sidelength $2^{-q(k)}$, its area $\HH^d(\tfrac{1}{2}A)$ is of the form $C 2^{-dq(k)}$. We deduce that there exists $C_1 > 0$ (depending only on $n$) such that
    \begin{equation}
        \HH^d(E \cap U_k) \leq C_1^{-1} 2^{-dq(k)} \implies \HH^d(\phi(E \cap U_k)) < \HH^d(\tfrac{1}{2}A).
    \end{equation}
    In this case, the set $\phi(E \cap U_k)$ does not include $\frac{1}{2}A$. It can be postcomposed with a radial projection centered in $\tfrac{1}{2}A$ and sent to $\partial A$. If $\HH^d(E \cap U_k) \leq C_1^{-1} 2^{-dq(k)}$, we can thus assume that for all $A \in K_k^d$, $\HH^d(\phi(E \cap U_k) \cap A) = 0$. That being done, we apply the quasiminimality condition (\ref{E_quasi}) with respect to $\phi$ in $B=B(0,\sqrt{n})$. We also assume $h \leq \tfrac{1}{2}$ so that
    \begin{equation}
        \HH^d(E \cap U_k)) \leq C \HH^d(\phi(E \cap U_k)).
    \end{equation}
    We decompose $E \cap U_k$ in two parts. The points of $E \cap U_{k+1}$ are sent in the $d$-dimensional skeleton of $K_k$ (as in step 1) and then radially projected in the $(d-1)$-skeleton so their image is $\HH^d$-negligible. On the other hand, we have
    \begin{equation}\label{FF2}
        \HH^d(\phi(E \cap U_k \setminus U_{k+1})) \leq C \HH^d(E \cap U_k \setminus U_{k+1}).
    \end{equation}
    This is deduced for the fact that 
    \begin{equation}
        U_k \setminus U_{k+1} = \bigcup \set{U_k \cap A | A \in K_k,\ A \cap U_{k+1} = \emptyset} ,
    \end{equation}
    that for all $A \in K_k$, $\HH^d(\phi(E \cap U_k \cap A)) \leq C\HH^d(E \cap U_k \cap A)$ and that the cells of $K_k$ have bounded overlap. In conclusion,
    \begin{equation}
        \HH^d(E \cap U_k) \leq C \HH^d(E \cap U_k \setminus U_{k+1}).
    \end{equation}
    We rewrite this inequality as
    \begin{equation}
        \HH^d(E \cap U_{k+1}) - \lambda \HH^d(E \cap U_k) \leq 0.
    \end{equation}
    where $\lambda = C^{-1}(C - 1)$. Multiplying this inequality by $\lambda^{-k}$, we obtain a telescopic term:
    \begin{equation}\label{FF3}
        \lambda^{-(k+1)}\HH^d(E \cap U_{k+1}) - \lambda^{-k} \HH^d(E \cap U_k) \leq 0.
    \end{equation}
    We choose $\mu$ such that $\mu^d = \lambda$. Next, we show that there exists a constant $C_3 \geq 1$ (depending on $n$, $\kappa$) such that if $\HH^d(E \cap U_0) \leq C_3^{-1}$, then (\ref{FF3}) holds for all $k \in \N$. The idea is to observe that if (\ref{FF3}) holds for $i=0,\ldots,k-1$, we can sum this telescopic inequality and obtain
    \begin{equation}\label{FF5}
        \lambda^{-k}\HH^d(E \cap U_k) - \HH^d(E \cap U_0) \leq 0.
    \end{equation}
    so
    \begin{equation}
        \HH^d(E \cap U_k) \leq \lambda^k \HH^d(E \cap U_0).
    \end{equation}
    According to the choice $\mu^d = \lambda$ and (\ref{q1.5}), there exists $C_2 \geq 1$ (depending on $n$, $\kappa$) such that for all $k \in \N$, $2^{-dq(k)} \geq C_2^{-1} \lambda^k$. We conclude that if $\HH^d(E \cap U_0) \leq (C_1 C_2)^{-1}$, then
    \begin{equation}
        \HH^d(E \cap U_k) \leq C_1^{-1} 2^{-dq(k)}
    \end{equation}
    and the process can be iterated. However, taking the limit $k \to \infty$ in (\ref{FF5}) yields a contradiction because $0 < \lambda < 1$ and $0 < \HH^d(E \cap U_0) \leq \HH^d(E \cap U_\infty) < \infty$.
\end{proof}

We are going to use our new estimate for the Federer--Fleming projection (\ref{FF_estimate2}) to prove the rectifiability of quasiminimal sets.
\begin{prop}\label{prop_rect}
Fix $\kappa \geq 1$, $h > 0$ and a scale $s \in ]0,\infty]$. Assume that $h$ is small enough (depending on $n$, $\GL$, $\IL$). Let $E$ be a $(\kappa,h,s)$-quasiminimal set with respect to $\II$ in $X$. Then $E$ is $\HH^d$ rectifiable.
\end{prop}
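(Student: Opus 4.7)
The plan is to argue by contradiction using the new Federer--Fleming estimate~(\ref{FF_estimate2}) together with the Besicovitch--Federer projection theorem. Suppose that the rectifiable decomposition $E = E_r \cup E_u$ has $\HH^d(E_u) > 0$. Since $\II$ is equivalent to $\HH^d$, the measure $\HH^d \mres E_u$ is locally finite, so by the differentiation theorem for Radon measures, $\HH^d$-almost every $x \in E_u$ is a density point of $E_u$ in $E$, meaning
\begin{equation}
    \lim_{r \to 0} \frac{\HH^d(E_r \cap B(x,r))}{\HH^d(E \cap B(x,r))} = 0.
\end{equation}
Moreover, by Proposition~\ref{prop_density}, $\HH^d(E \cap B(x,r)) \geq C^{-1} r^d$ for all small $r$, so the density point $x$ automatically lies in $E^*$ and the denominator is controlled from below.

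Next I fix such a density point $x$ and a small ball $B = B(x,r)$ with $r \leq \min\{r_s(x), r_t(x)\}$. Using the Whitney chart $\GT$ from axiom (iii**) of Definition~\ref{defi_whitney}, I transport to the model situation used in Proposition~\ref{prop_density}: after a bilipschitz change of coordinates (which only worsens the constants $\kappa,h$ by a factor $\GL^2\IL^2$) and a rescaling, I may assume that $\Gamma \cap B = |S| \cap B$ for some $S \subset \mathcal{E}_n(k)$ and work inside the canonical setup where $E_n$-subordinate complexes yield valid global sliding deformations. I then take a very fine finite $n$-complex $K \preceq E_n$ whose support covers $B(0,\tfrac12)$ and whose $d$-skeleton contains the relevant faces of $\Gamma$, and I apply the Federer--Fleming projection $\phi$ of $E \cap U(K)$ into $K^d$. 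As in the proof of Proposition~\ref{prop_density}, $\phi$ extends to a Lipschitz global sliding deformation supported in $B$.

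The key point is then to estimate $\HH^d(\phi(E))$ separately on the two parts. For the unrectifiable part, the new estimate~(\ref{FF_estimate2}) gives, for every $A \in K^d$,
\begin{equation}
    \HH^d(A \cap \phi(E_u)) \leq C \int_{G(d,n)} \HH^d(p_V(V_A \cap E_u)) \, \mathrm{d}V,
\end{equation}
and since $E_u$ is purely $d$-unrectifiable, Besicovitch--Federer (\cite[Theorem 18.1]{Mattila}) yields $\HH^d(p_V(E_u)) = 0$ for $\gamma_{d,n}$-a.e.\ $V$; summing over the finitely many $A \in K^d$ gives $\HH^d(\phi(E_u)) = 0$. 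For the rectifiable part, the classical estimate~(\ref{FF_estimate1}) gives $\HH^d(\phi(E_r \cap A)) \leq C \HH^d(E_r \cap A)$, and summing over cells with bounded overlap gives $\HH^d(\phi(E_r \cap B)) \leq C \HH^d(E_r \cap B)$. Combining and using $\IL^{-1}\HH^d \leq \II \leq \IL \HH^d$, the quasiminimality inequality (in the form of Remark~\ref{rmk_B}) yields
\begin{equation}
    \HH^d(E \cap B) \leq C \HH^d(E_r \cap B) + Ch \, \HH^d(E \cap hB).
\end{equation}
Since $hB \subset B$ and the left-hand side is bounded below by $C^{-1} r^d$ (Ahlfors regularity), dividing by $\HH^d(E \cap B)$ and sending $r \to 0$ forces
\begin{equation}
    1 \leq C \lim_{r \to 0} \frac{\HH^d(E_r \cap B(x,r))}{\HH^d(E \cap B(x,r))} + Ch,
\end{equation}
which contradicts the density property at $x$ provided $h$ is taken small enough. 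Hence $\HH^d(E_u) = 0$ and $E$ is $\HH^d$ rectifiable.

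The main obstacle is the bookkeeping ensuring that $\phi$ is genuinely a global sliding deformation compatible with $\Gamma$ (so that the quasiminimality inequality can be applied to it); this is exactly what the Whitney grid structure of $\Gamma$ and the condition $K \preceq E_n$ are designed to guarantee, as in the reduction carried out at the beginning of the proof of Proposition~\ref{prop_density}.
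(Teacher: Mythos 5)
Your overall strategy matches the paper's in spirit: decompose $E$ into rectifiable and purely unrectifiable parts, use the Federer--Fleming projection with the new estimate~(\ref{FF_estimate2}), kill the unrectifiable contribution by Besicovitch--Federer, and contradict lower Ahlfors-regularity at a density point of $E_u$. But the central step is incomplete, and the gap is precisely the one the paper's proof works hardest to close.

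The claim ``summing over the finitely many $A\in K^d$ gives $\HH^d(\phi(E_u))=0$'' does not follow. The Federer--Fleming projection sends $E\cap U(K)$ into
\begin{equation}
    \abs{K}\setminus\bigcup\set{\mathrm{int}(A) \mid A\in K,\ \mathrm{dim}\,A>d},
\end{equation}
which is \emph{not} $\abs{K^d}$: it also contains $\abs{K}\setminus U(K)$, the external boundary faces on which no radial projection is performed and on which $\phi=\mathrm{id}$. Points of $E$ near $\partial\abs{K}$ are pushed there by the radial projections in the adjacent cells, and the estimate~(\ref{FF_estimate2}) says nothing about that piece of the image; the only available control is the classical bound~(\ref{FF_estimate1}), which gives $\HH^d(\phi(E\cap A))\leq C\HH^d(E\cap A)$ and so does not vanish for the unrectifiable part. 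Consequently, what you actually obtain is
\begin{equation}
    \HH^d(\phi(E\cap W_\phi))\leq C\HH^d(E_r\cap B)+C\HH^d(E\cap \text{(transition zone near $\partial\abs{K}$)}),
\end{equation}
and without controlling the second term the quasiminimality inequality does not produce the claimed contradiction. A second, smaller issue: estimate~(\ref{FF_estimate2}) bounds $\HH^d(A\cap\phi(E))$ by $\int\HH^d(p_V(V_A\cap E))\,\mathrm{d}V$; since $\phi$ is built from $E$ (not from $E_u$), you should not apply it to ``$\phi(E_u)$'' directly, but instead split the right-hand side as $\int\HH^d(p_V(V_A\cap E_r))\,\mathrm{d}V+\int\HH^d(p_V(V_A\cap E_u))\,\mathrm{d}V$ and use Besicovitch--Federer on the second integral. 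This is a correct and easy fix, unlike the boundary issue above.

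The paper closes the boundary gap by taking a one-parameter family of nested complexes $K_k$ with $\abs{K_{k+1}}\subset U(K_k)$, all composed of cells of the same sidelength $2^{-q}$, so that $\phi(E\cap U_{k+1})$ lands entirely in the \emph{interior} $d$-cells of $K_k$ (where~(\ref{FF_estimate2}) applies), while only the transition shell $E\cap U_k\setminus U_{k+1}$ can be sent to the external boundary (where only~(\ref{FF_estimate1}) is available). A Chebyshev argument over $k$ then selects a shell of small $\HH^d$-mass, using the upper Ahlfors bound $\HH^d(E\cap B)\leq C\HH^d(E\cap B(0,\tfrac12))$ from Proposition~\ref{prop_density}, so that the transition term can be absorbed into the left-hand side by choosing $q$ large enough depending on $n$ and $\kappa$. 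Your outline needs this shrinking-shells-and-Chebyshev device; once inserted, the rest of the argument (density point of $E_u$, contradiction with lower Ahlfors-regularity) goes through as you intended.
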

\begin{proof}
    The letter $C$ plays the role of a constant $\geq 1$ that depends on $n$, $\kappa$, $\GL$, $\IL$. Its value can increase from one line to another (but a finite number of times). We assume, without loss of generality, that $E = E^*$.

    We decompose $E$ in two disjoint $\HH^d$ measurable parts: a $d$-rectifiable part $E_r$ and a purely $d$-unrectifiable part $E_u$. We are going to prove that there exists $\lambda \geq 1$ such that for all $x \in E$, for all small $r > 0$,
    \begin{equation}\label{af_rect}
        \HH^d(E \cap B(x,r)) \leq C \HH^d(E_r \cap B(x, \lambda r)).
    \end{equation}
    Let us explain why this this fulfills the lemma. The set $E_r$ is $\HH^d$ measurable and $\HH^d$-locally finite so for $\HH^d$-a.e. $x \in \R^n \setminus E_r$,
    \begin{equation}
        \lim r^{-d} \HH^d(B(x,r) \cap E_r) = 0.
    \end{equation}
    See for example \cite[Theorem 6.2(2)]{Mattila}. We combine this property with (\ref{af_rect}) to deduce that for $\HH^d$-a.e. $x \in E_u$,
    \begin{equation}
        \lim r^{-d} \HH^d(B(x,r) \cap E) = 0.
    \end{equation}
    This contradicts the Ahlfors-regularity of $E$ so $E_u = \emptyset$.

    We fix $x \in E$ and $0 < r \leq \min \set{r_s(x),r_t(x),r_u}$ where $u$ is the scale given by Proposition \ref{prop_density}. By quasiminimality, we know that for all sliding deformations $f$ of $E$ in $B(x,r)$,
    \begin{equation}
        \II(E \cap W_f) \leq \kappa \II(f(E \cap W_f)) + h \II(E \cap B(x,hr)),
    \end{equation}
    By Ahlfors-regularity, we also know that for all $0 < \lambda \leq 1$,
    \begin{equation}
        \HH^d(E \cap B(x,r)) \leq C \lambda^{-d} \HH^d(E \cap B(x,\lambda r)).
    \end{equation}
    We can simplify the setting as we did in Proposition \ref{prop_density}. It suffices to solve the following problem. Let the ball $B(0,\sqrt{n})$ be denoted by $B$. We assume that $E$ is a $\HH^d$ finite closed subset of $B$ and
    \begin{enumerate}
        \item $\HH^d(E \cap B) \leq C \HH^d(E \cap B(0,\tfrac{1}{2}))$;
        \item there exists $S \subset \mathcal{E}_n(1)$ such that $\Gamma \cap B = \abs{S} \cap B$;
        \item for all sliding deformations $f$ of $E$ along $\Gamma$ in $B$,
            \begin{equation}\label{E_rect_quasi}
                \HH^d(E \cap W_f) \leq \kappa \HH^d(f(E \cap W_f)) + h \HH^d(E \cap B(0,\tfrac{1}{2}));
            \end{equation}
    \end{enumerate}
    Then we show that
    \begin{equation}\label{rect_goal}
        \HH^d(E \cap \mathopen{]}-\tfrac{1}{2},\tfrac{1}{2}\mathclose{[}^n) \leq C \int_{G(d,n)} \HH^d(p_V(E \cap \mathopen{]}-1,1\mathclose{[}^n)) \, \mathrm{d}V.
    \end{equation}
    Thanks to the Besicovitch--Federer projection theorem \cite[Theorem 18.1]{Mattila}, the right-hand side cancels the purely $d$-unrectifiable part of $E$.

    We fix $q \in \N^*$. For $0 \leq k < 2^q$, let $K_k$ be the set of dyadic cells of sidelength $2^{-q}$ subdivising the cube $(1 - k2^{-q}) [-1,1]^n$ but which are not lying in its boundary. The set $K_k$ is a finite $n$-complex subordinated to $E_n$ and
    \begin{subequations}
        \begin{align}
            \abs{K_k}   &   = (1 - k2^{-q}) [-1,1]^n,\\
            U(K_k)      &   = (1 - k2^{-q}) \mathopen{]}-1,1\mathclose{[}^n.
        \end{align}
    \end{subequations}
    We abbreviate $U(K_k)$ as $U_k$; in particular $U_0 = \mathopen{]}-1,1\mathclose{[}^n$. The cells of $K_k$ have bounded overlap: each point of $\abs{K_k}$ belongs to at most $3^n$ cells $A \in K_k$. It is clear that $K_{k+1}$ is a subcomplex of $K_k$ (the sidelength $2^{-q}$ does not depends on $k$). We deduce that
    \begin{align}
        \abs{K_k} \setminus U_{k+1} &= \bigcup \set{A \in K_k | A \in K_k \setminus K_{k+1}}\\
                                    &= \bigcup \set{A \in K_k | A \cap U_k \ne \emptyset}.
    \end{align}

    Let $\phi$ be a Federer--Fleming projection of $E \cap U_k$ in $K_k$. We apply the quasiminimality of $E$ with respect to $\phi$ in $B = B(0,\sqrt{n})$. We also assume $h \leq \tfrac{1}{2}$ so that
    \begin{equation}
        \HH^d(E \cap \mathopen{]}-\tfrac{1}{2},\tfrac{1}{2}\mathclose{[}^n)) \leq C \HH^d(\phi(E \cap U_k)).
    \end{equation}
    We decompose $E \cap U_k$ in two parts: $E \cap U_{k+1}$ and $E \cap U_k \setminus U_{k+1}$. First, we have (as in step $1$ of Proposition \ref{prop_density}) ,
    \begin{equation}
        \phi(E \cap U_{k+1}) \subset \bigcup\set{A \in K_k | \mathrm{dim} \, A \leq d}.
    \end{equation}
    By the properties of Federer--Fleming projections, we have for $A \in K_k^d$,
    \begin{equation}
        \HH^d(\phi(E) \cap A) \leq C \int_{G(d,n)} \HH^d(p_V(E \cap U_0)) \, \mathrm{d}V
    \end{equation}
    and since $K_k^d$ contains at most $C 2^q$ cells,
    \begin{equation}
        \HH^d(\phi(E \cap U_{k+1})) \leq C 2^{qd} \int_{G(d,n)} \HH^d(p_V(E \cap U_0)) \, \mathrm{d}V.
    \end{equation}
    Next, we claim that
    \begin{equation}
        \HH^d(\phi(E \cap U_k \setminus U_{k+1})) \leq C \HH^d(E \cap U_{k+1} \setminus U_k).
    \end{equation}
    This is deduced from the fact that
    \begin{equation}
        U_k \setminus U_{k+1} = \bigcup \set{U_k \cap A | A \in K_k,\ A \cap U_{k+1} = \emptyset},
    \end{equation}
    that for all $A \in K_k$, $\HH^d(\phi(E \cap U_k \cap A)) \leq C\HH^d(E \cap U_k \cap A)$ and that the cells of $K_k$ have bounded overlap. In sum
    \begin{multline}\label{FF3a}
        \HH^d(E \cap \mathopen{]}-\tfrac{1}{2},\tfrac{1}{2}\mathclose{[}^n)) \leq C 2^q \int_{G(d,n)} \HH^d(p_V(E \cap U_0)) \, \mathrm{d}V\\ + C \HH^d(E \cap U_k \setminus U_{k+1}).
    \end{multline}
    We are going to use a Chebyshev argument to find an index $0 \leq k < 2^q$ (depending on $n, \kappa$) such that $\frac{1}{2}\mathopen{]}-1,1\mathclose{[}^n \subset U_k$ and
    \begin{equation}
        \HH^d(E \cap U_k \setminus U_{k+1}) \leq \tfrac{1}{2} C^{-1} \HH^d(E \cap \mathopen{]}-\tfrac{1}{2},\tfrac{1}{2}\mathclose{[}^n).
    \end{equation}
    The sets $E \cap U_k \setminus U_{k+1}$ are disjoint so
    \begin{equation}
        \sum_k \HH^d(E \cap U_k \setminus U_{k+1}) \leq \HH^d(E \cap U_0).
    \end{equation}
    As $U_k = (1 - k2^{-q}) \mathopen{]}-1,1\mathclose{[}^n$, there exist at least $2^{q-1}$ index $0 \leq k < 2^q$ such that $\mathopen{]}-\tfrac{1}{2},\tfrac{1}{2}\mathclose{[}^n \subset U_k$ so there exists an index $k$ such that $\mathopen{]}-\tfrac{1}{2},\tfrac{1}{2}\mathclose{[}^n \subset U_k$ and
    \begin{equation}
        2^{q-1} \HH^d(E \cap U_k \setminus U_{k+1}) \leq \HH^d(E \cap U_0).
    \end{equation}
    As $\HH^d(E \cap B) \leq C \HH^d(E \cap B(0,\tfrac{1}{2}))$, we then have
    \begin{equation}
        \HH^d(E \cap U_k \setminus U_{k+1}) \leq C 2^{-q} \HH^d(E \cap \mathopen{]}-\tfrac{1}{2},\tfrac{1}{2}\mathclose{[}^n).
    \end{equation}
    so we can choose $q$ big enough (depending on $n$, $\kappa$) such that
    \begin{equation}
        \HH^d(E \cap U_k \setminus U_{k+1}) \leq \tfrac{1}{2} C^{-1} \HH^d(E \cap \mathopen{]}-\tfrac{1}{2},\tfrac{1}{2}\mathclose{[}^n).
    \end{equation}
    Now, (\ref{FF3a}) implies
    \begin{equation}
        \HH^d(E \cap \mathopen{]}-\tfrac{1}{2},\tfrac{1}{2}\mathclose{[}^n) \leq C \int_{G(d,n)} \HH^d(p_V(E \cap U_0)) \, \mathrm{d}V.
    \end{equation}
    The constant $2^q$ has been absorbed in $C$ because $q$ depends now on $n$, $\kappa$.
\end{proof}

\subsection{Weak Limits of Quasiminimizing Sequences}
We prove that a weak limit of a quasiminimizing sequence is a quasiminimal set. Our working space is an open set $X$ of $\R^n$.
\begin{thm}[Limiting Theorem]\label{thm_limit}
    Fix a Whitney subset $\Gamma$ of $X$. Fix an admissible energy $\II$ which is induced by a continuous integrand. Fix $\kappa \geq 1$, $h > 0$ and a scale $s \in ]0,\infty]$. Assume that $h$ is small enough (depending on $n$, $\Gamma$, $\II$). Let $(E_i)$ be a sequence of closed, $\HH^d$ locally finite subsets of $X$ satisfying the following conditions:
    \begin{enumerate}[label=(\roman*)]
        \item the sequence of Radon measures $(\II \mres E_i)$ has a weak limit $\mu$ in $X$;
        \item for all open balls $B$ of scale $\leq s$ in $X$, there exists a sequence $(\varepsilon_i) \to 0$ such that for all global sliding deformations $f$ in $B$,
            \begin{equation}
                \II(E_i \cap W_f) \leq \kappa \II(f(E_i \cap W_f)) + h \II(E_i \cap hB) + \varepsilon_i
            \end{equation}
    \end{enumerate}
    Then we have
    \begin{equation}
        \II \mres E \leq \mu \leq \ka \II \mres E,
    \end{equation}
    where $E = \spt(\mu)$ and $\ka = \kappa + h$. The set $E$ is $(\kappa,\ka h,s)$-quasiminimal with respect to $\II$ in $X$ and in particular, $\HH^d$ rectifiable.
\end{thm}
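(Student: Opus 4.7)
My plan is to first establish quasiminimality of $E$ with parameters $(\kappa, \ka h, s)$; rectifiability then follows from Proposition \ref{prop_rect}, and the two-sided comparison $\II \mres E \leq \mu \leq \ka \II \mres E$ is obtained by a density analysis based on axioms (ii) and (iii) of the admissible energy. The decisive feature of hypothesis (ii) is that it is stated for \emph{global} sliding deformations: a single global $f$ in a ball $B$ of scale $\leq s$ is simultaneously a valid competitor against every $E_i$, which is what makes a limiting argument accessible.

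As a preliminary step I would establish the uniform upper Ahlfors bound $\mu(B(x,r)) \leq C r^d$ for every ball of scale $\leq s$. This follows from a shell-type Federer--Fleming projection: in each $B(x,r)$ one builds a global sliding deformation collapsing $B(x,r/2)$ onto the $(d{-}1)$-skeleton of a chart covering the annulus, tests (ii) to bound $\II(E_i \cap B(x, r/2)) \leq C r^d$ with $C$ independent of $i$, and passes to the limit with upper semicontinuity on compacts. A matching lower Ahlfors bound on $E$ follows from Proposition \ref{prop_density} applied to the $E_i$ together with the convergence of supports.

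The quasiminimality of $E$ is the heart of the proof. Applying (ii) to an arbitrary global sliding deformation $f$ in $B$ and passing to the limit yields
\begin{equation*}
\mu(W_f) \;\leq\; \liminf_i \II(E_i \cap W_f) \;\leq\; \kappa \limsup_i \II(f(E_i \cap W_f)) + h\,\mu(\overline{hB}),
\end{equation*}
by lower semicontinuity on the open set $W_f$ and upper semicontinuity on the compact $\overline{hB}$. The main obstacle is the comparison
\begin{equation*}
\limsup_i \II(f(E_i \cap W_f)) \;\leq\; \II(f(E \cap W_f)),
\end{equation*}
since the weak convergence of $\II \mres E_i$ does not transfer automatically through a Lipschitz, possibly non-injective $f$. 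I plan to handle this by covering $f(\overline{W_f})$ by finitely many small balls on which $f$ is almost affine and the continuous integrand of Definition \ref{defi_energy2} is almost constant, then comparing Hausdorff measures locally using the uniform upper Ahlfors bound to control the multiplicity of the pushforward. Combining this with the lower bound $\II \mres E \leq \mu$ produces the quasiminimality inequality for $E$ with parameters $(\kappa, \ka h, s)$; the factor $\ka$ appears because $\mu(\overline{hB})$ is upgraded to $\ka \II(E \cap \overline{hB})$ via the density comparison. Proposition \ref{prop_rect} then gives rectifiability.

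With $E$ rectifiable, the two-sided comparison reduces to a density computation at $\HH^d$-a.e.\ $x \in E$ with tangent plane $V = T_x E$. For $\II \mres E \leq \mu$: since the supports of $\II \mres E_i$ converge to $E$ and $E$ has tangent $V$ at $x$, $E_i \cap \overline B(x,r)$ spans $V \cap \partial B(x,r)$ for large $i$, so axiom (iii) gives $\II(V \cap B(x,r)) \leq \II(E_i \cap B(x,r)) + \varepsilon(r) r^d$; letting $i \to \infty$, dividing by $\II(V \cap B(x,r))$, and invoking axiom (ii) together with the final lemma of Section 1.3.3 yields the density inequality. For $\mu \leq \ka \II \mres E$: construct a tangent-plane competitor by composing a Federer--Fleming projection onto the $d$-skeleton of a chart adapted to the Whitney structure of $\Gamma$ with a retraction onto $V$; testing (ii) against this global sliding deformation gives $\II(E_i \cap B(x, r/2)) \leq \kappa \II(V \cap B(x,r)) + h \II(E_i \cap B(x,r)) + \varepsilon_i$, and passing to the limit followed by the density quotient gives $\mu(B(x,r))/\II(E \cap B(x,r)) \leq \kappa + h + o(1) = \ka + o(1)$, completing the proof.
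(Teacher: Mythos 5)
Your overall scaffolding (uniform Ahlfors bounds, quasiminimality in the interior, then a density computation for the two-sided comparison, then rectifiability via Proposition~\ref{prop_rect}) matches the paper's, but there are two genuine gaps at the heart of the argument.

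The first and decisive gap is the claimed comparison
\begin{equation*}
\limsup_i \II\bigl(f(E_i \cap W_f)\bigr) \;\leq\; \II\bigl(f(E \cap W_f)\bigr).
\end{equation*}
This is false in general, and the proposed fix (covering $f(\overline{W_f})$ by small balls and controlling multiplicity via the upper Ahlfors bound on $\mu$) does not repair it. The obstruction is that $\mu$ can carry \emph{higher density} than $\II \mres E$: the sets $E_i$ may converge to a multiple of $\II \mres E$, so they carry "hidden layers" accumulating on $E$ whose images under a non-injective Lipschitz $f$ do \emph{not} collapse onto $f(E)$. As a concrete model, take $E = \{0\}\times[0,1]^d$, $E_i = E \cup (\{1/i\}\times[0,1]^d)$, so $\mu = 2\,\II\mres E$ and $\spt\mu = E$, and take $f$ a folding map around the hyperplane $\{x_1 = 1/(2i)\}$-like behavior frozen at a fixed scale (e.g., $f(x_1,y) = (|x_1 - a|, y)$ for a small fixed $a>0$, suitably localized); then $\II(f(E_i))$ stays close to $2$ while $\II(f(E))$ is close to $1$, so the limsup strictly exceeds the right-hand side. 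Controlling multiplicity of $f$ does not help because the failure lives in the multiplicity of $\mu$ relative to $\II\mres E$, which is exactly what you are trying to bound. The paper sidesteps this entirely via the Technical Lemma~\ref{technical_lemma}: given $f$ and $\varepsilon > 0$, one constructs a \emph{new} global sliding deformation $g$ (with $\norm{g}_L \leq C\norm{f}_L$, $C$ independent of $\varepsilon$) and an open set $V$ with $\HH^d(E\setminus V) \leq \varepsilon$ such that $g$ smashes the whole open set $V$ onto a set of measure $\II(g(V)) \leq \II(f(E)) + \varepsilon$. Applying the quasiminimality hypothesis to $g$ against $E_i$ and splitting $E_i \cap W_g$ as $(E_i\cap V) \cup (E_i \cap W_g \setminus V)$, the first part is absorbed into $\II(g(V))$ regardless of how the $E_i$ accumulate, and the second part is small in the limit because $\mu(W_f\setminus V)$ is small. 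This is the step your approach cannot reproduce by local analysis of $f$ alone.

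The second gap is in the lower bound $\II \mres E \leq \mu$. You assert that "$E_i \cap \overline{B}(x,r)$ spans $V \cap \partial B(x,r)$ for large $i$" and then invoke axiom~(iii) of admissible energies directly against $E_i$. Weak convergence of $\II\mres E_i$ does not give Hausdorff convergence of supports, and even if it did, spanning in the strong sense of Definition~\ref{defi_energy}(iii) (no Lipschitz retraction onto $V\cap\partial B$) does not follow from support proximity. The paper instead argues by contradiction: it first \emph{cleans} $E_i$ near $\partial B(x,r)$ (a Federer--Fleming projection in a tubular region $T_r$ sends the small excess mass of $E_i$ there to the $(d{-}1)$-skeleton) and squashes a thin slab onto $V$, so that if the cleaned set could be mapped into $V\cap\partial B(x,r)$, one could assemble a global sliding deformation contradicting the quasiminimality hypothesis on $E_i$. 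This cleaning construction is essential and is not replaced by a softer compactness argument.

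Your preliminary Ahlfors bounds and the upper density comparison $\mu \leq \ka\,\II\mres E$ via a tangent-plane competitor are on the right track (the paper's Step~4 similarly builds a near-identity sliding deformation $f$ adapted to the cone structure of $\Gamma$ and uses the continuity of the integrand), and the appeal to Proposition~\ref{prop_rect} for rectifiability is correct in spirit, though the paper actually reproves rectifiability directly for the limit $E$ (Step~2) using the averaged Federer--Fleming estimate, since $E$ is not a priori quasiminimal at the point in the proof where rectifiability is needed.
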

\begin{rmk}\label{rmk_limit}
    Let us assume that $(E_i)$ satisfies an additional condition:
    \begin{enumerate}[label=(\roman*)]
    \setcounter{enumi}{2}
        \item there exists $\kb \geq 1$ such that for open balls $B$ of scale $\leq s$ in $X$, there exists a sequence $(\varepsilon_i) \to 0$ such that for all global sliding deformations $f$ in $B$,
    \begin{equation}
        \II(E_i \cap B) \leq \kb \II(f(E_i \cap B)) + \varepsilon_i.
    \end{equation}
    \end{enumerate}
    Then the theorem holds true even if $\II$ is not explicitly induced by a continuous integrand (as in Definition \ref{defi_energy}). More precisely, we have
    \begin{equation}
        \II \mres E \leq \mu \leq \kb \II \mres E,
    \end{equation}
    where $E = \spt(\mu)$. The set $E$ is $(\kappa,\kb h,s)$-quasiminimal with respect to $\II$ in $X$ and in particular, $\HH^d$ rectifiable. This will be justified during step 4 of the proof.
\end{rmk}
Theorem \ref{thm_limit} is proved by constructing relevant sliding deformations. We distinguish three intermediate result:
\begin{enumerate}
    \item \emph{The limit measure $\mu$ is locally Ahlfors-regular and rectifiable of dimension $d$.} We adapt the techniques of Propositions \ref{prop_density} and \ref{prop_rect}. It seems that an error term of the form $h \mathrm{diam}(B)^d + \varepsilon_i$ would pose a problem to the lower Ahlfors-regularity. However, an error term $h \mathrm{diam}(B)^d$ (without $\varepsilon_i$) should be fine. In this case the sets $E_i$ are quasiminimals for a slight variation of our definition \cite[Definition 2.3]{Sliding}. As a consequence, they are locally Ahlfors-regular \cite[Proposition 4.74]{Sliding} and then $h \mathrm{diam}(B)^d \approx \bar{h} \HH^d(E_i \cap \bar{h} B)$, where $h = \bar{h}^{d+1}$. The choice of the error term is not so significant in the rest of the proof.
    \item \emph{For all sliding deformation $f$ of $E$ in small balls $B$, we have}
        \begin{equation}
            \mu(W_f) \leq \kappa \II(f(W_f)) + h \mu(h B).
        \end{equation}
        This part follows from a technical lemma which is inspired by the techniques of David in \cite{Sliding}. The lemma was not conceptualized in \cite{Sliding} and allows significant simplifications. We use this quasiminimality condition to deduce that $\mu \leq (\kappa + h)\II \mres E$.
    \item \emph{We have $\mu \geq \II \mres E$.} We make use of an argument introduced by Fang in \cite{FangReifenberg}. It bypasses the concentration Lemma of Dal Maso, Morel and Solimini \cite{DMS} when the limit is already known to be rectifiable.
\end{enumerate}

\subsubsection{Technical Lemma}\label{technical_subsection}
Our ambient space is an open set $X$ of $\R^n$. We work with an admissible energy $\II$ in $X$ (Definition \ref{defi_energy}). Given a map $f$, the symbol $\norm{f}_L$ denotes the Lipschitz constant of $f$ (possibly $\infty$). The assumptions on the boundary are detailed in Definitions \ref{retract_definition} and \ref{cone_definition}.
\begin{lem}\label{technical_lemma}
    Fix $\Gamma$ a Lipschitz neighborhood retract in $X$ which is locally diffeomorphic to a cone. Let $f$ be a global sliding deformation in an open set $U \subset X$. Let $W \subset U$ be an open set and let $E \subset W$ be a $\HH^d$ measurable, $\HH^d$ finite and $\HH^d$ rectifiable set. For all $\varepsilon > 0$, there exists a global sliding deformation $g$ in $U$ and an open set $V \subset W$ such that $g - f$ has a compact support included in $W$, $\abs{g - f} \leq \varepsilon$, $\norm{g}_L \leq C \norm{f}_L$ (where $C \geq 1$ depends on $n$, $\Gamma$) and
    \begin{subequations}
        \begin{align}
            &   \HH^d(E \setminus V) \leq \varepsilon\\
            &   \II(g(V))          \leq \II(f(E)) + \varepsilon.
        \end{align}
    \end{subequations}
\end{lem}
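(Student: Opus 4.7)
The plan is to approximate $E$ by a disjoint union of compact pieces of $C^1$ embedded submanifolds, thicken this union to an open tubular neighborhood $V \subset W$, and define $g = f \circ r$ where $r$ is a Lipschitz retraction of $V$ onto the approximating pieces that preserves $\Gamma$. By the rectifiability decomposition already invoked in the excerpt (Theorem 15.21 in Mattila), $E$ is contained, up to an $\HH^d$-null set, in a countable union of $C^1$ embedded $d$-submanifolds $M_k$ of $\R^n$. Keeping finitely many indices, passing to compact subsets and using that $\HH^d(E) < \infty$, I would extract pairwise disjoint compacts $K_1,\dots,K_N$ with $K_i \subset E \cap M_i$ and $\HH^d(E \setminus \bigcup_i K_i) \leq \varepsilon$. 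A sufficiently thin tubular neighborhood $V \subset W$ of $\bigcup_i K_i$ then satisfies the first required inequality.

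Next I would construct a Lipschitz retraction $r \colon V \to \bigcup_i K_i$ with $\norm{r}_L \leq C$ (depending only on $n$ and $\Gamma$), satisfying $r(\Gamma \cap V) \subset \Gamma$, and homotopic to the identity through maps that preserve $\Gamma$ and keep $E \cap W$ inside $W$. Away from $\Gamma$, this is the classical nearest-point projection onto a $C^1$ submanifold, which is $C$-Lipschitz and close to the identity on a thin enough tube. Near $\Gamma$ I would exploit the hypothesis that $\Gamma$ is locally diffeomorphic to a cone (Definition \ref{cone_definition}): in a chart $\GT \colon B(x,R) \to O$ sending $\Gamma$ locally to a closed cone $S$, one designs $r$ so that each ray of $S$ is preserved, which automatically gives $r(\Gamma) \subset \Gamma$. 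A partition of unity patches the local definitions, and the Lipschitz-retract structure of $\Gamma$ in $X$ corrects any small drift out of $\Gamma$ that the patching introduces, exactly as in the toolbox of the appendix.

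Finally I would set $g = f \circ r$ on a compact $K \subset V \subset \subset W$ and taper $g$ back to $f$ on a collar of $\partial K$ inside $V$. The announced estimates then follow: $\norm{g}_L \leq \norm{f}_L \norm{r}_L \leq C \norm{f}_L$; the pointwise bound $\abs{g - f} \leq \varepsilon$ comes from shrinking $V$ so that $\abs{r(y) - y}$ is smaller than $\varepsilon / \norm{f}_L$; and $g - f$ is supported in $K \subset \subset W$ by construction. The homotopy from $\mathrm{id}$ to $g$ is obtained by first running the homotopy from $\mathrm{id}$ to $r$ through $\Gamma$-preserving maps and then composing with the homotopy giving $f$. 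For the energy inequality, $g(V)$ is contained in $f\bigl(\bigcup_i K_i\bigr) \cup Z$ where $Z$ is a thin correction from the tapering region, and $\II \leq \IL \HH^d$ together with the rectifiability of $Z$ controls the correction. The main obstacle I expect is the construction of $r$ near points where $E$ meets $\Gamma$ on a set of positive $\HH^d$ measure: there the tangent planes of $E \cap M_i$ must be compatible with the local cone structure of $\Gamma$ so that the projection onto $M_i$ stays in $\Gamma$, and it is precisely this compatibility that the cone hypothesis is designed to deliver.
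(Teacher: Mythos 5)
Your proposal takes a genuinely different route from the paper, and the idea is attractive in its simplicity: retract a thin tube $V$ onto compact pieces $K_i$ of $E$ and set $g = f \circ r$, so that $g(V) \subset f(E)$ for free; by contrast the paper defines $g \approx p \circ \pi \circ f$ (projecting \emph{after} $f$, onto tangent disks of $f(E)$ in a fine Vitali cover), which forces it to translate between covers of $f(E)$ and covers of $E$ via the $E_1/E_2$ density decomposition. Unfortunately your route hits a wall that the paper's construction is specifically designed to avoid. The submanifolds $M_i$ supplied by Mattila's Theorem 15.21 are only $C^1$, and the nearest-point projection onto a $C^1$ submanifold is not Lipschitz on any tube: positive reach requires $C^{1,1}$ regularity (a graph like $y = \abs{x}^{1+\alpha}$, $0<\alpha<1$, is $C^1$ with zero reach at the origin). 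Thus the claim ``the classical nearest-point projection onto a $C^1$ submanifold, which is $C$-Lipschitz and close to the identity on a thin enough tube'' is false, and there is no way to get the required bound $\norm{r}_L \leq C$. The paper circumvents this by never projecting onto a curved manifold at all: in each ball $B_j$ it projects onto the fixed affine tangent plane $V_j$, which is always $1$-Lipschitz.

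There are two further gaps. First, you include the tapering collar inside $V$ and argue that $g(V)$ is covered by $f(\bigcup K_i)$ plus ``a thin correction $Z$'' controlled by rectifiability — but the collar is an open $n$-dimensional set and $g$ restricted to it is a generic interpolation, so for $d < n$ its image generically has infinite $\HH^d$ measure; the only way to salvage this is to define $V$ to be \emph{exactly} the region where $g = f \circ r$, so that $g(V) = f(r(V)) \subset f(E)$ outright, and to place the tapering collar outside $V$. Second, you correctly identify the boundary compatibility as the main obstacle, but the cone hypothesis alone does not deliver it: the $M_i$ are arbitrary $C^1$ submanifolds produced by an abstract rectifiability theorem and have no reason to respect the local cone $S$. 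The paper's treatment here is substantial and non-trivial: it first proves (using the weak tangent convergence \eqref{0mu_V} together with the vanishing density \eqref{0mu_gamma}) that at $\HH^d$-a.e.\ $x \in f(E) \cap \Gamma$ the approximate tangent plane $V_x$ is actually contained in the tangent cone of $\Gamma$, and then builds a $C^1$ map $p_x \colon V_x \cap B(x,R) \to \Gamma$ with identity differential at $x$ by pulling back a linear projection through the cone diffeomorphism. This is the step that exploits ``locally diffeomorphic to a cone'' in an essential way, and it is precisely the step missing from your sketch.
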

Even with $\Gamma = \emptyset$, the lemma is not trivial. Roughly speaking, $g$ smashes an almost neighborhood $V$ of $E$ onto an approximation of $f(E)$. One can think of $g$ as a composition $p \circ f$ where $p$ is Lipschitz retraction onto $f(E)$. In practice, such a retraction may not exist. We find disjoint balls $(B_i)_i$ such that $\HH^d(f(E) \setminus \bigcup_i B_i) \leq \varepsilon$ and such that in each ball $B_i$, $f(E)$ is well approximated by a plane $V_i$. Then we define $p$ in each ball $B_i$ to be the orthogonal projection onto $V_i$ and finally $g = p \circ f$ and $V = f^{-1}(\bigcup B_i)$. The lemma requires further work because this construction does not suffice to control $\HH^d(E \setminus V)$.
\begin{proof}
    The letter $C$ plays the role of a constant $\geq 1$ that depends on $n$, $\Gamma$ and $\II$. Its value can increase from one line to another (but a finite number of times.) For $x \in \R^n$, for $V$ a plane passing through $x$ and for $0 < \varepsilon \leq 1$, we introduce the open cone
    \begin{equation}
        C(x,V,\varepsilon) = \Set{z \in \R^n | \mathrm{d}(z, V) < \varepsilon \abs{z - x}}.
    \end{equation}
    We take the convention that the $\HH^d$ measure of a $d$-dimensional disk of radius $r \geq 0$ equals $(2r)^d$.

    Let us get rid of the case $\norm{f}_L = 0$, that is, $f$ constant. In this case, we take $g = f$ and it is left to define $V$. If $E \ne \emptyset$, then $f(W) = f(E)$ so we can take $V = W$. If $E = \emptyset$, we take $V = \emptyset$. From now on, we assume $\norm{f}_L > 0$. It is more pleasant to work with maps defined over $\R^n$ so as not to worry about the domains of definition when composing functions. We apply McShane's extension Lemma (Lemma \ref{lipschitz_extension}) to extend $f$ as a Lipschitz map $\R^n \to \R^n$. Its new Lipschitz constant has been multiplied by $C$ at most so it can be denoted by $\norm{f}_L$ without altering the lemma. We also observe that it suffices to prove the lemma whenever $W \subset \subset U$. Indeed, in the general case, there exists an open set $W' \subset \subset W$ such that $H^d(E \setminus W') \leq \varepsilon$ and it suffices to apply the lemma to $E' = E \cap W'$. Thus, we assume $E \subset W \subset \subset U$. This property will only be used at the end of the proof.

    Next, we justify that the measures
    \begin{equation}
        \mu\colon A \to \HH^d(E \cap f^{-1}(A))
    \end{equation}
    and
    \begin{equation}
        \lambda\colon A \to \HH^d (f(E) \cap A)
    \end{equation}
    are Radon measures in $\R^n$. We recall that a finite Borel regular measure in $\R^n$ is a Radon measure \cite[Corollary 1.11]{Mattila}. The measure is finite because $\HH^d(f(E)) \leq \norm{f}_L^d \HH^d(E) < \infty$. It is also Borel regular because $f(E)$ is $\HH^d$ measurable. This proves that $\lambda$ is a Radon measure. It is clear that $\mu$ is a finite measure. We work a bit more to prove that $\mu$ is Borel regular. We have to show that for all $A \subset \R^n$, there exists a Borel set $C$ containing $A$ such that $\mu(A) = \mu(C)$, that is, $\HH^d(E \cap f^{-1}(A)) = \HH^d(E \cap f^{-1}(C))$. Let $O$ be an open set of $\R^n$ containing $f^{-1}(A)$. Then the set
    \begin{align}
        B   &   = \set{y \in \R^n | f^{-1}(y) \subset O}\\
            &   = \R^n \setminus f(\R^n \setminus O)
    \end{align}
    is a Borel set (because $\R^n \setminus O$ is $\sigma$-compact), it contains $A$ and
    \begin{equation}
        \HH^d(E \cap f^{-1}(B)) \leq \HH^d(E \cap O).
    \end{equation}
    By regularity of the Radon measure $\HH^d \mres E$, there exists a sequence of open sets $(O_k)$ containing $f^{-1}(A)$ such that
    \begin{equation}
        \lim_k \HH^d(E \cap O_k) = \HH^d(E \cap f^{-1}(A)).
    \end{equation}
    To each set $O_k$, we associate a Borel set $B_k$ containing $A$ and such that $\HH^d(E \cap f^{-1}(B_k)) \leq \HH^d(E \cap O_k)$. Then $C = \bigcap_k B_k$ is the solution.

    We are going to differentiate $\mu$ with respect to $\lambda$. For $t > 0$, we introduce $Y_t$: the set of points $y \in \R^n$ such that
    \begin{equation}
        \liminf\limits_{r \to 0} \frac{\HH^d(E \cap f^{-1}(B(y,r)))}{\HH^d(f(E) \cap B(y,r))} \leq t.
    \end{equation}
    The set $Y_t$ is $\HH^d$ measurable (see \cite[Remark 2.10]{Mattila}). According to the Differentiation Lemma \cite[Lemma 2.13]{Mattila}, we have for all $A \subset Y_t$,
    \begin{equation}
        \HH^d(E \cap f^{-1}(A)) \leq t \HH^d(f(E) \cap A).
    \end{equation}
    and for all $A \subset \R^n \setminus Y_t$,
    \begin{equation}
        \HH^d(E \cap f^{-1}(A)) \geq t \HH^d(f(E) \cap A).
    \end{equation}
    We define $E_1 = E \cap f^{-1}(Y_t)$ and $E_2 = E \setminus f^{-1}(Y_t)$. Then for all $A \subset \R^n$,
    \begin{equation}
        \HH^d(E_1 \cap f^{-1}(A)) \leq t\HH^d(f(E) \cap A)\\
    \end{equation}
    and
    \begin{equation}\label{diff2}
        \HH^d(f(E_2)) \leq t^{-1} \HH^d(E).
    \end{equation}
    Now, we fix a constant $\varepsilon_0 > 0$. We will deal independently with $E_1$ and $E_2$ in step 1 and step 2 respectively. The constant $t$ is fixed in step 1 whereas it is chosen in step 2.

    \emph{Step 0.} \emph{We build two open sets $W_1, W_2 \subset W$ such that $\overline{W_1} \cap \overline{W_2} = \emptyset$ and}
    \begin{equation}\label{W_i}
        \HH^d(E_i \setminus W_i) \leq \varepsilon_0.
    \end{equation}
    According to the Approximation Theorem \cite[Theorem 1.10]{Mattila}, there exist two compact sets $K_1, K_2 \subset W$ and two open sets $O_1, O_2 \subset W$ such that $K_i \subset E_i \subset O_i$ and
    \begin{equation}
        \HH^d \mres E (O_i \setminus K_i) \leq \varepsilon_0.
    \end{equation}
    Since $K_1$ and $K_2$ are disjoint compact sets and $K_i \subset O_i$, there exists two open sets $W_1, W_2$ such that $K_i \subset W_i \subset O_i$ and $\overline{W_1} \cap \overline{W_2} = \emptyset$. As $E_i \subset O_i$, we deduce
    \begin{equation}
        \HH^d(E_i \setminus W_i) \leq \HH^d(E \cap O_i \setminus K_i) \leq \varepsilon_0.
    \end{equation}

    \emph{Step 1.} \emph{Whichever is $t$, we build a Lipschitz map $g_1\colon \R^n \to \R^n$ such $g_1(\Gamma) \subset \Gamma$, $g_1 - f$ has a compact support included in $W_1$, $\abs{g_1 - f} \leq \varepsilon_0$, $\norm{g_1}_L \leq C\norm{f}_L$ and such that there is an open set $V_1 \subset W_1$ satisfying}
    \begin{subequations}
        \begin{align}
            &   \HH^d(E_1 \setminus V_1) \leq \varepsilon_0,\\
            &   \II(g_1(V_1)) \leq \II(f(E)) + \varepsilon_0.
        \end{align}
    \end{subequations}
    Our constructions will rely on intermediate variables $0 < \delta, \varepsilon \leq 1$. These variables will be as small as we want but the choice of $\varepsilon$ will be subordinated to the choice of $\delta$.

    We start by introducing a Lipschitz retraction onto the boundary. The set $\Gamma$ is a Lipschitz neighborhood retract so there exists an open set $O_\Gamma \subset X$ containing $\Gamma$ and a $C$-Lipschitz map $p\colon O_\Gamma \to \Gamma$ such that $p = \mathrm{id}$ on $\Gamma$. For $\delta > 0$, we define
    \begin{equation}
        \Gamma(\delta) = \set{x \in X | \mathrm{d}(x,\Gamma) < \delta}.
    \end{equation}
    As $H^d \mres f(E)$ is a finite measure, there exists $\delta > 0$ such that
    \begin{equation}\label{0delta}
        H^d(f(E) \cap \Gamma(2\delta) \setminus \Gamma) \leq t^{-1}\varepsilon_0.
    \end{equation}
    We restrict $O_\Gamma$ so that $O_\Gamma \subset \Gamma(\delta)$ and $\abs{p - \mathrm{id}} \leq \delta$ on $O_\Gamma$. Note that $O_\Gamma$ does not depend on the intermediate variable $\varepsilon$. We extend $p$ in $X \setminus \Gamma(\delta)$ by $p = \mathrm{id}$. Let us check the Lipschitz constant of this extension. For $x \in O_\Gamma$ and $y \in X \setminus \Gamma(\delta)$, we have $\abs{x - y} \geq \delta$ so
    \begin{align}
        \abs{p(x) - p(y)}   &   \leq \abs{p(x) - y}\\
                            &   \leq \abs{p(x) - x} + \abs{x - y}\\
                            &   \leq \delta + \abs{x - y}\\
                            &   \leq 2\abs{x - y}.
    \end{align}
    Thus $p$ is still $C$-Lipschitz. Now, we use McShane's extension Lemma (Lemma \ref{lipschitz_extension}) to extend $p$ as a $C$-Lipschitz map $p\colon \R^n \to \R^n$ such that $\abs{p - \mathrm{id}} \leq \delta$.

    Next, we recall some classical properties of rectifiable sets (see \cite[Theorem 4.8]{DL}). The set $f(E)$ is $\HH^d$ measurable, $\HH^d$ finite and $\HH^d$ rectifiable so for $\HH^d$-ae. $x \in f(E)$, there exists a unique $d$-plane $V_x$ passing through $x$ such that
    \begin{equation}\label{0mu_V}
        (2r)^{-d} \lambda_{x,r} \rightharpoonup \HH^d \mres V_x,
    \end{equation}
    where $\lambda = \HH^d \mres f(E)$, $\lambda_{x,r}\colon A \mapsto \lambda(x + r(A - x))$ and the arrow $\rightharpoonup$ denotes the weak convergence of Radon measures as $r \to 0$. In this case, we say that $V_x$ is an approximate tangent plane of $f(E)$ at $x$. Fix a point $x$ satisfying (\ref{0mu_V}). Then, for all $0 < \varepsilon \leq 1$,
    \begin{equation}
        \lim\limits_{r \to 0} r^{-d} \HH^d(f(E) \cap B(x,r) \setminus C(x,V_x,\varepsilon)) = 0
    \end{equation}
    and for $0 < a < 1$,
    \begin{equation}\label{Y0}
        \limsup\limits_{r \to 0} r^{-d} \HH^d(f(E) \cap B(x,r) \setminus a B(x,r)) \leq C(1 - a).
    \end{equation}
    Moreover, the definition of admissible energies (Definition \ref{defi_energy}) implies that for $\HH^d$-ae. $x \in f(E)$,
    \begin{equation}
        \lim\limits_{r \to 0} \frac{\II(f(E) \cap B(x,r))}{\II(V_x \cap B(x,r))} = 1.
    \end{equation}
    We need additional properties for the points $x \in f(E) \cap \Gamma$. They will help us to preserve the boundary. For $\HH^d$-ae. $x \in f(E) \cap \Gamma$, 
    \begin{equation}\label{0mu_gamma}
        \lim_{r \to 0} r^{-d} \HH^d(f(E) \cap B(x,r) \setminus \Gamma) = 0.
    \end{equation}
    This can be justified by applying \cite[Theorem 6.2(2)]{Mattila} to the set $f(E) \setminus \Gamma$. Let us fix a point $x \in f(E) \cap \Gamma$ which satisfies (\ref{0mu_V}) and (\ref{0mu_gamma}). We show that $V=V_x$ is tangent to $\Gamma$. More precisely, for all $0 < \varepsilon \leq 1$, there exists $r > 0$ such that
    \begin{equation}\label{0V_gamma}
        V \cap B(x,r) \subset \set{y \in X | \mathrm{d}(y,\Gamma) \leq \varepsilon \abs{y - x}}.
    \end{equation}
    If the statement is not true, there exists $\varepsilon > 0$ and a sequence $(y_k) \in V \cap X$ such that $y_k \to x$ and $\mathrm{d}(y_k,\Gamma) > \varepsilon \abs{y_k - x}$. As $x \in \Gamma$ and $\mathrm{d}(y_k, \Gamma) > 0$, we necessarily have $y_k \ne x$. Let $r_k = \abs{y_k - x}$ and $\hat{y_k} = x + r_k^{-1}(y_k - x)$. Observe that $\hat{y_k} \in V \cap S(x,1)$ (where $S(x,1)$ is the unit sphere of centered at $x$) and
    \begin{equation}
        y_k = x + r_k(\hat{y_k} - x).
    \end{equation}
    By compactness, we can assume that $\hat{y_k} \to \hat{z} \in V \cap S(x,1)$. Then we consider the sequence $(z_k)$ defined by
    \begin{equation}\label{z_k_hat}
        z_k = x + r_k(\hat{z} - x).
    \end{equation}
    According to (\ref{z_k_hat}) and the weak convergence (\ref{0mu_V}),
    \begin{equation}\label{csq_V}
        \lim_k (2r_k)^{-d} \lambda(B(z_k,\tfrac{1}{2}\varepsilon r_k)) = \HH^d(V \cap B(\hat{z},\tfrac{1}{2}\varepsilon)) > 0.
    \end{equation}
    On the other hand, we have $\abs{y_k - z_k} \leq r_k \abs{\hat{y_k} - \hat{z}}$ so, whenever $k$ is big enough such that $\abs{\hat{y_k} - \hat{z}} \leq \frac{1}{2}\varepsilon$,
    \begin{align}
        \mathrm{d}(z_k,\Gamma)  &\geq \mathrm{d}(y_k,\Gamma) - \abs{z_k - y_k}\\
                                &\geq \varepsilon r_k - \tfrac{1}{2} \varepsilon r_k\\
                                &\geq \tfrac{1}{2}\varepsilon r_k
    \end{align}
    This says that the open ball $B(z_k,\tfrac{1}{2} \varepsilon r_k)$ is disjoint from $\Gamma$. In particular, $B(z_k, \tfrac{1}{2} \varepsilon r_k) \subset B(x,2r_k) \setminus \Gamma$ so, by (\ref{0mu_gamma}),
    \begin{equation}
        \limsup_k r_k^{-d} \lambda(B(z_k,\tfrac{1}{2}\varepsilon r_k)) \leq \limsup_k r_k^{-d} \mu(B(x,2r_k) \setminus \Gamma) = 0
    \end{equation}
    contradicting (\ref{csq_V})! From (\ref{0V_gamma}), we are going to show that there exists $R > 0$ such that $\overline{B}(x,R) \subset X$ and a $C^1$ map $p\colon V \cap B(x,R) \to \Gamma$ such that $p(x) = x$ and whose differential at $x$ is $\mathrm{id}$. Let us draw a few consequences. We fix $\varepsilon > 0$. For $r > 0$ is small enough, the application $p$ is $2$-Lipschitz in $V \cap B(x,r)$ and for all $y \in V \cap B(x,r)$, $\abs{p(y) - y} \leq \varepsilon \abs{y - x}$. According to the definition of admissible energies, we have
    \begin{equation}
        \lim_{r \to 0} \frac{\II(p(V \cap B(x,r))}{\II(V \cap B(x,r))} = 1.
    \end{equation}
    so for $r > 0$ small enough,
    \begin{equation}
        \II(p(V \cap B(x,r)) \leq (1+\varepsilon) \II(V \cap B(x,r)).
    \end{equation}
    These are the main properties of $p$ that we will use later. We proceed to the construction. As $\Gamma$ is locally diffeomorphic to a cone, there exists $R_0 > 0$ such that $\overline{B}(x,R_0) \subset X$, an open set $O \subset \R^n$ containing $x$, a $C^1$ diffeomorphism $\GT\colon B(x,R_0) \to O$ and a closed cone $S \subset \R^n$ centered at $x$ such that $\GT(x)=x$ and $\GT(\Gamma \cap B(x,R_0)) = S \cap O$. Let $A\colon \R^n \to \R^n$ be the affine differential of $\GT$ at $x$. For all $\varepsilon > 0$, there exists $0 < r \leq R_0$ such that for all $y \in B(x,r)$,
    \begin{equation}\label{T_gamma}
        \abs{\GT(y) - A(y)} \leq \varepsilon \abs{y - x}. 
    \end{equation}
    As $A$ is invertible, there exists $L \geq 1$ such that for all $y, z \in \R^n$,
    \begin{equation}\label{A_lip}
        L^{-1} \abs{y - z} \leq \abs{A(y) - A(z)} \leq L\abs{y - z}
    \end{equation}
    With these notations at hand, we are going to prove that the plane $W: = A(V)$ is included in $S$. First, we check that $W$ is tangent to $S$, that is, for all $0 < \varepsilon \leq 1$, there exists $r > 0$ such that
    \begin{equation}\label{0W_gamma}
        W \cap B(x,r) \subset \set{w \in \R^n | \mathrm{d}(w,S) \leq \varepsilon \abs{w-x}}.
    \end{equation}
    Let $0 < \varepsilon \leq 1$ and let $0 < r \leq R_0$ be such that (\ref{0V_gamma}) and (\ref{T_gamma}) hold true. For $w \in W$, we can apply the bilipschitz property (\ref{A_lip}) to see that $y = A^{-1}(w)$ satisfies
    \begin{equation}
        L^{-1} \abs{y - x} \leq \abs{w - x} \leq L \abs{y - x}
    \end{equation}
    Let $w \in W \cap B(0, \tfrac{1}{3} L^{-1} r)$. We have $y = A^{-1}(w) \in V \cap B(x, \tfrac{1}{3}r)$ and by (\ref{0V_gamma}), there exists $z \in \Gamma$ such that $\abs{y - z} \leq 2 \varepsilon \abs{y - x}$. In particular, $\abs{z - x} \leq 3 \abs{y - x} \leq r$ so $z \in B(x,R_0)$. Using (\ref{T_gamma}), we obtain that
    \begin{align}
        \mathrm{d}(w, S)    &\leq \mathrm{d}(w, \GT(\Gamma \cap B(x,R_0)))\\
                            &\leq \abs{A(y) - \GT(z)}\\
                            &\leq \abs{\GT(y) - A(y)} + \abs{\GT(z) - A(z)} + \abs{A(z) - A(y)}\\
                            &\leq \varepsilon \abs{y - x} + 3 \varepsilon \abs{y - x} + 2 L \varepsilon \abs{y - x}\\
                            &\leq 6L \varepsilon \abs{y - x}\\
                            &\leq 6L^2 \varepsilon \abs{w - x}.
    \end{align}
    This proves that $W$ is tangent to $S$. Observe that the right-hand side in (\ref{0W_gamma}) is invariant by homothety of center $x$. We deduce that for all $0 < \varepsilon \leq 1$, we have
    \begin{equation}
        W \cap B(x,1) \subset \set{w \in \R^n | \mathrm{d}(w,S) \leq \varepsilon \abs{w-x}}.
    \end{equation}
    Since $\varepsilon$ is arbitrary, we have in fact $W \subset S$. Now, let $p_W \colon \R^n \to W$ be the orthogonal projection onto $W$. Since $\GT(x) = x$, $p_W(x) = x$ and $O$ is a neighborhood of $x$, there exists a radius $0 < R \leq R_0$ such that $p_W \GT(B(x,R)) \subset O$. Then,
    \begin{equation}
        p_W \GT(B(x,R)) \subset W \cap O \subset S \cap O = \GT(\Gamma \cap B(x,R_0)).
    \end{equation}
    We define $p\colon V \cap B(x,R) \to \R^n$, $y \mapsto \GT^{-1} p_W \GT (y)$. It is clear that $p$ is $C^1$ on $V \cap B(x,R)$, that $p(V \cap B(x,R)) \subset \Gamma$, that $p(x) = x$ and that its differential at $x$ is $\mathrm{id}$. The preliminary part is finished.

    We are going apply the Vitali covering theorem \cite[Theorem 2.8]{Mattila} to the set
    \begin{equation}
        Y = (f(E) \cap \Gamma) \cup (f(E) \setminus \Gamma(2\delta)).
    \end{equation}
    Let $0 < \varepsilon \leq 1$. There exists a \emph{finite} sequence of open balls $(B_j)_j$ (of center $y_j \in Y$ and radius $0 < r_j \leq 1)$ such that the closures $(\overline{B_j})$ are disjoint and included in $X$,
    \begin{equation}\label{0YB}
        \HH^d(Y \setminus \bigcup_j \overline{B_j}) \leq t^{-1}\varepsilon_0,
    \end{equation}
    there exists an approximate tangent plane $V_j$ of $f(E)$ at $y_j$,
    \begin{subequations}\label{B_j}
        \begin{align}
            &   \HH^d(f(E) \cap B_j \setminus a B_j) \leq C (1 - a) r_j^d\\
            &   \HH^d(f(E) \cap B_j \setminus C_j) \leq \varepsilon r_j^d,\\
            &   \II(f(E) \cap B_j) \geq (1 + \varepsilon)^{-1} \II(V_j \cap B_j)
        \end{align}
    \end{subequations}
    where $C_j = C(y_j,V_j,\varepsilon)$. If $y_j \in f(E) \cap \Gamma$, we require that there exists a $C$-Lipschitz map $p_j\colon V_j \cap B_j \to \Gamma$ such that $\abs{p_j - \mathrm{id}} \leq \varepsilon r_j$ and
    \begin{equation}
        \II(p_j(V_j \cap B_j)) \leq (1 + \varepsilon) \II(V_j \cap B_j).
    \end{equation}
    If $y_j \in f(E) \setminus \Gamma(2\delta)$, we require that $B_j \subset X \setminus \Gamma(\delta)$ and we define $p_j = \mathrm{id}$ in $V_j \cap B_j$. Note that in both cases, $p_j$ is $C$-Lipschitz, $\abs{p_j - \mathrm{id}} \leq \varepsilon r_j$ and
    \begin{equation}\label{p_j}
        \II(p_j(V_j \cap B_j)) \leq (1 + \varepsilon) \II(V_j \cap B_j).
    \end{equation}
    Combining (\ref{0delta}) and (\ref{0YB}), we have
    \begin{equation}
        \HH^d(f(E) \setminus \bigcup_j \overline{B_j}) \leq 2t^{-1}\varepsilon_0
    \end{equation}
    We finally add the condition $\HH^d(f(E) \cap \partial B_j) = 0$ to get the "almost-covering" property with open balls,
    \begin{equation}\label{YB}
        \HH^d(f(E) \setminus \bigcup_j B_j) 2t^{-1}\leq \varepsilon_0.
    \end{equation}

    In each ball $B_j$, we make a projection onto $V_j \cap B_j$ followed by a retraction on the boundary if necessary. Let $\pi_j$ be partially defined by
    \begin{equation}
        \pi_j(y) =
        \begin{cases}
            p_j(y') &   \text{in} \ a B_j \cap C_j\\
            y       &   \text{in} \ \R^n \setminus B_j,
        \end{cases}
    \end{equation}
    where $y'$ is the orthogonal projection of $y$ onto $V_j$ and $C_j = C(y_j,V_j,\varepsilon)$. We estimate $\abs*{\pi_j - \mathrm{id}}$. By definition of $C_j$, we have $\abs{y' - y} \leq \varepsilon r_j$ in $aB_j \cap C_j$. By definition of $p_j$, we have $\abs{p_j - \mathrm{id}} \leq \varepsilon r_j$ in $V_j \cap aB_j$. We deduce that for $y \in aB_j \cap B_j$,
    \begin{equation}
        \abs*{p_j(y') - y} \leq \abs*{p_j(y') - y'} + \abs*{y' - y} \leq C\varepsilon r_j
    \end{equation}
    Next, we estimate $\norm{\pi_j - \mathrm{id}}_L$. It is clear that $y \mapsto p_j(y')$ is $C$-Lipschitz in $aB_j \cap C_j$ so $\pi_j - \mathrm{id}$ is $C$-Lipschitz in $aB_j \cap C_j$. The map $\pi_j - \mathrm{id}$ is also clearly $C$-Lipschitz in $\R^n \setminus B_j$. We have
    \begin{equation}
        \mathrm{d}(a B_j, \R^n \setminus B_j) \geq (1 - a) r_j
    \end{equation}
    so for $x \in a B_j \cap C_j$ and for $y \in \R^n \setminus B_j$,
    \begin{align}
        \abs{(\pi_j - \mathrm{id})(x) - (\pi_j - \mathrm{id})(y)}   &   \leq \abs{(\pi_j - \mathrm{id})(x)}\\
                                                                    &   \leq C\varepsilon r_j\\
                                                                    &   \leq \frac{C\varepsilon}{1 - a} \abs{x - y}.
    \end{align}
    We choose $a = 1 - \varepsilon$ so that
    \begin{equation}
        \abs{(\pi_j - \mathrm{id})(x) - (\pi_j - \mathrm{id})(y)} \leq C \abs{x - y}
    \end{equation}
    We conclude that $\pi_j - \mathrm{id}$ is $C$-Lipschitz on its domain. We apply the McShane's extension Lemma (Lemma \ref{lipschitz_extension}) to $\pi_j - \mathrm{id}$ and extend $\pi_j$ as a Lipschitz map $\pi_j\colon \R^n \to \R^n$ such that $\abs{\pi_j - \mathrm{id}} \leq C\varepsilon r_j$ and $\pi_j - \mathrm{id}$ is $C$-Lipschitz. It is left to paste together the functions $\pi_j$ into a function $\pi$:
    \begin{equation}
        \pi =
        \begin{cases}
            \pi_j   &   \text{in} \ B_j\\
            \mathrm{id} &   \text{in} \ \R^n \setminus \bigcup_j B_j.
        \end{cases}
    \end{equation}
    It is clear that $\abs{\pi - \mathrm{id}} \leq \varepsilon$. The map $\pi - \mathrm{id}$ is also locally $C$-Lipschitz. Indeed, $\pi = \mathrm{id}$ in the open set $\R^n \setminus \bigcup_j \overline{B_j}$ and for each index $j$, $\pi = \pi_j$ in the open set $\R^n \setminus \bigcup_{i \ne j} \overline{B_i}$. These these open sets cover $\R^n$ because the closed balls $(\overline{B_i})_i$ are disjoints. Then $\pi - \mathrm{id}$ is globally $C$-Lipschitz by convexity of $\R^n$. 

    To solve step 1, we would like to define $g_1 \approx p \circ \pi \circ f$ and $V_1 \approx f^{-1}(V'_1)$ where
    \begin{equation}
        V'_1 = \bigcup_j aB_j \cap C_j.
    \end{equation}
    Remember that $p$ is the retraction onto $\Gamma$ defined at the beginning of this step. The application $f$ sends the open set $V_1$ to $V_1' = \bigcup_j aB j \cap C_j$, $\pi$ smashes each $aB_j \cap C_j$ to a tangent disk $V_j \cap B_j$ (possibly composed with $p_j$ to be included in $\Gamma$). The final composition with $p$ makes sure that $\Gamma$ is preserved.

    In fact, the definition of $g_1$ will be a bit more involved because we also need that need $g_1 = f$ in $\R^n \setminus W_1$. We are going to estimate $\HH^d(f(E) \setminus V'_1)$ and $\II(p \circ \pi(V'_1))$ and then make the necessary adjustments to $g_1$. According to (\ref{B_j}), the fact that $C^{-1} \HH^d \leq \II \leq C \HH^d$ and the choice $a = 1 - \varepsilon$, we have
    \begin{align}
        \begin{split}
            \HH^d(f(E) \setminus V'_1)  &   \leq \HH^d(f(E) \setminus \bigcup_j B_j) + \sum_j \HH^d(f(E) \cap (B_j \setminus a B_j))\\
                                        &   \qquad + \sum_j \HH^d(f(E) \cap (B_j \setminus C_j))
        \end{split}\\
                                        &   \leq 2t^{-1}\varepsilon_0 + \sum_j C (1 - a) r_j^d + \varepsilon r_j^d\\
                                        &   \leq 2t^{-1}\varepsilon_0 + C \varepsilon \sum_j r_j^d\\
                                        &   \leq 2t^{-1}\varepsilon_0 + C \varepsilon \sum_j \HH^d(f(E) \cap B_j)\\
                                        &   \leq 2t^{-1}\varepsilon_0 + C \varepsilon \HH^d(f(E))\label{technical1}.
    \end{align}
    To estimate $\II(p \circ \pi(V'_1))$, we observe that
    \begin{equation}
        p \circ \pi (V'_1) \subset \bigcup_j p \circ \pi_j(a B_j \cap C_j).
    \end{equation}
    For each $j$, we distinguish two cases. If $y_j \in f(E) \cap \Gamma$, then $\pi_j(a B_j \cap C_j) \subset \Gamma$ so $p = \mathrm{id}$ on $\pi_j(a B_j \cap C_j)$. If $y_j \in f(E) \setminus \Gamma(2\delta)$ then $\pi_j(a B_j \cap C_j) \subset B_j \subset X \setminus \Gamma(\delta)$ so $p = \mathrm{id}$ on $\pi_j(a B_j \cap C_j)$. It follows that
    \begin{equation}
        p \circ \pi (V'_1) \subset \bigcup_j \pi_j(a B_j \cap C_j).
    \end{equation}
    Using (\ref{B_j}) and (\ref{p_j}), we deduce
    \begin{align}
        \II(p \circ \pi(V'_1))  &\leq \sum_j \II(\pi_j(aB_j \cap C_j))\\
                                &\leq (1+\varepsilon) \sum_j \II(V_j \cap B_j))\\
                                &\leq (1 + \varepsilon)^2 \sum_j \II(f(E) \cap B_j)\\
                                &\leq (1 + \varepsilon)^2 \HH^d(f(E))\label{technical2}
    \end{align}

    Finally, we build $g_1$. By definition of $W_1$, $\HH^d(E_1 \setminus W_1) \leq \varepsilon_0$ so there exists an open set $W''_1 \subset \subset W_1$ such that
    \begin{equation}\label{W_0}
        \HH^d(E_1 \setminus W''_1) \leq 2\varepsilon_0.
    \end{equation}
    We also consider an intermediate open set $W'_1$ such that $W''_1 \subset \subset W'_1 \subset \subset W_1$. Note that $W''_1$ and $W'_1$ do not depend on the intermediate variables $\delta$, $\varepsilon$. We define
    \begin{equation}
        V_1 = W''_1 \cap f^{-1}(V'_1)
    \end{equation}
    and
    \begin{equation}
        f_1 =
        \begin{cases}
            \pi \circ f &   \text{in} \ V_1\\
            f           &   \text{in} \ \R^n \setminus W'_1.
        \end{cases}
    \end{equation}
    We have $\abs{f_1 - f} \leq \varepsilon$ because $\abs{\pi - \mathrm{id}} \leq \varepsilon$. We are going to estimate $\norm{f_1 - f}_L \leq C \norm{f}_L$. In $V_1$, $(f_1 - f) = (\pi - \mathrm{id}) \circ f$ is $C \norm{f}_L$ Lipschitz because $\pi - \mathrm{id}$ is $C$-Lipschitz.vIn addition, we assume $\varepsilon$ small enough so that $\mathrm{d}(W''_1, \R^n \setminus W'_1) \geq \varepsilon \norm{f}_L^{-1}$. Then, for $x \in V_1$ and $y \in \R^n \setminus W'_1$,
    \begin{align}
        \abs{(f_1 - f)(x) - (f_1 - f)(y)}   &\leq \abs{(f_1 - f)(x)}\\
                                            &\leq \varepsilon\\
                                            &\leq \norm{f}_L \abs{x - y}.
    \end{align}
    We apply McShane's extension Lemma (Lemma \ref{lipschitz_extension}) to $f_1 - f$ and obtain a Lipschitz map $f_1\colon \R^n \to \R^n$ such that $\abs{f_1 - f} \leq \varepsilon$ and $\norm{g_1 - f}_L \leq C\norm{f}_L$. As we have to preserve $\Gamma$, we finally define
    \begin{equation}
        g_1 =
        \begin{cases}
            p \circ f_1 &   \text{in} \ V_1 \cup \Gamma\\
            f_1 (= f)   &   \text{in} \ \R^n \setminus W'_1.
        \end{cases}
    \end{equation}
    As $\abs{p - \mathrm{id}} \leq \delta$ and $\abs{f_1 - f} \leq \varepsilon$, we have $\abs{g_1 - f} \leq \varepsilon + \delta$. We can assume $\varepsilon$ and $\delta$ small enough so that $\abs{g_1 - f} \leq \varepsilon_0$. Next, we estimate $\norm{g_1 - f}_L \leq C \norm{f}_L$. It suffices to have $\norm{g_1 - f_1} \leq C \norm{f}$. The first ingredients are the fact that $\abs{p - \mathrm{id}} \leq \delta$, that $V_1 \subset W''_1$ and that for $\delta$ small enough, $\mathrm{d}(W''_1, \R^n \setminus W'_1) \geq \delta \norm{f}_L^{-1}$. As a consequence, for $x \in V_1$ and $y \in \R^n \setminus W'_1$,
    \begin{align}
        \abs{(g_1 - f_1)(x) - (g_1 - f_1)(y)}   &   \leq \abs{(g_1 - f_1)(x)}\\
                                                &   \leq \delta\\
                                                &   \leq \norm{f}_L \abs{x - y}.
    \end{align}
    The second ingredients are the facts that $p$ is $C$-Lipschitz and $p = \mathrm{id}$ on $\Gamma$. It implies that for all $x \in \R^n$,
    \begin{align}
        \abs{p(x) - x}  &\leq \norm{p - \mathrm{id}}_L \mathrm{d}(x,\Gamma)\\
                        &\leq C \mathrm{d}(x,\Gamma).
    \end{align}
    We also recall that $f(\Gamma) \subset \Gamma$. As a consequence, for $x \in \Gamma$ and $y \in \R^n \setminus W'_1$,
    \begin{align}
        \abs{(g_1 - f_1)(x) - (g_1 - f_1)(y)}   &= \abs{p(f_1(x)) - f_1(x)}\\
                                                &\leq C \mathrm{d}(f_1(x), \Gamma)\\
                                                &\leq C \abs{f_1(x) - f(x)}\\
                                                &\leq C \abs{(f_1 - f)(x) - (f_1 - f)(y)}\\
                                                &\leq C \norm{f}_L \abs{x - y}.
    \end{align}
    We apply McShane's extension Lemma (Lemma \ref{lipschitz_extension}) to $g_1 - f$ and obtain a Lipschitz map $g_1\colon \R^n \to \R^n$ such that $\abs{g_1 - f} \leq \varepsilon_0$ and $\norm{g_1 - f}_L \leq C\norm{f}_L$. Next, we check that $g_1(\Gamma) \subset \Gamma$. In view of the definition of $p$ and $O_\Gamma$, it suffices that $f_1(\Gamma) \subset p^{-1}(O_\Gamma)$. As $\Gamma$ is relatively closed in $X$ and $\overline{W'_1}$ is a compact subset of $X$, the intersection $\Gamma \cap \overline{W'_1}$ is compact. The image $f(\Gamma \cap \overline{W'_1})$ is a compact subset of $\Gamma \subset p^{-1}(O_\Gamma)$. We take $\varepsilon$ small enough so that for all $x \in \Gamma \cap \overline{W'_1}$,
    \begin{equation}
        \overline{B}(f(x),\varepsilon) \subset p^{-1}(O_\Gamma).
    \end{equation}
    We deduce that $f_1(\Gamma) \subset p^{-1}(O_\Gamma)$ as $\abs{f_1 - f} \leq \varepsilon$ and $f_1 = f$ in $\R^n \setminus W'_1$. Next, we estimate $\HH^d(E_1 \setminus V_1)$ and $\II(g_1(V_1))$. It was almost done in (\ref{technical1}) and (\ref{technical2}). We recall that by definition of $E_1$, for all $A \subset \R^n$,
    \begin{equation}
        \HH^d(E_1 \cap f^{-1}(A)) \leq t\HH^d(f(E) \cap A).
    \end{equation}
    By (\ref{technical1}), we have
    \begin{align}
        \HH^d(E_1 \setminus V_1)    &\leq \HH^d(E_1 \setminus W''_1) + \HH^d(E_1 \setminus f^{-1}(V'_1))\\
                                    &\leq 2\varepsilon_0 + t \HH^d(f(E) \setminus V'_1)\\
                                    &\leq 4\varepsilon_0 + C t \varepsilon \HH^d(f(E)).
    \end{align}
    By the definition of $g_1$ and (\ref{technical2}), we have
    \begin{equation}
        \II(g_1(V_1)) \leq \II(p \circ \pi(V'_1)) \leq (1+\varepsilon)^2 \II(f(E)).
    \end{equation}
    We take one last time $\varepsilon$ small enough so that
    \begin{align}
        &   \HH^d(E_1 \setminus V_1)    \leq 5\varepsilon_0,\\
        &   \HH^d(g_1(V_1))             \leq \HH^d(f(E)) + \varepsilon_0.
    \end{align}

    \emph{Step 2. For a suitable choice of $t$, we build a Lipschitz map $g_2\colon \R^n \to \R^n$ such that $g_2(\Gamma) \subset \Gamma$, $g_2 - f$ has a compact support included in $W_2$, $\abs{g_2 - f} \leq \varepsilon$, $\norm{g_2}_L \leq \norm{f}_L$ and such that there is an open set $V_2 \subset W_2$ satisfying}
    \begin{subequations}
        \begin{align}
            &   \HH^d(E_2 \setminus V_2) \leq \varepsilon_0,\\
            &   \HH^d(g_2(V_2)) \leq \varepsilon_0.
        \end{align}
    \end{subequations}
    We recall that for $\HH^d$-ae. $x \in E_2$, there exists a (unique) approximate tangent plane $V_x$ of $E_2$ at $x$. For such point $x$, for all $0 < \varepsilon \leq 1$ for all $0 < a < 1$, 
    \begin{align}
        &   \lim_{r \to 0} (2r)^{-d} \HH^d(E \cap B(x,r)) = 1\\
        &   \limsup_{r \to 0} r^{-d} \HH^d(E \cap a\overline{B}(x,r) \setminus B(x,r)) \leq C(1-a)r^d\\
        &   \lim_{r \to 0} r^{-d} \HH^d(E \cap \overline{B}(x,r) \setminus C(x,V_x,\varepsilon) = 0.
    \end{align}
    Moreover, for $\HH^d$-ae. $x \in E_2$, there exists a (unique) affine map $A_x\colon V_x \to \R^n$ such that for all $\varepsilon > 0$, there exists $r > 0$ such that for all $y \in V_x \cap B(x,r)$,
    \begin{equation}
        \abs{f(y) - A_x(y)} \leq \varepsilon \abs{y - x}.
    \end{equation}

    Let $0 < \varepsilon \leq 1$, let $0 < a < 1$ (to be chosen later). There exists an $\HH^d$ measurable subset $K \subset E_2$ such that $\HH^d(E_2 \setminus K) \leq \varepsilon_0$ and for some radius $0 < r_0 \leq 1$, for all $x \in K$,
    \begin{subequations}\label{B_k}
        \begin{align}
            &   \overline{B}(x,r_0) \subset W_2\label{B_k1}\\
            &   \forall\ 0 < r \leq r_0,\ (2r)^{-d} \HH^d(E \cap B(x,r)) \geq \tfrac{1}{2}\label{A2}\\
            &   \forall\ 0 < r \leq r_0,\ \HH^d(E \cap \overline{B}(x,r) \setminus aB(x,r)) \leq C (1 - a) r^d\\
            &   \forall\ 0 < r \leq r_0,\ \HH^d(E \cap \overline{B}(x,r) \setminus C(x,V_x,\varepsilon) \leq \varepsilon r^d\\
            &   \forall y \in V_x \cap B(x,r_0),\ \abs{f(y) - A_x(y)} \leq \varepsilon \abs{y - x}.\label{B_k5}
        \end{align}
    \end{subequations}
    The Approximation Theorem \cite[Theorem 1.10]{Mattila} also allows to assume that $K$ is compact. As $\HH^d(E_2 \setminus W_2) \leq \varepsilon_0$, we finally assume that $K \subset E_2 \cap W_2$ and
    \begin{equation}\label{K_cover}
        \HH^d(E_2 \setminus K) \leq 2 \varepsilon_0.
    \end{equation}
    By the definition of Hausdorff measures, there exists a countable set $\mathcal{S}$ of closed balls $D = B(y,\rho)$ of center $y \in f(K)$ and radius $0 < \rho \leq \varepsilon r_0$ such that
    \begin{equation}
        f(K) \subset \bigcup_{D \in \mathcal{S}} D
    \end{equation}
    and
    \begin{equation}\label{hausdorff_cover}
        \sum_{D \in \mathcal{S}} \mathrm{diam}(D)^d \leq C \HH^d(f(K)) + t^{-1}.
    \end{equation}
    We cover $K$ by a \emph{finite} sequence of open balls $(B_k)=(B(x_k,r_k))$ (of center $x_k \in K$ and radius $r_k > 0$) such that for each $k$, there exists a ball $D_k \in \mathcal{S}$ (of center $y_k$ and radius $\rho_k$) such that $x_k \in f^{-1}(D_k)$ and $r_k = \varepsilon^{-1} \rho_k$. In particular,
    \begin{equation}\label{x_k_def}
        \abs{f(x_k) - y_k} \leq \varepsilon r_k.
    \end{equation}
    In any metric space, a \emph{finite} family of balls $(B_k)_k$ admits a subfamily of disjoint balls $(B_l)_l$ such that $\bigcup_k B_k \subset \bigcup_l 3 B_l$. Thus, we can require that the smaller balls $(\frac{1}{3} B_k)_k$ are disjoint while
    \begin{equation}\label{B_k_cover}
        K \subset \bigcup_k B_k.
    \end{equation}
    Since $\rho_k \leq \varepsilon r_0$, we have $r_k \leq r_0$. In particular, $\overline{B_k} \subset W_2$ by (\ref{B_k1}). Moreover, (\ref{A2}) and the fact that the balls $(\frac{1}{3} B_k)$ are disjoint yields
    \begin{equation}\label{disjoint_balls}
        \sum_k r_k^d \leq C \HH^d(E).
    \end{equation}
    In each ball $B_k$, we are going to replace $f$ by an orthogonal projection onto a plane. For each $k$, we define $p_k$ as the orthogonal projection onto $\mathrm{Im} \, A_{x_k}$. We can write $p_k$ as
    \begin{equation}\label{p_k_def}
        p_k = f(x_k) + \vec{p}_k(\cdot - f(x_k)),
    \end{equation}
    where $\vec{p}_k$ is the (linear) orthogonal projection onto the direction of $\mathrm{Im} \, A_{x_k}$. We consider a maximal set $\mathcal{T}$ of linear orthogonal projection of rank $\leq d$ and of mutual distances $> \varepsilon$. It contains at most $N(\varepsilon)$ elements, where $N(\varepsilon)$ depends on $n$ and $\varepsilon$. For each $k$, let $\vec{\pi_k} \in \mathcal{T}$ be such that
    \begin{equation}\label{vecpi_k_def}
        \norm*{\vec{p}_k - \vec{\pi}_k} \leq \varepsilon.
    \end{equation}
    Finally, we define $\pi_k$ as the orthogonal projection onto the affine plane $y_k + \mathrm{Im} \, \vec{\pi}_k$, that is
    \begin{equation}\label{pi_k_def}
        \pi_k = y_k + \vec{\pi}_k(\cdot - y_k).
    \end{equation}
    Without loss of generality, we assume that the domain of definition of $(B_k)$ is totally ordered and that the radius $(r_k)$ are non-increasing. We define
    \begin{equation}
        f_2 =
        \begin{cases}
            \pi_k \circ f   &   \text{in} \ a B_k \cap C_k \setminus \bigcup_{i < k} B_i\\
            f               &   \text{in} \ \R^n \setminus \bigcup_k B_k,
        \end{cases}
    \end{equation}
    where $C_k = C(x_k, V_{x_k},\varepsilon)$. We estimate $\abs{f_2 - f}$. For all $x \in a B_k \cap C_k \setminus \bigcup_{i < k} B_i$, we have
    \begin{align}
        \abs{f_2(x) - f(x)} &   = \abs{\pi_k f(x) - f(x)}\\
                            &   \leq \abs{\pi_k f(x) - p_k f(x)} + \abs{p_k f(x) - f(x)}.
    \end{align}
    By (\ref{x_k_def}), (\ref{p_k_def}), (\ref{vecpi_k_def}) and (\ref{pi_k_def}),
    \begin{align}
        \begin{split}
            \abs{\pi_k f(x) - p_k f(x)} &   \leq \abs{f(x_k) - y_k} + \abs{\vec{\pi}_k(f(x_k) - y_k)}\\
                                        &   \qquad + \abs{(\vec{\pi}_k - \vec{p}_k)(f(x) - f(x_k))}
        \end{split}\\
        &   \leq 2\abs{f(x_k) - y_k} + \norm{\vec{\pi}_k - \vec{p}_{k}}\abs{f(x) - f(x_k)}\\
        &   \leq 2\varepsilon r_k + \norm{f}_L\varepsilon r_k.
    \end{align}
    By the properties of orthogonal projections, the definition of $C_k$ and (\ref{B_k5}),
    \begin{align}
        \abs{p_k f(x) - f(x)}   &   =\mathrm{d}(f(x), \mathrm{Im} \, A_{x_k})\\
                                &   \leq \abs{f(x) - A_{x_k}(x' - x_k)}\\
                                &   \leq \abs{f(x) - f(x')} + \abs{f(x') - A_{x_k}(x' - x_k)}\\
                                &   \leq \norm{f}_L \varepsilon r_k + \varepsilon r_k,
    \end{align}
    where $x'$ is the orthogonal projection of $x$ onto $V_{x_k}$. In conclusion, $\abs{f_2 - f} \leq C(\norm{f}_L+1)\varepsilon r_k$ in $a B_k \cap C_k \setminus \bigcup_{i < k} B_i$. We replace $\varepsilon$ by a smaller value in the previous construction so as to simply assume $\abs{f_2 - f} \leq \varepsilon r_k$. Next, we estimate $\norm{f_2 - f}_L$. In each set $a B_k \cap C_k \setminus \bigcup_{i < k} B_i$, the map $f_2 - f = (\pi_k - \mathrm{id}) \circ f$ is $C\norm{f}_L$-Lipschitz because $(\pi_k - \mathrm{id})$ is $C$-Lipschitz. As
    \begin{equation}
        \mathrm{d}(a B_k, \R^n \setminus B_k) \geq (1 - a) r_k,
    \end{equation}
    we have for $x \in a B_k \cap C_k \setminus \bigcup_{i < k} B_i$ and $y \in \R^n \setminus \bigcup_k B_k$,
    \begin{align}
        \abs{(f_2-f)(x) - (f_2-f)(y)}   &   \leq \abs{f_2(x) - f(x)}\\
                                        &   \leq \varepsilon r_k\\
                                        &   \leq \frac{\varepsilon}{1 - a} \abs{x - y}.
    \end{align}
    For $x \in a B_k \cap C_k \setminus \bigcup_{i < k} B_i$ and $y \in a B_l \cap C_l \setminus \bigcup_{i < l} B_i$ where $k < l$, we have similarly
    \begin{align}
        \abs{(f_2-f)(x) - (f_2-f)(y)}   &   \leq \abs{f_2(x) - f(x)} + \abs{f_2(y) - f(y)}\\
                                        &   \leq \varepsilon r_k +  \varepsilon r_l\\
                                        &   \leq 2\varepsilon r_k\\
                                        &   \leq \frac{2\varepsilon}{1 - a} \abs{x - y}.
    \end{align}
    Taking $a = 1 - \varepsilon$, we conclude that $f_2 - f$ is $C$-Lipschitz. We apply McShane's extension Lemma (Lemma \ref{lipschitz_extension}) to $f_2 - f$ and extend $f_2$ as a Lipchitz map $\R^n \to \R^n$ such that $\abs{f_2 - f} \leq \varepsilon$ and $\norm{f_2 - f}_L \leq C \norm{f}_L$.

    Now, we define
    \begin{equation}
        V_2 = \bigcup_k B_k \setminus A,
    \end{equation}
    where $A$ is the compact set $A = \bigcup_k \overline{B_k} \setminus (a B_k \cap C_k)$ and we estimate $\HH^d(f_2(V_2))$ and $\HH^d(E_2 \setminus V_2)$. By (\ref{B_k}), (\ref{K_cover}), (\ref{B_k_cover}), (\ref{disjoint_balls}) and the choice $a = 1 - \varepsilon$,
    \begin{align}
        \HH^d(E_2 \setminus V_2)    &   \leq \HH^d(E_2 \setminus K) + \HH^d(K \setminus V_2)\\
                                    &   \leq 2 \varepsilon_0 + \HH^d(K \cap A)\\
                                    \begin{split}
                                    &   \leq 2\varepsilon_0 + \sum_k \HH^d(E \cap \overline{B_k} \setminus a B_k)\\
                                    &   \qquad + \sum_k \HH^d(E \cap \overline{B_k} \setminus C_k)
                                    \end{split}\\
                                    &   \leq 2\varepsilon_0 + C (1 - a) \sum_k r_k^d + \varepsilon \sum_k r_k^d\\
                                    &   \leq 2\varepsilon_0 + C\varepsilon \sum_k r_k^d\\
                                    &   \leq 2\varepsilon_0 + C\varepsilon \HH^d(E)\label{V_2_cover}
    \end{align}
    By definition of $A$ and $V_2$,
    \begin{align}
        f_2(V_2)    &   \subset f_2(\bigcup_k B_k \setminus A)\\
                    &   \subset \bigcup_k f_2(a B_k \cap C_k \setminus \bigcup_{i < k} B_i).
    \end{align}
    By definition of $f_2$,
    \begin{equation}
        f_2(a B_k \cap C_k \setminus \bigcup_{i < k} B_i) \subset y_k + \mathrm{Im} \, \vec{\pi}_k.
    \end{equation}
    We have in addition for $x \in a B_k \cap C_k \setminus \bigcup_{i < k} B_i$,
    \begin{align}
        \abs{f_2(x) - y_k}   &= \abs{\pi_k f(x) - \pi_k(y_k)}\\
                             &\leq \abs{f(x) - y_k}\\
                             &\leq \abs{f(x) - f(x_k)} + \abs{f(x_k) - y_k}\\
                             &\leq \norm{f}_L r_k + \varepsilon r_k
    \end{align}
    We assume $\varepsilon \leq \norm{f}_L$ so that this simplifies to $\abs{f_2(x) - y_k} \leq 2 \norm{f}_L r_k$. By definition, $r_k = \varepsilon^{-1} \rho_k$ so
    \begin{equation}
        f_2(a B_k \cap C_k \setminus \bigcup_{i < k} B_i) \subset \left(y_k + \mathrm{Im} \, \vec{\pi}_k\right) \cap B(y_k, \lambda \rho_k)
    \end{equation}
    where $\lambda = 2 \norm{f}_L \varepsilon^{-1}$. We deduce that
    \begin{equation}
        f_2(V_2) \subset \bigcup_{D \in \mathcal{S},\\ \vec{\pi} \in \mathcal{T}} (y + \mathrm{Im} \, \vec{\pi}) \cap \lambda D.
    \end{equation}
    Using the fact that $\mathcal{T}$ has at most $N(\varepsilon)$ elements and using (\ref{hausdorff_cover}), we get
    \begin{align}
        \HH^d(f_2(V_2)) &   \leq C N(\varepsilon) \lambda^d \sum_{D \in \mathcal{S}} \mathrm{diam}(D)^d\\
                        &   \leq C N(\varepsilon) \lambda^d \left(\HH^d(f(K)) + t^{-1}\right)
    \end{align}
    We recall that by definition of $E_2$, $\HH^d(f(E_2)) \leq t^{-1} \HH^d(E)$ so
    \begin{equation}
        \HH^d(f_2(V_2)) \leq C N(\varepsilon) \lambda^d t^{-1} (\HH^d(E) + 1).
    \end{equation}
    We will choose $t$ so that $\HH^d(f_2(V_2)) \leq \varepsilon_0$. However, an extra step is needed to make a retraction onto the boundary. For $\delta > 0$, there exists a $C$-Lipschitz map $p\colon \R^n \to \R^n$ and an open set $O \subset X$ containing $\Gamma$ such that $\abs{p - \mathrm{id}} \leq \delta$, $p(O) \subset \Gamma$ and $p = \mathrm{id}$ on $\Gamma$. We consider
    \begin{equation}
        g_2 =
        \begin{cases}
            p \circ f_2 &   \text{in} \ V_2 \cup \Gamma\\
            f_2 (=f)    &   \text{in} \ \R^n \setminus \bigcup_k B_k,
        \end{cases}
    \end{equation}
    As the family $(B_k)$ is finite and for each $k$, $\overline{B_k} \subset W_2$, we have $\bigcup_k B_k \subset \subset W_2$. As in step 1, we can assume $\varepsilon$ and $\delta$ small enough so that $\abs{g_2 - f} \leq \varepsilon_0$, $\norm{g_2 - f}_L \leq C\norm{f}_L$ and $g_2(\Gamma) \subset \Gamma$. We extend $g_2$ as a Lipschitz map $g_2\colon \R^n \to \R^n$ such that $\abs{g_2 - f} \leq \varepsilon_0$, $\norm{g_2 - f}_L \leq C \norm{f}_L$. Since $p$ is $C$-Lipschitz, we have
    \begin{align}
        \HH^d(g_2(V_2)) &\leq C \HH^d(f_2(V_2))\\
                        &\leq C N(\varepsilon) \lambda^d t^{-1} (\HH^d(E) + 1).
    \end{align}
    We take $\varepsilon$ small enough so that (\ref{V_2_cover}) yields $\HH^d(E_2 \setminus V) \leq 3\varepsilon_0$. Then we choose $t$ big enough such that $\HH^d(f_2(V_2)) \leq \varepsilon_0$.

    \emph{Step 3.} We define $V = V_1 \cup V_2$ and $g$ the piecewise function
    \begin{equation}
        g(z) =
        \begin{cases}
            g_1(z)  &   \text{in} \ W_1\\
            g_2(z)  &   \text{in} \ W_2\\
            f(z)    &   \text{in} \ \R^n \setminus (W_1 \cup W_2).
        \end{cases}
    \end{equation}
    We have $\abs{g - f} \leq \varepsilon_0$ and $\norm{g - f}_L \leq C\norm{f}_L$ (the function $g-f$ is locally $C\norm{f}_L$-Lipschitz because each $g_i - f$ has a compact support included in $W_i$ and then $g-f$ is globally $\norm{f}_L$-Lipschitz by convexity of $\R^n$). As $W_1, W_2 \subset W \subset \subset U$, $g(\Gamma) \subset \Gamma$, $g = f$ in $X \setminus W$ and $\abs{g - f} \leq \varepsilon_0$, Lemma \ref{sliding_perturbation} says that $g$ is a global sliding deformation in $U$ provided that $\varepsilon_0$ is small enough. Finally, we have
    \begin{align}
        \HH^d(E \setminus V)    &   \leq \HH^d(E_1 \setminus V_1) + \HH^d(E_2 \setminus V_2)\\
                                &   \leq 2\varepsilon_0.
    \end{align}
    and
    \begin{align}
        \II^d(g(V)) &   \leq \II(g_1(V_1)) + \II(g_2(V_2))\\
                    &   \leq \II(g_1(V_1)) + C\HH^d(g_2(V_2))\\
                    &   \leq \II(f(E))) + C\varepsilon_0.
    \end{align}
\end{proof}

\subsubsection{Proof of the Limiting Theorem}
\begin{proof}
    The letter $C$ is an unspecified constant $\geq 1$ that depends on $n$, $\kappa$, $\Gamma$, $\II$. Its value can increase from one line to another (but a finite number of times). The letter $u > 0$ is a constant that depends on $n$, $s$, $\Gamma$. Let $E$ be the support of $\mu$ in $X$. Let $t$, $\GL$ and $\IL$ be the constants given by Definitions \ref{defi_whitney} and \ref{defi_energy}.

    \emph{Step 1. We show that for all $x \in E$, for all $0 < r \leq r_u(x)$ and for $i$ big enough,}
    \begin{equation}\label{step1_claim}
        C^{-1} r^d \leq \HH^d(E_i \cap B(x,r)) \leq C r^d.
    \end{equation}
    It is almost the same proof as in Proposition \ref{prop_density}. We fix $x \in E$ and $0 < r \leq \min \set{r_s(x),r_t(x)}$. The proof is based on two ingredients. The first one is that for all $0 < \rho \leq r$,
    \begin{equation}
        \liminf_i \HH^d(E_i \cap B(x, \rho)) > 0.
    \end{equation}
    The second one is that there there exists a sequence $(\varepsilon_i) \to 0$ such that for all global sliding deformations $f$ in $B(x,r)$,
            \begin{equation}
                \II(E_i \cap W_f) \leq \kappa \II(f(E_i \cap W_f)) + h \II(E_i \cap B(x,hr)) + \varepsilon_i.
            \end{equation}
            We can simplify the situation as we did in Proposition \ref{prop_density}. It suffices to solve the following problem. Let the ball $B(0,\sqrt{n})$ be denoted by $B$. We assume that $(E_i)$ is a sequence of $\HH^d$ finite closed subsets of $B$ and
    \begin{enumerate}
        \item $\liminf_i \HH^d(E_i \cap B(0,\tfrac{1}{2})) > 0$;
        \item there exists $S \subset \mathcal{E}_n(1)$ such that $\Gamma \cap B(0,\sqrt{n}) = \abs{S} \cap B(0,\sqrt{n})$;
        \item there exists a sequence $(\varepsilon_i) \to 0$ such that for all global sliding deformations $f$ along $\Gamma$ in $B$,
            \begin{equation}
                \HH^d(E_i \cap W_f) \leq \kappa \HH^d(f(E_i \cap W_f)) + h \HH^d(E_i \cap hB) + \varepsilon_i.
            \end{equation}
    \end{enumerate}
    Then we show that for $i$ big enough, 
    \begin{align}
        \HH^d(E_i \cap \mathopen{]}-\tfrac{1}{2},\tfrac{1}{2}\mathclose{[}^n)    &\leq C\\
        \HH^d(E_i \cap \mathopen{]}-1,1\mathclose{[}^n)                          &\geq C.
    \end{align}
    We can proceed as in the reduced problem of Proposition \ref{prop_density}. Note that for $i$ big enough, $\varepsilon_i \leq h \HH^d(E_i \cap B(0,\tfrac{1}{2}))$ so the term $\varepsilon_i$ is be absorbed in $h \HH^d(E_i \cap B(0,\tfrac{1}{2}))$.

    Before passing to step 2, let us draw the main consequences of (\ref{step1_claim}). We recall that $\II^d \mres E_i \rightharpoonup \mu$ so for $x \in E$ and for $0 < r \leq r_u(x)$,
    \begin{equation}
        C^{-1} r^d \leq \mu(B(x,r)) \leq C r^d.
    \end{equation}
    According to the density theorems on Radon measures \cite[Theorem 6.9]{Mattila},
    \begin{equation}
        C^{-1} \HH^d \mres E \leq \mu \leq C \HH^d \mres E
    \end{equation}
    whence for all $x \in E$ and for all $0 < r \leq r_u(x)$,
    \begin{equation}\label{E_af}
        C^{-1} r^d \leq \HH^d(E \cap (B(x,r)) \leq C r^d.
    \end{equation}
    We are going to deduce the regularity property mentioned in Remark \ref{rmk_af_semi}. Let $v > 0$ be a scale such that $r_v \leq \tfrac{1}{3}r_u$. We fix $x \in E$ and $B = B(x,r_v(x))$. For $y \in B$, we have $r_u(x) \leq r_u(y) + r_v(x) \leq r_u(y) + \tfrac{1}{3}r_u(x)$ so $\tfrac{2}{3} r_u(x) \leq r_u(y)$ and in turn $2 r_v(x) \leq r_u(y)$. We conclude that for all $y \in B$, for all $0 \leq r \leq 2r_v(x)$,
    \begin{equation}
        C^{-1} r^d \leq \HH^d(E \cap B(y,r)) \leq C r^d.
    \end{equation}
    Remark \ref{rmk_af_semi} says that for all subset $S \subset E \cap B(x,r_v(x))$, for all radius $\rho > 0$, the set $S$ can be covered by at most $C \rho^{-d} \mathrm{diam}(S)^d$ balls of radius $\rho$. We reduce $u$ once again so that this property holds for $r_u$ in place of $r_v$.

    \emph{Step 2. We show that $E$ is $\HH^d$ rectifiable.}  More precisely, we show that there exists $\lambda \geq 1$ such that for all $x \in E$, for small all $r > 0$,
    \begin{equation}\label{step2_claim}
        \HH^d(E \cap B(x,r)) \leq C \HH^d(E_r \cap B(x, \lambda r)).
    \end{equation}
    We fix $x \in E$ and $0 < r \leq \min \set{r_s(x),r_t(x),r_u(x)}$. The proof relies on five arguments. For all $0 < \rho \leq r$,
    \begin{equation}
        \liminf_i \HH^d(E_i \cap B(x,\rho)) > 0.
    \end{equation}
    For all compact set $K \subset B(x,r)$,
    \begin{equation}
        \limsup_i \HH^d(E_i \cap K) \leq C\HH^d(E \cap K).
    \end{equation}
    For all $0 < \rho \leq r$,
    \begin{equation}
        \rho^{-d} \HH^d(E \cap B(x,\rho)) \leq C r^{-d} \HH^d(E \cap B(x,r)).
    \end{equation}
    For all subset $S \subset E \cap B(x,r)$, for all radius $\rho > 0$, the set $S$ can be covered by at most $C \rho^{-d} \mathrm{diam}(S)^d$ balls of radius $r$. And finally, there exists a sequence $(\varepsilon_i) \to 0$ such that for all global sliding deformations $f$ in $B(x,r)$,
            \begin{equation}
                \II(E_i \cap W_f) \leq \kappa \II(f(E_i \cap W_f)) + h \II(E_i \cap B(x,hr)) + \varepsilon_i.
            \end{equation}
            We can simplify the situation as usual. It suffices to solve the following problem. Let the ball $B(0,\sqrt{n})$ be denoted by $B$. We assume that $E$ and $(E_i)$ are $\HH^d$ finite closed subsets of $B$ and
    \begin{enumerate}
        \item $\liminf_i \HH^d(E_i \cap B(0,\tfrac{1}{2})) > 0$;
        \item for all compact set $K \subset B$, $\limsup_i \HH^d(E_i \cap K) \leq C \HH^d(E \cap K)$;
        \item $\HH^d(E \cap B) \leq C \HH^d(E \cap B(0,\tfrac{1}{2}))$;
        \item for all subset $S \subset E \cap B$, for all radius $r > 0$, the set $S$ can be covered by at most $C r^{-d} \mathrm{diam}(S)^d$ balls of radius $r$;
        \item there exists a set $S \subset \mathcal{E}_n(1)$ such that $\Gamma \cap B = \abs{S} \cap B$;
        \item there exists a sequence $(\varepsilon_i) \to 0$ such that for all global sliding deformations $f$ in $B$,
            \begin{equation}\label{step2_quasi}
                \HH^d(E_i \cap W_f) \leq \kappa \HH^d(f(E_i \cap W_f)) + h \HH^d(E_i \cap B(0,\tfrac{1}{2})) + \varepsilon_i.
            \end{equation}
    \end{enumerate}
    Then we show that
    \begin{equation}
        \HH^d(E \cap \mathopen{]}-\tfrac{1}{2},\tfrac{1}{2}\mathclose{[}^n) \leq C \int_{G(d,n)} \HH^d(p_V(E \cap \mathopen{]}-1,1\mathclose{[}^n)) \, \mathrm{d}V.
    \end{equation}
    We want to proceed as in the proof of the reduced problem of Corollary \ref{prop_density} but there is a difference. We are not going to make Federer--Fleming projection of the sets $E_i$ but of the set $E$ instead.

    We fix $q \in \N^*$. For $0 \leq k < 2^q$, let $K_k$ be the set of dyadic cells of sidelength $2^{-q}$ subdivising the cube $(1 - k2^{-q}) [-1,1]^n$ but which are not lying in its boundary. The set $K_k$ is a finite $n$-complex subordinated to $E_n$ and
    \begin{subequations}
        \begin{align}
            \abs{K_k}   &   = (1 - k2^{-q}) [-1,1]^n,\\
            U(K_k)      &   = (1 - k2^{-q}) \mathopen{]}-1,1\mathclose{[}^n.
        \end{align}
    \end{subequations}
    We abbreviate $U(K_k)$ as $U_k$; in particular $U_0 = \mathopen{]}-1,1\mathclose{[}^n$. The cells of $K_k$ have bounded overlap: each point of $\abs{K_k}$ belongs to at most $3^n$ cells $A \in K_k$. It is clear that $K_{k+1}$ is a subcomplex of $K_k$ (the sidelength $2^{-q}$ does not depend on $k$). We deduce that
    \begin{align}
        \abs{K_k} \setminus U_{k+1} &= \bigcup \set{A \in K_k | A \in K_k \setminus K_{k+1}}\\
                                    &= \bigcup \set{A \in K_k | A \cap U_k \ne \emptyset}.
    \end{align}

    Let $\phi$ be a Federer--Fleming projection of $E \cap U_k$ in $K_k$. We can assume that $\phi$ is $C$-Lipschitz thanks to Lemma \ref{lem_semi} and Remark \ref{rmk_af_semi}. We apply the quasiminimality of $E$ with respect to $\phi$ in $B = B(0,\sqrt{n})$. For $i$ big enough, $\varepsilon_i \leq h \HH^d(E_i \cap B(0,\tfrac{1}{2}))$ so the term $\varepsilon_i$ is be absorbed in $h \HH^d(E_i \cap B(0,\tfrac{1}{2}))$. We also assume $h \leq \tfrac{1}{4}$ so that
    \begin{equation}
        \HH^d(E_i \cap \mathopen{]}-\tfrac{1}{2},\tfrac{1}{2}\mathclose{[}^n)) \leq C \HH^d(\phi(E_i \cap U_k)).
    \end{equation}
    Now we prove that there exists an open set $O \subset U_{k+1}$ which contains $E \cap U_{k+1}$ and such that
    \begin{equation}\label{assumption_O}
        \HH^d(\phi(O)) \leq C 2^q \int_{G(d,n)} \HH^d(p_V(E \cap U_0)) \, \mathrm{d}V + \varepsilon
    \end{equation}
    where $\varepsilon > 0$ is an error term which is small compared to $\HH^d(E_i \cap U_k)$. We will use this to decompose $E_i \cap U_k$ in two parts: $E_i \cap O$ and $E_i \cap U_k \setminus O$. By the properties of the Federer--Fleming projection, there exists an open set $O' \subset U_k$ containing $E \cap U_k$ and such that
    \begin{equation}
        \phi(O') \subset \abs{K_k} \setminus \bigcup \set{\mathrm{int}(A) | A \in K_k,\ \mathrm{dim} \, A > d}.
    \end{equation}
    As $K_{k+1} \subset K_k$, the function $\phi$ preserves the cells of $K_{k+1}$ and thus $\phi(U_{k+1}) \subset \abs{K_{k+1}}$. We deduce that
    \begin{align}
        \phi(O' \cap U_{k+1})   &   \subset \abs{K_{k+1}} \setminus \bigcup \set{\mathrm{int}(A) | A \in K_k,\ \mathrm{dim} \, A > d}\\
                                &   \subset U_k \setminus \bigcup \set{\mathrm{int}(A) | A \in K_k,\ \mathrm{dim} \, A > d}\\
                                &   \subset \bigcup \set{\mathrm{int}(A) | A \in K_k,\ \mathrm{dim} \, A \leq d}.
    \end{align}
    We denote the set $\bigcup \set{\mathrm{int}(A) | A \in K_k,\ \mathrm{dim} \, A \leq d}$ by $S_k$. Since $O'$ contains $E \cap U_k$, we also have $\phi(E \cap U_{k+1}) \subset S_k$. We recall that for $A \in K_k^d$,
    \begin{equation}
        \HH^d(\phi(E) \cap A) \leq C \int_{G(d,n)} \HH^d(p_V(E \cap U_0)) \, \mathrm{d}V.
    \end{equation}
    As $\phi(E \cap U_{k+1}) \subset S_k$ and $K_k^d$ contains at most $C 2^q$ cells, we deduce that
    \begin{equation}
        \HH^d(\phi(E \cap U_{k+1})) \leq C 2^q \int_{G(d,n)} \HH^d(p_V(E \cap U_0)) \, \mathrm{d}V.
    \end{equation}
    The set $O$ that we intend to build is a small neighborhood of $E \cap U_{k+1}$ for which this estimate still holds with an error term $\varepsilon$. First, we observe that $\HH^d \mres S_k$ is a Radon measure because $K$ is finite. According to the regularity properties of Radon measure, there exists an open set $O''$ containing $\phi(E \cap U_{k+1})$ such that
    \begin{equation}
        \HH^d(S_k \cap O'') \leq \HH^d(\phi(E \cap U_{k+1})) + \varepsilon.
    \end{equation}
    Since $\phi(O' \cap U_{k+1}) \subset S_k$, the set $O = O' \cap U_{k+1} \cap \phi^{-1}(O'')$ is a solution: it is an open subset of $U_{k+1}$ which contains $E \cap U_{k+1}$ and which satisfies
    \begin{align}
        \HH^d(\phi(O))  &\leq \HH^d(\phi(S_k \cap O''))\\
                        &\leq \HH^d(\phi(E \cap U_{k+1})) + \varepsilon\\
                        &\leq C 2^q \int_{G(d,n)} \HH^d(p_V(E \cap U_0)) \, \mathrm{d}V + \varepsilon.
    \end{align}
    In conclusion,
    \begin{align}
        \HH^d(E_i \cap \mathopen{]}-\tfrac{1}{2},\tfrac{1}{2}\mathclose{[}^n))  &\leq C \HH^d(\phi(E_i \cap U_k))\\
                                                                                &\leq C \HH^d(\phi(O)) + C\HH^d(E_i \cap U_k \setminus O))\\
                                                                                \begin{split}
                                                                                &\leq C 2^q \int_{G(d,n)} \HH^d(p_V(E \cap U_0)) \, \mathrm{d}V + C\varepsilon\\
                                                                                &\qquad + C \HH^d(\phi(E_i \cap U_k \setminus O)).
                                                                                \end{split}
    \end{align}
    As $\liminf_i \HH^d(E_i \cap B(0,\tfrac{1}{2})) > 0$, we can take $\varepsilon$ small enough (independently from $i$) so that $C\varepsilon < \tfrac{1}{2} \liminf_i \HH^d(E_i \cap B(0,\tfrac{1}{2}))$ and then assume $i$ big enough so that
    \begin{equation}
        C\varepsilon \leq \tfrac{1}{2} \HH^d(E_i \cap B(0,\tfrac{1}{2})).
    \end{equation}
    Thus,
    \begin{multline}
        \HH^d(E_i \cap \mathopen{]}-\tfrac{1}{2},\tfrac{1}{2}\mathclose{[}^n) \leq C 2^q \int_{G(d,n)} \HH^d(p_V(E \cap U_0)) \, \mathrm{d}V +\\C \HH^d(\phi(E_i \cap U_k \setminus O))
    \end{multline}
    and since $\phi$ is $C$-Lipschitz,
    \begin{multline}
        \HH^d(E_i \cap \mathopen{]}-\tfrac{1}{2},\tfrac{1}{2}\mathclose{[}^n) \leq C 2^q \int_{G(d,n)} \HH^d(p_V(E \cap U_0)) \, \mathrm{d}V\\ + C \HH^d(E_i \cap U_k \setminus O).
    \end{multline}
    We pass to the limit $i \to \infty$,
    \begin{multline}
        \HH^d(E \cap \mathopen{]}-\tfrac{1}{2},\tfrac{1}{2}\mathclose{[}^n) \leq C 2^q \int_{G(d,n)} \HH^d(p_V(E \cap U_0)) \, \mathrm{d}V\\ + C \HH^d(E \cap \abs{K_k}\setminus O).
    \end{multline}
    The open set $O$ contains $E \cap U_{k+1}$ so
    \begin{multline}
        \HH^d(E \cap U_k) \leq C 2^q \int_{G(d,n)} \HH^d(p_V(E \cap U_0)) \, \mathrm{d}V\\ + C \HH^d(E \cap \abs{K_k} \setminus U_{k+1})).
    \end{multline}
    This is (almost) the same inequality as (\ref{FF3a}) in the proof of Proposition \ref{prop_rect}. We can conclude in the same way by a Chebyshev argument.

    \emph{Step 3. We show that for all open balls $B$ of scale $\leq s$ in $X$, for all sliding deformations $f$ of $E$ in $B$,}
    \begin{equation}
        \mu(W_f) \leq \kappa \II(f(W_f)) + h \mu(h B),
    \end{equation}
    where $W_f = \set{x \in E | f(x) \ne x}$. This step relies mainly on the technical Lemma \ref{technical_lemma}. Let us fix an open ball $B$ of scale $\leq s$ in $X$. Let $f$ be a sliding deformation of $E$ in $B$. Let $\varepsilon > 0$. Let $K$ be a compact subset of $E \cap W_f$ such that $\HH^d(E \cap W_f \setminus K) \leq \varepsilon$. There exists $\delta > 0$ such that $\abs{f - \mathrm{id}} > \delta$ on $K$. According to Lemma \ref{sliding_alternative}, there exists a global sliding deformation $f_1$ in $B$ such that $\abs{f_1 - f} \leq \frac{\delta}{2}$, $E \cap W_{f_1} \subset \subset W_f$ and
    \begin{equation}\label{fB}
        \HH^d(f_1(W_f) \setminus f(W_f)) \leq \varepsilon.
    \end{equation}
    In particular, $K \subset W_{f_1}$ because $\abs{f - f_1} \leq \frac{\delta}{2} < \abs{f - \mathrm{id}}$ on $K$. Since $\HH^d(E \cap W_f \setminus K) \leq \varepsilon$, this implies
    \begin{equation}\label{fA}
        \HH^d(E \cap W_f \setminus W_{f_1}) \leq \varepsilon.
    \end{equation}
    We are going to apply Lemma \ref{technical_lemma}. There exists a global sliding deformation $g$ in $B$ (whose Lipschitz constant does not depend on $\varepsilon$) and an open set $V \subset W_{f_1}$ such that $W_g \subset W_{f_1}$, $\abs{g - f_1} \leq \frac{\delta}{4}$ and
    \begin{subequations}
        \begin{align}
            &   \HH^d(E \cap W_{f_1} \setminus V)   \leq \varepsilon\label{gA}\\
            &   \II(g(V))                           \leq \II(f_1(E \cap W_{f_1})) + \varepsilon.\label{gB}
        \end{align}
    \end{subequations}
    Let us draw a few consequences. It is straightforward that $E \cap \overline{W_g} \subset W_f$. Moreover, $K \subset W_g$ because $\abs{g - f} \leq \tfrac{3\delta}{4} < \abs{f - \mathrm{id}}$ on $K$. The conditions (\ref{fA}), (\ref{gA}) imply
    \begin{align}
        \mu(W_f \setminus V)    &   \leq \mu(W_f \setminus W_{f_1}) + \mu(W_{f_1} \setminus V)\\
                                &   \leq C\varepsilon.
    \end{align}
    and the conditions (\ref{fB}), (\ref{gB}) imply
    \begin{align}
        \II(g(V))   &   \leq \II(f_1(E \cap W_{f_1})) + \varepsilon\\
                    &   \leq \II(f_1(W_f)) + \varepsilon\\
                    &   \leq \II(f(W_f)) + 2\varepsilon.
    \end{align}
    Now, we apply the quasiminimality of $E_i$ with respect to $g$ in $B$,
    \begin{equation}
        \II(E_i \cap W_g) \leq \kappa \II(g(E_i \cap W_g)) + h \II(E_i \cap hB) + \varepsilon_i.
    \end{equation}
    By construction,
    \begin{align}
        \II(g(E_i \cap W_g))    &   \leq \II(g(V)) + \II(g(E_i \cap W_g \setminus V))\\
                                &   \leq \II(f(W_f)) + C\norm{g}_L \II(E_i \cap W_g \setminus V) + 2\varepsilon,
    \end{align}
    where $\norm{g}_L$ is the Lipschitz constant of $g$. Passing to the limit in $i$, we have
    \begin{align}
        \limsup_i \II(g(E_i \cap W_g))  &   \leq \II(f(W_f)) + C\norm{g}_L \mu(\overline{W_g} \setminus V) + 2\varepsilon\\
                                        &   \leq \II(f(W_f)) + C\norm{g}_L \mu(W_f \setminus V) + 2\varepsilon\\
                                        &   \leq \II(f(W_f)) + C\norm{g}_L \varepsilon + 2\varepsilon.
    \end{align}
    On the other hand, we have
    \begin{equation}
        \liminf_i \mu(E_i \cap W_g) \geq \mu(W_g) \geq \mu(K).
    \end{equation}
    We conclude that
    \begin{align}
        \mu(W_f)    &   \leq \mu(K) + \varepsilon\\
                    &   \leq \kappa \II(f(W_f)) + h \mu(h\overline{B}) + C \varepsilon (\norm{g}_L + 1).
    \end{align}
    The Lipschitz constant of $g$ does not depend on $\varepsilon$ so we can make $\varepsilon \to 0$ to obtain
    \begin{equation}
        \mu(W_f) \leq \kappa \II(f(W_f)) + h \mu(h \overline{B}).
    \end{equation}
    To replace $\overline{B}$ by $B$ in the lower-order term, we can apply the previous reasoning in a slightly smaller ball $B'$ where $f$ is still a sliding deformation.

    \emph{Step 4. We show that $\mu \leq (\kappa + h) \II \mres E$.}
    Here it is convenient to assume that there exists a continuous function $i\colon X \times G(d,n) \to \mathopen{]}0,\infty\mathclose{[}$ such that for all $\HH^d$ measurable, $\HH^d$ finite, $\HH^d$ rectifiable set $F \subset X$,
    \begin{equation}
        \II(F) = \int_E \! i(x,T_xF) \, \mathrm{d}\HH^d
    \end{equation}
    where $T_xF$ is the linear tangent plane of $F$ at $x$.

    As $E$ is Borel and $\HH^d$ locally finite, the measure $\II \mres E$ is a Radon measure. Then according to the Differentiation Lemma \cite[Lemma 2.13]{Mattila}, it suffices to prove that for $\HH^d$-a.e. $x \in E$,
    \begin{equation}
        \liminf_{r \to 0} \frac{\mu(B(x,r))}{\II(E \cap B(x,r))} \leq \kappa + h
    \end{equation}
    We know that for $\HH^d$-a.e. $x \in E$, there exists an embedded submanifold $M \subset \R^n$ such that
    \begin{equation}
        \lim_{r \to 0} r^{-d} \HH^d(B(x,r) \cap (E \Delta M)) = 0.
    \end{equation}
    We fix such point $x \in E$ and to simplify the notation we assume $x = 0$. We denote the tangent plane of $M$ at $0$ by $V_0$ and the value $i(0,V_0)$ by $i_0$. Let $0 < \varepsilon_0 < 1$. By continuity of $i$, there exists $0 < r_0 \leq 1$ such that for all $x \in B(0,r_0)$ and for all $W \in B(V,r_0)$,
    \begin{equation}
        (1 + \varepsilon_0)^{-1} i_0 \leq i(x,W) \leq (1 + \varepsilon_0) i_0.
    \end{equation}
    Let $0 < r_1 \leq r_0$ be a radius such that for all $0 < r \leq r_1$,
    \begin{equation}\label{MD}
        \HH^d(B(0,r) \cap (E \Delta M)) \leq \varepsilon_0 r^d
    \end{equation}
    and for all $x \in M \cap B(0,r_1)$,
    \begin{equation}\label{MV}
        \mathrm{d}(T_xM,V) \leq \tfrac{1}{2}r_0.
    \end{equation}
    We have in particular for all $0 < r \leq r_1$,
    \begin{equation}\label{M}
        i_0 (1 + \varepsilon_0)^{-1} \leq \frac{\II(M \cap B(0,r))}{\HH^d(M \cap B(0,r))} \leq i_0(1 + \varepsilon_0)
    \end{equation}
    Let $0 < r \leq \min \set{r_0,r_1}$ be a small radius. For all compact set $K \subset B(0,r)$ and for all $0 < \varepsilon \leq \varepsilon_0$, we are going to build a global sliding deformation $f$ in $B(0,r)$ such that $K\setminus \set{0} \subset W_f$ and $\norm{Df - \mathrm{id}} \leq \varepsilon$. We explain why this solves step 4. According to step 3, we have
    \begin{equation}\label{step4}
        \mu(K) \leq \kappa \II^d(f(E \cap W_f)) + h \II^d(E \cap B(0,hr)).
    \end{equation}
    Next, we estimate $\II^d(f(E \cap W_f))$. For all $x \in B(0,r)$, the differential $Df(x)$ is inversible because $\norm{Df(x) - \mathrm{id}} < 1$ and by Lemma \ref{lem_grass_iso}
    \begin{equation}
        \mathrm{d}(Df(x)(T_xM),T_xM) \leq \frac{\varepsilon}{1 - \varepsilon}.
    \end{equation}
    We assume $\varepsilon$ small enough so that $\frac{\varepsilon}{1 - \varepsilon} \leq \tfrac{r_0}{2}$ and then by (\ref{MV}), for all $x \in B(0,r)$,
    \begin{equation}
        \mathrm{d}(Df(x)(T_xM),V)   \leq r_0.
    \end{equation}
    According to the definition of $r_0$, we have
    \begin{equation}
        \II(f(M \cap B(0,r)) \leq i_0 (1 + \varepsilon_0) \HH^d(f(M \cap B(0,r))
    \end{equation}
    and by fact that $f$ is $(1+\varepsilon_0)$-Lipschitz and (\ref{M}),
    \begin{align}
        \II(f(M \cap B(0,r)) &\leq i_0 (1 + \varepsilon_0)^{d+1} \HH^d(M \cap B(0,r))\\
                             &\leq (1 + \varepsilon_0)^{d+2} \II(M \cap B(0,r)).
    \end{align}
    Plugging (\ref{MD}), we finally estimate
    \begin{equation}
        \II(f(E \cap B(0,r)) \leq (1 + \varepsilon_0)^{d+2} \II(E \cap B(0,r)) + C \varepsilon_0 r^d.
    \end{equation}
    Now, (\ref{step4}) yields
    \begin{equation}
        \mu(K) \leq \kappa (1 + \varepsilon_0)^{d+2} \II(E \cap B(0,r)) + h \II(E \cap B(0,hr)) + C \varepsilon_0 r^d.
    \end{equation}
    We make $K \to B(0,r)$, $r \to 0$ and $\varepsilon_0 \to 0$ to deduce
    \begin{equation}
        \limsup_{r \to 0} \frac{\mu(B(0,r))}{\II(E \cap B(0,r))} \leq \kappa + h.
    \end{equation}
    We detail the construction of $f$. If $0 \in E \setminus \Gamma$, the construction is easier because we can work with a small radius $r > 0$ such that $B(0,r) \cap \Gamma = \emptyset$ and there is no need to check the sliding constraint. From now on, we assume $0 \in E \cap \Gamma$. The boundary $\Gamma$ is locally diffeomorphic to a cone so for $r > 0$ small enough, there exists an open set $O$ containing $0$, a $C^1$ diffeomorphism $\GT\colon B(0,r) \to O$ and a closed cone $S \subset \R^n$ centered at $0$ such that $\GT(0) = 0$ and $\GT(\Gamma \cap B(0,r)) = S \cap O$.

    Let $W$ be open set such that $K \subset W \subset \subset B(0,r)$. We define $K'=\GT(K)$ and $W' = \GT(W)$. Note that there exists $r' \geq 1$ such that $W' \subset B(0,r')$. Let $\chi \in C^1_c(W')$ be such that $0 \leq \chi \leq 1$ and $\chi = 1$ in $K'$. For a certain $\delta > 0$ (it will be taken small enough later), we introduce
    \begin{equation}
        g = \mathrm{id} + \delta \chi \mathrm{id}.
    \end{equation}
    It is clear that $g$ is $C^1$ on $\R^n$, $g(S) \subset S$ and $\abs{g - \mathrm{id}} \leq \delta r'$. We assume $\delta$ small so that for all $y \in W'$, $\overline{B}(y,\delta r') \subset O$. As a consequence, $g(W') \subset O$ but we also have $g(O) \subset O$ since $g = \mathrm{id}$ in $O \setminus W'$. The function $g$ is global sliding deformation along $S$ in $O$ associated to the homotopy
    \begin{equation}
        G_t = \mathrm{id} + t \delta \chi \mathrm{id}.
    \end{equation}
    Finally, the function
    \begin{equation}
        f = \GT^{-1} \circ g \circ \GT
    \end{equation}
    is a global sliding deformation along $\Gamma$ in $B(0,r)$. As $g \ne \mathrm{id}$ in $K' \setminus \set{0}$, we have $K \setminus \set{0} \subset f$. Next, we estimate $\norm{Df - \mathrm{id}}$. We take the convention that when a point $x \in B(0,r)$ is given, the letter $y$ denotes $\GT(x)$. For $x \in B(0,r)$, we have
    \begin{equation}
        Df(x)   = D \GT^{-1}(g(y)) \circ Dg(y) \circ D \GT(x)
    \end{equation}
    It is clear that if $x \notin W$, $Df(x) = \mathrm{id}$ so we only focus about the points $x \in W$. Note that there exists $L \geq 1$ such that for all $x \in W$, $\norm{D\GT(x)} \leq L$ and for all $y \in W'$, $\norm{D\GT^{-1}(y)} \leq L$. We see that for $y \in W'$,
    \begin{equation}
        \norm{Dg(y) - \mathrm{id}} \leq \delta \left(\abs{\chi}_\infty + \abs{\nabla \chi}_\infty r'\right)
    \end{equation}
    so we can assume $\delta$ small so that for all $y \in W'$,
    \begin{equation}
        \norm{Dg(y) - \mathrm{id}} \leq \varepsilon.
    \end{equation}
    In particular, for $x \in W$,
    \begin{equation}
        \norm{Df(x) - D \GT^{-1}(g(y)) \circ D \GT(x)} \leq L^2 \varepsilon
    \end{equation}
    The application $y \mapsto D\GT^{-1}(y)$ is uniformly continuous on $W' \cup g(W')$ so we can assume $\delta$ small so that for all $y, z \in W \cup g(W)$, the condition $\abs{y - z} \leq \delta r'$ implies
    \begin{equation}
        \norm{D\GT^{-1}(z) - D\GT^{-1}(y)} \leq \varepsilon
    \end{equation}
    In particular, for all $x \in W$, $\norm{D\GT^{-1}(g(y)) - D\GT^{-1}(y)} \leq \varepsilon$ so
    \begin{equation}
        \norm{D \GT^{-1}(g(y)) \circ D \GT(x) - \mathrm{id}} \leq L \varepsilon.
    \end{equation}
    We conclude that for all $x \in W$, $\norm{Df(x) - \mathrm{id}} \leq 2 L^2\varepsilon$. We might want to replace $\varepsilon$ by a smaller constant in the previous reasoning so that $\norm{Df - \mathrm{id}} \leq \varepsilon$. This concludes the proof of step 4.

    We want to give another proof of this step under different assumptions. We forget about the fact that $\II$ is induced by a continuous integrand but we assume that there exists $\kb \geq 1$ such that the following condition holds true: for ball open ball $B$ of scale $\leq s$ in $X$, there exists a sequence $(\varepsilon_i) \to 0$ such that for all global sliding deformations $f$ in $B$,
    \begin{equation}\label{Bquasi}
        \II(E_i \cap B) \leq \kb \II(f(E_i \cap B)) + \varepsilon_i.
    \end{equation}
    Then we can proceed as in step 3 to show that $\mu(B) \leq \kb \II(B)$. Here are the details. We consider $f = \mathrm{id}$. Let $\varepsilon > 0$. According to Lemma \ref{technical_lemma}, there exists a global sliding deformation $g$ in $B$ (whose Lipschitz constant does not depend on $\varepsilon$) and an open set $V \subset B$ such that
    \begin{subequations}
        \begin{align}
            &   \HH^d(E \cap B \setminus V) \leq \varepsilon\\
            &   \II(g(V))                   \leq \II(E \cap B) + \varepsilon.
        \end{align}
    \end{subequations}
    Now, we apply (\ref{Bquasi}) with respect to $g$ in $B$,
    \begin{equation}
        \II(E_i \cap B) \leq \kb \II(g(E_i \cap B)) + \varepsilon_i.
    \end{equation}
    By construction,
    \begin{align}
        \II(g(E_i \cap B))  &   \leq \II(g(V)) + \II(g(E_i \cap B \setminus V))\\
                            &   \leq \II(E \cap B) + C\norm{g}_L \II(E_i \cap B \setminus V) + \varepsilon,
    \end{align}
    where $\norm{g}_L$ is the Lipschitz constant of $g$. Passing to the limit in $i$, we have
    \begin{align}
        \limsup_i \II(g(E_i \cap B))    &   \leq \II(E \cap B) + C\norm{g}_L \mu(\overline{B} \setminus V) + \varepsilon\\
                                        &   \leq \II(E \cap B) + C\norm{g}_L \mu(\partial B) + C \norm{g}_L \varepsilon + \varepsilon.
    \end{align}
    On the other hand, we have
    \begin{equation}
        \liminf_i \mu(E_i \cap B) \geq \mu(B).
    \end{equation}
    We conclude that
    \begin{equation}
        \mu(B) \leq \kb \II(E \cap B) + C\kb(\norm{g} \mu(\partial B) + \norm{g}\varepsilon + \varepsilon).
    \end{equation}
    The Lipschitz constant of $g$ does not depend on $\varepsilon$ so we can make $\varepsilon \to 0$ to obtain
    \begin{equation}
        \mu(B) \leq \kb \II(E \cap B) + C\kb \norm{g} \mu(\partial B).
    \end{equation}
    To get rid of $\mu(\partial B)$, we can apply the previous reasoning in slightly smaller balls $B'$ where $f$ is still a sliding deformation and such that $\mu(\partial B') = 0$.

    \emph{Step 5. We show that $\II \mres E \leq \mu$.}
    According to the Differentiation Lemma \cite[Lemma 2.13]{Mattila}, it suffices to prove that for $\HH^d$-ae. $x \in E$,
    \begin{equation}\label{mu_goal}
        \limsup_r \frac{\mu(B(x,r))}{\II(E \cap B(x,r))} \geq 1.
    \end{equation}
    We are going to work with points $x \in E$ satisfying nice properties. The set $E$ is $\HH^d$ rectifiable so for $\HH$-ae. $x \in E$, there exists a (unique) d-plane $V_x$ passing through $x$ and a constant $\theta > 0$ such that 
    \begin{equation}\label{mu_V}
        r^{-d} \mu_{x,r} \rightharpoonup \theta \HH^d \mres V_x,
    \end{equation}
    where $\mu_{x,r}\colon A \mapsto \mu(x + r(A - x))$ and the arrow $\rightharpoonup$ denotes the weak convergence of Radon measures as $r \to 0$ (see \cite[Theorem 4.8]{DL}). By the properties of admissible energies, for $\HH^d$-ae. $x \in E$,
    \begin{equation}\label{II_V}
        \lim_r \frac{\II(E \cap B(x,r))}{\II(V_x \cap B(x,r))} = 1.
    \end{equation}
    Moreover, for $\mu$-ae. $x \in \Gamma$,
    \begin{equation}\label{mu_gamma}
        \lim_{r \to 0} r^{-d} \mu(B(x,r) \setminus \Gamma) = 0.
    \end{equation}
    This can be justified by applying \cite[Theorem 6.2(2)]{Mattila} to the set $E \setminus \Gamma$. For $x \in \Gamma$ satisfying (\ref{mu_V}) and (\ref{mu_gamma}), one can show (this was done in Lemma \ref{technical_lemma}) that for all $0 < \varepsilon \leq 1$, there exists $r > 0$ such that
    \begin{equation}\label{V_gamma}
        V_x \cap B(x,r) \subset \set{y \in X | \mathrm{d}(y,\Gamma) \leq \varepsilon \abs{y - x}}.
    \end{equation}
    Now, we fix $x$ which satisfies (\ref{mu_V}), (\ref{II_V}) and such that either $x \notin \Gamma$, either $x \in \Gamma$ and (\ref{mu_gamma}). We aim to prove (\ref{mu_goal}) at $x$. According to (\ref{II_V}), the statement (\ref{mu_goal}) is equivalent to
    \begin{equation}\label{mu_goal0}
        \limsup_r \frac{\mu(B(x,r))}{\II(V_x \cap B(x,r))} \geq 1.
    \end{equation}
    According to the definition of admissible energies, there exists $r_0 > 0$ and $\varepsilon_0 \colon \mathopen{]}0,r_0\mathclose{]} \to \R^+$ such that $\lim_{r \to 0} \varepsilon(r) = 0$ and
    \begin{equation}
        \II(V_x \cap B(x,r)) \leq \II(S \cap B(x,r)) + \varepsilon(r)r^d
    \end{equation}
    when $0 < r \leq r_0$ and $S \subset \overline{B}(x,r)$ is a compact $\HH^d$ finite set which cannot be mapped into $V \cap \partial B(x,r)$ by a Lipschitz mapping $\psi\colon \overline{B}(x,r) \to \overline{B}(x,r)$ with $\psi = \mathrm{id}$ on $V \cap \partial B(x,r)$. We plan to take advantage of this principle and to show that there exists $r_1 > 0$ such that for $0 < r \leq r_1$ and for $i$ big enough (depending on $r$),
    \begin{equation}
        \II(V_x \cap B(x,r)) \leq \II(E_i \cap B(x,r)) + \varepsilon(r)r^d.
    \end{equation}
    This would imply (\ref{mu_goal0}). We could assume that $E_i \cap \overline{B}(x,r)$ is mapped into $V \cap \partial B(x,r)$ by a Lipschitz mapping $\psi\colon \overline{B}(x,r) \to \overline{B}(x,r)$ with $\psi = \mathrm{id}$ on $V \cap \partial B(x,r)$, turn this map into a sliding deformation and get a contradiction. However, we would have to make a transition between this map and $\mathrm{id}$ away from $B(x,r)$. As we do not know the Lispchitz constant of $\psi$, we cannot measure the image of $E_i$ in the transition area. Therefore, we have to clean the set $E_i$ first in the transition area.

    To simplify the notations, we assume that $x = 0$, we denote $V_x$ by $V$ and assume that
    \begin{equation}
        V  = \set{x \in \R^n | x_i = 0 \ \text{for} \ i=d+1,\ldots,n}.
    \end{equation}
    In particular, we identify $V$ to $\R^d$. For $r > 0$, let $B_r = B(0,r)$ and $V_r =  V \times (B_r \cap V^\perp)$. For $t > 0$, the symbol $tB_r$ denotes $B_{tr}$ and $tV_r$ denotes $V_{tr}$. We fix $0 < a \leq \frac{1}{3}$ (close to $0$). We define layers surrounding the disk $B_r \cap V$,
    \begin{align}
        R^0_r   &   = B_r \cap aV_r\\
        R^1_r   &   = (1+a)B_r \cap 2aV_r\\
        R^2_r   &   = (1+2a)B_r \cap 3aV_r\\
        R^3_r   &   = (1+3a)B_r \cap 4aV_r.
    \end{align}
    and we define an area of $2B_r$ away from $V$,
    \begin{equation}
        S_r = 2B_r \setminus \tfrac{a}{2}V_r.
    \end{equation}
    We are going to the clean set $E_i$ in two steps. First, to we want to get $\HH^d(E_i \cap S_r) = 0$ and then, that
    \begin{equation}
        E_i \cap R^3_r \subset B_r \cup V \cup \R^n \setminus (1+a)B_r.
    \end{equation}

    We clean $E_i$ in $S_r$ by projecting it to $(d-1)$-skeleton with a Federer--Fleming projection. We introduce an open neighborhood of $S_r$,
    \begin{equation}
        T_r = 3B_r \setminus \tfrac{a}{3}\overline{V_r}.
    \end{equation}
    The set $E_i$ has a very small $\HH^d$ measure in $T_r$ when $i$ goes to $\infty$. More precisely, by the weak convergence $\II \mres E_i \rightharpoonup \mu$ in $X$, we have
    \begin{equation}
        \limsup_i \HH^d(E_i \cap T_r) \leq C\mu(\overline{T_r})
    \end{equation}
    and by the weak convergence $r^{-d} \mu_{0,r} \rightharpoonup \theta \HH^d \mres V$,
    \begin{align}
        \lim_r r^{-d} \mu(\overline{T_r})   &   =\theta \HH^d \mres V(3\overline{B_1} \setminus \tfrac{a}{3}V_1)\\
                                            &   = 0.
    \end{align}
    Now we fix $\varepsilon > 0$ (to be chosen later). For $r > 0$ small enough and for $i$ big enough (depending on $r$),
    \begin{equation}
        \HH^d(E_i \cap \overline{T_r}) < \varepsilon r^d.
    \end{equation}
    If $\varepsilon$ is small enough, we will build a global sliding deformation $\phi$ in $T_r$ such that
    \begin{subequations}
        \begin{align}
        &   \HH^d(\phi(E_i \cap T_r)) \leq C \HH^d(E \cap T_r)\\
        &   \HH^d(\phi(E_i \cap T_r) \cap S_r) = 0.
        \end{align}
    \end{subequations}
    The map $\phi$ will be a Federer--Fleming projection of the set $E_i \cap S_r$. However, our statement of Federer--Fleming projections can only preserve rigid boundaries. This is the reason why we need to work with the rigid version of our boundary . According to Definition \ref{defi_whitney}(iii*), there exists $\rho > 0$, an open set $O \subset \R^n$, a $C$-bilipschitz bijective map $\GT\colon B_\rho \to O$, an integer $k \in \N$ and a subset $S \subset \mathcal{E}_n(k)$ such that $O \subset B(0,2^{-k-1})$ and $\GT(\Gamma \cap B_\rho) = \abs{S} \cap O$.

    The radius $\rho$ does not depend on $r$ and $i$ and it will be fixed during the rest of the proof. We assume $r \leq \tfrac{1}{3}\rho$ so that $T_r \subset B_\rho$ and we introduce
    \begin{subequations}
        \begin{align}
            \tilde{E_i} &= \GT(E_i \cap B_\rho),\\
            \tilde{S_r} &= \GT(S_r),\\
            \tilde{T_r} &= \GT(T_r).
        \end{align}
    \end{subequations}
    Since $T(B_\rho) \subset B(0,2^{-k-1})$, we have $r \leq \rho \leq C2^{-k}$. Thus, there exists $p \geq k$ be such that $2^{-p} \leq Cr$. In addition, we consider $q \in \N$ (to be chosen later). We consider the complex of $\R^n$ composed of all the cells of the form
    \begin{equation}
        B = \prod_{i=1}^n [p_i,p_i + (2^{-p-q})\alpha_i]
    \end{equation}
    where $p \in (2^{-p-q})\Z^n$ and $\alpha \in \set{0,1}^n$. This describes a uniform grid of sidelength $2^{-p-q}$ as in Example \ref{grid_example} of Section \ref{rigid_boundaries}. Then, we consider the subcomplex $K$ composed of such cells $B$ such that
    \begin{equation}
        B \cap \tilde{S_r} \ne \emptyset.
    \end{equation}
    We are going to justify that if $q$ is big enough (depending on $n$, $\kappa$, $\Gamma$, $a$ but not $r$, $i$)
    \begin{equation}\label{Tinclusion}
        \tilde{S_r} \subset U(K) \subset \abs{K} \subset \tilde{T_r}.
    \end{equation}
where $U(K) = \bigcup \set{\mathrm{int}(B) | B \in K}$. For $x \in \tilde{S_r}$, there exists a cell $B$ in the uniform grid such that $x \in \mathrm{int}(B)$. Then by definition of $K$, we have $B \in K$. This proves that $\tilde{S_r} \subset U(K)$. For $B \in K$, we have $B \cap \tilde{S_r} \ne \emptyset$ so $\GT^{-1}(B) \cap S_r \ne \emptyset$. As $T_r$ contains a $(C^{-1}ar)$-neighborhood of $S_r$ and $\mathrm{diam}(B) \leq C2^{-p-q} \leq C 2^{-q}r$, we deduce that $\GT^{-1}(B) \subset T_r$ if $q$ is big enough (depending on $n$, $C$, $a$ but not $r$, $i$). The inclusions (\ref{Tinclusion}) gives in particular $\abs{K} \subset O$. As $O \subset B(0,\tfrac{1}{2})$ and $p + q \geq k$, we deduce that $K \preceq E_n(k)$ where
\begin{equation}
    E_n(k) = \Set{\prod_{i = 1}^n [0,2^{-k}\alpha_i] | \alpha \in \set{-1,0,1}^n}.
\end{equation}
This property ensures that a Federer--Fleming projection in $K$ preserves the cells of $E_n(k)$.

    Let $\tilde{\phi}$ be a Federer--Fleming projection of $\tilde{E_i} \cap U(K)$ in $K$. Thus, $\tilde{\phi}\colon \abs{K} \to \abs{K}$ is a Lipschitz map such that
    \begin{enumerate}
        \item for all $B \in K$, $\tilde{\phi}(B) \subset B$;
        \item $\tilde{\phi} = \mathrm{id}$ in $\abs{K} \setminus U(K)$;
        \item for all $B \in K$,
            \begin{equation}\label{phi_B}
                \HH^d(\tilde{\phi}(\tilde{E_i} \cap B)) \leq C \HH^d(\tilde{E_i} \cap B).
            \end{equation}
    \end{enumerate}
    Using the fact that the cells $B \in K$ have bounded overlap, that $\abs{K} \subset \tilde{T_r}$ and that $\GT$ is $C$-bilipschitz, the property (\ref{phi_B}) implies
    \begin{align}
        \HH^d(\tilde{\phi}(\tilde{E_i} \cap U(K)))  &\leq C \HH^d(\tilde{E_i} \cap U(K))\\
                                                    &\leq C \HH^d(\tilde{E_i} \cap \tilde{T_r})\\
                                                    &\leq C \HH^d(E_i \cap T_r)\label{tilde_phi}.
    \end{align}
    We recall that we have chosen $r$ small enough and $i$ big enough (depending on $r$) such that $\HH^d(E_i \cap T_r) \leq C \varepsilon r^d$. The cells of $K$ have a sidelength of $2^{-p-q} \leq C2^{-q}r$ so if $\varepsilon > 0$ is chosen small enough (depending on $n$, $C$, $a$, but not $r$, $i$), we can make additional projections in the $d$-dimensional cells $B \in K$ to get
    \begin{equation}\label{phi_build1}
        \HH^d(\tilde{\phi}(\tilde{E_i} \cap U(K)) \cap U(K)) = 0.
    \end{equation}
    Note that inequality (\ref{tilde_phi}) still holds: the points of $\tilde{\phi}(\tilde{E_i} \cap U(K))$ belonging to $U(K)$ have been sent to an $\HH^d$ negligible set, whereas the points belonging to $\partial \abs{K}$ have been left untouched.

    Now, we define
    \begin{equation}
        \phi = \GT^{-1} \circ \tilde{\phi} \circ \GT
    \end{equation}
    on $\GT^{-1}(\abs{K})$. Let us recall that $\GT^{-1}(\abs{K}) \subset T_r$ by (\ref{Tinclusion}). We extend $\phi$ over $\R^n$ by $\phi$ = $\mathrm{id}$ in $\R^n \setminus \GT^{-1}(\abs{K})$. Let us check that this extension is still Lipschitz. First, observe that $\GT^{-1}(\abs{K})$ is compact since $\GT$ is an homeomorphism between $B_\rho$ and $O$. Moreover, $\GT^{-1}(U(K))$ is an open set of $\R^n$. We deduce that for $x \in \GT^{-1}(\abs{K})$ and $y \in \R^n \setminus \GT^{-1}(\abs{K})$, there exists $z \in \GT^{-1}(\partial \abs{K})$ such that $z \in [x,y]$. Then,
    \begin{align}
        \abs{\phi(x) - \phi(y)} &\leq \abs{\phi(x) - \phi(z)} + \abs{\phi(z) - \phi(y)}\\
                                &\leq \abs{x - z} + L \abs{z - y},
    \end{align}
    where $L$ is the Lipschitz constant of $\phi$ over $\GT^{-1}(\abs{K})$. As $z \in [x,y]$,
    \begin{equation}
        \abs{x - z}, \abs{z - y} \leq \abs{x - y} 
    \end{equation}
    and our claim follows. As $\tilde{S_r} \subset U(K)$ (this is (\ref{Tinclusion})), it is clear by (\ref{phi_build1}) that
    \begin{equation}
        \HH^d(\phi(E_i) \cap S_r) = 0
    \end{equation}
    and we estimate
    \begin{align}
        \begin{split}
            \HH^d(\phi(E_i \cap T_r))   &\leq \HH^d(\phi(E_i \cap \GT^{-1}(U(K))))\\
                                        &\qquad + \HH^d(\phi(E_i \cap T_r \setminus \GT^{-1}(U(K)))
        \end{split}\\
                                        &\leq C\HH^d(\tilde{\phi}(\tilde{E_i} \cap U(K)) + \HH^d(E_i \cap T_r)\\
                                        &\leq C \HH^d(E_i \cap T_r).\label{phi_build0}
    \end{align}
    As $K \preceq E_n(k)$, $\tilde{\phi}$ preserves each cell of $E_n(k)$ and, by convexity, the homotopy $t \mapsto t \tilde{\phi} + (1-t) \mathrm{id}$ also preserves the cells of $E_n(k)$. Remember that there exists $S \subset E_n(k)$ such that $\GT(\Gamma \cap B_\rho) = \abs{S} \cap O$. Moreover, $\abs{K} \subset O$ so
    \begin{equation}
        \GT(\Gamma \cap \GT^{-1}(\abs{K)}) = \abs{S} \cap \abs{K}.
    \end{equation}
    We deduce that the previous homotopy preserves $\GT(\Gamma \cap \GT^{-1}(\abs{K}))$. We conclude that $\phi$ is a global sliding deformation along $\Gamma$ in $T_r$. Moreover, we have shown that
    \begin{align}
        &   \HH^d(\phi(E_i) \cap S_r) = 0\label{phi_info0},\\
        &   \HH^d(\phi(E_i \cap T_r)) \leq C \HH^d(E_i \cap T_r)\label{phi_info}.
    \end{align}

    Now, we proceed to the second step of the cleaning process. We introduce the partially defined function
    \begin{equation}
        \pi(x) =
        \begin{cases}
            x'         &   \text{in} \ ((1+\tfrac{3a}{2})B_r \setminus (1-\tfrac{a}{2})B_r) \cap \tfrac{a}{2}V_r\\
            \mathrm{id} &   \text{in} \ ((1-a)B_r) \cup (\R^n \setminus R^2_r)
        \end{cases}
    \end{equation}
    where $x'$ is the orthogonal projection of $x$ onto $V$. By definition of $V_r$, we have $\abs{\pi - \mathrm{id}} \leq \tfrac{a}{2}r$. Moreover, $\pi$ is $C$-Lipschitz because its two partial domains are at distance $\geq C^{-1}ar$. We also see that $\pi$ preserves $R^2_r$ on its (partial) domain. We can extend $\pi$ as a $C$-Lipschitz function $\pi\colon \R^n \to \R^n$ such that $\abs{\pi - \mathrm{id}} \leq \tfrac{a}{2}r$ and $\pi(R^2_r) \subset R^2_r$.

    Let us take some time to describe the image of a point $x \in \tfrac{a}{2}V_r$. If $x \in (1-\tfrac{a}{2})B_r$, then $\pi(x) \in B_r \cap aV_r$ because $\abs{\pi - \mathrm{id}} \leq \tfrac{a}{2}r$. If $x \in (1+\tfrac{3a}{2})B_r \setminus (1-\tfrac{a}{2})B_r$, then $\pi(x) \in V$. If $x \notin (1+\tfrac{3a}{2})B_r$, then $\pi(x) \notin (1+a)B_r$ because $\abs{\pi - \mathrm{id}} \leq \tfrac{a}{2}r$. In conclusion,
    \begin{equation}
        \pi(\tfrac{a}{2}V_r) \subset R^0_r \cup V \cup (\R^n \setminus (1+a)B_r).
    \end{equation}

    In the next part of the proof, we work with the clean set $E'_i = \pi(\phi(E_i))$. We want to justify beforehand that the measure $\II(E'_i \cap B_r)$ is close to $\II(E_i \cap B_r)$. As $\phi = \mathrm{id}$ outside $T_r$, we have
    \begin{align}
        \II(E'_i \cap B_r)  &\leq \II(\pi(\phi(E_i \setminus T_r)) \cap B_r) + \II(\pi(\phi(E_i \cap T_r)))\\
                            &\leq \II(\pi(E_i) \cap B_r)) + C\HH^d(E_i \cap T_r).
    \end{align}
    We use the fact that $\pi^{-1}(B_r) \subset (1 + \tfrac{a}{2})B_r$ (because $\abs{\pi - \mathrm{id}} \leq \tfrac{a}{2}r$), that $\pi$ is $C$-Lipschitz and that $\pi = \mathrm{id}$ in $(1-a)B_r$ to see that
    \begin{multline}
        \II(\pi(E_i) \cap B_r) \leq C\HH^d(E_i \cap (1+\tfrac{a}{2})B_r \setminus (1-a)B_r) + \II(E_i \cap B_r).
    \end{multline}
    In sum,
    \begin{multline}\label{Eprime_i}
        \II(E_i' \cap B_r) \leq \II(E_i \cap B_r) + C\HH^d(E_i \cap (1+\tfrac{a}{2})B_r \setminus (1-a)B_r) +\\C\HH^d(E_i \cap T_r).
    \end{multline}
    Here, the terms $\HH^d(E_i \cap (1+\tfrac{a}{2})B_r \setminus (1-a)B_r)$ and $\HH^d(E_i \cap T_r)$ will disappear when $i \to \infty$, $r \to 0$ and $a \to 0$.

    Now, the goal of the proof is to show that there exists $a_1 > 0$, $r_1 > 0$ such that for all $0 < a \leq a_0$, for all $0 < r \leq r_1$ there exists $i_1 \in \mathbf{N}$ such that for $i \geq i_1$,
    \begin{equation}
        \II(V \cap B_r) \leq \II(E'_i \cap B_r) + \varepsilon(r)r^d.
    \end{equation}
    Let us explain why this suffices to finish the proof.  Passing $i$ to the limit in (\ref{Eprime_i}) yields for $0 < a \leq a_0$ and for $0 \leq r \leq r_1$,
    \begin{equation}
        \II(V \cap B_r) \leq \mu(\overline{B_r}) + C\mu((1+\tfrac{a}{2})\overline{B_r} \setminus (1-a)B_r)) + C\mu(\overline{T_r}) + \varepsilon(r)r^d.
    \end{equation}
    Taking $a \to 0$, it then implies that
    \begin{equation}
        \II(V \cap B_r) \leq \mu(\overline{B_r}) + C\mu(\partial B_r) + C\mu(\overline{T_r}) + \varepsilon(r)r^d
    \end{equation}
    for $0 < r \leq r_1$. As $\mu(\partial B_r) = 0$ for arbitrary small $r$ and $\lim_{r \to 0} r^{-d}\mu(\overline{T_r}) = 0$, it follows that
    \begin{equation}
        \limsup_r \frac{\mu(B_r)}{\II(V \cap B_r)} \geq 1
    \end{equation}
    as wanted.

    We proceed by contradiction and assume that for arbitrary small $a$ and $r$ and for $i$ big enough (depending on $a$, $r$), $E'_i \cap B_r$ is mapped into $V \cap \partial B_r$ by a Lipschitz mapping $\psi_0\colon \overline{B_r} \to \overline{B_r}$ with $\psi_0 = \mathrm{id}$ on $V \cap \partial B_r$. We consider
    \begin{equation}
        \psi = 
        \begin{cases}
            \psi_0      &   \text{in} \ E'_i \cap R^0_r\\
            \mathrm{id} &   \text{in} \ (V \setminus B_r) \cup (\R^n \setminus R^1_r)
        \end{cases}
    \end{equation}
    Let us check that this map is still Lipschitz (even with a bad a Lipschitz constant). For $x \in E'_i \cap R^0_r$ and $y \in V \setminus B_r$, let $z \in V \cap \partial B_r$ be the orthogonal projection of $y$ onto $B_r$. Then, $\abs{y - z} \leq \abs{y - x}$ and by the triangular inequality, $\abs{x - z} \leq \abs{y - x} + \abs{y - z} \leq 2\abs{y - x}$. Denoting $L$ the Lipschitz constant of $\psi_0$ on $\overline{B_r}$, we have
    \begin{align}
        \abs{\psi(x) - \psi(y)} &\leq \abs{\psi(x) - \psi(z)} + \abs{\psi(z) - \psi(y)}\\
                                &\leq L\abs{\psi(x) - \psi(z)} + \abs{z - y}\\
                                &\leq L\abs{x - z} + \abs{z - y}\\
                                &\leq (1 + 2L)\abs{z - y}.
    \end{align}
    As $\psi_0(E'_i \cap R^0_r) \subset V \cap \partial B_r$, we also see that $\psi$ preserves $R^2_r$ where it is defined. We can extend $\psi$ as a Lipschitz map $\R^n \to \R^n$ such that $\psi(R^2_r) \subset R^2_r$.

    We introduce $f = \psi \circ \pi$. By construction, $f = \mathrm{id}$ in $\R^n \setminus R^2_r$ and $f(R^2_r) \subset R^2_r$. We are going to postcompose $f$ with a retraction onto the boundary to obtain a sliding deformation. If $0 \notin \Gamma$, then for $r > 0$ small enough, $B_r$ is disjoint from $\Gamma$ and there is nothing to do. We assume that $0 \in \Gamma$ and we define our retraction. There exists an open set $O \subset X$ containing $\Gamma$ and a $C$-Lipschitz map $p\colon O \to \Gamma$ such that $p = \mathrm{id}$ on $\Gamma$. We are going to see that if $r$ is small enough, then $\abs{p - \mathrm{id}} \leq Car$ in $R^2_r$. By (\ref{V_gamma}), we can assume that $r > 0$ is small enough so that
    \begin{equation}\label{V_gamma2}
        2B_r \cap V \subset \set{y \in X | \mathrm{d}(y,\Gamma) \leq ar}.
    \end{equation}
    Let $y \in R^2_r$ and let $y'$ be the orthogonal projection of $y$ onto $V$. Thus, $y' \in 2B_r \cap V$ and $\abs{y - y'} \leq ar$. By (\ref{V_gamma2}), $\mathrm{d}(y',\Gamma) \leq ar$ so $\mathrm{d}(y,\Gamma) \leq 2ar$ and there exists $z \in \Gamma$ such that $\abs{y - z} \leq 3ar$. Then,
    \begin{align}
        \abs{(p - \mathrm{id})(y)}  &   =\abs{(p - \mathrm{id})(y) - (p - \mathrm{id})(z)}\\
                                    &   \leq C \abs{y - z}\\
                                    &   \leq C ar.
    \end{align}
    We restrict $p$ to $\Gamma \cup R^2_r$, next extend this restriction as a $C$-Lipschitz map $p\colon \R^n \to \R^n$ such that $\abs{p - \mathrm{id}} \leq C ar$. We still have $p(R^2_r) \subset \Gamma$ and $p = \mathrm{id}$ on $\Gamma$. We extend $p$ by $p = \mathrm{id}$ in $\R^n \setminus R^3_r$. This extension is still $C$-Lipschitz because $\abs{p - \mathrm{id}} \leq Car$ and
    \begin{equation}
        \mathrm{d}(R^2_r, \R^n \setminus R^3_r) \geq C^{-1}ar.
    \end{equation}
    As $\abs{p - \mathrm{id}} \leq Car$, we can assume $a$ small enough so that $p(R^3_r) \subset 3B_r$.

    Now, we consider $g = p \circ f$. We have $g = \mathrm{id}$ in $\R^n \setminus R^3_r$ and $g(R^3_r) \subset 3B_r$. We have also $g(\Gamma) \subset \Gamma$ because $f(R^2_r) \subset R^2_r$, $p(R^2_r) \subset \Gamma$ and $f = \mathrm{id}$ outside $R^2_r$. Finally, we justify that the map $g$ is a global sliding deformation in $3B_r$. We introduce
    \begin{equation}
        G =
        \begin{cases}
            \mathrm{id}                         &   \text{in} \ (\set{0} \times \R^n) \cup (I \times (\R^n \setminus R^3_r))\\
            p \circ ((1-t) \mathrm{id} + tf)    &   \text{in} \ I \times \Gamma\\
            g (=p \circ f)                      &   \text{in} \ \set{1} \times \R^n.
        \end{cases}
    \end{equation}
    Using the fact that $R^2_r$ is convex, we see as previously that $G_t(\Gamma) \subset \Gamma$. The map $G$ is continuous as a pasting of continuous maps in closed domains of $X$. According to the Tietze extension theorem, it can be extend as a continuous map $G\colon I \times \R^n \to \R^n$ such that $G_t(B_r) \subset 3B_r$. Thus, $g$ is a sliding deformation in $3B_r$.

    In summary, $g$ and $\phi$ are sliding deformations in $3B_r$ and $g = \phi = \mathrm{id}$ outside $R^3_r \cup T_r$. By definition of $R^3_r$ and $T_r$, $B_r \subset R^3_r \cup T_r$. We apply the quasiminimality of $E_i$ with respect to $g \circ \phi$ in $3B_r$. We assume $h$ small enough (depending only on $n$) such that
    \begin{equation}
        h \HH^d(E_i \cap h 3B_r) \leq \tfrac{1}{2} \HH^d(E_i \cap B_r).
    \end{equation}
    As $g \phi = \mathrm{id}$ outside $R^3_r \cup T_r$, the quasiminimality in $3B_r$ gives
    \begin{equation}
        \II(E_i \cap B_r) \leq C\II(g\phi[E_i \cap (R^3_r \cup T_r)]) + \varepsilon_i.
    \end{equation}
    Since $g = p \circ f$ and $p$ is $C$-Lipschitz, we have
    \begin{equation}
        \HH^d(g\phi[E_i \cap (R^3_r \cup T_r)]) \leq C\HH^d(f\phi[E_i \cap (R^3_r \cup T_r)]).
    \end{equation}
    By the fact that $f = \mathrm{id}$ in $\R^n \setminus R^2_r$, that $\phi = \mathrm{id}$ outside $T_r$ and by (\ref{phi_info}), we estimate first
    \begin{align}
        \begin{split}
        &   \HH^d(f\phi[E_i \cap (R^3_r \cup T_r)])\\
        &   \leq \HH^d(f(\phi(E_i) \cap R^3_r)) + \HH^d(f(\phi[E_i \cap (R^3_r \cup T_r)] \setminus R^3_r))\\
        &   \leq \HH^d(f(\phi(E_i) \cap R^3_r)) + \HH^d(\phi[E_i \cap (R^3_r \cup T_r)] \setminus R^3_r)
        \end{split}\\
        &   \leq \HH^d(f(\phi(E_i) \cap R^3_r)) + \HH^d(\phi(E_i \cap T_r))\\
        &   \leq \HH^d(f(\phi(E_i) \cap R^3_r)) + C\HH^d(E_i \cap T_r).
    \end{align}
    We decompose the set $\phi(E_i) \cap R^3_r$ in three parts by taking its intersection with $\pi^{-1}(R^0_r)$, $S_r$ and $R^3_r \setminus (\pi^{-1}(R^0_r) \cup S_r)$. First, $f(\phi(E_i) \cap \pi^{-1}(R^0_r)) \subset V \cap \partial B_r$ so
    \begin{equation}
        \HH^d(f(\phi(E_i) \cap B_r)) = 0.
    \end{equation}
    Next, by (\ref{phi_info0}),
    \begin{equation}
        \HH^d(f(\phi(E_i) \cap S_r)) = 0.
    \end{equation}
    Finally, remember that by definition of $R^3_r$ and $S_r$ and $R^3_r \setminus S_r \subset \frac{a}{2}V_r$ and
    \begin{equation}
        \pi(\tfrac{a}{2}V_r) \subset R^0_r \cup V \cup (\R^n \setminus (1+a)B_r).
    \end{equation}
    Thus,
    \begin{equation}
        \pi(R^3_r \setminus (p^{-1}(R^0_r) \cup S_r)) \subset (V \setminus B_r) \cup (\R^n \setminus (1+a)B_r).
    \end{equation}
    As $\psi = \mathrm{id}$ on $(V \setminus B_r) \cup (\R^n \setminus (1+a)B_r)$, we deduce that $f$ is $C$-Lipschitz on $R^3_r \setminus (p^{-1}(B_r) \cup S_r)$. Hence,
    \begin{align}
        \II(f(\phi(E_i) \cap R^3_r \setminus p^{-1}(B_r) \cup S_r)) &   \leq C\HH^d(\phi(E_i) \cap R^3_r \setminus p^{-1}(B_r))\\
        \begin{split}
            &\leq C\HH^d(\phi(E_i \cap T_r))\\
            &\qquad + C\HH^d(E_i \cap R^3_r \setminus p^{-1}(B_r))
        \end{split}\\
        \begin{split}
            &   \leq C\HH^d(E_i \cap T_r)\\
            &\qquad + C\HH^d(E_i \cap R^3_r \setminus p^{-1}(B_r)).
        \end{split}
    \end{align}
    As $R^3_r \subset (1+2a)B_r$ and $(1-\tfrac{a}{2})B_r \subset p^{-1}(B_r)$, we have
    \begin{equation}
        R^3_r \setminus p^{-1}(B_r) \subset (1+2a)B_r \setminus (1-\tfrac{a}{2})B_r.
    \end{equation}
    We conclude that
    \begin{align}
        \II(E_i \cap B_r) \leq C\HH^d(E_i \cap T_r) + C\HH^d(E_i \cap (1+2a)B_r \setminus (1-\tfrac{a}{2})B_r).
    \end{align}
    We pass to the limit $i \to +\infty$ and obtain
    \begin{equation}
        \mu(B_r) \leq C \mu(\overline{T_r}) + \mu((1+2a)B_r \setminus (1-\tfrac{a}{2}B_r)).
    \end{equation}
    This is true for arbitrary small $r$ so we can multiply both sides by $r^{-d}$ and we pass to the limit $r \to 0$. As $\limsup_r r^{-d} \mu(\overline{T_r}) = 0$, we have
    \begin{equation}
        \HH^d(B_1 \cap V) \leq Ca.
    \end{equation}
    This is true for arbitrary small $a$ so we can make $a \to 0$ and obtain
    \begin{equation}
        \HH^d(B_r \cap V) = 0.
    \end{equation}
    Contradiction!
\end{proof}

\section{Application}
\subsection{Direct Method}
We use Theorem \ref{thm_limit} to derive a scheme of direct method. This is the same strategy as \cite{I1}, \cite{I2} but we can minimize the competitors on the boundary. Our working space is an open set $X$ of $\R^n$.
\begin{cor}[Direct Method]\label{cor_direct}
    Fix a Whitney subset $\Gamma$ of $X$. Fix an admissible energy $\II$ in $X$. Fix $\mathcal{C}$ a class of closed subsets of $X$ such that
    \begin{equation}
        m = \inf \set{\II(E) | E \in \mathcal{C}} < \infty
    \end{equation}
    and assume that for all $E \in \mathcal{C}$, for all sliding deformations $f$ of $E$ in $X$,
    \begin{equation}
        m \leq \II(f(E)).
    \end{equation}
    Let $(E_k)$ be a minimizing sequence for $\II$ in $\mathcal{C}$. Up to a subsequence, there exists a coral\footnote{A set $E \subset X$ is coral in $X$ if $E$ is the support of $\HH^d \mres E$ in $X$. Equivalently, $E$ is closed in $X$ and for all $x \in E$ and for all $r > 0$, $\HH^d(E \cap B(x,r)) > 0$.} minimal set $E$ with respect to $\II$ in $X$ such that
    \begin{equation}
        \II \mres E_k \rightharpoonup \II \mres E.
    \end{equation}
    where the arrow $\rightharpoonup$ denotes the weak convergence of Radon measures in $X$. In particular, $\II(E) \leq m$.
\end{cor}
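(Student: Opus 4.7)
First I would extract a weakly convergent subsequence. The hypothesis $\II(E_k) \to m$ gives a uniform upper bound, so the Radon measures $\II \mres E_k$ are locally uniformly bounded on $X$; by the standard weak$^*$ compactness of Radon measures there is a subsequence (still indexed by $k$) and a Radon measure $\mu$ on $X$ with $\II \mres E_k \rightharpoonup \mu$. Set $E = \spt(\mu)$.

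Next, I would verify the hypotheses of Theorem \ref{thm_limit} with parameters $(\kappa, h, s) = (1, 0, \infty)$. Fix an open ball $B \subset X$ and let $f$ be any global sliding deformation in $B$, so that $W_f \subset\subset B$ and $f = \mathrm{id}$ outside $W_f$. The restriction of $f$ to $E_k$ is a sliding deformation of $E_k$ in $X$ (the support condition is inherited from $B \subset X$), hence by hypothesis $m \leq \II(f(E_k))$. From the decomposition $f(E_k) = f(E_k \cap W_f) \cup (E_k \setminus W_f)$ and subadditivity,
\begin{equation*}
    m \leq \II(f(E_k \cap W_f)) + \II(E_k) - \II(E_k \cap W_f).
\end{equation*}
Setting $\varepsilon_k := \II(E_k) - m \to 0$, this rearranges into
\begin{equation*}
    \II(E_k \cap W_f) \leq \II(f(E_k \cap W_f)) + \varepsilon_k,
\end{equation*}
which is condition (ii) of Theorem \ref{thm_limit} with $\kappa = 1$ and $h = 0$. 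The same computation applied with $B$ in place of $W_f$, using $f(E_k) = f(E_k \cap B) \cup (E_k \setminus B)$, yields
\begin{equation*}
    \II(E_k \cap B) \leq \II(f(E_k \cap B)) + \varepsilon_k,
\end{equation*}
which is the additional condition (iii) of Remark \ref{rmk_limit} with $\kb = 1$; this removes the need to assume $\II$ is induced by a continuous integrand.

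Finally, I would invoke Theorem \ref{thm_limit} through Remark \ref{rmk_limit}. With $\kappa = 1$, $h = 0$, $\kb = 1$, we obtain $\II \mres E \leq \mu \leq \II \mres E$, so $\mu = \II \mres E$, and $E = \spt(\mu)$ is $(1, 0, \infty)$-quasiminimal with respect to $\II$ in $X$, i.e., a minimal set. Because $\IL^{-1} \HH^d \leq \II \leq \IL \HH^d$, the support of $\II \mres E$ in $X$ coincides with the support of $\HH^d \mres E$, so $E = E^*$ is coral. Lower semicontinuity on open sets gives $\II(E) = \mu(X) \leq \liminf_k \II(E_k) = m$. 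I do not foresee a serious obstacle: the substance is the short subadditivity computation that converts ``minimizing among sliding competitors'' into the uniform quasiminimality estimate with vanishing error $\varepsilon_k$, after which Theorem \ref{thm_limit} supplies both the structure of $\mu$ and the minimality of $E$.
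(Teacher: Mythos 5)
Your proposal is correct and follows essentially the same route as the paper: derive the quasiminimality inequality with vanishing error $\varepsilon_k = \II(E_k) - m$ from the subadditivity computation, then invoke Theorem \ref{thm_limit} together with Remark \ref{rmk_limit} (condition (iii) with $\kb = 1$) so that the continuous-integrand hypothesis is not needed. The only cosmetic point is that Theorem \ref{thm_limit} is stated for $h > 0$, so one should apply it for every small $h > 0$ (your inequality implies hypothesis (ii) for any such $h$) and let $h \to 0$ in the resulting $(1,\kb h,\infty)$-quasiminimality to conclude that $E$ is minimal --- a step the paper's own proof also leaves implicit.
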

\begin{rmk}
    The limit $E$ may not belong to $\mathcal{C}$.
\end{rmk}
\begin{proof}
    Let $(E_k)$ be a minimizing sequence. The sequences ($\varepsilon_k$) defined by $\varepsilon_k = E_k - m$ goes to $0$. The sequence $(\II \mres E_k)$ is a bounded sequence of Radon measure in $X$ so, up to a subsequence, there exists a Radon measure $\mu$ in $X$ such that
    \begin{equation}
        \II \mres E_k \rightharpoonup \mu.
    \end{equation}
    For all $k$, for all global sliding deformations $f$ in $X$,
    \begin{align}
        \II(E_k)    &   \leq m + \varepsilon_k\\
                    &   \leq \II(f(E_k)) + \varepsilon_k
    \end{align}
    whence for all open set $W \subset X$ such that $f = \mathrm{id}$ in $X \setminus W$,
    \begin{equation}
        \II(E_k \cap W) \leq \II(f(E_k \cap W)) + \varepsilon_k.
    \end{equation}
    According to Theorem \ref{thm_limit} and Remark \ref{rmk_limit} we have $\mu = \II \mres E$, where $E$ is the support of $\mu$. Moreover, for all sliding deformations $f$ in $X$,
    \begin{equation}
        \II(E \cap W_f) \leq \II(f(E \cap W_f)).
    \end{equation}
\end{proof}

\textbf{Acknowledgement:} I would like to thank Guy David for his warm and helpful discussions. I thank the anonymous reviewers who have improved this paper.

\begin{appendices}
    \section{Continuous and Lipschitz Extensions}\label{lipschitz_appendix}
    \subsection{Continuous Extensions}
    \begin{lem}[Tietze extension]\label{continuous_extension}
        Let $X$ be a metric space and $A$ be a closed subset of $X$. Any continuous function $f\colon A \to \R^n$ has a continuous extension $g\colon X \to \R^n$.
    \end{lem}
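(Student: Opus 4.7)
The plan is to apply Dugundji's partition-of-unity construction, which extends continuous maps into any convex target (here $\R^n$) from a closed subset of a metric space without needing to reduce to the scalar or bounded case.

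First I would set $g = f$ on $A$ and focus on the open complement $U = X \setminus A$. Since $d(\,\cdot\,, A)$ is positive and continuous on $U$, the family $\{B(x, d(x,A)/3) : x \in U\}$ is an open cover of $U$. Metric spaces being paracompact (Stone's theorem), this cover admits a locally finite open refinement $\{V_\alpha\}_{\alpha \in I}$ in $U$, together with a continuous partition of unity $\{\psi_\alpha\}$ subordinate to it. For each index $\alpha$, I would pick an arbitrary point $x_\alpha \in V_\alpha$ and a point $a_\alpha \in A$ satisfying $d(x_\alpha, a_\alpha) \leq 2\, d(x_\alpha, A)$ (possible by definition of the infimum). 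Then define
$$g(x) = \sum_{\alpha} \psi_\alpha(x)\, f(a_\alpha) \quad \text{for } x \in U.$$

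Continuity of $g$ on the open set $U$ is immediate: near each point of $U$ only finitely many $\psi_\alpha$ are nonzero, and a finite sum of continuous $\R^n$-valued functions is continuous.

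The main obstacle, as usual, is verifying continuity at points $a \in A$, which is where the geometric choice of refinement pays off. For $x \in U$ close to $a$ and for any index $\alpha$ with $\psi_\alpha(x) > 0$, the point $x$ lies in $V_\alpha \subset B(y, d(y,A)/3)$ for some $y \in U$, which forces $d(x_\alpha, A) \lesssim d(y,A) \lesssim d(x,a)$, and then $d(a_\alpha, a) \lesssim d(x,a)$ by the triangle inequality together with the choice of $a_\alpha$. Given $\varepsilon > 0$, continuity of $f$ at $a$ supplies $\delta > 0$ with $|f(a') - f(a)| < \varepsilon$ for all $a' \in A$ within $\delta$ of $a$; choosing $x$ close enough to $a$ makes every $a_\alpha$ appearing in the sum satisfy this bound, whence $|g(x) - f(a)| < \varepsilon$ by convexity of the partition of unity. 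This finishes the proof.
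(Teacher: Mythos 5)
Your proposal is correct. Note that the paper states this lemma without proof, treating it as a classical fact, so there is no "paper's argument" to compare against; what you have written is a complete and valid proof via Dugundji's extension theorem, which is genuinely stronger than the classical Tietze theorem for normal spaces. The key estimate does go through: if $\psi_\alpha(x)>0$ then $x, x_\alpha \in V_\alpha \subset B(y, \mathrm{d}(y,A)/3)$ for some $y \in U$, which gives $\mathrm{d}(y,A) \leq \tfrac{3}{2}\mathrm{d}(x,A)$, hence $\mathrm{d}(x_\alpha,x) \leq \mathrm{d}(x,a)$, $\mathrm{d}(x_\alpha,A) \leq 2\mathrm{d}(x,a)$ and finally $\mathrm{d}(a_\alpha,a) \leq 6\,\mathrm{d}(x,a)$, so the convexity argument at boundary points closes as you claim. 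Two small remarks. First, your construction has the advantage that it proves the remark following the lemma for free: since $g(x)$ is a convex combination of values of $f$ on $U$, the image of $g$ lies in the convex hull of $f(A)$ automatically, with no need to postcompose with a projection. Second, a more elementary route that avoids Stone's paracompactness theorem is to argue coordinate-wise, reduce to bounded scalar $f$ by composing with a homeomorphism $\R \to \mathopen{]}-1,1\mathclose{[}$, and use an explicit averaging formula such as
\begin{equation*}
    g(x) = \frac{\int_A f(a)\, \mathrm{d}(x,a)^{-2\lambda(x)}\, \mathrm{d}\nu(a)}{\int_A \mathrm{d}(x,a)^{-2\lambda(x)}\, \mathrm{d}\nu(a)}
\end{equation*}
type constructions, or the Urysohn-based series from the classical proof; but since metric spaces are paracompact, your argument is perfectly legitimate and shorter to state.
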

    \begin{rmk}
        Note that we can postcompose $g$ with the orthogonal projection onto the closed convex hull of $f(A)$. Thus, we obtain another continuous extension whose image is included in the convex hull of $f(A)$. For example, if $\abs{f} \leq M$, we can assume $\abs{g} \leq M$ as well.
    \end{rmk}

    \subsection{Lipschitz Extensions}
    The McShane--Whitney formula is a simple technique to build Lipschitz extensions.
    \begin{lem}[McShane--Whitney extension]\label{lipschitz_extension}
        Let $X$ be a metric space and $A \subset X$. Any Lipschitz function $f\colon A \to \R^n$ has a Lipschitz extension $g\colon X \to \R^n$.
    \end{lem}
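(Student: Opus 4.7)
The plan is to reduce the vector-valued statement to the scalar case and then prove the scalar case by the classical McShane inf-convolution formula. Since the lemma only asserts the existence of some Lipschitz extension without specifying the constant, losing a dimensional factor when going from $\R$ to $\R^n$ causes no difficulty.

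For the scalar case, suppose $f\colon A\to \R$ is $L$-Lipschitz (the case $A=\emptyset$ is trivial, so fix some $a_0\in A$). Define
\begin{equation*}
    g(x) = \inf_{a\in A}\bigl\{ f(a) + L\, d(x,a) \bigr\}.
\end{equation*}
For every $a\in A$, the Lipschitz bound gives $f(a) \geq f(a_0) - L\,d(a,a_0)$, so the triangle inequality yields $f(a) + L\,d(x,a) \geq f(a_0) - L\,d(x,a_0)$; hence the infimum is finite and $g$ is well-defined on all of $X$. To see that $g$ extends $f$, fix $a'\in A$: taking $a=a'$ in the infimum gives $g(a')\leq f(a')$, while the Lipschitz bound $f(a) + L\,d(a',a) \geq f(a')$ yields the reverse inequality, so $g(a') = f(a')$. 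To check that $g$ is itself $L$-Lipschitz, note that for any $x,y\in X$ and $a\in A$ we have $f(a) + L\,d(x,a) \leq f(a) + L\,d(y,a) + L\,d(x,y)$; taking the infimum over $a$ gives $g(x) \leq g(y) + L\,d(x,y)$, and the roles of $x$ and $y$ are symmetric.

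For the general case $f\colon A\to \R^n$, write $f=(f_1,\dots,f_n)$. Each coordinate $f_i$ is itself $L$-Lipschitz since $\abs{f_i(a)-f_i(a')}\leq \abs{f(a)-f(a')}\leq L\,d(a,a')$. Applying the scalar case to each coordinate produces $L$-Lipschitz extensions $g_i\colon X\to \R$, and the map $g=(g_1,\dots,g_n)$ is then a $\sqrt{n}\,L$-Lipschitz extension of $f$, which is what the statement claims. There is no real obstacle here: the only delicate point, if one wanted to preserve the Lipschitz constant exactly (Kirszbraun's theorem), would be avoiding the $\sqrt{n}$ factor, but this refinement is unnecessary for the present statement and for the way it is used throughout the paper.
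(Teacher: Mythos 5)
Your proof is correct and follows essentially the same route as the paper: reduce to the scalar case coordinate by coordinate and use the McShane inf-convolution formula $g(x)=\inf_{a\in A}\{f(a)+L\,d(x,a)\}$, checking finiteness, the extension property, and the Lipschitz bound. You simply spell out the verifications that the paper leaves as "one can check."
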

    \begin{proof}
        We cover the case $n = 1$ because it suffices to extend each coordinate functions independently. Let $L$ be the Lipschitz constant of $f$. Then the McShane--Whitney extension of $f$ is given by the formula
        \begin{equation}
            g(x) = \inf_{y \in A} \set{f(y) + L \abs{y - x}}.
        \end{equation}
        One can check that $g$ is real-valued, coincides with $f$ in $A$ and is $L$-Lipschitz.
    \end{proof}
    \begin{rmk}
        Let $\norm{f}$ be the Lipschitz constant of $f$. The McShane--Whitney extension is $C\norm{f}$-Lipschitz, where $C$ is a positive constant that depends only on $n$. In the case $X = \R^m$, the Kirzbraun theorem gives an extension $g$ with has the same Lipschitz constant as $f$. We can postcompose $g$ with the orthogonal projection onto the closed convex hull of $f(A)$. We obtain another Lipschitz extension whose image is included in the closed convex hull of $f(A)$. This operation does not increase the Lipschitz constant. For example, given $\abs{f} \leq M$, we can assume $\abs{g} \leq M$ as well without increasing the Lipschitz constant of $g$.
    \end{rmk}

    We also want to approximate continuous functions by Lipschitz functions.
    \begin{lem}\label{lipschitz_approximation}
        Let $X$ be a metric space. Let $f\colon X \to \R^n$ be a bounded and uniformly continuous function. Then for all $\varepsilon > 0$, there exists a Lipschitz function $g\colon X \to \R^n$ such that $\abs{g - f} \leq \varepsilon$.
    \end{lem}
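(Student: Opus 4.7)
The plan is to reduce to the scalar case and then use an inf-convolution formula, much in the spirit of the McShane--Whitney extension used in Lemma \ref{lipschitz_extension}. More precisely, since the lemma is about $\R^n$-valued functions, I would first observe that it suffices to build a Lipschitz approximation componentwise: if each coordinate $f_i\colon X \to \R$ admits a Lipschitz approximation $g_i$ with $|g_i - f_i| \leq \varepsilon / \sqrt{n}$, then $g = (g_1, \ldots, g_n)$ is Lipschitz and satisfies $|g - f| \leq \varepsilon$. Thus I may assume $n = 1$.

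Once reduced to the scalar case, let $M = \sup_X |f| < \infty$ and let $\delta > 0$ be provided by uniform continuity so that $d(x, y) < \delta$ implies $|f(x) - f(y)| \leq \varepsilon$. Motivated by the McShane formula, I would define
\begin{equation*}
    g(x) = \inf_{y \in X} \set{f(y) + L\, d(x, y)},
\end{equation*}
where $L := 2M / \delta$ is chosen so that the ``far'' points $y$ cannot achieve the infimum. It is standard that any such infimum of $L$-Lipschitz functions $y \mapsto f(y) + L\, d(\,\cdot\,, y)$ is $L$-Lipschitz (whenever it is finite-valued); here boundedness of $f$ guarantees $-M \leq g(x) \leq M$.

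It remains to verify $|g - f| \leq \varepsilon$. Taking $y = x$ immediately gives $g(x) \leq f(x)$. For the reverse bound, I would split the infimum into two regimes. If $d(x, y) < \delta$, uniform continuity yields $f(y) + L\, d(x, y) \geq f(x) - \varepsilon$. If $d(x, y) \geq \delta$, then by the choice of $L$,
\begin{equation*}
    f(y) + L\, d(x, y) \geq -M + L\delta = M \geq f(x).
\end{equation*}
In either case $f(y) + L\, d(x, y) \geq f(x) - \varepsilon$, so $g(x) \geq f(x) - \varepsilon$. Recombining the coordinates (after rescaling $\varepsilon$ by $1/\sqrt{n}$) completes the proof. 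I do not anticipate a real obstacle: the argument is a direct adaptation of the proof of Lemma \ref{lipschitz_extension}, the only new ingredients being (i) the use of uniform continuity to replace the missing Lipschitz hypothesis on $f$ at small scale, and (ii) the use of the bound $\sup|f| \leq M$ to force the infimum to be attained at points close to $x$.
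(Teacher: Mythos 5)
Your proof is correct and follows essentially the same approach as the paper's: reduction to $n=1$ by working coordinatewise, the McShane-type inf-convolution $g(x) = \inf_y \{f(y) + 2M\delta^{-1} d(x,y)\}$, and the same two-regime case split (near points controlled by uniform continuity, far points controlled by the bound $M$). The only cosmetic difference is that you spell out the $\varepsilon/\sqrt{n}$ rescaling when recombining coordinates, a detail the paper leaves implicit.
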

    \begin{proof}
        We cover the case $n=1$ because it suffices to approximate each coordinate functions independently. Let us fix $\varepsilon > 0$; there exists $\delta > 0$ such that for all $x, y \in X$ with $\abs{x - y} \leq \delta$, $\abs{f(x) - f(y)} \leq \varepsilon$. We define $M = \sup \abs{f}$ and
        \begin{equation}
            g(x) = \inf_{y \in X} \set{f(y) + 2M \delta^{-1} \abs{x - y}}.
        \end{equation}
        One can check that $g$ is real-valued, $g \leq f$ and $g$ is $2M \delta^{-1}$-Lipschitz. Next, we check that $f \leq g + \varepsilon$. For $x, y \in X$, either $\abs{x - y} \geq \delta$ and then
        \begin{align}
            f(x)    &   \leq f(y) + 2M\\
                    &   \leq f(y) + 2M \delta^{-1} \abs{x - y}
        \end{align}
        or $\abs{x - y} \leq \delta$ and then
        \begin{align}
            f(x)    &   \leq f(y) + \varepsilon\\
                    &   \leq f(y) + 2M \delta^{-1} \abs{x - y} + \varepsilon.
        \end{align}
        In both cases, $f(x) \leq f(y) + 2M\delta^{-1}\abs{x - y} + \varepsilon$ and since $y \in X$ is arbitrary, $f(x) \leq g(x) + \varepsilon$.
    \end{proof}

    \begin{cor}\label{lipschitz_approximation_extension}
        Let $X$ be a metric space. Let $f\colon X \to \R^n$ be a bounded uniformly continuous function which is Lipschitz on some subset $A \subset X$. Then, for all $\varepsilon > 0$, there exists a Lipschitz function $g\colon X \to \R^n$ such that $\abs{g - f} < \varepsilon$ in $X$ and $g = f$ in $A$.
    \end{cor}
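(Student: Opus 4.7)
The plan is to combine the two preceding lemmas: first use Lemma \ref{lipschitz_approximation} to produce a Lipschitz approximation $g_0$ of $f$ on all of $X$, then use Lemma \ref{lipschitz_extension} to correct the error $f - g_0$ on $A$ by a Lipschitz extension to $X$, chosen small enough that the correction only adds a controlled error on the rest of $X$.

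More concretely, I would first apply Lemma \ref{lipschitz_approximation} with $\varepsilon/2$ in place of $\varepsilon$ to obtain a Lipschitz function $g_0 \colon X \to \R^n$ such that $\abs{g_0 - f} \leq \varepsilon/2$ on $X$. Define $\varphi\colon A \to \R^n$ by $\varphi = f - g_0$. Since $f$ is Lipschitz on $A$ by hypothesis and $g_0$ is Lipschitz on $X$ (hence on $A$), $\varphi$ is Lipschitz on $A$. Moreover, $\abs{\varphi} \leq \varepsilon/2$ on $A$, so the closed convex hull of $\varphi(A)$ is contained in the closed ball $\overline{B}(0,\varepsilon/2)$ of $\R^n$.

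Next I would apply Lemma \ref{lipschitz_extension} to $\varphi$, and use the remark immediately following that lemma (postcomposition with the orthogonal projection onto the closed convex hull of $\varphi(A)$, which does not increase the Lipschitz constant) to obtain a Lipschitz extension $h\colon X \to \R^n$ of $\varphi$ satisfying $\abs{h} \leq \varepsilon/2$ on all of $X$ and $h = \varphi$ on $A$.

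Finally, define $g = g_0 + h$. As a sum of two Lipschitz functions, $g$ is Lipschitz on $X$. On $A$ we have $g = g_0 + (f - g_0) = f$, so $g$ agrees with $f$ on $A$. On $X$,
\begin{equation*}
    \abs{g - f} = \abs{(g_0 - f) + h} \leq \abs{g_0 - f} + \abs{h} \leq \varepsilon/2 + \varepsilon/2 = \varepsilon.
\end{equation*}
To obtain the strict inequality $\abs{g - f} < \varepsilon$ stated in the corollary, it suffices to run the argument above with $\varepsilon' < \varepsilon$ in place of $\varepsilon$ (say $\varepsilon' = \varepsilon/2$) at the very first step. I do not anticipate any genuine obstacle; the only subtle point is ensuring that the correction $h$ has sup norm bounded by $\varepsilon/2$, which is exactly what the convex-hull projection remark after the McShane--Whitney lemma provides.
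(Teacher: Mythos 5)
Your proof is correct and follows essentially the same route as the paper: approximate $f$ by a Lipschitz $g_0$ via Lemma \ref{lipschitz_approximation}, then extend $f - g_0$ from $A$ with Lemma \ref{lipschitz_extension} (using the convex-hull projection remark to keep the sup norm $\leq \varepsilon/2$), and add the two. Your extra care about the strict inequality is a fine touch but not a genuine divergence from the paper's argument.
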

    \begin{proof}
        According to Lemma \ref{lipschitz_approximation}, there exists a Lipschitz function $g\colon X \to \R^n$ such that $\abs{g - f} \leq \frac{\varepsilon}{2}$. The function $u = f - g$ is Lipschitz on $A$ and satisfies $\abs{u} \leq \frac{\varepsilon}{2}$, so Lemma \ref{lipschitz_extension} say that it admits a Lipschitz extension $v\colon X \to \R^n$ with $\abs{v} \leq \frac{\varepsilon}{2}$. We conclude that $g + v$ is a solution to our problem.
    \end{proof}

    \section{Toolbox for Sliding Deformations}\label{appendix_toolbox}
    \subsection{Local Retractions}
    We will often need to localize a retraction of the boundary in a given open set.
    \begin{lem}\label{retract_lemma}
        Let $\Gamma$ be a Lipschitz neighborhood retract in $X$. For all open sets $U \subset X$ and for all $\varepsilon > 0$, there exists a Lipschitz map $p\colon \R^n \to \R^n$ and an open subset $O \subset X$ such that $\Gamma \cap U \subset O \subset U$ and
        \begin{subequations}
            \begin{align}
                &   \abs{p - \mathrm{id}} \leq \varepsilon\\
                &   p(O) \subset \Gamma\\
                &   p = \mathrm{id} \ \text{in} \ \Gamma \cup (\R^n \setminus U).
            \end{align}
        \end{subequations}
        Moreover, we can build $p$ such that its Lipschitz constant depends only on $n$ and $\Gamma$ (but not $U$ and $\varepsilon$).
    \end{lem}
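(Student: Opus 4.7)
My plan is to build $p$ in three stages: shrink the given retraction so that $|p - \mathrm{id}| \leq \varepsilon$, extend it globally, then localize inside $U$ via a cutoff that compares the two distances $\mathrm{d}(\cdot, \Gamma)$ and $\mathrm{d}(\cdot, \R^n \setminus U)$. Start from the Lipschitz retraction $r\colon O_\Gamma \to \Gamma$, $r|_\Gamma = \mathrm{id}$, with Lipschitz constant $L$. Since $r$ fixes $\Gamma$, the triangle inequality gives $|r(x) - x| \leq (L+1)\mathrm{d}(x, \Gamma)$ on $O_\Gamma$, so on the smaller open neighborhood $O'_\Gamma := \{x \in X : \mathrm{d}(x, \Gamma) < \delta\}$, with $\delta$ chosen so that $O'_\Gamma \subset O_\Gamma$ and $(L+1)\delta \leq \varepsilon$, one has $|r - \mathrm{id}| \leq \varepsilon$.

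I extend $v := r - \mathrm{id}$ from $O'_\Gamma$ to a Lipschitz map $\tilde v\colon \R^n \to \R^n$ by McShane (Lemma~\ref{lipschitz_extension}), then compose with the $1$-Lipschitz nearest-point projection onto $\overline{B}(0, \varepsilon)$ to enforce $|\tilde v| \leq \varepsilon$ globally while preserving $\tilde v = v$ on $O'_\Gamma$; combined with $(L+1)\delta \leq \varepsilon$, this upgrades the local estimate to the global displacement bound $|\tilde v(y)| \leq (L+1)\,\mathrm{d}(y, \Gamma)$. Setting $\tilde r := \mathrm{id} + \tilde v$, one has $\tilde r(O'_\Gamma) \subset \Gamma$. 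Now introduce $d_1(y) := \mathrm{d}(y, \Gamma)$, $d_2(y) := \mathrm{d}(y, \R^n \setminus U)$, $s(y) := d_1(y)/(d_1(y) + d_2(y)) \in [0, 1]$ (defined on $\R^n \setminus (\Gamma \setminus U)$), and fix once and for all a Lipschitz profile $\psi\colon [0, 1] \to [0, 1]$ with $\psi \equiv 1$ on $[0, \eta/2]$ and $\psi \equiv 0$ on $[1 - \eta/2, 1]$ for some universal $\eta \in (0, 1)$. Define
\[
   p(y) := y + \psi(s(y))\,\tilde v(y),
\]
extended by $p = \mathrm{id}$ on the closed set $\Gamma \setminus U$ (consistent since $\tilde v$ vanishes there), and set $O := \{y \in X : s(y) < \eta/2\} \cap O'_\Gamma$. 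The four required properties then follow at once: $p = \mathrm{id}$ on $\R^n \setminus U$ (there either $d_2 = 0$ forces $s = 1$ and $\psi(s) = 0$, or $y \in \Gamma \setminus U$), $p|_\Gamma = \mathrm{id}$ (since $\tilde v = 0$), $|p - \mathrm{id}| \leq |\tilde v| \leq \varepsilon$, and $p = \tilde r = r \in \Gamma$ on the open set $O \supset \Gamma \cap U$.

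The main obstacle is the moreover clause: any Lipschitz scalar cutoff equal to $1$ on an open neighborhood of $\Gamma \cap U$ and $0$ outside $U$ has Lipschitz constant $\gtrsim 1/\mathrm{d}(\Gamma \cap U, \R^n \setminus U)$, which is unbounded whenever $\Gamma$ accumulates on $\partial U$. The ratio-based cutoff $\psi(s)$ bypasses this because the large factor $|\nabla s|$ only appears multiplied by the small factor $|\tilde v|$. Indeed, $|\nabla d_1|, |\nabla d_2| \leq 1$ give $|\nabla s| \leq 1/(d_1 + d_2)$, and the global displacement bound yields
\[
  \bigl|(\nabla s)\otimes \tilde v\bigr| \leq (L+1)\,\frac{d_1}{d_1 + d_2} = (L+1)\,s \leq L + 1.
\]
Since $\nabla p = I + \psi'(s)(\nabla s)\otimes \tilde v + \psi(s)\,\nabla \tilde v$, this combined with $\|\psi'\|_\infty \leq 1/(1 - \eta)$ and $\|\nabla \tilde v\|_\infty \leq C(n)(L+1)$ gives a global bound on $\|\nabla p\|$ depending only on $n$ and $L$; continuity of $p$ at $\Gamma \setminus U$ promotes this a.e.\ estimate to a genuine Lipschitz estimate by convexity of $\R^n$, completing the moreover clause.
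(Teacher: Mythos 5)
Your construction takes a genuinely different route from the paper's: instead of defining $p$ piecewise on a well-chosen domain and letting the McShane extension do the interpolation, you interpolate explicitly with the ratio cutoff $\psi(d_1/(d_1+d_2))$, and your observation that the large gradient of the cutoff is always paired with the small displacement $\abs{\tilde v} \lesssim \mathrm{d}(\cdot,\Gamma)$ is exactly the right mechanism for the ``moreover'' clause. (The paper achieves the same effect by building the comparison of $\abs{r-\mathrm{id}}$ with $\mathrm{d}(\cdot, U^c)$ into the definition of $O$ and checking by hand that the resulting piecewise map is Lipschitz across the two pieces.) However, there is a concrete gap at the very first step. You choose a constant $\delta>0$ with $O'_\Gamma = \set{x \in X : \mathrm{d}(x,\Gamma)<\delta} \subset O_\Gamma$. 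No such $\delta$ need exist: $\Gamma$ is only relatively closed in $X$ and may be unbounded or accumulate on $\partial X$, and the retraction neighborhood $O_\Gamma$ furnished by Definition~\ref{retract_definition} may thin out along $\Gamma$ (for instance $\Gamma=[1,\infty)\times\set{0}$ in $X=\R^2$ with an $O_\Gamma$ whose width decays like $e^{-t}$), so that $\mathrm{d}(\Gamma, X\setminus O_\Gamma)=0$. Everything downstream --- the definition of $v$ on $O'_\Gamma$, the set $O$, the displacement bound --- is built on this possibly nonexistent set.

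A second, smaller slip: even granting such a $\delta$, your derivation of the global bound $\abs{\tilde v(y)}\leq(L+1)\,\mathrm{d}(y,\Gamma)$ needs the inequality the other way. Off $O'_\Gamma$ you only know $\abs{\tilde v}\leq\varepsilon$ and $\mathrm{d}(y,\Gamma)\geq\delta$, so you need $(L+1)\delta\geq\varepsilon$, whereas you imposed $(L+1)\delta\leq\varepsilon$; with strict inequality the bound fails on the annulus $\delta\leq\mathrm{d}(y,\Gamma)<\varepsilon/(L+1)$. Both problems disappear with one change: take $O'_\Gamma := \set{x\in O_\Gamma : \mathrm{d}(x,\Gamma)<\varepsilon/(L+1)}$, which is open, contains $\Gamma$, sits inside $O_\Gamma$ by construction, and still gives $\abs{r-\mathrm{id}}\leq\varepsilon$ there; then obtain the displacement bound for free from the fact that the McShane extension $\tilde v$ is $C(n)(L+1)$-Lipschitz and vanishes on $\Gamma\subset O'_\Gamma$, so that $\abs{\tilde v(y)}\leq C(n)(L+1)\,\mathrm{d}(y,\Gamma)$ everywhere. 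With these repairs your cutoff computation goes through and yields a Lipschitz constant depending only on $n$ and $\Gamma$, as required.
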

    \begin{proof}
        Let $r$ be a Lipschitz retraction from an open set $O_0$ containing $\Gamma$ onto $\Gamma$. Let $\varepsilon > 0$, we define the open set
        \begin{equation}
            O = \Set{x \in U \cap O_0 | \abs{r(x) - x} < \varepsilon \min\set{\mathrm{d}(x,U^c),1}}.
        \end{equation}
        In particular, $\Gamma \cap U \subset O \subset U$ and $r(O) \subset \Gamma$. Consider the partially defined map
        \begin{equation}
            p =
            \begin{cases}
                r           &   \text{in} \ O\\
                \mathrm{id} &   \text{in} \ X \setminus U.
            \end{cases}
        \end{equation}
        Notice that $\Gamma \subset O \cup (X \setminus U)$ so $p = \mathrm{id}$ on $\Gamma$. It is straightforward that $\abs{p - \mathrm{id}} \leq \varepsilon$ because $\abs{r - \mathrm{id}} \leq \varepsilon$ in $O$. Next, we estimate the Lipschitz constant of $p - \mathrm{id}$. Let $\norm{r}$ be the Lipschitz constant of $r$. For $x, y \in O$,
        \begin{align}
            \abs{(p-\mathrm{id})(x) - (p-\mathrm{id})(y)}   &   \leq \abs{r(x) - r(y)} + \abs{x - y}\\
                                                            &   \leq (1 + \norm{r}) \abs{x - y}.
        \end{align}
        For $x \in O$ and for $y \in X \setminus U$,
        \begin{align}
            \abs{(p-\mathrm{id})(x) - (p-\mathrm{id})(y)}   &   \leq \abs{r(x) - x}\\
                                                            &   \leq \varepsilon \mathrm{d}(x, X \setminus U)\\
                                                            &   \leq \varepsilon \abs{x - y}.
        \end{align}
        We assume $\varepsilon \leq 1$ so that $p-\mathrm{id}$ is $(1 + \norm{r})$-Lipschitz on its domain. Finally, we apply Lemma \ref{lipschitz_extension} in Appendix \ref{lipschitz_appendix} to $p-\mathrm{id}$. Thus, $p$ extends as a Lipschitz map $p\colon \R^n \to \R^n$ such that $(p-\mathrm{id})$ is $C(1 + \norm{r})$-Lipschitz (where $C$ depends only on $n$) and $\abs{p-\mathrm{id}} \leq \varepsilon$.
    \end{proof}

    \subsection{Stability of Sliding Deformations}
    Our working space in an open set $X$ of $\R^n$. The next lemma says that a slight modification of a sliding deformation is still a sliding deformation.
    \begin{lem}\label{sliding_perturbation}
        Let $\Gamma$ be a Lipschitz neighborhood retract in $X$. Let $E$ be a closed subset of $X$. Let $f$ be a sliding deformation of $E$ in an open subset $U \subset X$. Let $W$ be an open set such that $W \subset \subset E \cap U$. Then there exists $\delta > 0$ such that all Lipschitz maps $g\colon E \to \R^n$ satisfying
        \begin{subequations}
            \begin{align}
                &   \abs{g - f} \leq \delta,\\
                &   g(E \cap \Gamma) \subset \Gamma,\\
                &   g = \mathrm{id} \ \text{in} \ E \setminus W,
            \end{align}
        \end{subequations}
        are sliding deformations of $E$ in $U$.
    \end{lem}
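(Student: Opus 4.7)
The plan is to construct a continuous homotopy $G \colon I \times E \to X$ witnessing $g$ as a sliding deformation in $U$, by concatenating the given homotopy $F$ for $f$ with a short linear perturbation from $f$ to $g$, and using the Lipschitz neighborhood retract structure of $\Gamma$ to preserve the sliding boundary constraint. First, I would extract from the sliding deformation structure of $f$ a homotopy $F \colon I \times E \to X$ with compact support $K_F \subset E \cap U$, so that $K := K_F \cup \overline{W}$ is a compact subset of $E \cap U$. Pick a relatively compact open set $U'$ in $X$ with $K \subset U' \subset \overline{U'} \subset U$, and apply Lemma~\ref{retract_lemma} with input $U'$ and a small parameter $\varepsilon_1 > 0$ to produce a globally defined Lipschitz map $p \colon \R^n \to \R^n$ and an open set $O$, $\Gamma \cap U' \subset O \subset U'$, satisfying $\abs{p - \mathrm{id}} \leq \varepsilon_1$, $p(O) \subset \Gamma$, and $p = \mathrm{id}$ on $\Gamma \cup (\R^n \setminus U')$.

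The natural construction for $G$ has two phases. For $t \in [0, 1/2]$, set $G_t(x) = F_{2t}(x)$, so that $G_0 = \mathrm{id}$ and $G_{1/2} = f$. For $t \in [1/2, 1]$ with $s = 2t - 1$, the building block is the straight-line interpolant $h_s(x) = (1-s) f(x) + s g(x)$, composed with $p$ on the $\Gamma$-part. The parameter $\delta > 0$ should be chosen small enough to guarantee three things: (i) for every $x \in E \cap \Gamma$ and every $s \in [0, 1]$, the interpolant $h_s(x)$ lies in $O$, which is possible because $f(x), g(x) \in \Gamma$ and $\abs{h_s - f} \leq \delta$; (ii) the compact set $F(I \times K_F) \cup \{h_s(x) : x \in \overline{W}, s \in [0, 1]\}$ remains in $U$, exploiting the positive distance of these compact sets from $\partial U$; (iii) the support is contained in $K$, which follows from $g = \mathrm{id}$ on $E \setminus W$ and $F_t = \mathrm{id}$ outside $K_F$ once $\delta$ is small enough to handle the behavior of $f$ on $K_F \setminus W$ (where necessarily $\abs{f - \mathrm{id}} \leq \delta$).

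The main obstacle is the continuity of the second phase as a map on $E$, especially across the relative boundary $\partial_E(E \cap \Gamma)$. Indeed, the naive piecewise formula ``$G_t(x) = p(h_s(x))$ on $E \cap \Gamma$ and $G_t(x) = h_s(x)$ on $E \setminus \Gamma$'' is continuous on each open piece but may jump at boundary points, since $p$ need not fix $h_s(x_0)$ when $x_0 \in \partial_E(E \cap \Gamma)$. I would resolve this by replacing the hard case split with a single global formula built from the deformation retraction $P_\sigma(y) = (1-\sigma) y + \sigma p(y)$ and a continuous cutoff $\sigma \colon I \times E \to [0, 1]$, taken equal to $1$ on $I \times (E \cap \Gamma)$ and vanishing near the boundary times $t \in \{0, 1\}$ outside $\Gamma$, so that the endpoint identities $G_0 = \mathrm{id}$ and $G_1 = g$ hold exactly while $G_t(E \cap \Gamma) \subset \Gamma$ is enforced by $p$. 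Once continuity is secured, the remaining axioms of Definition~\ref{defi_sliding} follow by direct verification: the support lies in $K \subset E \cap U$ by construction, $G_t(E \cap U) \subset U$ by the smallness of $\delta$, and $G_1 = g$ by the choice of $\sigma$ at $t = 1$.
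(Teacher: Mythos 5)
Your overall strategy is the paper's: perturb the given homotopy $F$ towards $g$, choose $\delta$ by compactness so that the images of $E\cap\Gamma$ stay in a neighborhood $O$ retracting onto $\Gamma$ and the images of the support stay in a relatively compact subset of $U$, then compose with the retraction $p$ to restore the sliding constraint. You also correctly isolate the crux, namely the continuity of the corrected homotopy across the relative boundary of $E\cap\Gamma$ in $E$. But your resolution of that crux does not work. A continuous cutoff $\sigma\colon I\times E\to[0,1]$ with $\sigma=1$ on $I\times(E\cap\Gamma)$ and $\sigma=0$ near $t\in\{0,1\}$ outside $\Gamma$ cannot exist as soon as some $x_0\in E\cap\Gamma$ is a limit of points of $E\setminus\Gamma$: continuity at $(1,x_0)$ forces $\sigma(1,x_0)=0$ while the first requirement forces $\sigma(1,x_0)=1$. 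Nor can you salvage this by tolerating a discontinuous $\sigma$: at a point $(t,x_0)$ with $x_0$ in the relative boundary of $E\cap\Gamma$ and $s=2t-1\in(0,1)$ close to $1$, the interpolant $h_s(x_0)=(1-s)f(x_0)+sg(x_0)$ generally lies \emph{off} $\Gamma$ (the segment joining two points of $\Gamma$ need not stay in $\Gamma$), so $p(h_s(x_0))\ne h_s(x_0)$, and the map $(1-\sigma)h_s+\sigma\,p(h_s)$ genuinely jumps there when $\sigma$ jumps. So the "single global formula" leaves the homotopy discontinuous exactly where you flagged the danger.

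The paper's way around this is to give up on a global formula. One prescribes the homotopy only on the closed subsets of $I\times E$ where a constraint actually lives: $\mathrm{id}$ on $(\{0\}\times E)\cup(I\times(E\setminus W_0))$, the retracted map $p\circ G_t$ on $I\times(E\cap\Gamma)$, and $g$ on $\{1\}\times E$ (here $G_t=F_t+t(g-f)$ and $W_0$ contains $W$ and the support of $F$). These prescriptions agree on all overlaps precisely because $p=\mathrm{id}$ on $\Gamma$, $g(E\cap\Gamma)\subset\Gamma$ and $G_t=\mathrm{id}$ off $W_0$; hence the pasting is continuous on the union of these closed sets. One then extends it to all of $I\times E$ by the Tietze theorem applied to $H-G$, keeping the bound $\abs{H_t-G_t}\le\delta_0$ where $\delta_0\le\mathrm{d}(U_0,X\setminus U)$, which is what guarantees $H_t(E\cap U)\subset U$ on the region where no formula was imposed. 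I recommend you replace the cutoff device by this pasting-plus-Tietze argument; the rest of your outline (the choice of $K$, $U'$, $O$, and the three smallness conditions on $\delta$) then goes through essentially as you wrote it.
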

    \begin{proof}
        Let $F$ be a sliding homotopy associated to $f$. We define
        \begin{equation}
            W_0 = W \cup \Set{x \in E | \exists t \in I,\ F_t(x) \ne x}
        \end{equation}
        and we underline that $W_0 \subset \subset E \cap U$. Thus, there exists an open set $U_0$ such that
        \begin{equation}
            F(I \times \overline{W_0}) \subset U_0 \subset \subset U.
        \end{equation}
        We fix $\delta_0 > 0$ such that $\mathrm{d}(U_0,X \setminus U) \geq \delta_0$. We apply Lemma \ref{retract_lemma} to obtain a Lipschitz function $p\colon X \to \R^n$ and an open set $O \subset X$ containing $\Gamma$ such that
        \begin{subequations}
            \begin{align}
                &   \abs{p - \mathrm{id}} \leq \delta_0\\
                &   p(O) \subset \Gamma\\
                &   p = \mathrm{id} \ \text{in} \ \Gamma.
            \end{align}
        \end{subequations}
        The map $p$ will not be used in step $1$ but the open set $O$ will be needed.

        \emph{Step 1.} Let $\delta > 0$ (to be precised momentarily) and let $g$ be as the statement. We introduce the homotopy $G_t = F_t + t(g - f)$. It is clear that $G_0 = \mathrm{id}$, $G_1 = g$ and for all $t \in I$, $G_t = \mathrm{id}$ in $E \setminus W_0$. The condition $\abs{g - f} \leq \delta$ also implies that for all $t \in I$, $\abs{G_t - F_t} \leq \delta$. We are going to see that if $\delta$ is sufficiently small, then for all $t \in I$,
        \begin{align}
            &   G_t(E \cap \Gamma) \subset O,\\
            &   G_t(E \cap U_0) \subset U_0.
        \end{align}
        Since $I \times \overline{W_0}$ is compact and $F(I \times \overline{W_0}) \subset U_0$, we can take $\delta > 0$ small enough so that for all $t \in I$, for all $x \in \overline{W_0}$,
        \begin{equation}
            \mathrm{d}(F_t(x), \R^n \setminus U_0) > \delta.
        \end{equation}
        Thus, for all $t \in I$, $x \in W_0$,
        \begin{equation}
            G_t(x) \in \overline{B}(F_t(x),\delta) \subset U_0.
        \end{equation}
        We deduce that for all $t \in I$, $G_t(E \cap U_0) \subset U_0$ as $G_t = \mathrm{id}$ in $E \cap U_0 \setminus W_0$. Similarly, $I \times (\Gamma \cap \overline{W_0})$ is compact and $F(I \times (\Gamma \cap \overline{W_0})) \subset O$ because $F(I \times (\Gamma \cap E)) \subset \Gamma \subset O$. We take $\delta > 0$ small enough so for all $t \in I$, for all $x \in \Gamma \cap \overline{W_0}$,
        \begin{equation}
            \mathrm{d}(F_t(x), \R^n \setminus O) > \delta
        \end{equation}
        and we deduce that for all $t \in I$, $G_t(E \cap \Gamma) \subset O$.

        \emph{Step 2.} We would like to retract $G_t(\Gamma)$ onto $\Gamma$ so we define
        \begin{equation}
            H_t =
            \begin{cases}
                \mathrm{id} &   \text{in} \ (0 \times E) \cup (I \times (E \setminus W_0))\\
                p \circ G_t &   \text{in} \ I \times (E \cap \Gamma)\\
                g           &   \text{in} \ 1 \times E.
            \end{cases}
        \end{equation}
        This map is continuous as a pasting of continuous maps in closed domains (relative to $I \times E$). As $G_t(E \cap \Gamma) \subset O$, we have $H_t(E \cap \Gamma) \subset \Gamma$. Since $\abs{p - \mathrm{id}} \leq \delta_0$, $H$ also satisfies the inequality $\abs{H_t - G_t} \leq \delta_0$. We apply the Tietze Extension Theorem (Lemma \ref{continuous_extension}, Appendix \ref{lipschitz_appendix}) in the working space $I \times E$ to $H - G$. Thus, we extend $H$ into a continuous function $H\colon E \to \R^n$ such that $\abs{H_t - G_t} \leq \delta_0$ in $E$. Combining $\abs{H_t - G_t} \leq \delta_0$ and $G_t(E \cap U_0) \subset U_0$, we deduce that $H_t(E \cap U_0) \subset U$ by definition of $\delta_0$. Moreover, $H_t = \mathrm{id}$ on $E \setminus U_0$ so we have in fact $H_t(U) \subset U$. We conclude that $H$ is a sliding homotopy.
    \end{proof}

    \subsection{Global Sliding Deformations}\label{global_deformations}
    Our working space is an open set $X$ of $\R^n$. We recall that a global sliding deformation in an open set $U \subset X$ is a sliding deformation of $X$ in $U$ (the set $E$ is replaced by $X$ in Definition \ref{defi_sliding}). Our goal is to show that global sliding deformations induce the same quasiminimal sets. First, we present a necessary and sufficient condition for a sliding deformations on $E$ to extend as a global deformation.
    \begin{lem}[Sliding Deformation Extension]\label{sliding_extension}
        Let $\Gamma$ be a Lipschitz neighborhood retract in $X$. Let $E$ be a closed subset of $X$. Let $f$ be a sliding deformation of $E$ in an open set $U \subset X$. Then $f$ extends as a global sliding deformation in $U$ if and only if there exists a constant $C \geq 1$ such that for all $x \in E$,
        \begin{equation}\label{extension_assumption}
            \mathrm{d}(f(x), \Gamma) \leq C \mathrm{d}(x, \Gamma).
        \end{equation}
    \end{lem}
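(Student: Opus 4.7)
The necessity direction is a short application of the Lipschitz property. If $g\colon X\to X$ is a global sliding deformation with Lipschitz constant $L$ extending $f$, its associated homotopy $G$ satisfies $g(\Gamma)=G_1(\Gamma)\subset\Gamma$, so for any $x\in E$ and any $y\in\Gamma$ one has $g(y)\in\Gamma$ and
\[
\mathrm{d}(f(x),\Gamma)\leq |g(x)-g(y)| \leq L\,|x-y|.
\]
Infimizing over $y\in\Gamma$ gives (\ref{extension_assumption}) with $C=L$.

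For the sufficiency direction, the plan is to construct $g$ in four steps, relying on McShane-type extension (Lemma~\ref{lipschitz_extension}) and the localized retraction (Lemma~\ref{retract_lemma}). First, since the sliding deformation is the identity outside some compact $K\subset E\cap U$, I would apply Lemma~\ref{lipschitz_extension} to $f-\mathrm{id}$ and multiply by a Lipschitz cutoff $\chi$ supported in $U$ and equal to $1$ on a small neighborhood of $K$; the result $\tilde f=\mathrm{id}+\chi\,h$ is a Lipschitz extension of $f$ to $\R^n$ with $\tilde f=\mathrm{id}$ outside a compact subset of $U$. Second, use Lemma~\ref{retract_lemma} (applied to $U$) to obtain a Lipschitz retraction $p\colon\R^n\to\R^n$ with $p|_\Gamma=\mathrm{id}$, $p=\mathrm{id}$ outside $U$, and $p(O)\subset\Gamma$ for an open $O\supset\Gamma$. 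The key estimate, which is where (\ref{extension_assumption}) enters, is that for $y\in\Gamma$ and $y'\in E$ nearest to $y$,
\[
\mathrm{d}(\tilde f(y),\Gamma) \leq |\tilde f(y)-f(y')| + \mathrm{d}(f(y'),\Gamma) \leq L_{\tilde f}\,\mathrm{d}(y,E) + C\,\mathrm{d}(y',\Gamma) \leq (L_{\tilde f}+C)\,\mathrm{d}(y,E),
\]
using $\mathrm{d}(y',\Gamma)\leq |y-y'|=\mathrm{d}(y,E)$ (since $y\in\Gamma$). By choosing the cutoff $\chi$ so that its support is contained in a small enough neighborhood of $K$, this guarantees $\tilde f(\Gamma)\subset O$. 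Third, define a partial map $g_0\colon E\cup\Gamma\to\R^n$ by $g_0|_E=f$ and $g_0|_\Gamma=p\circ\tilde f|_\Gamma$; these agree on $E\cap\Gamma$ because $f(E\cap\Gamma)\subset\Gamma$ and $p|_\Gamma=\mathrm{id}$. Finally, extend $g_0$ to a Lipschitz $g\colon\R^n\to\R^n$ via Lemma~\ref{lipschitz_extension} and adjust with a cutoff so that $g-\mathrm{id}$ is compactly supported in $U$; since $\tilde f=\mathrm{id}$ and $p=\mathrm{id}$ outside $U$, this does not affect $g|_E=f$ or $g(\Gamma)\subset\Gamma$. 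The homotopy $G_t$ is then built in parallel by a linear interpolation $\tilde G_t=(1-t)\mathrm{id}+t\,g$ corrected near $\Gamma$ by post-composition with $p$, along the template of Lemma~\ref{sliding_perturbation}.

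The main obstacle is the Lipschitz gluing of $g_0$ on $E\cup\Gamma$ in the cross case $x\in E\setminus\Gamma$, $y\in\Gamma\setminus E$. Writing
\[
|g_0(x)-g_0(y)| \leq |\tilde f(x)-\tilde f(y)| + |\tilde f(y)-p(\tilde f(y))| \leq L_{\tilde f}\,|x-y| + (L_p+1)\,\mathrm{d}(\tilde f(y),\Gamma),
\]
one combines the above bound with $\mathrm{d}(y,E)\leq |x-y|$ (since $x\in E$) to conclude Lipschitz control. This is precisely where the hypothesis (\ref{extension_assumption}) is indispensable: without it, $\tilde f(y)$ for $y\in\Gamma$ could lie arbitrarily far from $\Gamma$, making both the retraction step and the Lipschitz gluing impossible, and showing that the distance condition is the only obstruction to extending $f$.
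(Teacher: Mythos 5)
Your necessity argument is correct and is exactly the paper's. The architecture of your sufficiency argument (McShane extension of $f$, localized retraction $p$ from Lemma \ref{retract_lemma}, gluing $f$ on $E$ with $p\circ\tilde f$ on $\Gamma$, with (\ref{extension_assumption}) controlling the cross-term $\abs{\tilde f(y)-p(\tilde f(y))}\leq \norm{p-\mathrm{id}}\,\mathrm{d}(\tilde f(y),\Gamma)$) does match Step 2 of the paper's proof. But there are two genuine gaps, and they have a common source: you only ever use the time-$1$ map $f$, never the sliding homotopy $F$, and both of your interpolations between $\mathrm{id}$ and $f$ are \emph{linear chords}, which do not respect $\Gamma$.

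First, the claim that $\tilde f(\Gamma)\subset O$ ``by choosing the support of $\chi$ small enough'' does not hold. In the transition annulus where $0<\chi<1$, a point $y\in\Gamma$ at distance $\sim\varepsilon$ from some $y'\in E\cap K$ is sent to $\tilde f(y)=y+\chi(y)h(y)\approx y+\chi(y)(f(y')-y')$, a point on the segment from $y$ to (approximately) $f(y')$. Both endpoints may lie on $\Gamma$ while the midpoint is far from it (take $\Gamma$ a sphere and $f$ moving $y'$ to a distant point of $\Gamma$); and shrinking $\mathrm{supp}\,\chi$ does not help, because $\mathrm{Lip}(\chi)\sim\varepsilon^{-1}$ while $\norm{h}_\infty$ near $K$ stays of size $\sup_K\abs{f-\mathrm{id}}$, so the bound $(L_{\tilde f}+C)\,\mathrm{d}(y,E)$ stays of order $\sup_K\abs{f-\mathrm{id}}$ rather than tending to $0$. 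Your estimate via (\ref{extension_assumption}) is genuinely useful only for $y\in\Gamma$ \emph{very} close to $E$; it cannot by itself place $\tilde f(\Gamma\cap\mathrm{supp}\,\chi)$ inside $O$. Second, the same chord problem kills the homotopy: for $x\in E\cap\Gamma$, the segment $(1-t)x+tf(x)$ need not stay in any neighborhood of $\Gamma$ on which $p$ retracts, so $p\circ((1-t)\mathrm{id}+tg)$ is not defined as a map into $\Gamma$; the template of Lemma \ref{sliding_perturbation} is inapplicable because there $g$ is a \emph{small} perturbation of $f$ and the interpolation rides on the original homotopy $F_t$, not on $\mathrm{id}$. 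The paper resolves both issues the same way: it Tietze-extends the whole homotopy $F$ from $I\times E$ to $I\times X$, and reparametrizes it as $G'_t(x)=G_{t\varphi(x)}(x)$ with a cutoff $\varphi$ vanishing outside a neighborhood $V_\Gamma$ of $\Gamma\cap K$ in $\Gamma$; since $G(I\times(\Gamma\cap K))\subset\Gamma\subset O$, compactness and continuity give $G(I\times V_\Gamma)\subset O$, so the deformation of points of $\Gamma$ stays in $O$ \emph{at all times} by construction rather than by a Lipschitz estimate. This use of the original homotopy is not optional; (\ref{extension_assumption}) is then needed only where the paper (and you) use it, namely to make the final glued map Lipschitz across $E$ and $\Gamma$.
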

    \begin{proof}
        Let us justify that the condition is necessary. Assume that there exists a global sliding deformation $g$ in $U$ which coincides with $f$ on $E$. For all $x \in E$, for all $y \in \Gamma$, we have $g(y) \in \Gamma$ so
        \begin{align}
            \mathrm{d}(f(x), \Gamma)    &   \leq \abs{f(x) - g(y)}\\
                                        &   \leq \abs{g(x) - g(y)}\\
                                        &   \leq \norm{g} \abs{x - y},
        \end{align}
        where $\norm{g}$ is the Lipschitz constant of $g$. Since $y \in \Gamma$ is arbitrary, it follows that $\mathrm{d}(f(x), \Gamma) \leq \norm{g} \mathrm{d}(x,\Gamma)$. From now on, we assume (\ref{extension_assumption}) and we build an extension of $f$.

        This paragraph is devoted to introducing a few objects and notation. Let $F$ be a sliding homotopy associated to $f$. Our extension of $F$ risks overstepping $U$ so we are going to work in a smaller open set $U_0$ which is relatively compact in $U$. Let $K \subset E \cap U$ be a compact set such that for all $t$, $F_t = \mathrm{id}$ in $E \setminus K$. As $F(I \times K)$ is a compact subset of $U$, there exists an open set $U_0 \subset \subset U$ such that $F(I \times K) \subset U_0$. In particular $K \subset U_0$ because $F_0 = \mathrm{id}$. Let $W$ be an open set such that $K \subset W \subset \subset U_0$. The point of such set $W$ is that there exists a constant $M > 0$ such that for all $x \in E \cap K$,
        \begin{equation}\label{extension_assumption2}
            \abs{f(x) - x} \leq M \mathrm{d}(x, X \setminus W).
        \end{equation}
        Of course, the inequality still holds for all $x \in E$ since $f = \mathrm{id}$ outside $K$. This inequality will allow to extend $f$ in a Lipschitz way by $f = \mathrm{id}$ in $X \setminus W$. Finally, we want a Lipschitz retraction onto $\Gamma$. We apply Lemma \ref{retract_lemma} to obtain a Lipschitz map $p\colon X \to \R^n$ and an open subset $O \subset X$ containing $\Gamma$ such that
        \begin{subequations}
            \begin{align}
                &   \abs{p - \mathrm{id}} \leq \varepsilon\\
                &   p(O) \subset \Gamma\\
                &   p = \mathrm{id} \ \text{in} \ \Gamma,
            \end{align}
        \end{subequations}
        where $\varepsilon$ is a small positive constant that we will specify later.

        \emph{Step 1.} The first part of the proof consists in building a continuous function $G\colon I \times X \to \R^n$ which is an extension of $F$ and such that
        \begin{subequations}\label{G}
            \begin{align}
                &   G_0 = \mathrm{id}\\
                &   G_1 \ \text{is Lipschitz}\\
                &   \forall t \in [0,1],\ G_t(\Gamma) \subset O\\
                &   \forall t \in [0,1],\ G_t(U_0) \subset U_0\\
                &   \forall t \in [0,1],\ G_t = \mathrm{id}\ \text{in} \ X \setminus W.
            \end{align}
        \end{subequations}
        The partially defined function
        \begin{equation}
            \begin{cases}
                F_t         &   \text{in} \ I \times E\\
                \mathrm{id} &   \text{in} \ (0 \times X) \cup (I \times (X \setminus W))
            \end{cases}
        \end{equation}
        is continuous because it is obtained by pasting continuous functions in closed domains. We apply the Tietze Theorem to obtain a continuous extension $G\colon I \times X \to \R^n$. In order to obtain the conditions (\ref{G}), we will re-parametrize $G_t$. By compactness, the inclusion $I \times K \subset G^{-1}(U_0)$ implies the existence of an open set $V \subset X$ such that $K \subset V$ and
        \begin{equation}
            I \times V \subset G^{-1}(U_0).
        \end{equation}
        We apply the same argument in $I \times \Gamma$ where the inclusion $I \times (\Gamma \cap K) \subset G^{-1}(\Gamma) \subset G^{-1}(O)$ implies the existence of a relative open set $V_\Gamma \subset \Gamma$ such that $\Gamma \cap K \subset V_\Gamma$ and
        \begin{equation}
            I \times V_\Gamma \subset G^{-1}(O).
        \end{equation}
        Let $\varphi\colon X \to [0,1]$ be a continuous function such that $\varphi = 1$ in $K$ and $\varphi = 0$ in $X \setminus V$ and $\Gamma \setminus V_\Gamma$. We define
        \begin{equation}
            G'_t(x) = G_{t \varphi(x)}(x).
        \end{equation}
        Hence $G'$ is a continuous function which satisfies
        \begin{subequations}\label{G2}
            \begin{align}
                &   G'_0 = \mathrm{id}\\
                &   \forall t \in [0,1],\ G'_t(V_\Gamma) \subset O\\
                &   \forall t \in [0,1],\ G'_t(V) \subset U_0\\
                &   \forall t \in [0,1],\ G'_t = \mathrm{id}\ \text{in}\ (X \setminus W) \cup (X \setminus V) \cup (\Gamma \setminus V_\Gamma).
            \end{align}
        \end{subequations}
        In addition, $G'$ coincides with $F$ on $I \times E$. Combining $G'_t(V) \subset U_0$ and $G'_t = \mathrm{id}$ in $X \setminus V$, one deduces that $G'_t(U_0) \subset U_0$. Similarly, $G'_t(\Gamma) \subset O$. Nex, we replace $G'_1$ with a Lipschitz approximation. As $W \subset \subset U_0$, $G(I \times \overline{W})$ is a compact subset of $U_0$ and there exists $\delta > 0$ such that for all $t \in I$, for all $x \in \overline{W}$,
        \begin{equation}\label{ineq_W1}
            \overline{B}(G_t(x), \delta) \subset U_0.
        \end{equation}
        The set $G(I \times (\Gamma \cap \overline{W}))$ is also a compact subset of $O$ so we can assume that for all $t \in I$, for all $x \in \Gamma \cap \overline{W}$,
        \begin{equation}\label{ineq_W2}
            \overline{B}(G_t(x), \delta) \subset O.
        \end{equation}
        We are going to replace $G'_1$ with a Lipschitz function $g\colon X \to \R^n$ such that $g = G'_1$ in $E \cup (X \setminus W)$ and $\abs{g - G'_1} \leq \delta$ in $X$. We start by checking that $G'_1$ is Lipschitz in $E \cup (X \setminus W)$. Indeed $G_1 = f$ in $E$, $G_1 = \mathrm{id}$ in $X \setminus W$ and for $x \in E$ and $y \in X \setminus W$, (\ref{extension_assumption2}) yields
        \begin{align}
            \abs{G_1(x) - G_1(y)}   &   = \abs{f(x) - y}\\
                                    &   \leq \abs{f(x) - x} + \abs{x - y}\\
                                    &   \leq M \mathrm{d}(x, X \setminus W) + \abs{x - y}\\
                                    &   \leq (M + 1) \abs{x - y}.
        \end{align}
        We can apply Lemma \ref{lipschitz_approximation_extension} to $G'_1 - \mathrm{id}$ as it is continuous with compact support. Hence we obtain a Lipschitz function $v\colon X \to \R^n$ such that $v = G'_1 - \mathrm{id}$ in $E \cup (X \setminus W)$ and $\abs{G'_1 - \mathrm{id} - v} < \delta$. Then we define $g = v + \mathrm{id}$ and we replace $G'$ with
        \begin{equation}
            G''_t = G'_t + t(g - G'_1).
        \end{equation}
        Combining (\ref{ineq_W1}), (\ref{ineq_W2}) and the facts that $\abs{g - G'_1} < \delta$ and $g = \mathrm{id}$ in $X \setminus W$, one can see that $G''_t(\Gamma) \subset O$ and $G''_t(U_0) \subset U_0$. We conclude that $G''$ solves step $1$. It will be denoted $G$ in the next step.

        \emph{Step 2.}
        We would like to retract $G_t(\Gamma)$ onto $\Gamma$ so we define
        \begin{equation}
            H_t =
            \begin{cases}
                \mathrm{id} &   \text{in} \ 0 \times X\\
                p \circ G_t &   \text{in} \ I \times \Gamma\\
                G_t         &   \text{in} \ I \times (E \cup (X \setminus W)).
            \end{cases}
        \end{equation}
        This function is continuous as a pasting of continuous functions in closed domains. As $G_t(\Gamma) \subset O$, we have $H_t(\Gamma) \subset \Gamma$. It satisfies the inequality $\abs{H_t - G_t} \leq \varepsilon$ because $\abs{p - \mathrm{id}} \leq \varepsilon$. Let us check that $H_1$ is Lipschitz on its domain. The Lipschitz constants of $p$, $p - \mathrm{id}$ and $g$ are denoted by $\norm{p}$, $\norm{p - \mathrm{id}}$ and $\norm{g}$ respectively. Note that for all $x \in \R^n$, for all $y \in \Gamma$,
        \begin{align}
            \abs{p(x) - x}  &   = \abs{(p - \mathrm{id})(x) - (p - \mathrm{id})(y)}\\
                            &   \leq \norm{p - \mathrm{id}} \abs{x - y}
        \end{align}
        whence for all $x \in X$,
        \begin{equation}
            \abs{x - p(x)} \leq \norm{p - \mathrm{id}} \mathrm{d}(x, \Gamma).
        \end{equation}
        Using the Lemma assumption (\ref{extension_assumption}), we deduce that for $x \in E$ and $y \in \Gamma$, we have
        \begin{align}
            \abs{H_1(x) - H_1(y)}   &   = \abs{f(x) - p g(y)}\\
                                    &   \leq \abs{f(x) - p f(x)} + \abs{p g(x) - p g(y)}\\
                                    &   \leq \norm{p - \mathrm{id}} \mathrm{d}(f(x), \Gamma) + \norm{p}\norm{g}\abs{x - y}\\
                                    &   \leq \norm{p - \mathrm{id}} C \mathrm{d}(x, \Gamma) + \norm{p}\norm{g}\abs{x - y}\\
                                    &   \leq \norm{p - \mathrm{id}} C \abs{x - y} + \norm{p} \norm{g} \abs{x - y}.
        \end{align}
        We also have for $x \in X \setminus W$ and $y \in \Gamma$,
        \begin{align}
            \abs{H_1(x) - H_1(y)}   &   = \abs{x - p g(y)}\\
                                    &   \leq \abs{x - p(x)} + \abs{p g(x) - p g(y)}\\
                                    &   \leq \norm{p - \mathrm{id}}\mathrm{d}(x, \Gamma) + \norm{p} \norm{g} \abs{x - y}\\
                                    &   \leq \norm{p - \mathrm{id}}\abs{x - y} + \norm{p} \norm{g} \abs{x - y}.
        \end{align}
        We apply the extension Lemma \ref{lipschitz_extension} to extend $H_1$ as a Lipschitz function $H_1\colon X \to \R^n$ such that $\abs{H_1 - g} \leq \varepsilon$. Then, we use the Tietze Extension Lemma (Lemma \ref{continuous_extension}, Appendix \ref{lipschitz_appendix}) to extend $H$ as a continuous function $H\colon I \times X \to \R^n$ such that $\abs{H_t - G_t} \leq \varepsilon$. We assume that $\varepsilon$ is small enough so that for all $x \in U_0$, $\overline{B}(x, \varepsilon) \subset U$. Thus, the conditions $G_t(U_0) \subset U_0$ and $\abs{H_t - G_t} \leq \varepsilon$ ensure that $H_t(U_0) \subset U$. Moreover $H_t = \mathrm{id}$ in $X \setminus U_0$, so we have in fact $H_t(U) \subset U$. We conclude that $H$ solves the lemma.
    \end{proof}

    Combining the previous lemmas, we prove that every sliding deformation can be replaced by an equivalent global sliding deformation.
    \begin{lem}[Sliding Deformation Alternative]\label{sliding_alternative}
        Let $\Gamma$ be a Lipschitz neighborhood retract in $X$. Let $E$ be a closed subset of $X$ which is $\HH^d$ locally finite in $X$. Let $f$ be a sliding deformation of $E$ in an open subset $U \subset X$. Then for all $\varepsilon > 0$, there exists a global sliding deformation $g$ in $U$ such that $\abs{g - f} \leq \varepsilon$, $E \cap W_g \subset \subset W_f$ and
        \begin{equation}
            \HH^d(g(W_f) \setminus f(W_f)) \leq \varepsilon
        \end{equation}
        where
        \begin{align}
            W_g &   = \Set{x \in X | g(x) \ne x},\\
            W_f &   = \Set{x \in E | f(x) \ne x}.
        \end{align}
    \end{lem}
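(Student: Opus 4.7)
The strategy is to imitate the construction in the proof of Lemma~\ref{sliding_extension}, inserting a Lipschitz-approximation step that lets us dispense with the extension hypothesis $\mathrm{d}(f(x),\Gamma) \leq C\,\mathrm{d}(x,\Gamma)$; the price is that $g$ only has to be $\varepsilon$-close to $f$ on $E$ rather than equal.

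First I would take the sliding homotopy $F$ of $f$, extend it to a continuous map $G\colon I \times X \to X$ by Tietze (Lemma~\ref{continuous_extension}) with $G_0 = \mathrm{id}$, and localize via a continuous cutoff $\chi\colon X \to [0,1]$ supported in a small compact neighborhood of $W_f$ inside $U$, equal to $1$ on $W_f$. Setting $G'_t(x) = G_{t\chi(x)}(x)$ yields a continuous map with $G'_t = \mathrm{id}$ outside a compact, $G'_t(U) \subset U$, $G'_t(\Gamma) \subset O$ for a preassigned neighborhood $O$ of $\Gamma$, and $G'_1|_E = f$. This parallels step~1 in the proof of Lemma~\ref{sliding_extension} but does \emph{not} use the extension condition.

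Next I would apply Corollary~\ref{lipschitz_approximation_extension} to $G'_1$ to obtain a Lipschitz map $h\colon X \to X$ with $h = f$ on $E$ and $|h - G'_1| \leq \varepsilon/3$, and Lemma~\ref{retract_lemma} to obtain a Lipschitz retract $p\colon X \to X$ onto $\Gamma$ with $|p - \mathrm{id}| \leq \varepsilon/3$, $p = \mathrm{id}$ on $\Gamma$, and $p(O'') \subset \Gamma$ for a sub-neighborhood $O'' \subset O$ of $\Gamma$. Choosing $O''$ small enough that $h(\Gamma) \subset O''$ (possible since $G'_1(\Gamma) \subset O$ and $h$ is close to $G'_1$), the composition $g := p \circ h$ is globally Lipschitz with $g(\Gamma) \subset \Gamma$. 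Since $f(E \cap \Gamma) \subset \Gamma$ and $p = \mathrm{id}$ on $\Gamma$, we have $g = f$ on $E \cap \Gamma$; on the rest of $E$, $|g - f| \leq |p \circ h - h| + |h - f| \leq \varepsilon/3$. The associated homotopy is built by applying the same retraction procedure time-by-time to $G'$, exactly as in step~2 of the proof of Lemma~\ref{sliding_extension}, and Tietze-extending.

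The main obstacle is the Hausdorff-measure bound $\HH^d(g(W_f) \setminus f(W_f)) \leq \varepsilon$. Since $g|_E = p \circ f$, one has $g(W_f) = p(f(W_f))$, and the new points $p(f(W_f)) \setminus f(W_f)$ come only from $y \in f(W_f) \cap O'' \setminus \Gamma$ (elsewhere $p(y) = y$). Because $\HH^d \mres f(W_f)$ is a finite Radon measure (as $f$ is Lipschitz and $W_f$ is relatively compact in $E \cap U$ with $E$ locally $\HH^d$-finite), one has $\HH^d(f(W_f) \cap O'' \setminus \Gamma) \to 0$ as $O''$ shrinks onto $\Gamma$. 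Choosing $O''$ narrow enough relative to the Lipschitz constant of $p$ (which depends only on $n$ and $\Gamma$) yields $\HH^d(p(f(W_f) \cap O'' \setminus \Gamma)) \leq \varepsilon$. The compact containment $E \cap W_g \subset\subset W_f$ is then secured by the support of $\chi$ and a final perturbation argument (Lemma~\ref{sliding_perturbation} applied to kill $g-\mathrm{id}$ just outside a compact neighborhood of $W_f$).
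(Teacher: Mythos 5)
Your overall architecture (extend the homotopy by Tietze, localize by a cutoff, Lipschitz-approximate, post-compose with a localized retraction $p$ onto $\Gamma$, and shrink the retraction neighborhood so that the part of $f(W_f)$ actually moved by $p$ has small $\HH^d$ measure) is close in spirit to what the paper does, and the measure estimate is essentially the paper's. But there is a genuine gap at the conclusion $E \cap W_g \subset\subset W_f$, and it cannot be repaired by the device you invoke. First, with $g = p\circ h$ one has $g = p\circ f$ on $E$, so for a point $x \in E \setminus W_f$ lying in $O''\setminus\Gamma$ you get $g(x) = p(x) \ne x$ in general: $W_g \cap E$ picks up points of $E\cap O''$ that are not in $W_f$ at all. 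Second, and independently of $p$: even if you had $g = f$ exactly on $E$, you would get $E\cap W_g = W_f$, and $W_f$ is a relatively \emph{open} subset of $E$ which is not compactly contained in itself. So the lemma's conclusion forces you to modify $f$ near the relative boundary of $W_f$; no choice of $p$ or of approximation accuracy avoids this. The paper handles both problems at once by truncating $f$ to $f'$ (equal to $f$ on $W_{2\varepsilon}$, equal to $\mathrm{id}$ outside $W_\varepsilon$, with the Lipschitz estimate $\abs{f-\mathrm{id}}\leq L\varepsilon$ on $E\setminus W_\varepsilon$ making the pasting Lipschitz) and by using the formula $g = p\circ f' + \mathrm{id} - p$ rather than $p\circ f'$: wherever $f' = \mathrm{id}$ this collapses to $g = \mathrm{id}$ identically, even inside the retraction neighborhood. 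Your appeal to Lemma \ref{sliding_perturbation} to ``kill $g - \mathrm{id}$'' misreads that lemma: it only certifies that a map already satisfying $g = \mathrm{id}$ in $E\setminus W$ and close to $f$ is still a sliding deformation; it provides no mechanism for forcing $g = \mathrm{id}$ anywhere.

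Two smaller points. (a) For $g$ to be a \emph{global} sliding deformation you need $g = \mathrm{id}$ outside a compact subset of $U$, so the Lipschitz approximation must be performed relative to $A = E\cup(X\setminus W)$ (not just $A=E$), and one must check that $G'_1$ is Lipschitz on that union; this is exactly the estimate $\abs{f(x)-x}\leq M\,\mathrm{d}(x,X\setminus W)$ used in the proof of Lemma \ref{sliding_extension}, and it is available here, but you do not invoke it. (b) Your cutoff $\chi$ must equal $1$ on a compact containing the whole set $\set{x\in E \mid \exists t,\ F_t(x)\ne x}$, not merely on $W_f$; otherwise $G'_1$ need not restrict to $f$ on $E$. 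Once the truncation of $f$ and the $\mathrm{id}-p$ correction are inserted, the measure estimate also needs the extra error terms $\HH^d(W_f\setminus W_{2\varepsilon})$ and $\HH^d(W_f\cap W_p)$ that the paper tracks, since $g$ then differs from $p\circ f$ on part of $W_f$.
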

    \begin{proof}
        Given Lemmas \ref{sliding_extension} and \ref{sliding_perturbation}, it suffices to build a Lipschitz function $g\colon E \to \R^n$ which satisfies the following conditions: $\abs{g - f} \leq \varepsilon$, $g(E \cap \Gamma) \subset \Gamma$, $W_g \subset \subset W_f$, there exists $C \geq 1$ such that for all $x \in E$,
        \begin{equation}
            \mathrm{d}(g(x), \Gamma) \leq C \mathrm{d}(x, \Gamma)
        \end{equation}
        and finally,
        \begin{equation}
            \HH^d(g(W_f) \setminus f(W_f)) \leq \varepsilon_0.
        \end{equation}

        We fix $\varepsilon_0 > 0$. The construction will brings into play an intermediate variable $\varepsilon > 0$. First, we want to build a Lipschitz function $p\colon X \to \R^n$ (whose Lipschitz constant depends only on $\Gamma$) such that $\abs{p - \mathrm{id}} \leq \varepsilon_0$, $p = \mathrm{id}$ on $\Gamma$ and such that there exists an open set $O$ with $\Gamma \subset O \subset X$ and $p(O) \subset \Gamma$. Moreover, we want that
        \begin{equation}\label{g-estim1}
            \HH^d(W_f \cap W_p) \leq \varepsilon_0
        \end{equation}
        and
        \begin{equation}\label{g-estim2}
            \HH^d(W_f \cap f^{-1}(W_p)) \leq \varepsilon_0,
        \end{equation}
        where $W_p = \set{x \in X | p(x) \ne x}$. Let us proceed to build $p$. Since $\overline{W_f}$ is a compact subset of $E$, we have $\HH^d(W_f) < \infty$ so we can find an open set $V$ such that $\Gamma \subset V \subset X$ and
        \begin{equation}
            \HH^d(W_f \cap V \setminus \Gamma) \leq \varepsilon_0
        \end{equation}
        and
        \begin{equation}
            \HH^d(W_f \cap f^{-1}(V \setminus \Gamma)) \leq \varepsilon_0.
        \end{equation}
        Then we apply Lemma \ref{retract_lemma} in the open set $V$: there exists a Lipschitz function $p\colon X \to \R^n$ (whose Lipschitz constant depends only on $\Gamma$) and an open set $O$ such that $\Gamma \cap V \subset O \subset V$ and
        \begin{subequations}
            \begin{align}
                &   \abs{p - \mathrm{id}} \leq \varepsilon_0\\
                &   p(O) \subset \Gamma\\
                &   p = \mathrm{id} \ \text{in} \ \Gamma \cup (X \setminus V).
            \end{align}
        \end{subequations}
        As $W_p \subset V \setminus \Gamma$, we deduce
        \begin{equation}
            \HH^d(W_f \cap W_p) \leq \varepsilon_0
        \end{equation}
        and
        \begin{equation}
            \HH^d(W_f \cap f^{-1}(W_p)) \leq \varepsilon_0.
        \end{equation}

        Next, we truncate $f$ in view of obtaining the property $W_g \subset \subset W_f$. We introduce the set
        \begin{equation}
            W_\varepsilon = \set{x \in E | \mathrm{d}(x, E \setminus W_f) \geq \varepsilon}
        \end{equation}
        and since $W_f$ is a relative an open subset of $E$, we can assume $\varepsilon$ small enough so that
        \begin{equation}\label{g-estim0}
            \HH^d(W_f \setminus W_{2\varepsilon}) \leq \varepsilon_0.
        \end{equation}
        Let $f'$ be partially defined by
        \begin{equation}
            f' =
            \begin{cases}
                f           &   \text{in} \ W_{2\varepsilon}\\
                \mathrm{id} &   \text{in} \ E \setminus W_\varepsilon.
            \end{cases}
        \end{equation}
        We are going to estimate $\abs{f - f'}$ and the Lipschitz constant of $f - f'$. The Lipschitz constant of $f - \mathrm{id}$ plays a special role in these estimates and is denoted by $L$. We start by proving that for $x \in E \setminus W_\varepsilon$,
        \begin{equation}
            \abs{f(x) - x} \leq L \varepsilon
        \end{equation}
        Indeed, for $x \in E \setminus W_\varepsilon$, there exists $y \in E \setminus W_f$ such that $\abs{x - y} \leq \varepsilon$ whence
        \begin{align}
            \abs{f(x) - x}  &   = \abs{(f - \mathrm{id})(x) - (f - \mathrm{id})(y)}\\
                            &   \leq L \abs{x - y}\\
                            &   \leq L \varepsilon.
        \end{align}
        Now, it is straightforward that $\abs{f' - f} \leq L \varepsilon$ on the domain of $f'$. We are going to see that $f' - f$ is $L$-Lipschitz on the domain of $f'$. It is clear that $f - f'$ is $L$-Lipschitz on $W_{2\varepsilon}$ and $E \setminus W_\varepsilon$, respectively. For $x \in W_{2\varepsilon}$ and $y \in X \setminus W_\varepsilon$, we have $\abs{x - y} \geq \varepsilon$ so
        \begin{align}
            \abs{(f' - f)(x) - (f' - f)(y)} &   = \abs{f(y) - y}\\
                                            &   \leq L \varepsilon\\
                                            &   \leq L \abs{x - y}.
        \end{align}
        We apply Lemma \ref{lipschitz_extension} to $f' - f$ so as to extend $f'$ as a Lipschitz map $f'\colon E \to \R^n$ such that $\abs{f' - f} \leq L \varepsilon$ with a Lipschitz depending only on $n$ and $f$. Before moving to the next paragraph, we require $f'(E) \subset X$. As $f(E) \subset X$ and $\overline{W_f}$ is a compact subset of $E$, we can assume $\varepsilon$ small enough so that for all $x \in \overline{W_f}$,
        \begin{equation}
            \mathrm{d}(f(x), \R^n \setminus X) > L \varepsilon.
        \end{equation}
        This implies $f'(E) \subset X$ as $f' = f$ in $E \setminus W_f$ and $\abs{f' - f} \leq L \varepsilon$.

        We finally define the map $g$ on $E$ by
        \begin{equation}
            g = p \circ f' + \mathrm{id} - p.
        \end{equation}
        We have $\abs{p - \mathrm{id}} \leq \varepsilon_0$ and $\abs{f' - f} \leq L\varepsilon$ so $\abs{g - f} \leq 2\varepsilon_0 + L \varepsilon$. We assume $\varepsilon$ small enough so that $\abs{g - f} \leq 3\varepsilon_0$. Observe that $g = \mathrm{id}$ in $E \setminus W_\varepsilon$ so $W_g \subset \subset W_f$. Next, we prove that there exists a constant $C \geq 1$ such that for all $x \in E$,
        \begin{equation}\label{goal_g}
            \mathrm{d}(g(x), \Gamma) \leq C \mathrm{d}(x, \Gamma).
        \end{equation}
        This inequality is clearly true for $x \in E \setminus W_f$ so we focus on $W_f$. As $\Gamma$ is relatively closed in $X$ and $\overline{W_f}$ is compact subset of $X$, $\Gamma \cap \overline{W_f}$ is compact. Its image $f(\Gamma \cap \overline{W_f})$ is a compact subset of $\Gamma \subset O$ so there exists $\delta > 0$ such that for all $y \in f(\Gamma \cap \overline{W_f})$,
        \begin{equation}
            \mathrm{d}(y, X \setminus O) \geq \delta.
        \end{equation}
        We introduce
        \begin{equation}
            O_\delta = \set{y \in X | \mathrm{d}(y, X \setminus O) \geq \delta}
        \end{equation}
        so the set $f^{-1}(O_\delta)$ is a relative open set of $E$ containing $\Gamma \cap \overline{W_f}$. By compactness, we can assume $\varepsilon$ small enough so that
        \begin{equation}
            \set{x \in \overline{W_f} | \mathrm{d}(x,\Gamma) \leq \varepsilon} \subset f^{-1}(O_\delta).
        \end{equation}
        Then for $x \in \overline{W_f}$ such that $\mathrm{d}(x,\Gamma) \leq \varepsilon$, we have $f(x) \in O_\delta$ and thus $f'(x) \in O$ assuming $\varepsilon$ small enough so that $\abs{f' - f} \leq \delta$. We are also going to need the fact that for $x \in E$,
        \begin{equation}
            \abs{p(x) - x} \leq \norm{p - \mathrm{id}}\mathrm{d}(x, \Gamma),
        \end{equation}
        where $\norm{p - \mathrm{id}}$ is the Lipschitz constant of $p - \mathrm{id}$. Indeed for all $x \in E$ and all $y \in \Gamma$,
        \begin{align}
            \abs{p(x) - x}  &   = \abs{(p - \mathrm{id})(x) - (p - \mathrm{id})(y)}\\
                            &   \leq \norm{p - \mathrm{id}} \abs{x - y}
        \end{align}
        and since $y$ is arbitrary in $\Gamma$, $\abs{p(x) - x} \leq \norm{p - \mathrm{id}}\mathrm{d}(x,\Gamma)$. We are ready to prove (\ref{goal_g}). For $x \in W_f$, we have either $\mathrm{d}(x, \Gamma) \leq \varepsilon$, either $\mathrm{d}(x,\Gamma) \geq \varepsilon$. In the first case, $f'(x) \in O$ so $p \circ f(x) \in \Gamma$ and then
        \begin{align}
            \mathrm{d}(g(x),\Gamma) &   \leq \abs{g(x) - p \circ f(x)}\\
                                    &   \leq \abs{p(x) - x}\\
                                    &   \leq \norm{p - \mathrm{id}}\mathrm{d}(x,\Gamma).
        \end{align}
        In the second case,
        \begin{align}
            \mathrm{d}(g(x),\Gamma) &   \leq \sup\set{\mathrm{d}(g(u), \Gamma) | u \in \overline{W_f}}\\
                                    &   \leq \sup\set{\mathrm{d}(g(u), \Gamma) | u \in \overline{W_f}} \varepsilon^{-1} \mathrm{d}(x,\Gamma).
        \end{align}
        In both cases, we have $\mathrm{d}(g(x),\Gamma) \leq C \mathrm{d}(x,\Gamma)$, where $C \geq 1$ is a constant that does not depends on $x$. To finish the proof, we show that
        \begin{equation}
            \HH^d(g(W_f) \setminus f(W_f)) \leq 3\norm{g}\varepsilon_0
        \end{equation}
        where $\norm{g}$ is the Lipschitz constant of $g$ (it depends only on $n$, $f$ and $\Gamma$). Observe that $g = f$ on $W_{2\varepsilon} \setminus (W_p \cup f^{-1}(W_p))$. Moreover by (\ref{g-estim0}),
        \begin{equation}
            \HH^d(W_f \setminus W_{2\varepsilon)} \leq \varepsilon_0
        \end{equation}
        and by (\ref{g-estim1}), (\ref{g-estim2}),
        \begin{equation}
            \HH^d(W_f \cap (W_p \cup f^{-1}(W_p))) \leq 2\varepsilon_0.
        \end{equation}
        The result follows.
    \end{proof}

    \section{Grassmannian Space}\label{appendix_grass}
    Let $E$ be an Euclidean vector space of dimension $n$ and $0 \leq d \leq n$ be an integer. The Grassmannian $G(d,E)$ is the set of all $d$-linear planes of $E$. Given a linear map $u\colon E \to E$, the symbol $\norm{u}$ denotes the operator norm
    \begin{equation}
        \norm{u} = \sup \set{\abs{u(x)} | x \in E,\ \abs{x} \leq 1}.
    \end{equation}
    Other authors use the Hilbert--Schmidt norm $\norm{u}_{HS} = \sqrt{\mathrm{trace}(u^* u)}$. Each linear plane $V \in G(d,E)$ is uniquely identified by the orthogonal projection $p_V$ onto $V$. This correspondance induces a distance on $G(d,E)$:
    \begin{equation}
        \mathrm{d}(V,W) = \norm{p_V - p_W}.
    \end{equation}
    The action of the orthogonal group $O(E)$ on $G(d,E)$, $(g,V) \mapsto gV$, is distance-preserving because $p_{gV} = g \circ p_V \circ g^{-1}$. The application $G(d,E) \to G(n-d,E)$, $V \to V^\perp$ is an isometry. In the case $E = \R^n$, the Grassmannian $G(d,E)$ is simply denoted by $G(d,n)$. 

    \subsection{Metric Structure}
    Given a subspace $V$ of $\R^n$ and a linear map $u \colon \R^n \to \R^n$, we define
    \begin{equation}
        \norm{u}_V = \sup \set{\abs{u(x)} | x \in V,\ \abs{x} \leq 1}.
    \end{equation}
    We are going to see that it suffices to compute the norm $\norm{p_V - p_W}$ on $V$, $V^\perp$ or $V$, $W$. This is helpful because the expression $p_V - p_W$ can be simplified on these subspaces.
    \begin{lem}\label{grassEquivalence}
        For $V, W \in G(d,n)$,
        \begin{align}
            \mathrm{d}(V,W) &   = \max\set{\norm{p_V - p_W}_V, \norm{p_V - p_W}_{V^\perp}}\\
                            &   = \max\set{\norm{p_V - p_W}_V, \norm{p_V - p_W}_W}.
        \end{align}
    \end{lem}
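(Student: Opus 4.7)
The plan is to prove the two equalities separately, both by exploiting orthogonal decompositions of $\R^n$.

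For the first equality, I would use the splitting $\R^n = V \oplus V^\perp$. Any unit vector $x$ can be written uniquely as $x = \alpha v + \beta u$ with $v \in V$, $u \in V^\perp$ unit vectors and $\alpha^2 + \beta^2 = 1$ (up to the degenerate cases $\alpha = 0$ or $\beta = 0$). The key observation is that $(p_V - p_W)v = v - p_Wv = p_{W^\perp}v$ lies in $W^\perp$, whereas $(p_V - p_W)u = -p_W u$ lies in $W$. These two vectors are therefore orthogonal, so
\[
|(p_V - p_W)x|^2 = \alpha^2 |(p_V - p_W)v|^2 + \beta^2 |(p_V - p_W)u|^2.
\]
Since this is a convex combination of the two norms squared, its supremum over all admissible $\alpha,\beta,v,u$ equals the maximum of the two, which is precisely $\max\{\|p_V - p_W\|_V^2, \|p_V - p_W\|_{V^\perp}^2\}$. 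This yields the first identity.

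For the second equality, the clean route is to show the intermediate identity $\|p_V - p_W\|_{V^\perp} = \|p_V - p_W\|_W$; once this is established, applying the first equality to both sides of the orthogonal decomposition (and noting $\|p_V - p_W\|_W \le \|p_V - p_W\|$ trivially) gives the claim. For $u \in V^\perp$ we computed $(p_V - p_W)u = -p_W u$, so $\|p_V - p_W\|_{V^\perp} = \|p_W|_{V^\perp}\|$. Likewise, for $w \in W$, $(p_V - p_W)w = p_{V^\perp} w$, so $\|p_V - p_W\|_W = \|p_{V^\perp}|_W\|$. I would then check by a short direct computation that the linear maps $p_W|_{V^\perp}\colon V^\perp \to W$ and $p_{V^\perp}|_W\colon W \to V^\perp$ are mutual adjoints: for $u \in V^\perp$ and $w \in W$,
\[
\langle p_W u, w\rangle = \langle u, p_W w\rangle = \langle u, w\rangle = \langle u, p_{V^\perp}w\rangle,
\]
using in the first step that $p_W$ is self-adjoint, in the second that $w \in W$, and in the third that $u \in V^\perp$. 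Since an operator and its adjoint have the same operator norm, $\|p_W|_{V^\perp}\| = \|p_{V^\perp}|_W\|$, completing the proof.

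There is no real obstacle here: the only thing one has to be careful about is keeping track of which subspace each vector lives in, so as to recognise the orthogonality of $p_{W^\perp}v$ and $p_W u$ and the adjointness of the two restricted projections. Both steps are elementary once the correct decompositions are in place.
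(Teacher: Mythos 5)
Your proposal is correct and follows essentially the same route as the paper: decompose $x = x_V + x_{V^\perp}$, use the orthogonality of $(p_V-p_W)(x_V)\in W^\perp$ and $(p_V-p_W)(x_{V^\perp})\in W$ to get the Pythagorean identity for the first equality, and the adjointness of the restrictions $(p_V-p_W)\colon V^\perp\to W$ and $(p_V-p_W)\colon W\to V^\perp$ for the second. (Only a cosmetic sign slip: for $w\in W$ one has $(p_V-p_W)w=-p_{V^\perp}w$, which of course does not affect the norm.)
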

    \begin{proof}
        For $x \in \R^n$, we denote $x_V = p_V(x)$ and $x_{V^\perp} = p_{V^\perp}(x)$. Observe that $(p_V - p_W)(x_V) \in W^\perp$ and $(p_V - p_W)(x_{V^\perp}) \in W$ so
        \begin{equation}
            \abs{p_V(x) - p_W(x)}^2 = \abs{p_V(x_V) - p_W(x_V)}^2 + \abs{p_V(x_{V^\perp}) - p_W(x_{V^\perp})}^2.
        \end{equation}
        We deduce that
        \begin{equation}
            \norm{p_V - p_W} \leq \max\set{\norm{p_V - p_W}_V, \norm{p_V - p_W}_{V^\perp}}.
        \end{equation}
        Since the maps $(p_V - p_W)\colon V^\perp \to W$ and $(p_V - p_W)\colon W \to V^\perp$ are adjoints of one another, we have $\norm{p_V - p_W}_{V^\perp} = \norm{p_V - p_W}_W$. Finally, it is clear that
        \begin{equation}
            \max(\norm{p_V - p_W}_V, \norm{p_V - p_W}_W) \leq \norm{p_V - p_W}.
        \end{equation}
    \end{proof}

    The next lemma describes the local structure of $G(d,n)$.
    \begin{lem}
        \begin{enumerate}[label=(\roman*)]
            \item For all $V, W \in G(d,n)$, $\mathrm{d}(V,W) \leq 1$ and there is equality if and only if $V \cap W^\perp \ne 0$ or $V^\perp \cap W \ne 0$.
            \item Let $V, W \in G(d,n)$ be such that $\mathrm{d}(V,W) < 1$, then for all $x \in W$,
                \begin{equation}\label{grassGraph}
                    \abs{x} \leq \frac{1}{\sqrt{1 - \mathrm{d}(V,W)^2}} \abs{p_V(x)}
                \end{equation}
                Thus $p_V$ induces a linear isomorphism $p_* \colon W \to V$. In particular, $W = \Set{x + \varphi(x) | x \in V}$, where $\varphi\colon V \to V^\perp$, $x \to p_*^{-1}(x) - x$.
            \item Let $V$ and $W \in G(d,n)$ for which there exists a linear application $\varphi\colon V \to V^\perp$ such that $W = \Set{x + \varphi(x) | x \in V}$, then
                \begin{equation}\label{grassNorm}
                    \mathrm{d}(V,W) = \frac{\norm{\varphi}}{\sqrt{1 + \norm{\varphi}^2}}.
                \end{equation}
        \end{enumerate}
    \end{lem}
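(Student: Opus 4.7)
The plan is to handle the three items in order, using Lemma \ref{grassEquivalence} throughout to reduce computations to the subspaces $V$, $V^\perp$ and $W$, where the projection $p_V - p_W$ simplifies dramatically.

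For (i), I would observe that if $x \in V$ with $|x|\le 1$, then $p_V(x)=x$ and $(p_V-p_W)(x)=x-p_W(x)=p_{W^\perp}(x)$, so $|(p_V-p_W)(x)|=|p_{W^\perp}(x)|\le |x|\le 1$, with equality if and only if there is a unit vector $x\in V\cap W^\perp$. Symmetrically, for $x\in V^\perp$, $(p_V-p_W)(x)=-p_W(x)$ and equality in $|p_W(x)|\le |x|$ holds iff $x\in V^\perp\cap W$. Combined with Lemma \ref{grassEquivalence}, this yields both the bound $\mathrm{d}(V,W)\le 1$ and the equality case.

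For (ii), assume $\mathrm{d}(V,W)<1$ and fix $x\in W$, so $p_W(x)=x$. Decompose $x=p_V(x)+p_{V^\perp}(x)$, so that $|x|^2=|p_V(x)|^2+|p_{V^\perp}(x)|^2$. The key identity is $p_{V^\perp}(x)=x-p_V(x)=(p_W-p_V)(x)$, and by Lemma \ref{grassEquivalence} its norm is bounded by $\mathrm{d}(V,W)\,|x|$. Substituting gives $(1-\mathrm{d}(V,W)^2)|x|^2\le |p_V(x)|^2$, i.e. (\ref{grassGraph}). Injectivity of $p_V\colon W\to V$ then follows, and by equal dimension it is an isomorphism; defining $\varphi(x)=p_*^{-1}(x)-x$ exhibits $W$ as a graph.

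For (iii), the delicate part, I would compute $\|p_V-p_W\|_V$ and $\|p_V-p_W\|_W$ separately and invoke Lemma \ref{grassEquivalence}. For $x\in W$, writing $x=y+\varphi(y)$ with $y\in V$ gives $p_V(x)=y$ and $(p_V-p_W)(x)=-\varphi(y)$, so $|x|^2=|y|^2+|\varphi(y)|^2$ and
\begin{equation*}
    \|p_V-p_W\|_W^2=\sup_{|y|^2+|\varphi(y)|^2=1}|\varphi(y)|^2=\frac{\|\varphi\|^2}{1+\|\varphi\|^2}.
\end{equation*}
For $x\in V$ with $|x|=1$, since $(p_V-p_W)(x)\in W^\perp$, Pythagoras gives $|x-p_W(x)|^2=1-|p_W(x)|^2$, and a straightforward variational computation shows that $p_W(x)=y+\varphi(y)$ where $y=(I+\varphi^*\varphi)^{-1}(x)$, whence $|p_W(x)|^2=\langle x,(I+\varphi^*\varphi)^{-1}x\rangle$. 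Minimizing the latter on the unit sphere of $V$ using the spectral decomposition of $\varphi^*\varphi$ (whose largest eigenvalue is $\|\varphi\|^2$) yields $\inf|p_W(x)|^2=1/(1+\|\varphi\|^2)$, so $\|p_V-p_W\|_V^2=\|\varphi\|^2/(1+\|\varphi\|^2)$, matching the previous computation and giving (\ref{grassNorm}).

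The only genuinely technical step is the computation in (iii) of $\|p_V-p_W\|_V$, because it requires identifying $p_W(x)$ for $x\in V$; the rest is a bookkeeping exercise once Lemma \ref{grassEquivalence} is available. I would present the variational characterization of $p_W(x)$ as the unique minimizer of $y\mapsto |x-y|^2+|\varphi(y)|^2$ over $y\in V$, which decouples the $V$ and $V^\perp$ components and reduces the problem to inverting $I+\varphi^*\varphi$.
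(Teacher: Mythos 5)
Your proof is correct, but parts (i) and (iii) take a genuinely different route from the paper's.

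For (i) you rely on Lemma \ref{grassEquivalence} to reduce to the subspaces $V$ and $V^\perp$, where $(p_V-p_W)$ simplifies to $p_{W^\perp}$ and $-p_W$ respectively, and equality is traced to a unit vector lying in $V \cap W^\perp$ or $V^\perp \cap W$. The paper instead works with arbitrary $x$ and uses the sphere-geometry observation that $p_V(x)$ and $p_W(x)$ both lie on the sphere $S(x/2, |x|/2)$; the bound $\mathrm{d}(V,W) \le 1$ comes from the triangle inequality, and the equality case from the antipodality of $p_V(x)$ and $p_W(x)$. Your argument is slightly more streamlined since it reuses \ref{grassEquivalence}, while the paper's is self-contained.

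Part (ii) is essentially the same in both.

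For (iii) the difference is more substantial. The paper establishes the formula by a two-sided bound: the upper bound $\mathrm{d}(V,W)\le\|\varphi\|/\sqrt{1+\|\varphi\|^2}$ comes from estimating $|x - p_W(x)|$ by a one-dimensional minimization over the line $t\mapsto t(x+\varphi(x))$ inside $W$ (together with the companion graph representation of $W^\perp$ over $V^\perp$ via $-\varphi^*$), and the lower bound comes from the inequality \eqref{grassGraph} applied to $x+\varphi(x)$. You instead compute $\|p_V-p_W\|_W$ and $\|p_V-p_W\|_V$ exactly and check they coincide. The computation on $W$ via the graph parametrization is clean; the computation on $V$ requires the explicit formula $p_W|_V = (I+\varphi^*\varphi)^{-1} + \varphi(I+\varphi^*\varphi)^{-1}$, which you correctly obtain from the decoupled quadratic $y\mapsto|x-y|^2+|\varphi(y)|^2$, and then $\inf_{|x|=1}\langle x,(I+\varphi^*\varphi)^{-1}x\rangle = (1+\|\varphi\|^2)^{-1}$ via the spectral theorem. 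Your approach buys an exact identification of $p_W$ on $V$ (and hence more information) at the cost of a heavier linear-algebra computation; the paper's approach is lighter but non-constructive, relying on matching inequalities in the two directions. Both are valid.
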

    \begin{proof}
        \textbf{1)} Let $V, W \in G(d,n)$. For $x \in \R^n$, the orthogonal projection $p_V(x)$ satisfies the equation $p_V(x) \cdot (x - p_V(x)) = 0$ because $x - p_V(x) \in V^\perp$. Observe that this is equivalent to $\abs{p_V(x) - \frac{x}{2}} = \frac{\abs{x}}{2}$ (we have a similar property for $p_W(x)$). According to the triangular inequality,
        \begin{align}
            \abs{p_V(x) - p_W(x)}   &   \leq \abs{p_V(x) - \frac{x}{2}} + \abs{p_W(x) - \frac{x}{2}}\\
                                    &   = \frac{\abs{x}}{2} + \frac{\abs{x}}{2}\\
                                    &   = \abs{x}.
        \end{align}
        so $\norm{p_V - p_W} \leq 1$. Assume that $\abs{p_V(x) - p_W(x)} = \abs{x}$ for some $x \in \R^n \setminus 0$. The previous inequalities become equalities so
        \begin{equation}
            \abs{p_V(x) - p_W(x)} = \abs{p_V(x) - \frac{x}{2}} + \abs{p_W(x) - \frac{x}{2}}.
        \end{equation}
        We deduce that $p_V(x)$ and $p_W(x)$ are antipodals on the sphere $S(\frac{x}{2}, \frac{\abs{x}}{2})$, whence
        \begin{equation}
            p_V(x) - \frac{x}{2} = -\left(p_W(x) - \frac{x}{2}\right)
        \end{equation}
        or equivalently
        \begin{equation}
            p_V(x) + p_W(x) = x.
        \end{equation}
        As $x \ne 0$, we have either $p_V(x) \ne 0$ or $p_{V^\perp}(x) \ne 0$. In the first case,
        \begin{equation}
            p_V(x) = p_{W^\perp}(x) \in V \cap W^\perp \setminus \set{0}
        \end{equation}
        and in the second case
        \begin{equation}
            p_{V^\perp}(x) = p_W(x) \in V^\perp \cap W \setminus \set{0}.
        \end{equation}
        The converse is straightforward.

        \textbf{2)} For all $x \in W$,
        \begin{align}
            \abs{x}^2   &   = \abs{p_V(x)}^2 + \abs{x - p_V(x)}^2\\
                        &   = \abs{p_V(x)}^2 + \abs{p_W(x) - p_V(x)}^2\\
                        &   \leq \abs{p_V(x)}^2 + \mathrm{d}(V,W)^2 \abs{x}^2
        \end{align}
        so
        \begin{equation}
            \abs{x} \leq \frac{1}{\sqrt{1 - \mathrm{d}(V,W)^2}} \abs{p_V(x)}.
        \end{equation}

        \textbf{3)} We show first that
        \begin{equation}
            W^\perp = \Set{x - \varphi^*(x) | x \in V^\perp},
        \end{equation}
        where $\varphi^*\colon V^\perp \to V$ is the adjoint of $\varphi$. For $x \in V$ and $y \in V^\perp$, we have
        \begin{align}
            (x + \varphi(x)) \cdot (y - \varphi^*(y))   &   = \varphi(x) \cdot y - x \cdot \varphi^*(y)\\
                                                        &   =0.
        \end{align}
        As $W = \set{x + \varphi(x) | x \in V}$, we have shown that
        \begin{equation}
            \set{x - \varphi^*(x) | x \in V^\perp} \subset W^\perp.
        \end{equation}
        The map $x \mapsto x - \varphi^*(x)$ is injective on $V^\perp$ because $x$ and $\varphi^*(x)$ are orthogonals. We conclude that this inclusion is an equality by a dimension argument.

        Next we prove that $\mathrm{d}(V,W) \leq \frac{\norm{\varphi}}{\sqrt{1 + \norm{\varphi}^2}}$. According to Lemma \ref{grassEquivalence},
        \begin{equation}
            \mathrm{d}(V,W) = \max\set{\norm{\mathrm{id} - p_W}_V, \norm{\mathrm{id} - p_{W^\perp}}_{V^\perp}}.
        \end{equation}
        If we show that
        \begin{equation}\label{grassDualityStep}
            \norm{\mathrm{id} - p_W}_V \leq \frac{\norm{\varphi}}{\sqrt{1 + \norm{\varphi}^2}},
        \end{equation}
        the same proof will yield
        \begin{equation}
            \norm{\mathrm{id} - p_{W^\perp}}_{V^\perp} \leq \frac{\norm{-\varphi^*}}{\sqrt{1 + \norm{-\varphi^*}^2}} = \frac{\norm{\varphi}}{\sqrt{1 + \norm{\varphi}^2}}.
        \end{equation}
        Therefore, we only prove (\ref{grassDualityStep}). Let us fix $x \in V \setminus 0$. As $\abs{x - p_W(x)} = \mathrm{d}(x,W)$, we see that for all $t \in \R$, $\abs{x - p_W(x)} \leq \abs{x - t(x + \varphi(x))}$. The right-hand side attains its minimum for
        \begin{equation}
            t = \frac{x \cdot (x + \varphi(x))}{\abs{x + \varphi(x)}^2} = \frac{\abs{x}^2}{\abs{x + \varphi(x)}^2}
        \end{equation}
        and one can compute
        \begin{align}
            \abs{x - t(x + \varphi(x)}^2    &   = (1 - t)^2 \abs{x}^2 + t^2 \abs{\varphi(x)}^2\\
                                            &   =\frac{\abs{x}^2 \abs{\varphi(x)}^2}{\abs{x}^2 + \abs{\varphi(x)}^2}\\
                                            &   \leq \frac{\norm{\varphi}^2}{1 + \norm{\varphi}^2} \abs{x}^2.
        \end{align}
        We conclude that $\mathrm{d}(V,W) \leq \frac{\norm{\varphi}}{\sqrt{1 + \norm{\varphi}^2}}$. It is left to prove the reverse inequality. For all $x \in V$,
        \begin{align}
            \abs{\varphi(x)}    &   = \abs{(x + \varphi(x)) - x}\\
                                &   = \abs{p_W(x + \varphi(x)) - p_V(x + \varphi(x))}\\
                                &   \leq \mathrm{d}(V,W) \abs{x + \varphi(x)}.
        \end{align}
        By $(\ref{grassGraph})$, we have
        \begin{align}
            \abs{x + \varphi(x)}    &   \leq \frac{1}{\sqrt{1 - \mathrm{d}(V,W)^2}} \abs{p_V(x + \varphi(x))}\\
                                    &   \leq \frac{1}{\sqrt{1 - \mathrm{d}(V,W)^2}} \abs{x}.
        \end{align}
        We deduce that $\norm{\varphi} \leq \frac{\mathrm{d}(V,W)}{\sqrt{1 - \mathrm{d}(V,W)^2}}$ or equivalently,
        \begin{equation}
            \frac{\norm{\varphi}}{\sqrt{1 + \norm{\varphi}^2}} \leq \mathrm{d}(V,W).
        \end{equation}
    \end{proof}

    Finally, we estimate the Lipschitz constant of a linear isomorphism acting on $G(d,n)$.
    \begin{lem}\label{lem_grass_iso}
        Let $u\colon \R^n \to \R^n$ be a linear isomorphism. Then for all $V, W \in G(d,n)$,
        \begin{equation}
            \mathrm{d}(u(V),u(W)) \leq \norm*{u} \norm*{u^{-1}} \mathrm{d}(V,W).
        \end{equation}
        If $\norm{u - \mathrm{id}} < 1$, then for all $V \in G(d,n)$,
        \begin{equation}
            \mathrm{d}(u(V),V) \leq \frac{\norm*{u-\mathrm{id}}}{1 - \norm*{u - \mathrm{id}}}.
        \end{equation}
    \end{lem}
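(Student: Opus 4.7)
The plan is to reduce both inequalities to one-sided estimates using Lemma \ref{grassEquivalence}, which tells us that $\mathrm{d}(V', W') = \max\{\|p_{V'} - p_{W'}\|_{V'}, \|p_{V'} - p_{W'}\|_{W'}\}$. On a plane $V'$, the operator $p_{V'} - p_{W'}$ simplifies to $\mathrm{id} - p_{W'}$, so its norm on $V'$ equals $\sup\{\mathrm{d}(z, W') \mid z \in V',\ |z| \leq 1\}$. Both claims therefore reduce to bounding such one-sided distances.

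For the first inequality, I would fix $z \in u(V)$ with $|z| \leq 1$ and write $z = u(x)$ with $x \in V$. A natural competitor for the nearest point to $z$ in $u(W)$ is $u(p_W(x))$, which yields
\begin{equation*}
    \mathrm{d}(z, u(W)) \leq |u(x) - u(p_W(x))| \leq \|u\|\, |x - p_W(x)| = \|u\|\, |(p_V - p_W)(x)|,
\end{equation*}
where I used $p_V(x) = x$. Since $|x| \leq \|u^{-1}\|\, |z| \leq \|u^{-1}\|$, this gives $\mathrm{d}(z, u(W)) \leq \|u\|\|u^{-1}\| \mathrm{d}(V,W)$. Swapping the roles of $V$ and $W$ handles the $u(W)$-side, and Lemma \ref{grassEquivalence} concludes.

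For the second inequality, I would again bound the two one-sided pieces separately. On the $V$-side, for $y \in V$ with $|y| \leq 1$, the competitor $u(y) \in u(V)$ gives immediately $\mathrm{d}(y, u(V)) \leq |y - u(y)| \leq \|u - \mathrm{id}\|$. On the $u(V)$-side, for $z = u(x) \in u(V)$ with $x \in V$, I would decompose $u(x) = x + (u - \mathrm{id})(x)$ and use $p_V(x) = x$ to obtain $u(x) - p_V(u(x)) = p_{V^\perp}((u - \mathrm{id})(x))$, hence $\mathrm{d}(u(x), V) \leq \|u - \mathrm{id}\|\, |x|$. The hypothesis $\|u - \mathrm{id}\| < 1$ then gives the reverse comparison $|x| \leq (1 - \|u-\mathrm{id}\|)^{-1}|u(x)|$ via the triangle inequality $|u(x)| \geq |x| - \|u-\mathrm{id}\|\,|x|$, which upgrades the estimate to $\mathrm{d}(z, V) \leq \frac{\|u-\mathrm{id}\|}{1 - \|u-\mathrm{id}\|}$ for $|z| \leq 1$. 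Since the $V$-side bound $\|u-\mathrm{id}\|$ is smaller, the maximum is the claimed quantity.

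There is no genuine obstacle here; the whole proof is an unpacking of Lemma \ref{grassEquivalence} combined with two elementary triangle-inequality computations. The only subtle point is remembering to use $p_V(x) = x$ when $x \in V$ to identify $x - p_W(x)$ with $(p_V - p_W)(x)$, so that the operator norm $\mathrm{d}(V,W)$ appears on the right-hand side of the first estimate.
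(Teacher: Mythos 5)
Your proof is correct and follows essentially the same route as the paper: both reduce to the one-sided estimates via Lemma \ref{grassEquivalence} and then compare $\abs{x}$ with $\abs{u(x)}$, your only cosmetic difference being that you bound $\abs{x} \leq (1-\norm{u-\mathrm{id}})^{-1}\abs{u(x)}$ directly by the triangle inequality where the paper invokes the Neumann series bound on $\norm{u^{-1}}$ (these are the same estimate).
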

    \begin{proof}
        According to Lemma \ref{grassEquivalence},
        \begin{equation}
            \mathrm{d}(u(V),u(W)) = \max\set{\norm{\mathrm{id} - p_{u(W)}}_{u(V)}, \norm{\mathrm{id} - p_{u(V)}}_{u(W)}}.
        \end{equation}
        By symmetry, we only need to show that
        \begin{equation}
            \norm{\mathrm{id} - p_{u(W)}}_{u(V)} \leq \norm*{u}\norm*{u^{-1}} \mathrm{d}(V,W)
        \end{equation}
        and this amounts to say that for all $y \in u(V)$,
        \begin{equation}
            \mathrm{d}(y,u(W)) \leq \norm*{u}\norm*{u^{-1}} \mathrm{d}(V,W)\abs{y}.
        \end{equation}
        Indeed, there exists $x \in V$ such that $y = u(x)$ so
        \begin{align}
            \mathrm{d}(y, u(W))                 &\leq \abs{u(x) - u(p_W(x))}\\
                                                &\leq \norm*{u} \abs{x - p_W(x)}\\
                                                &\leq \norm*{u} \mathrm{d}(V,W) \abs{x}\\
                                                &\leq \norm*{u}\norm*{u^{-1}} \mathrm{d}(V,W) \abs{y}.
        \end{align}
        This proves the first assertion. We are going to prove the second assertion similarly. For $x \in V$,
        \begin{align}
            \mathrm{d}(x,u(V))  &\leq \abs{u(x) - x}\\
                                &\leq \norm*{u - \mathrm{id}} \abs{x}
        \end{align}
        and for $y \in u(V)$, there exists $x \in V$ such that $y = u(x)$ so
        \begin{align}
            \mathrm{d}(y,V) &\leq \abs{u(x) - x}\\
                            &\leq \norm*{u - \mathrm{id}} \abs{x}\\
                            &\leq \norm*{u - \mathrm{id}} \norm*{u^{-1}} \abs{y}.
        \end{align}
        Note that if $\norm*{u - \mathrm{id}} < 1$, then
        \begin{align}
            \norm*{u^{-1}}  &\leq \sum_{k=0}^\infty \norm*{u - \mathrm{id}}^k\\
                            &\leq \frac{1}{\norm*{u - \mathrm{id}}^n}.
        \end{align}
    \end{proof}

    \subsection{Haar Measure}
    Let $E$ be an Euclidean vector space of dimension $n$ and $0 \leq d \leq n$ be an integer. The Haar measure $\gamma_{d,E}$ is the unique Radon measure on $G(d,E)$ whose total mass is $1$ and which is invariant under the action of $O(E)$ (see \cite[Chapter 3]{Mattila} for existence and unicity). In the case $E = \R^n$, the Haar measure is simply denoted by $\gamma_{d,n}$.

    It is usually not defined that way but $\gamma_{d,n}$ coincides with the Hausdorff measure of dimension $m=d(n-d)$, up to a multiplicative constant. Indeed, $G(d,n)$ is non-empty compact manifold of dimension $m$ so $\sigma = \HH^m(G(d,n))$ is finite and positive. Then $\sigma^{-1} \HH^m$ is a Radon measure on $G(d,n)$ whose total mass is $1$ and which is invariant under the isometries of $G(d,n)$. In particular, it is invariant under the action of $O(n)$. One can justify similarly that for all $A \subset G(1,n+1)$
    \begin{equation}\label{haar_sphere}
        \gamma_{1,n+1}(A) = \sigma_n^{-1} \HH^n(\set{x \in \mathbf{S}^n | \exists L \in A,\ x \in L}),
    \end{equation}
    where $\sigma_n = \HH^n(\mathbf{S}^n)$.

    \begin{lem}[Disintegration Formula]\label{grassSum}
        Let $p,q,n$ be non-negative integers with $p + q \leq n$. For all Borel set $A \subset G(n,p + q)$,
        \begin{equation}
            \gamma_{p+q,n}(A) = \int_{G(p,n)} \gamma_{q,V^\perp}(\set{W | V + W \in A}) \, \mathrm{d}\gamma_{p,n}(V).
        \end{equation}
        We omitted to write $W \in G(q,V^\perp)$ to ease the notation.
    \end{lem}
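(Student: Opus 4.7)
The plan is to invoke the uniqueness of the Haar probability measure on $G(p+q, n)$. For a Borel set $A \subset G(p+q, n)$, define
\begin{equation*}
\mu(A) := \int_{G(p,n)} \gamma_{q, V^\perp}\bigl(\{W \in G(q, V^\perp) : V + W \in A\}\bigr) \, d\gamma_{p,n}(V).
\end{equation*}
I would show that $\mu$ is an $O(n)$-invariant Borel probability measure on $G(p+q, n)$; by the characterization of the Haar measure recalled earlier, it must then coincide with $\gamma_{p+q, n}$.

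Well-definedness is the subtle point. The inner Haar measure $\gamma_{q, V^\perp}$ lives on a space that varies with $V$, so one has to check that $V \mapsto \gamma_{q, V^\perp}(\{W : V + W \in A\})$ is Borel in $V$. The cleanest route, I would argue, is to work on the incidence submanifold
\begin{equation*}
X = \{(V, W) \in G(p, n) \times G(q, n) : W \subset V^\perp\},
\end{equation*}
which is closed in $G(p, n) \times G(q, n)$ and fibers over $G(p, n)$ via the first projection, with fiber over $V$ equal to $\{V\} \times G(q, V^\perp)$. The group $O(n)$ acts transitively on $X$, so $X$ carries a unique $O(n)$-invariant probability measure $\nu$; its push-forward by the first projection is $\gamma_{p, n}$, and its fiberwise disintegration is $\gamma_{q, V^\perp}$ (by uniqueness of the Haar measure on each fiber). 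Measurability of the integrand then follows from Fubini applied to $(X, \nu)$, and $\mu$ is the push-forward of $\nu$ under the continuous map $\Phi\colon X \to G(p+q, n)$, $(V, W) \mapsto V + W$.

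Given this, countable additivity of $\mu$ follows from monotone convergence, and $\mu(G(p+q, n)) = 1$ since for $A = G(p+q, n)$ the inner integrand is identically $1$. For the $O(n)$-invariance, fix $g \in O(n)$ and a Borel set $A$. After the substitution $V \mapsto gV$ in the outer integral, permitted by the $O(n)$-invariance of $\gamma_{p, n}$, the perp space transforms as $V^\perp \mapsto gV^\perp$, and the isometry $W \mapsto gW$ sends $G(q, V^\perp)$ onto $G(q, gV^\perp)$ and pushes $\gamma_{q, V^\perp}$ to $\gamma_{q, gV^\perp}$ (again by uniqueness of the fiber Haar measure). Combining these identifications gives $\mu(gA) = \mu(A)$, and uniqueness of the Haar probability measure on $G(p+q, n)$ then yields $\mu = \gamma_{p+q, n}$.

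The main obstacle is the measurability/disintegration step, where the fiber-varying domain of $\gamma_{q, V^\perp}$ has to be handled carefully via the incidence bundle $X \to G(p, n)$; once this bundle and its invariant probability measure are in place, the remaining verifications (probability measure, $O(n)$-invariance) are routine symmetry arguments.
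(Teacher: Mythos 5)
Your proposal is correct and follows essentially the same route as the paper: both define the iterated integral as a measure obtained by pushing forward an invariant measure on the incidence space $X = \set{(V,W) \mid V \perp W}$ under $(V,W) \mapsto V+W$, and conclude by uniqueness of the $O(n)$-invariant probability measure on $G(p+q,n)$. Your extra care with the measurability of $V \mapsto \gamma_{q,V^\perp}(\set{W \mid V+W \in A})$ via the transitive $O(n)$-action on $X$ is a welcome refinement of a point the paper treats more briskly.
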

    \begin{proof}
        We introduce some notation so as to interpret the right-hand side as a pushforward measure. Let us define the space
        \begin{equation}
            X = \Set{(V,W) \in G(p,n) \times G(q,n) | V \perp W}.
        \end{equation}
        This space is closed in $G(p,n) \times G(q,n)$ because it can be written
        \begin{equation}
            \set{(V,W) \in G(p,n) \times G(q,n) | p_V p_W = 0}.
        \end{equation}
        In particular, $X$ is a compact space. We equip $X$ with the Radon measure $\gamma_{p \perp q,n}$ defined for all Borel set $A \subset X$ by
        \begin{equation}
            \gamma_{p \perp q,n}(A) = \int_{G(p,n)} \gamma_{V^\perp,q}(\set{W | (V,W) \in A}) \, \mathrm{d}\gamma_{p,n}(V).
        \end{equation}
        Finally we define the map $f\colon X \to G(p+q,n)$, $(V,W) \to V + W$. The map $f$ is Lipschitz because $V \perp W$ implies $p_{V + W} = p_V + p_W$.

        Now, the lemma reduces to showing that
        \begin{equation}
            \gamma_{p+q,n} = f_{\#} \gamma_{p \perp q,n}.
        \end{equation}
        According to \cite[Theorem 1.18]{Mattila}, $f_{\#} \gamma_{n,p \perp q}$ is a Radon measure on $G(p+q,n)$. It is easy to see that it is invariant under the action of $O(n)$ and that its total mass is $1$. By uniqueness of uniformly distributed measures on $G(p+q,n)$, it coincides with $\gamma_{p+q,n}$.
    \end{proof}

    \begin{lem}\label{grassParam}
        Let $H$ be an affine hyperplane in $\R^{n+1}$ which does not pass through $0$, let $r_0 = \mathrm{d}(0,H)$. Then for all bounded subset $A \subset H$,
        \begin{equation}
            \HH^n(A) \leq \frac{\sigma_n}{2} \left(\frac{r^2}{r_0}\right)^n \gamma_{1,n+1}(\set{L | \text{$L$ meets $A$}}),
        \end{equation}
        where $r = \sup\limits_{x \in A}\abs{x}$ and $\sigma_n = \HH^n(\mathbf{S}^n)$. Moreover for all $L \in G(1,n+1)$ such that $L$ meets $A$, we have
        \begin{equation}
            \mathrm{d}(L,L_0) \leq \sqrt{1 - \left(\frac{r}{r_0}\right)^2},
        \end{equation}
        where $L_0$ is the vector line orthogonal to $H$. 
    \end{lem}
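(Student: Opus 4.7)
\medskip

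\noindent\textbf{Proof proposal.} Let $p_0$ be the orthogonal projection of $0$ onto $H$, so that $|p_0|=r_0$, the hyperplane $H$ has equation $x\cdot p_0 = r_0^2$, and $L_0=\R p_0$. The plan is to transfer both statements to the unit sphere $\mathbf{S}^n$ via the central projection
\begin{equation*}
    \pi\colon H \longrightarrow \mathbf{S}^n,\qquad \pi(x)=x/|x|,
\end{equation*}
whose image lies in the open hemisphere $U = \{u\in\mathbf{S}^n : u\cdot p_0>0\}$ and whose inverse is the map $\pi^{-1}(u) = r_0^2\, u/(u\cdot p_0)$. The distance bound on lines will follow from a direct angle computation on $H$, while the measure bound will come from controlling the Lipschitz constant of $\pi^{-1}$ on the spherical cap corresponding to the ball $\overline{B}(0,r)\cap H$, combined with formula~(\ref{haar_sphere}) relating $\gamma_{1,n+1}$ to Hausdorff measure on $\mathbf{S}^n$.

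For the distance bound, pick $L\in G(1,n+1)$ meeting $A$ at some point $x$, with $|x|\le r$. Since $x\in H$, the relation $x\cdot p_0 = r_0^2$ gives
\begin{equation*}
    \cos\angle(x,p_0)=\frac{x\cdot p_0}{|x|\,|p_0|}=\frac{r_0}{|x|}\ge \frac{r_0}{r},
\end{equation*}
hence $\sin\angle(x,p_0)\le \sqrt{1-(r_0/r)^2}$ (the statement apparently intends $r_0/r$ rather than $r/r_0$). Writing $L = \{t(e\cos\theta + f\sin\theta):t\in\R\}$ with $e=p_0/|p_0|$, $f\perp e$ unit, and $\theta=\angle(x,p_0)$, one sees that $L$ is the graph of $\varphi\colon L_0 \to L_0^\perp$ given by $\varphi(y) = (\tan\theta)\, f\,|y|$. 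Then $\|\varphi\|=\tan\theta$, and formula~(\ref{grassNorm}) yields $\mathrm{d}(L,L_0) = \tan\theta/\sqrt{1+\tan^2\theta} = \sin\theta \le \sqrt{1-(r_0/r)^2}$.

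For the measure bound, differentiate $\pi^{-1}$ at a point $u\in U$ with $\rho = |\pi^{-1}(u)|$: for $v\in T_u\mathbf{S}^n$ one computes
\begin{equation*}
    d\pi^{-1}(u)(v) = \frac{r_0^2}{(u\cdot p_0)^2}\bigl[(u\cdot p_0)\,v - (v\cdot p_0)\,u\bigr].
\end{equation*}
Decomposing $v$ into its component in the 2-plane $\mathrm{span}(u,p_0)$ and its component perpendicular to it, one checks that $d\pi^{-1}(u)$ stretches the meridional direction by $\rho^2/r_0$ and each of the $n-1$ perpendicular directions by $\rho$; hence its operator norm equals $\rho^2/r_0$ (using $\rho\ge r_0$). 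Since $\pi^{-1}$ maps the geodesically convex spherical cap $\pi(\overline{B}(0,r)\cap H)$ into $\overline{B}(0,r)\cap H$ with $\rho\le r$ throughout, $\pi^{-1}$ is $(r^2/r_0)$-Lipschitz on $\pi(A)$. Consequently
\begin{equation*}
    \HH^n(A) = \HH^n\bigl(\pi^{-1}(\pi(A))\bigr)\le \Bigl(\frac{r^2}{r_0}\Bigr)^{\!n}\HH^n(\pi(A)).
\end{equation*}

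Finally, identify the shadow on the sphere. Since $x\cdot p_0 = r_0^2>0$ for every $x\in A$, the sets $\pi(A)\subset U$ and $-\pi(A)\subset -U$ are disjoint and congruent (the antipodal map is an isometry of $\mathbf{S}^n$), and their union is exactly $\{u\in\mathbf{S}^n : u\in L$ for some line $L$ meeting $A\}$. Formula~(\ref{haar_sphere}) then gives
\begin{equation*}
    \sigma_n\,\gamma_{1,n+1}\bigl(\{L : L \text{ meets } A\}\bigr) = 2\HH^n(\pi(A)),
\end{equation*}
which when combined with the previous display yields the claimed inequality. The main technical point is the Lipschitz estimate of step 3, i.e.\ correctly identifying that the meridional singular value $\rho^2/r_0$ dominates; the rest is bookkeeping (the factor $\tfrac12$ coming from antipodal symmetry and the slight looseness between $\rho^{n+1}/r_0$ and $(\rho^2/r_0)^n$ for the Jacobian).
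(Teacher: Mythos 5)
Your proof is correct and follows the same two-part outline as the paper (central projection onto $\mathbf{S}^n$, bound the Lipschitz constant of $\pi^{-1}$, apply formula~(\ref{haar_sphere}) with the antipodal factor $\tfrac12$; then use~(\ref{grassNorm}) for the line estimate), and you are right that the stated bound should read $\sqrt{1-(r_0/r)^2}$ rather than $\sqrt{1-(r/r_0)^2}$. Where you differ is in how the Lipschitz constant of $\pi^{-1}$ is obtained. The paper proves the elementary identity
\[
    \mathrm{d}(0,(xy)) = \tfrac{1}{2}\,\tfrac{|x||y|}{|x-y|}\,\bigl|\tfrac{x}{|x|}-\tfrac{y}{|y|}\bigr|\,\bigl|\tfrac{x}{|x|}+\tfrac{y}{|y|}\bigr|,
\]
which, combined with $\mathrm{d}(0,(xy))\geq r_0$ and $|\tfrac{x}{|x|}+\tfrac{y}{|y|}|\leq 2$, gives the global estimate $|x-y|\leq \tfrac{r^2}{r_0}\,|\pi(x)-\pi(y)|$ directly, with no calculus. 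You instead compute $d\pi^{-1}$ and its singular values $\rho^2/r_0$ (meridional) and $\rho$ (the other $n-1$ directions), then integrate along geodesics in the cap. This is perfectly valid and arguably more transparent about the geometry (it reveals the Jacobian $\rho^{n+1}/r_0$, explaining the looseness of $(r^2/r_0)^n$), but it needs two ingredients you take for granted: the geodesic convexity of the cap, and the fact that integrating $\|d\pi^{-1}\|$ along spherical geodesics produces a Lipschitz bound with respect to geodesic distance, so one must then invoke that $\HH^n$ with respect to the intrinsic and ambient metrics coincide on $\mathbf{S}^n$ (or, equivalently, a slightly more careful estimate comparing chord and arc lengths). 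The paper's distance formula bypasses both subtleties at the cost of an unmotivated algebraic identity.

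Two small slips to fix. First, $\varphi(y)=(\tan\theta)\,f\,|y|$ is not linear; write instead $\varphi(t e)=(\tan\theta)\,t\,f$ for $t\in\R$, so that $\|\varphi\|=\tan\theta$ still holds and~(\ref{grassNorm}) applies. Second, for the claim that the operator norm of $d\pi^{-1}(u)$ equals $\rho^2/r_0$ you correctly note this requires $\rho\geq r_0$; it is worth saying explicitly that this holds because $r_0=\mathrm{d}(0,H)\leq|\pi^{-1}(u)|=\rho$ for every $u$ in the image of $\pi$.
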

    \begin{proof}
        Let $x_0$ be the orthogonal projection of $0$ onto $H$ (in particular $L_0$ is the line generated by $x_0$). The application
        \begin{equation}
            f\colon x \mapsto \frac{x}{\abs{x}}
        \end{equation}
        is bijective from $H$ onto $\mathbf{S}^n_+ =\set{y \in \mathbf{S}^n | y \cdot x_0 > 0}$. We want to compute the local Lipschitz constant of $f^{-1}$. One idea is to compute the differential of $f^{-1}$ but we propose a geometric approach. The idea is to show that for all $x, y \in \R^n \setminus 0$ such that $x \ne y$,
        \begin{equation}\label{O_distance}
            \mathrm{d}(0,(xy)) = \frac{1}{2} \frac{\abs{x}\abs{y}}{\abs{x - y}} \times \abs{\frac{x}{\abs{x}} - \frac{y}{\abs{y}}} \times \abs{\frac{x}{\abs{x}} + \frac{y}{\abs{y}}}
        \end{equation}
        where $(xy)$ is the line passing through $x$ and $y$. Let $z$ be the orthogonal projection of $0$ onto the line $(xy)$. We write $x = z + u$ and $y = z + v$ where $u, v$ are orthogonal to $z$ and collinear to see that 
        \begin{equation}
            \abs{x}^2 \abs{y}^2 - (x \cdot y)^2 = \abs{z}^2 \abs{x - y}^2.
        \end{equation}
        Then
        \begin{align}
            \abs{x}^2 \abs{y}^2 - (x \cdot y)^2 &= \abs{x}^2\abs{y}^2 \left(1 - \left(\frac{x \cdot y}{\abs{x}\abs{y}}\right)^2\right)\\
                                                &= \abs{x}^2 \abs{y}^2 \left(\frac{1}{2} \abs{\frac{x}{\abs{x}} - \frac{y}{\abs{y}}} \times \abs{\frac{x}{\abs{x}} + \frac{y}{\abs{y}}}\right)^2.
        \end{align}
        This proves (\ref{O_distance}). It is clear that for all $x, y \in H$, we have $\mathrm{d}(0,(xy)) \geq r_0$ so (\ref{O_distance}) gives
        \begin{equation}
            \abs{x - y} \leq \frac{\abs{x}\abs{y}}{r_0} \times \abs{\frac{x}{\abs{x}} - \frac{y}{\abs{y}}}
        \end{equation}
        We deduce that the Lipschitz constant of $f^{-1}$ on $f(A)$ is $\leq \frac{r^2}{r_0}$. By the action of Lipschitz functions on Hausdorff measures,
        \begin{align}
            \HH^n(A)    &\leq \left(\frac{r^2}{r_0}\right)^n \HH^n(\set{x \in \mathbf{S}^n_+ | \text{the line $(0x)$ meets $A$}})\\
                        &\leq \frac{1}{2} \left(\frac{r^2}{r_0}\right)^n \HH^n(\set{x \in \mathbf{S}^n | \text{the line $(0x)$ meets $A$}})\\
                        &\leq \frac{\sigma_n}{2} \left(\frac{r^2}{r_0}\right)^n \gamma_{1,n+1}(\set{L | \text{$L$ meets $A$}}).
        \end{align}

        Next, we show that for all $L \in G(1,n+1)$ which meets $A$, we have
        \begin{equation}
            \mathrm{d}(L,L_0) \leq \sqrt{1 - \left(\frac{r}{r_0}\right)^2}
        \end{equation}
        First of all, $L$ cannot be orthogonal to $L_0$ so $\mathrm{d}(L,L_0) < 1$. There exists a linear application $\varphi_L\colon L_0 \to L_0^\perp$ such that
        \begin{equation}
            L = \Set{x + \varphi_L(x) | x \in L_0}.
        \end{equation}
        and the point of intersection between $L$ and $H$ is $x_L = x_0 + \varphi_L(x_0)$. According to (\ref{grassNorm}), we have
        \begin{equation}
            \mathrm{d}(L,L_0) = \frac{\norm{\varphi_L}}{\sqrt{1 + \norm{\varphi_L}^2}}.
        \end{equation}
        Since $\varphi_L$ is defined on a line, it is easy to compute $\norm{\varphi}$:
        \begin{equation}
            \norm{\varphi_L} = \frac{\abs{\varphi_L(x_0)}}{\abs{x_0}} = \frac{\sqrt{\abs{x_L}^2 - \abs{x_0}^2}}{\abs{x_0}}
        \end{equation}
        and the result follows.
    \end{proof}

    \section{Construction of the Federer--Fleming Projection}\label{appendix_FF}
    We recall a few notions. Let $K$ be a $n$-complex. For any integer $d$, the set $\set{A \in K | \mathrm{dim}(A) = d}$ is denoted by $K^d$. For any subcomplex $L$ of $K$, we define
    \begin{equation}
        U(L) = \bigcup \set{\mathrm{int}(A) | A \in L}.
    \end{equation}
    According the properties of $n$-complexes, this is an open set of $\R^n$. We define the gauge $\zeta^d$ on Borel subsets of $\R^n$ by
    \begin{equation}
        \zeta^d(E) = \int_{G(d,n)} \HH^d(p_V(E)) \, \mathrm{d}V.
    \end{equation}
    For a cell $A$, we define the restriction of this gauge to $A$
    \begin{equation}
        \zeta^d \mres A(E) = \int_{G(\mathrm{aff}(A),d)} \HH^d(p_V(A \cap E)) \, \mathrm{d}V,
    \end{equation}
    where $\mathrm{aff}(A)$ is the affine span of $A$ and $G(\mathrm{aff}(A),d)$ is the set of all $d$-linear planes of $\mathrm{aff}(A)$ centered at an arbitrary point.

    We recall what we want to do. Let us say $K$ as the set of dyadic cells of sidelength $2^{-k}$ which are included in $Q = [-1,1]^n$ but not in $\partial Q$. Let us say that $E$ is a $1$-dimensional subset of $Q$. We perform a radial projection in each cell $A \in L^n$, then in each cell $A \in L^{n-1}, \ldots$ until the cells $A \in L^2$. This operation sends $E$ to
    \begin{equation}
        \abs{K} \setminus \bigcup \set{\mathrm{int}(A) | A \in K\, \mathrm{dim} \, A \geq 2} = \partial Q \cup \abs{K^1}.
    \end{equation}
    Throughout the construction, we choose the centers of projections so that $\HH^d(\phi(E)) \leq C \HH^d(E)$ and $\zeta^d(\phi(E)) \leq C \zeta^d(E)$. However, at the end we will only evaluate $\zeta^d$ on restriction to cells $A \in K^d$ so that $\zeta^d$ reduces to $\HH^d$.

    We prove the construction in two steps. In the first Lemma, we forget about the set $E$ and we just show that making radial projections in the cells of a subcomplex $L$ of $K$ yields a Lipschitz retraction onto $\abs{K} \setminus U(L)$. In the second Lemma, we focus about the choice of projection centers.
    \begin{lem}\label{abstract_FF}
        Let $K$ be a $n$-complex, let $L$ be a subcomplex of $K$ which does not contain $0$-dimensional cells. There exists a Lipschitz function $\phi\colon \abs{K} \to \abs{K}$ satisfying the following properties:
        \begin{enumerate}[label=(\roman*)]
            \item for all $A \in K$, $\phi(A) \subset A$;
            \item $\phi = \mathrm{id}$ in $\abs{K} \setminus U(L)$;
            \item there exists a relative open set $O \subset \abs{K}$ containing $\abs{K} \setminus U(L)$ such that
                \begin{equation}
                    \phi(O) \subset \abs{K} \setminus U(L).
                \end{equation}
        \end{enumerate}
    \end{lem}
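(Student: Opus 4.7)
The plan is to proceed by induction on dimension, clearing cells of $L$ one dimensional stratum at a time from top to bottom. Let $D = \max\{\dim A : A \in L\}$ (noting that $\dim A \geq 1$ for every $A \in L$). I will construct Lipschitz maps $\phi_D, \phi_{D-1}, \ldots, \phi_1 \colon \abs{K} \to \abs{K}$, each preserving every cell of $K$ and fixing $\abs{K} \setminus \bigcup\{\mathrm{int}(A) : A \in L^d\}$, then take $\phi = \phi_1 \circ \cdots \circ \phi_D$. A key simplification comes from $L$ being a subcomplex of $K$: by Lemma \ref{complex_lemma}, any cell $B \in K$ meeting $\mathrm{int}(A)$ for some $A \in L$ must contain $A$, hence must itself lie in $L$. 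Thus cells of $K \setminus L$ are disjoint from $U(L)$, and property (i) is automatic for them.

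At stage $d$, for each $A \in L^d$ I pick a projection center $x_A \in \mathrm{int}(A)$ and define a radial projection toward $\partial A$ on the supercells of $A$ (a finite family of size at most $3^n$ by the third item of Definition \ref{defi_system}). Concretely, for each $B \in L$ with $A \subset B$, the radial projection from $x_A$ along half-lines in $\mathrm{aff}(A)$ sends $B \setminus (x_A + \mathrm{aff}(A)^\perp)$ into $B \setminus \mathrm{int}(A)$; these maps assemble coherently on $V_A$ since $V_A$ is exactly the union of the interiors of the supercells of $A$. I then extend by the identity outside a slightly smaller open neighborhood, using the wide-neighborhood axiom $V_A(\kappa) \subset V_A$ of $n$-complexes to get a Lipschitz bound depending only on $n$. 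By disjointness of interiors, the bounded overlap property (at most $3^n$ cells through any point), and local finiteness of $K$, the pasting over all $A \in L^d$ produces a Lipschitz map $\phi_d$ on $\abs{K}$ that preserves every cell of $K$ and is the identity off $\bigcup \{\mathrm{int}(A) : A \in L^d\}$.

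Properties (i) and (ii) of $\phi$ then follow immediately from the corresponding properties of each $\phi_d$. For property (iii), I construct $O$ by a backwards recursion that peels off the centers stage by stage: set $O_{D+1} = \abs{K}$ and
$$
O_d = \phi_d^{-1}(O_{d+1}) \setminus \bigcup_{A \in L^d} \{x_A\}.
$$
Each $O_d$ is relatively open in $\abs{K}$ by continuity of $\phi_d$, and it contains $\abs{K} \setminus U(L)$ because such points are neither centers nor moved by any $\phi_e$. Taking $O = O_1$, any $z \in O$ avoids every center at every stage, so the radial projection at stage $e$ pushes $\phi_{e+1} \circ \cdots \circ \phi_D(z)$ out of $\mathrm{int}(A)$ for each $A \in L^e$, so $\phi(z) \in \abs{K} \setminus U(L)$. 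The main obstacle I expect is verifying the coherent pasting at stage $d$: the radial projections associated to different cells $A \in L^d$ have overlapping wide neighborhoods precisely where they share supercells, and one must check that both the pointwise definition and the Lipschitz interpolation to the identity fit together continuously across these overlaps. This relies on Lemma \ref{complex_lemma} applied inside $L$ to identify which supercells are common, and on the uniform widening constant $\kappa$ to turn local Lipschitz control into a global bound.
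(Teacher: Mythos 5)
Your overall strategy (top-down sweep through the dimensions of $L$, radial projections centred at chosen points $x_A$, and an open set $O$ obtained by deleting preimages of the bad sets) is the one the paper uses, but two steps as you describe them do not go through.

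First, the well-definedness of the stage-$d$ map. You define the projection associated to $A \in L^d$ on the whole neighbourhood $V_A$, i.e.\ on the interiors of all supercells of $A$. But two distinct cells $A, A' \in L^d$ can share a supercell $B$ (two edges of a common square, say), so $V_A \cap V_{A'} \supset \mathrm{int}(B) \ne \emptyset$, and your two explicit formulas (radial projection of the $\mathrm{aff}(A)$-component vs.\ the $\mathrm{aff}(A')$-component) do not agree there. You name this as "the main obstacle" but the appeal to Lemma \ref{complex_lemma} and to $\kappa$ does not resolve it: those give Lipschitz control, not coherence of conflicting formulas. The paper avoids the problem entirely by defining $\psi_m$ on each cell $A \in L^m$ as a map $A \to A$ (the radial projection inside $\mathrm{aff}(A)$ onto $\partial A$), gluing these with the identity on $\abs{K} \setminus U_m(L)$ — a family of sets with disjoint "interiors" — and only then extending into the higher-dimensional cells by abstract (McShane) Lipschitz extension, one dimension at a time, which automatically preserves each cell. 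If you want explicit formulas on the supercells you must either prove they agree on overlaps (they don't) or abandon them in favour of such an abstract extension.

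Second, the singularity at the centre. The exact radial projection from $x_A$ onto $\partial A$ is not Lipschitz on $A \setminus \{x_A\}$: its local Lipschitz constant blows up like $\mathrm{diam}(A)/\abs{x - x_A}$. Since the lemma asks for a globally Lipschitz $\phi$, you must truncate at some radius $\delta_A > 0$ and replace the map on $A \cap \overline{B}(x_A,\delta_A)$ by an arbitrary Lipschitz extension $A \to A$, which no longer sends points to $\partial A$. Consequently, setting $O_d = \phi_d^{-1}(O_{d+1}) \setminus \bigcup_A \{x_A\}$ is not enough: a point of $O$ whose image under the earlier stages lands in $B(x_A,\delta_A) \setminus \{x_A\}$ need not be pushed out of $\mathrm{int}(A)$, so property (iii) fails. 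You must delete the preimage of the whole closed ball $\overline{B}(x_A,\delta_A)$ (a locally finite family of closed sets, so the complement stays relatively open), which is exactly what the paper does; choosing $x_A$ to be the centre of $A$ then lets you take $\mathrm{diam}(A)\,\delta_A^{-1} \leq C(n)$ and keep the Lipschitz constant uniform.
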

    \begin{proof}
        The letter $C$ plays the role of a constant $\geq 1$ that depends on $n$, $\kappa$, $\Gamma$. Its value can increase from one line to another (but a finite number of times). We build by induction a family of locally Lipschitz functions
        \begin{equation}
            (\phi_m)\colon \abs{K} \to \abs{K}
        \end{equation}
        indexed by a decreasing integer $m = n+1, \ldots, 1$. The application $\phi_m$ is obtained by composing $\phi_{m+1}$ with radial projections in the cells $A \in L$ of dimension $m$. We define
        \begin{equation}
            U_m(L) = \bigcup \set{\mathrm{int}(A) | A \in L,\ \mathrm{dim} \, A \geq m}
        \end{equation}
        and we require that
        \begin{enumerate}[label=(\roman*)]
            \item for all $A \in K$, $\phi_m(A) \subset A$,
            \item $\phi_m = \mathrm{id}$ in $\abs{K} \setminus U_m(L)$,
            \item there exists a relative open set $O \subset \abs{K}$ containing $\abs{K} \setminus U_m(L)$ such that
                \begin{equation}
                    \phi(O) \subset \abs{K} \setminus U_m(L).
                \end{equation}
        \end{enumerate}
        The induction starts with $\phi_{n+1} = \mathrm{id}$. Assume that $\phi_{m+1}$ is well defined for some $m \leq  n$. We are going to postcompose $\phi_{m+1}$ with a function $\psi_m$. We define first $\psi_m$ on $\abs{K} \setminus U_{m+1}(L)$. We observe that
        \begin{equation}
            \abs{K} \setminus U_{m+1}(L) \subset \left(\abs{K} \setminus U_m(L)\right) \cup \abs{L^m}
        \end{equation}
        but according to Lemma \ref{complex_lemma}, $\abs{L^m}$ is disjoint from $U_{m+1}(L)$ so
        \begin{equation}
            \abs{K} \setminus U_{m+1}(L) = \left(\abs{K} \setminus U_m(L)\right) \cup \abs{L^m}
        \end{equation}
        We set
        \begin{equation}\label{init_psi_m}
            \psi_m = \mathrm{id}\ \text{in}\ \abs{K} \setminus U_m(L).
        \end{equation}
        and we set $\psi_m$ in each cell $A \in L^m$ to be a radial projection. For $A \in L^m$, let $x_A$ be the center of $A$ and let $\delta_A > 0$ be such that
        \begin{equation}
            A \cap \overline{B}(x_A, \delta_A) \subset \mathrm{int}(A).
        \end{equation}
        We define $\psi_m$ in $A \setminus B(x_A, \delta_A)$ to be the radial projection centered in $x_A$ onto $\partial A$. As $A$ is a face of cube, $\psi_m$ is Lipschitz with a controlled constant: for all $x, y \in A \setminus B(x_A, \delta_A)$,
        \begin{equation}
            \abs{\psi_m(x) - \psi_m(y)} \leq C \mathrm{diam}(A) \delta_A^{-1} \abs{x - y}.
        \end{equation}
        We extend $\psi_m$ as a $C \mathrm{diam}(A) \delta_A^{-1}$-Lipschitz function $\psi_m \colon A \to A$. The function $\psi_m$ is now well defined on $\abs{K} \setminus U_{m+1}(L)$. By construction, $\psi_m$ is Lipschitz in each cell $A \in L^m$ and it is the identity map on $\abs{K} \setminus U_m(L)$. We are going to deduce that $\psi_m$ is Lipschitz on $\abs{K} \setminus U_{m+1}(L)$. Consider $A, B \in L^m$ such that $A \ne B$ and let $x \in A$ and $y \in B$. According to Lemma \ref{complex_lemma}, we have $V_A \cap B = \emptyset$. And by the properties of $n$-complexes, there exists a constant $\kappa \geq 1$ (depending on $n$) such that $V_A(\kappa) \subset V_A$. We deduce that $y \notin V_A(\kappa)$, i.e., $\mathrm{d}(y, \partial A) \leq \kappa \mathrm{d}(y,A)$. Let $z \in \partial A$ be such that $\abs{y - z} = \mathrm{d}(y, \partial A)$, in particular
        \begin{equation}
            \abs{y - z} \leq \kappa \abs{x - y}.
        \end{equation}
        Similarly, one can find $z' \in \partial B$ such that
        \begin{equation}
            \abs{x - z'} \leq \kappa \abs{x - y}.
        \end{equation}
        Using the triangular inequality, we see that
        \begin{equation}
            \abs{x - z}, \abs{y - z'}, \abs{z - z'} \leq C \kappa \abs{x - y}.
        \end{equation}
        Thus
        \begin{align}
            \begin{split}
                \abs{\psi_m(x) - \psi_m(y)} &   \leq \abs{\psi_m(x) - \psi_m(z)} + \abs{\psi_m(z) - \psi_m(z')}\\
                                            &   \qquad + \abs{\psi_m(z') - \psi_m(y)}
            \end{split}\\
                                        &   \leq \abs{\psi_m(x) - \psi_m(z)} + \abs{z - z'} + \abs{\psi_m(z') - \psi_m(y)}\\
                                        &   \leq C \kappa (\mathrm{diam}(A) \delta_A^{-1} + \mathrm{diam}(B) \delta_{B}^{-1} + 1) \abs{x - y}.
        \end{align}
        Next consider $A \in L^m$, $x \in A$ and $y \in \abs{K} \setminus U_m(L)$. We have $V_A(\kappa) \subset V_A \subset U_m(L)$ so $\mathrm{d}(y, \partial A) \leq \kappa \mathrm{d}(y,A)$. Let $z \in \partial A$ be such that $\abs{y - z} = \mathrm{d}(y, \partial A)$, in particular
        \begin{equation}
            \abs{y - z} \leq \kappa \abs{x - y}.
        \end{equation}
        Using the triangular inequality, we see that $\abs{x - z} \leq C\kappa \abs{x - y}$. It follows that
        \begin{align}
            \abs{\psi_m(x) - \psi_m(y)} &   \leq \abs{\psi_m(x) - \psi_m(z)} + \abs{\psi_m(z) - \psi_m(y)}\\
                                        &   \leq \abs{\psi_m(x) - \psi_m(z)} + \abs{z - y}\\
                                        &   \leq C \kappa (\mathrm{diam}(A) \delta_A^{-1} + 1) \abs{x - y}.
        \end{align}
        As $x_A$ is the center of $A$, it is possible to choose $\delta_A$ in such a way that $\mathrm{diam}(A) \delta_A^{-1} \leq C$. This concludes the proof that $\psi_m$ is well defined and $C$-Lipschitz on $\abs{K} \setminus U_{m+1}(L)$. In a future lemma, we will copy the proof but in a situation where we cannot choose $x_A$ to be the center of $A$. Thus we will not be able to control the ratio $\mathrm{diam}(A) \delta_A^{-1}$. 

        In this paragraph, we extend $\psi_m$ over $\abs{K}$ in such way that for each $A \in K$, $\psi_m(A) \subset A$. We present the extension procedure and we will check afterward that $\psi_m$ preserves every face. If $m = n$, we have
        \begin{equation}
            \abs{K} \setminus U_{m+1}(L) = \abs{K}
        \end{equation}
        so $\psi_m$ is already defined over $\abs{K}$. Assume that $m < n$. For $A \in L^{m+1}$, Lemma \ref{complex_lemma} says that
        \begin{equation}
            A \setminus U_{m+1}(L) = A \setminus \mathrm{int}(A) = \partial A
        \end{equation}
        so $\psi_m$ is defined and $C$-Lipschitz on $\partial A$ and we can extend it as a $C$-Lipschitz function $\psi_m \colon A \to A$. We recall that
        \begin{equation}
            \abs{K} \setminus U_{m+2}(L) = \left(\abs{K} \setminus U_{m+1}(L)\right) \cup \abs{L^{m+1}}
        \end{equation}
        so the function $\psi_m$ is now well defined on $\abs{K} \setminus U_{m+2}(L)$. It is is Lipschitz in each cell $A \in L^{m+1}$ and it is Lipschitz on $\abs{K} \setminus U_{m+1}(L)$. We can deduce that it is Lipschitz on $\abs{K} \setminus U_{m+2}(L)$ using the same proof than in the previous paragraph. We continue the process until $\psi_m$ is defined on $\abs{K}$. Let us prove that $\psi_m$ preserves every face $A \in K$. There are two cases to distinguish. If $A \subset \abs{K} \setminus U_m(L)$, then $\psi_m = \mathrm{id}$ on $A$ by (\ref{init_psi_m}). If $A \cap U_m(L) \ne \emptyset$, there exists $B \in L$ such that $\mathrm{dim} \, B \geq m$ and $A \cap \mathrm{int}(B) \ne \emptyset$. By the first point of Lemma \ref{complex_lemma}, $B \subset A$ so $\mathrm{dim} \, A \geq m$ and, by the properties of subcomplexes, $A \in L$. We conclude that $\psi_m$ preserves $A$ by construction.

        We finally define $\phi_m = \psi_m \circ \phi_{m+1}$. By construction, $\phi_m$ is $C$-Lipschitz and satisfies the first and second requirements of the induction. Let us check that it satisfies the third requirement. By assumption, there exists a relative open set $O$ of $\abs{K}$ which contains $\abs{K} \setminus U_{m+1}(L)$ and such that
        \begin{equation}
            \phi_{m+1}(O) \subset \abs{K} \setminus U_{m+1}(L)
        \end{equation}
        We will solve the induction with the set
        \begin{equation}
            O' = O \setminus \phi_{m+1}^{-1}\left(\bigcup\limits_{A \in L^m} A \cap \overline{B}(x_A, \delta_A)\right).
        \end{equation}
        We justify that $O'$ contains $\abs{K} \setminus U_m(L)$. First, we have trivially
        \begin{equation}
            \abs{K} \setminus U_m(L) \subset \abs{K} \setminus U_{m+1}(L) \subset O.
        \end{equation}
        In addition $\phi_{m+1} = \mathrm{id}$ on $\abs{K} \setminus U_m(L)$ and this set is disjoint from
        \begin{equation}
            \bigcup\limits_{A \in L^m} A \cap \overline{B}(x_A, \delta_A) \subset U_m(L).
        \end{equation}
        We justify that $O'$ is open. The family $(A \cap \overline{B}(x_A, \delta_A))_A$ is a locally finite family of closed sets in $\abs{K}$ so its union is relatively closed. Then $O'$ is relatively open in $\abs{K}$ by continuity of $\psi_{m+1}$. Finally, we justify that $\psi_m(O') \subset \abs{K} \setminus U_m(L)$. We have $\phi_{m+1}(O) \subset \abs{K} \setminus U_{m+1}(L)$ and
        \begin{equation}
            \abs{K} \setminus U_{m+1}(L) \subset \left(\abs{K} \setminus U_m(L)\right) \cup \abs{L^m}
        \end{equation}
        so
        \begin{align}
            \phi_m(O')  &   \subset \psi_m \circ \phi_{m+1}(O')\\
                        &   \subset \psi_m \left(\abs{K} \setminus \left(U_{m+1}(L) \cup \bigcup\limits_{A \in L^m} A \setminus \overline{B}(x_A, \delta_A)\right)\right)\\
                        &   \subset \abs{K} \setminus U_m(L)
        \end{align}
        by construction of $\psi_m$.
    \end{proof}

    \begin{lem}[Federer--Fleming projection]\label{lem_FF}
        Let $K$ be $n$-complex. Let $0 \leq d < n$ be an integer and let $E$ be Borel subset of $\abs{K}$ such that $\HH^{d+1}(\abs{K} \cap \overline{E}) = 0$. Then there exists a locally Lipschitz function $\phi\colon \abs{K} \to \abs{K}$ satisfying the following properties:
        \begin{enumerate}[label=(\roman*)]
            \item for all $A \in K$, $\phi(A) \subset A$;
            \item $\phi = \mathrm{id}$ in $\abs{K} \setminus \bigcup \set{\mathrm{int}(A) | \mathrm{dim} \, A > d}$;
            \item there exists a relative open set $O \subset \abs{K}$ such that $E \subset O$ and
                \begin{equation}
                    \phi(O) \subset \abs{K} \setminus \bigcup \set{\mathrm{int}(A) | \mathrm{dim} \, A > d};
                \end{equation}
            \item for all $A \in K$,
                \begin{equation}
                    \HH^d(\phi(A \cap E)) \leq C \HH^d(A \cap E);
                \end{equation}
            \item for all $A \in K^d$,
                \begin{equation}
                    \HH^d(A \cap \phi(E)) \leq C \int_{G(d,n)} \HH^d(p_V(V_A \cap E)) \, \mathrm{d}V,
                \end{equation}
        \end{enumerate}
        where $C \geq 1$ is a constant that depends only on $n$.
    \end{lem}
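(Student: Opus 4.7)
The plan is to refine the inductive construction of Lemma \ref{abstract_FF} applied to the subcomplex $L = \{A \in K : \dim A > d\}$, improving only the choice of radial-projection centers so as to enforce the measure estimates (iv) and (v); the topological conclusions (i), (ii), (iii) are then immediate from Lemma \ref{abstract_FF}.

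First I would build by downward induction on $m = n, n-1, \ldots, d+1$ a sequence of locally Lipschitz maps $\phi_m : |K| \to |K|$ with $\phi_{n+1} = \mathrm{id}$ and $\phi_m = \psi_m \circ \phi_{m+1}$, where $\psi_m$ pastes radial projections $\psi_A$ from centers $x_A$ (for $A \in L^m$) extended exactly as in the proof of Lemma \ref{abstract_FF}. The key change is how $x_A$ is selected. For fixed $A \in L^m$, set $F_A = A \cap \phi_{m+1}(E)$. Since $\phi_{m+1}$ is locally Lipschitz and cell-preserving and $\HH^{d+1}(|K| \cap \overline{E}) = 0$, an induction on $m$ gives $\HH^{d+1}(A \cap \overline{\phi_{m+1}(E)}) = 0$; because $m > d$ this yields $\LL^m(\overline{F_A}) = 0$. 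Lemma \ref{lem_DS1} applied in the affine span of $A$, together with the Markov inequality, then produces a set of ``bad'' centers for the $\HH^d$-estimate of relative density strictly less than $\tfrac{1}{2}$ in $\tfrac{1}{2}A$; Lemma \ref{lem_DS2} does the same for the gauge estimate. Hence one can pick $x_A \in \tfrac{1}{2}A \setminus \overline{F_A}$ satisfying simultaneously
\begin{align*}
\HH^d(\psi_A(F_A)) &\leq C\, \HH^d(F_A),\\
\zeta^d \mres \partial A(\psi_A(F_A)) &\leq C\, \zeta^d \mres A(F_A),
\end{align*}
with $C$ depending only on $n$, after which one chooses $\delta_A > 0$ with $\overline{B}(x_A, \delta_A) \cap A \subset \mathrm{int}(A) \setminus \overline{F_A}$ so that $\psi_A$ is locally Lipschitz on its natural domain.

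Estimate (iv) is then propagated cell-by-cell: on each $A \in K^m$, $\HH^d(\psi_m(\phi_{m+1}(E) \cap A)) \leq C\, \HH^d(\phi_{m+1}(E) \cap A)$; summing over cells (using their bounded overlap in $K$) and iterating $n-d$ times yields (iv). For (v), the gauge estimate applied to every $A \in L^m$ containing a given $B \in K^{m-1}$ gives
\begin{equation*}
\zeta^d \mres B(\phi_m(E)) \leq C \sum_{A \in L^m,\, A \supset B} \zeta^d \mres A(\phi_{m+1}(E)),
\end{equation*}
and unravelling this recursion from $m = n$ down to $m = d+1$, together with the fact from Definition \ref{defi_system} that every cell of $K$ is contained in at most $3^n$ cells of $K$, produces
\begin{equation*}
\zeta^d \mres B(\phi(E)) \leq C \sum_{A \in K,\, A \supset B} \zeta^d \mres A(E) \leq C\, \zeta^d(V_B \cap E),
\end{equation*}
the last inequality being Remark \ref{rmk_zeta}. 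At dimension $d$ the restricted gauge $\zeta^d \mres B$ coincides with $\HH^d \mres B$, giving exactly (v).

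The main obstacle is the measure-theoretic bookkeeping for (v): the gauge $\zeta^d \mres A$ integrates over $d$-planes of the affine span of $A$, so Lemma \ref{lem_DS2} must be applied inside each cell's own affine hull, and the resulting inequalities must be chained across $n-d$ induction steps without the constants blowing up (here the bounded-overlap property of $n$-complexes is crucial). A secondary technical point is the propagation of $\HH^{d+1}(\overline{F_A}) = 0$ through the composition $\psi_m \circ \phi_{m+1}$: this requires that the locally Lipschitz map $\phi_{m+1}$ avoid any concentration phenomenon on $\overline{E}$, which is precisely what the assumption $\HH^{d+1}(|K| \cap \overline{E}) = 0$ and the careful choice of each $x_A$ outside the closure of $F_A$ are designed to guarantee.
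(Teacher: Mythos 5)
The overall architecture (downward induction composing radial projections, Chebyshev selection of good centers via Lemmas~\ref{lem_DS1} and \ref{lem_DS2}, then chaining the estimates) matches the paper's proof of Lemma~\ref{lem_FF}. But there is a concrete gap in how you select the centers $x_A$, and it breaks the per-cell estimate (iv).

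You choose $x_A$ so that the single set $F_A = \mathrm{int}(A) \cap \phi_{m+1}(E)$ satisfies $\HH^d(\psi_{x_A}(F_A)) \leq C\,\HH^d(F_A)$. Property (iv), however, is a \emph{per-cell} estimate: for each fixed $B \in K$, one needs $\HH^d(\phi_m(B \cap E)) \leq C\,\HH^d(B \cap E)$, and its verification at level $m$ decomposes $\phi_{m+1}(B\cap E)$ into the pieces $\phi_{m+1}(B \cap E) \cap \mathrm{int}(A)$ for $A \in K^m$. Each of those pieces is a \emph{proper subset} of $F_A$ (it is $F_A$ with the contributions coming from the other cells containing $A$ removed), and radial projection does not transfer a density bound from a set to its subsets: $\HH^d(\psi_x(S)) \leq C\,\HH^d(S)$ does not give $\HH^d(\psi_x(S')) \leq C\,\HH^d(S')$ for $S' \subset S$. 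Concretely, already at level $m = n-1$: for a $(n-1)$-cell $B$ the set $\mathrm{int}(B)\cap\phi_n(E)$ contains both $\mathrm{int}(B)\cap E$ and the projections coming down from adjacent $n$-cells, which may have far larger $\HH^d$-measure; your choice of $x_B$ controls only the projection of the union, so it gives no control of $\HH^d(\psi_{x_B}(\mathrm{int}(B)\cap E))$ in terms of $\HH^d(\mathrm{int}(B)\cap E)$. This is precisely what (iv) requires.

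The fix — and this is what the paper does — is to run the Markov/Chebyshev argument \emph{simultaneously} for each of the sets $F_B := \phi_{m+1}(B \cap E)$ as $B$ ranges over the cells of $K$ containing $A$, plus the $\zeta^d$-estimate. Definition~\ref{defi_system} guarantees there are at most $3^n$ such $B$, so one is selecting a center good for at most $N = 3^n + 1$ sets; the Markov bound from Lemma~\ref{lem_DS1} then gives a bad set of density $\leq CN/\lambda$ in $\tfrac{1}{2}A$, which is $< 1$ once $\lambda$ is chosen depending only on $n$. With that modification (and keeping the condition $x_A \notin \overline{\phi_{m+1}(E)}$ for the Lipschitz extension), the rest of your argument goes through; a secondary stylistic difference is that the paper formulates the inductive hypothesis for (v) directly as $\zeta^d \mres A(\mathrm{int}(A) \cap \phi_m(E)) \leq C\,\zeta^d(V_A \cap E)$ in terms of the \emph{original} set $E$, which closes more cleanly than unravelling a chain of intermediate $\phi_{m+1}(E)$ bounds.
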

    \begin{rmk}
        We do not say that for all $A \in K^n$,
        \begin{equation}
            \HH^d(\phi(A \cap E)) \leq C \int_{G(d,n)} \HH^d(p_V(A \cap E)) \, \mathrm{d}V
        \end{equation}
        because $\phi(A \cap E)$ may not be included in the $d$-skeleton of $K$. Thus $\zeta^d$ may not reduce to $\HH^d$ on $\phi(E \cap A)$.
    \end{rmk}
    \begin{proof}
        The letter $C$ plays the role of a constant $\geq 1$ that depends on $n$. Its value can increase from one line to another (but a finite number of times). The principle of the proof is to build by induction a family of locally Lipschitz functions
        \begin{equation}
            (\phi_m)\colon \abs{K} \to \abs{K}
        \end{equation}
        indexed by a decreasing integer $m = n+1, \ldots, d+1$. The application $\phi_m$ is obtained by composing $\phi_{m+1}$ with radial projections in the cells $A \in L$ of dimension $m$. We define
        \begin{equation}
            U_m(K) = \bigcup \set{\mathrm{int}(A) | A \in K,\ \mathrm{dim} \, A \geq m}.
        \end{equation}
        and require that
        \begin{enumerate}[label=(\roman*)]
            \item for all $A \in K$, $\phi_m(A) \subset A$;
            \item $\phi_m = \mathrm{id}$ in $\abs{K} \setminus U_m(K)$;
            \item there exists a relative open set $O \subset \abs{K}$ containing $\abs{K} \setminus U_m(K)$ such that
                \begin{equation}
                    \phi(O) \subset \abs{K} \setminus U_m(K);
                \end{equation}
            \item for all $A \in K$,
                \begin{equation}
                    \HH^d(\phi_m(A \cap E)) \leq C \HH^d(A \cap E);
                \end{equation}
            \item for all $A \in K$,
                \begin{equation}
                    \zeta^d \mres A(\mathrm{int}(A) \cap \phi_m(E)) \leq C \zeta^d(V_A \cap E).
                \end{equation}
        \end{enumerate}
        The proof follows the same scheme as Lemma \ref{abstract_FF} (where the subcomplex $L$ is the set of cells $A \in K$ of dimension $\geq d+1$) but we choose the center of projection wisely. The induction starts with $\phi_{n+1} = \mathrm{id}$. Assume that $\phi_{m+1}$ is well defined for some $d < m \leq n$. We post-compose $\phi_{m+1}$ with a function $\psi_m$ made of radial projections in the cells of $K$ of dimension $m$. Fix $A \in K^m$. For $x \in \mathrm{int}(A)$, let $\psi_x$ be the radial projection onto $\partial A$ centered at $x$. We want a center of projection $x_A \in \frac{1}{2} A$ such that
        \begin{enumerate}
            \item $x_A \notin \overline{\phi_{m+1}(E)}$;
            \item for all $B \in K$ containing $A$,
                \begin{equation}
                    \HH^d(\psi_{x_A}(\mathrm{int}(A) \cap F_B)) \leq C \HH^d(\mathrm{int}(A) \cap F_B)
                \end{equation}
                where $F_B = \phi_{m+1}(B \cap E)$;
            \item for all $B \in K$ included in $\partial A$,
                \begin{equation}
                    \zeta^d \mres B (\psi_{x_A}(\mathrm{int}(A) \cap F)) \leq C \zeta^d \mres A(\mathrm{int}(A) \cap F),
                \end{equation}
                where $F = \phi_{m+1}(E)$.
        \end{enumerate}
        We are going to show that such centers $x_A$ exists. First, we have
        \begin{equation}\label{x_A-0}
            \HH^{d+1}(\overline{\phi_{m+1}(E)}) = 0.
        \end{equation}
        because $\phi_{m+1}$ is locally Lipschitz. This means that the first requirement is satisfied for $\HH^m$-almost every $x_A \in \mathrm{int}(A)$. Now we deal with the second and the third requirements. Let $F_1, \ldots, F_N$ be a Borel subset of $A$ such that $\HH^{d+1}(\overline{F_i}) = 0$. We fix an index $i$ and we apply Lemma \ref{lem_DS1} to $F_i$ in combination with the Markov inequality. We estimate that for $\lambda > 0$,
        \begin{equation}
            \HH^m(\set{x \in \tfrac{1}{2} A \setminus \overline{F_i} | \HH^d(\psi_x(F_i)) \geq \lambda \HH^d(F_i)}) \leq C \lambda^{-1} \mathrm{diam}(A)^m 
        \end{equation}
        and thus
        \begin{multline}
            \HH^m(\set{x \in \tfrac{1}{2} A \setminus \bigcup_i \overline{F_i} | \exists i,\ \HH^d(\psi_x(F_i)) \geq \lambda \HH^d(F_i)}) \\\leq N C \lambda^{-1} \mathrm{diam}(A)^m.
        \end{multline}
        We deduce that for $\lambda$ big enough (depending on $n$, $N$) there exists $x \in \tfrac{1}{2} A \setminus \bigcup \overline{F_i}$ such that for all $i$,
        \begin{equation}
            \HH^d(\psi_x(F_i)) \leq \lambda\HH^d(F_i)).
        \end{equation}
        We apply this reasoning to the sets $F_B$ and $F$. Note that $N \leq 3^n + 1$ because there are at most $3^n$ cells $B$ that contain $A$. The center of projection $x_A$ is chosen. We let $\delta_A > 0$ be such that
        \begin{equation}\label{delta_A_definition}
            A \cap \overline{B}(x_A, \delta_A) \subset \mathrm{int}(A) \setminus \phi_{m+1}(E)
        \end{equation}
        and we define $\psi_m$ in $A \setminus B(x_A, \delta_A)$ to be the radial projection centered in $x_A$ onto $\partial A$. In particular $\psi_m$ is $C\mathrm{diam}(A) \delta_A^{-1}$ Lipschitz and we extend $\psi_m$ as a $C \mathrm{diam}(A) \delta_A^{-1}$-Lipschitz function $\psi_m \colon A \to A$. The construction of $\psi_m$ continues just as in the proof of Lemma \ref{abstract_FF} and $\phi_m$ is finally defined as $\phi_m = \psi_m \circ \phi_{m+1}$.

        It is left to show that for all $B \in K$,
        \begin{equation}\label{goal_phi1}
            \HH^d(\phi_m(B \cap E)) \leq C \HH^d(B \cap E);
        \end{equation}
        and for all $A \in K$,
        \begin{equation}\label{goal_phi2}
            \zeta^d \mres A(\mathrm{int}(A) \cap \phi_m(E)) \leq C \zeta^d(V_A \cap E).
        \end{equation}
        The other requiremements are practically proved in \ref{abstract_FF}. We fix $B \in K$ and we prove that
        \begin{equation}
            \HH^d(\phi_m(B \cap E)) \leq C\HH^d(B \cap E)).
        \end{equation}
        We have
        \begin{align}
            \phi_{m+1}(B \cap E)    &\subset \abs{K} \setminus U_{m+1}\\
                                    &\subset (\abs{K} \setminus U_m) \cup \bigcup_{A \in L^m} \mathrm{int}(A)\label{line_union}
        \end{align}
        We observe that $\phi_{m+1}(B \cap E) \subset B$ so for $A \in L^m$ such that $\mathrm{int}(A) \cap B \ne \emptyset$, we have $A \subset B$. The union in (\ref{line_union}) is in fact indexed by $A \in L^m$ such that $A \subset B$. By construction, we have $\psi_m = \mathrm{id}$ on $\abs{K} \setminus U_{m+1}$ and for all $A \in L^m$ such that $A \subset B$,
        \begin{equation}
            \HH^d(\psi_m(\mathrm{int}(A) \cap \phi_{m+1}(B \cap E))) \leq C \HH^d(\mathrm{int}(A) \cap \phi_{m+1}(B \cap E)).
        \end{equation}
        As $\HH^d$ is $\sigma$-additive on Borel sets and the cells $A \in K^m$ have disjoint interiors, we conclude that 
        \begin{align}
            \HH^d(\phi_m(B \cap E)) &\leq C \HH^d(\phi_{m+1}(B \cap E))\\
                                    &\leq C \HH^d(B \cap E).
        \end{align}

        We fix $A \in K$ and we prove that
        \begin{equation}
            \zeta^d \mres A(\mathrm{int}(A) \cap \phi_m(E)) \leq C \zeta^d(V_A \cap E).
        \end{equation}
        If $\mathrm{dim}(A) > m$ this is trivial because $\phi_m(E) \subset \abs{K} \setminus U_m$. If $\mathrm{dim}(A) < m-1$, this is trivial because $\phi_m = \mathrm{id}$ on $\abs{K} \setminus U_m$ (and we use (\ref{zeta_mres})). We assume that $\mathrm{dim}(A) = m-1$. Here again,
        \begin{align}
            \psi_m^{-1}(\mathrm{int}(A)) \cap \phi_{m+1}(E) &\subset \abs{K} \setminus U_{m+1}\\
                                                            &\subset (\abs{K} \setminus U_m) \cup \bigcup_{B \in K^m} \mathrm{int}(B)\label{line_union2}.
        \end{align}
        For $B \in K^m$ such that $\psi_m^{-1}(\mathrm{int}(A)) \cap \mathrm{int}(B) \ne \emptyset$, we have $\mathrm{int}(A) \cap B \ne \emptyset$, because $\psi_m(B) \subset B$. As a consequence $A \subset B$ but since $\mathrm{dim} \, A < \mathrm{dim} \, B$, we also have $\mathrm{int}(B) \cap A = \emptyset$. This proves that the union in (\ref{line_union2}) is indexed by $B \in K^m$ such that $A \subset \partial B$. By construction, we have $\psi_m = \mathrm{id}$ on $\abs{K} \setminus U_{m+1}$ and for all $B \in K^m$ such that $A \subset \partial B$,
        \begin{align}
            \zeta^d \mres A(\psi_m(\mathrm{int}(B) \cap \phi_{m+1}(E))) &\leq C \zeta^d(\mathrm{int}(B) \cap \phi_{m+1}(E))\\
                                                                        &\leq C \zeta^d(V_B \cap E)\\
                                                                        &\leq C \zeta^d(V_A \cap E).
        \end{align}
        As there are at most $3^n$ cells $B \in K^m$ such that $A \subset \partial B$, we conclude that
        \begin{align}
            \zeta^d(\mathrm{int}(A) \cap \phi_m(E)) \leq C \zeta^d(V_A \cap E).
        \end{align}
    \end{proof}

    In the next Lemma, we provide two conditions to control the Lipschitz constant of $\phi$. The second condition is a weak form of Ahlfors-regularity (see Remark \ref{rmk_af_semi} below the proof)
    \begin{lem}\label{lem_semi}
        We keep the notations of the Lemma \ref{lem_FF}. Let us assume that there exists a constant $\lambda \geq 1$ such that
        \begin{enumerate}[label=(\roman*)]
            \item for all $A, B \in K$ which are not $0$-cells and such that $A \subset B$, we have $\mathrm{diam}(B) \leq \lambda \mathrm{diam}(A)$;
            \item for all bounded subset $S \subset E$, for all radius $\rho > 0$, the set $S$ can be covered by at most $\lambda \rho^{-d} \mathrm{diam}(S)^d$ balls of radius $r$.
        \end{enumerate}
        Then $\phi$ can be taken $C$-Lipschitz where $C$ depends on $n$, $\lambda$.
    \end{lem}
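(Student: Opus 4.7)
The only obstruction to a uniform Lipschitz bound on $\phi$ in the construction of Lemma \ref{lem_FF} is the factor $\mathrm{diam}(A)\delta_A^{-1}$ appearing when $\psi_m$ is defined in each cell $A\in K^m$: the center of projection $x_A\in\tfrac12 A$ may a priori lie arbitrarily close to $\overline{\phi_{m+1}(E)}$, forcing $\delta_A$ to be tiny (cf.\ the definition (\ref{delta_A_definition})). The plan is to refine the selection of $x_A$, using hypotheses (i) and (ii), so that $\delta_A\geq c(n,\lambda)\mathrm{diam}(A)$ at every step. With this strengthening in place, each $\psi_m$ becomes $C(n,\lambda)$-Lipschitz on its cell $A$, the extension argument from the proof of Lemma \ref{abstract_FF} delivers a $C(n,\lambda)$-Lipschitz map on $|K|$, and the composition $\phi=\psi_{d+1}\circ\cdots\circ\psi_n\circ\psi_{n+1}$ is $C(n,\lambda)^{n-d}$-Lipschitz, which gives the desired bound.

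The quantitative input driving the refinement is a downward induction on $m=n+1,\ldots,d+1$ maintaining the covering statement: there exists $N_m=N_m(n,\lambda)$ such that for every $A\in K$,
\begin{equation*}
    \phi_m(E\cap V_A) \ \text{is covered by at most $N_m$ balls of radius $\tfrac{1}{10}\mathrm{diam}(A)$}.
\end{equation*}
The base case $m=n+1$ is immediate: $\phi_{n+1}=\mathrm{id}$, hypothesis (i) gives $\mathrm{diam}(V_A)\leq C(\lambda)\mathrm{diam}(A)$ (so $E\cap V_A$ is bounded with controlled diameter), and hypothesis (ii) then yields a covering by at most $\lambda\cdot 10^d\cdot C(\lambda)^d$ balls of radius $\tfrac{1}{10}\mathrm{diam}(A)$.

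For the inductive step at level $m$, I claim that for each $A\in K^m$ one can select $x_A\in\tfrac12 A$ meeting the two Markov-type conditions of Lemma \ref{lem_FF} and also lying at distance $\geq c(n,\lambda)\mathrm{diam}(A)$ from $\phi_{m+1}(E)$. Because each $\psi_k$ preserves the cells of $K$, any $x$ with $\phi_{m+1}(x)\in\mathrm{int}(A)$ lies in a cell containing $A$, so $\phi_{m+1}(E)\cap\mathrm{int}(A)\subset \phi_{m+1}(E\cap V_A)$ and the inductive covering bound applies. The $(c+\tfrac{1}{10})\mathrm{diam}(A)$-neighbourhood of the $N_{m+1}$ ball centres has Lebesgue measure at most $\omega_n N_{m+1}(c+\tfrac{1}{10})^n\mathrm{diam}(A)^n$; for $c=c(n,\lambda)>0$ small enough this is strictly less than $\mathcal{L}^n(\tfrac12 A)\geq c_n\mathrm{diam}(A)^n$, leaving a subset of $\tfrac12 A$ of positive Lebesgue measure at distance $\geq c\mathrm{diam}(A)$ from $\phi_{m+1}(E)$. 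The Markov arguments of Lemma \ref{lem_FF} each exclude a subset of measure bounded by a constant times $\lambda_0^{-1}\mathrm{diam}(A)^n$ (where $\lambda_0$ is the Markov parameter and the number of Markov constraints is at most $3^n+1$), so taking $\lambda_0=\lambda_0(n,\lambda)$ large enough keeps the intersection non-empty. Choosing $x_A$ there and $\delta_A=\tfrac{c}{2}\mathrm{diam}(A)$ gives $\mathrm{diam}(A)\delta_A^{-1}\leq 2/c(n,\lambda)$, so $\psi_m$ is $C(n,\lambda)$-Lipschitz on $A$; the cell-to-cell gluing of Lemma \ref{abstract_FF} then extends $\psi_m$ to a $C(n,\lambda)$-Lipschitz map of $|K|$ into itself.

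To close the induction one verifies that the covering bound propagates: writing $\phi_m=\psi_m\circ\phi_{m+1}$ and using the uniform Lipschitz bound on $\psi_m$, each of the $N_{m+1}$ balls of radius $\tfrac{1}{10}\mathrm{diam}(A)$ covering $\phi_{m+1}(E\cap V_A)$ maps into a ball of radius $\tfrac{C(n,\lambda)}{10}\mathrm{diam}(A)$, which in turn can be re-covered by $C(n,\lambda)^n$ balls of radius $\tfrac{1}{10}\mathrm{diam}(A)$; thus $N_m\leq C(n,\lambda)^n N_{m+1}$, and after at most $n-d$ iterations $N_{d+1}$ is still a constant depending only on $n,\lambda$. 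I expect the main obstacle to be the simultaneous orchestration of the three linked constraints at each level (Lipschitz bound on $\psi_m$, covering of $\phi_m(E\cap V_A)$, admissible location of $x_A$): no single step is deep, but one must book-keep the dependency of the constants $c(n,\lambda)$, $N_m$ and $\lambda_0$ so that the set of admissible centres remains non-empty throughout the induction.
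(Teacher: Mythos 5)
Your overall strategy is the right one and matches the paper's: the only thing to fix in Lemma \ref{lem_FF} is the ratio $\mathrm{diam}(A)\,\delta_A^{-1}$, and one fixes it by adding to the Markov-type constraints on $x_A$ the requirement that $x_A$ lie at distance $\geq c(n,\lambda)\,\mathrm{diam}(A)$ from $\phi_{m+1}(E)$, showing that the set of excluded centers has small $\HH^m$-measure in $\mathrm{int}(A)$. However, the quantitative mechanism you use to bound that excluded set does not work. You cover $\phi_{m+1}(E\cap V_A)$ by a \emph{fixed} number $N_{m+1}$ of balls of the \emph{fixed} radius $\tfrac{1}{10}\mathrm{diam}(A)$, and then estimate the measure of the $(c+\tfrac{1}{10})\mathrm{diam}(A)$-neighbourhood of the centers by $\omega_n N_{m+1}(c+\tfrac{1}{10})^n\mathrm{diam}(A)^n$, claiming this is beaten by the measure of $\tfrac12 A$ once $c$ is small. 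But as $c\to 0$ this bound does not tend to $0$: it tends to $\omega_n N_{m+1}10^{-n}\mathrm{diam}(A)^n$, so the pigeonhole only closes if $N_{m+1}$ is below a fixed dimensional threshold, which you cannot guarantee ($N_{n+1}$ already scales like a power of $\lambda$, and your recursion $N_m\leq C(n,\lambda)^nN_{m+1}$ makes it worse at each level). There is also a dimensional slip: $A\in K^m$ with $m<n$ has $\LL^n(\tfrac12 A)=0$; the comparison must be made with $\HH^m$ on $\mathrm{int}(A)$, which does not change the problem above.

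The repair is to use coverings at \emph{all} small scales rather than at the single scale $\tfrac{1}{10}\mathrm{diam}(A)$, and this is exactly what the paper does. For each $\delta\in\mathopen{]}0,1]$, hypothesis (ii) applied to $E\cap B$ for the at most $3^n$ cells $B\supset A$ (whose diameters are $\leq\lambda\,\mathrm{diam}(A)$ by hypothesis (i)) gives a covering by at most $C\delta^{-d}$ balls of radius $\delta\,\mathrm{diam}(A)$; pushing forward by the $C$-Lipschitz $\phi_{m+1}$ covers $\mathrm{int}(A)\cap\phi_{m+1}(E)$ by at most $C\delta^{-d}$ balls of radius $C\delta\,\mathrm{diam}(A)$, whence
\begin{equation*}
    \HH^m\bigl(\{x\in\mathrm{int}(A) : \mathrm{d}(x,\phi_{m+1}(E))<C\delta\,\mathrm{diam}(A)\}\bigr)\leq C\,\delta^{m-d}\,\mathrm{diam}(A)^m,
\end{equation*}
and since every cell in which a radial projection is performed has dimension $m>d$, the right-hand side is an arbitrarily small fraction of $\HH^m(A)$ for $\delta$ small. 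The exponent $m-d>0$ is the whole point, and it is invisible at a single fixed scale. Note also that once you argue this way there is no need for your separate inductive covering invariant on the images $\phi_m(E\cap V_A)$: the covering is always taken on the original set $E$ and transported by the already-established Lipschitz bound on $\phi_{m+1}$, which avoids the lossy re-covering step $N_m\leq C(n,\lambda)^nN_{m+1}$ altogether.
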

    \begin{proof}
        The letter $C$ plays the role of constant $\geq 1$ that depends on on $n$ and $\lambda$. Given a $C$-Lipschitz $\phi_{m+1}$, we want to build a $C$-Lipschitz $\psi_m$. According to the proof of Lemma \ref{abstract_FF}, it suffices that for all $A \in K^m$,
        \begin{equation}
            \mathrm{diam}(A) \delta_A^{-1} \leq C.
        \end{equation}
        We prove the existence of $x_A \in \frac{1}{2}$ such that
        \begin{enumerate}
            \item $\mathrm{diam}(A) \mathrm{d}(x, \phi_{m+1}(E))^{-1} \leq C$;
            \item for all $B \in K$ containing $A$,
                \begin{equation}
                    \HH^d(\psi_{x_A}(\mathrm{int}(A) \cap F_B)) \leq C \HH^d(\mathrm{int}(A) \cap F_B)
                \end{equation}
                where $F_B = \phi_{m+1}(B \cap E)$;
            \item for all $B \in K$ included in $\partial A$,
                \begin{equation}
                    \zeta^d \mres B (\psi_{x_A}(\mathrm{int}(A) \cap F)) \leq C \zeta^d \mres A(\mathrm{int}(A) \cap F),
                \end{equation}
                where $F = \phi_{m+1}(E)$.
        \end{enumerate}
        Remember that to obtain the second requirement on $x_A$, we have chosen $C$ big enough (depending on $n$) so that for $B \in K$ containing $A$,
        \begin{equation}
            \mathrm{diam}(A)^{-m} \HH^m(\set{x \in \mathrm{int}(A) | \HH^d(\psi_x(F_B)) \geq C \HH^d(F_B)})
        \end{equation}
        is sufficiently small (depending on $n$). We have obtained the third requirement on $x_A$ similarly. Now, we also want $C$ big enough (depending on $n$, $\lambda$) so that
        \begin{equation}
            \mathrm{diam}(A)^{-m} \HH^m(\set{x \in \mathrm{int}(A) | \mathrm{d}(x, \phi_{m+1}(E)) \leq C^{-1} \mathrm{diam}(A)})
        \end{equation}
        is sufficiently small (depending on $n$). Thus, the points $x \in \mathrm{int}(A)$ that do not satisfy all our criteria will have a small $\HH^m$-measure compare to $\HH^m(A)$. Fix $0 < \delta \leq 1$. We recall that
        \begin{equation}
            \mathrm{int}(A) \cap \phi_{m+1}(E) \subset \bigcup_B \phi_{m+1}(B \cap E),
        \end{equation}
        where $\bigcup_B$ is indexed by the cells $B \in K$ containing $A$. For such $B$, we can cover $E \cap B$ by at most $C \delta^{-d}$ balls of radius $\delta \mathrm{diam}(A)$. Since $\phi_{m+1}$ is $C$-Lipschitz and since, by Definition \ref{defi_system}, there are at most $3^n$ cells $B \in K$ containing $A$, the set $\mathrm{int}(A) \cap \phi_{m+1}(E)$ is covered by at most $C\delta^{-d}$ balls of radius $C\delta \mathrm{diam}(A)$. We deduce that
        \begin{multline}
            \HH^m(\set{x \in \mathrm{int}(A) | \mathrm{d}(x, \phi_{m+1}(E)) < C\delta\mathrm{diam}(A)}) \\ \leq C \delta^{m-d} \mathrm{diam}(A)^m.
        \end{multline}
        As $m > d$ and $\delta$ is arbitrary small, we can find $C$ big enough so that
        \begin{equation}
            \mathrm{diam}(A)^{-m} \HH^m(\set{x \in \mathrm{int}(A) | \mathrm{d}(x, \phi_{m+1}(E)) \leq C^{-1} \mathrm{diam}(A)})
        \end{equation}
        is sufficiently small.
    \end{proof}
    \begin{rmk}\label{rmk_af_semi}
        Let $E \subset \R^n$ be a Borel set and $B$ be a ball of radius $R > 0$. We assume that there exists a constant $\lambda \geq 1$ such that for all $x \in E \cap B$, for all $0 \leq r \leq 2R$,
        \begin{equation}\label{AF_semi}
            \lambda^{-1} r^d \leq \HH^d(E \cap B(x,r)) \leq \lambda r^d.
        \end{equation}
        Then there exists a constant $C \geq 1$ (depending on $\lambda$, $d$) such that for all subset $S \subset E \cap B$, for all radius $r > 0$, the set $S$ can be covered by at most $C r^{-d} \mathrm{diam}(S)^d$ balls of radius $r$. Here is the proof.

        We assume that $S$ contains at least one point, denoted by $x_S$. The property is trivial for $r > \mathrm{diam}(S)$ because $S$ is covered by by $B(x_S,r)$. Now we work with $r \leq \mathrm{diam}(S)$. Let $(x_i)$ be a maximal sequence of points in $S$ such that $\abs{x_i - x_j} \geq r$. By maximality, $S$ is covered by the balls $B(x_i,r)$. Then, we estimate the cardinal $N$ of such family. The balls $E \cap B(x_i,\tfrac{1}{2}r)$ are disjoints and included in $E \cap B(x_S, 2\mathrm{diam}(S))$. It follows that
        \begin{equation}
            \sum \HH^d(E \cap B(x_i,\tfrac{1}{2}r)) \leq \HH^d(E \cap B(x_S,2\mathrm{diam}(S))).
        \end{equation}
        We apply (\ref{AF_semi}) and obtain
        \begin{equation}
            (2^d \lambda)^{-1} N r^d \leq 2^d \lambda \mathrm{diam}(S)^d.
        \end{equation}
    \end{rmk}
\end{appendices}

\textsc{Université Paris-Saclay, CNRS, Laboratoire de mathématiques d'Orsay, 91405, Orsay, France.}
\end{document}